\lstdefinelanguage{GAP}{%
  morekeywords={%
    Assert,Info,IsBound,QUIT,%
    TryNextMethod,Unbind,and,break,%
    continue,do,elif,%
    else,end,false,fi,for,%
    function,if,in,local,%
    mod,not,od,or,%
    quit,rec,repeat,return,%
    then,true,until,while%
  },%
  sensitive,%
  morecomment=[l]\#,%
  morestring=[b]",%
  morestring=[b]',%
}[keywords,comments,strings] 
\numberwithin{equation}{section}
\newtheorem{introtheorem}{Theorem}{\bf}{\it}
\newtheorem{observation}[theorem]{Observation}
\newtheorem{fact}[theorem]{Fact}
\spnewtheorem*{fatconjecture}{Conjecture}{\bf}{\it}
\newcommand{\Q}{\mathbb{Q}}
\newcommand{\Z}{\mathbb{Z}}
\newcommand{\N}{\mathbb{N}}
\newcommand{\defeq}{\mathrel{\mathop{:}}=}
\newcommand{\abs}[1]{\lvert #1 \rvert}
\newcommand{\gen}[1]{\langle #1 \rangle}
\newcommand{\lk}{\operatorname{lk}}
\newcommand{\F}{\mathbb{F}}
\newcommand{\id}{\operatorname{id}}
\newcommand{\calC}{\mathcal{C}}
\newcommand{\im}{\operatorname{im}}
\newcommand{\Aff}{\operatorname{AGL}}
\newcommand{\calO}{\mathcal{O}}
\newcommand{\frakm}{\mathfrak{m}}
\newcommand{\GL}{\operatorname{GL}}
\newcommand{\PGL}{\operatorname{PGL}}
\newcommand{\PGaL}{\operatorname{P\Gamma{}L}}
\newcommand{\SL}{\operatorname{SL}}
\newcommand{\PSL}{\operatorname{PSL}}
\newcommand{\Aut}{\operatorname{Aut}}
\newcommand{\Out}{\operatorname{Out}}
\newcommand{\Inn}{\operatorname{Inn}}
\newcommand{\Gal}{\operatorname{Gal}}
\renewcommand{\P}{\mathbb{P}}
\newcommand{\calP}{\mathcal{P}}
\newcommand{\calL}{\mathcal{L}}
\newcommand{\hjelm}{\mathcal{H}}
\newcommand{\hpts}{\mathcal{P}}
\newcommand{\hlns}{\mathcal{L}}
\newcommand{\typ}{\operatorname{typ}}
\newcommand{\mmod}{\mathrel{\,\operatorname{mod}\,}}
\newcommand{\Groupoid}{\mathcal{G}}
\newcommand{\SingTrip}{\mathcal{ST}}
\newcommand{\Inner}{\mathcal{IST}}
\newcommand{\DiffMat}{\mathcal{DM}}
\newcommand{\DiffMatSet}{\mathrm{DM}}
\newcommand{\DiffMatBSet}{\mathrm{DM}_0}
\newcommand{\Normal}{\mathcal{N}}
\newcommand{\ProjPla}{\Pi}
\newcommand{\inc}{\mathrel{I}}
\newcommand{\dbar}[1]{\bar{\bar{#1}}}
\numberwithin{equation}{section}
\newcommand{\chr}{\operatorname{char}}
\newcommand{\llb}{(\mkern-2mu(}
\newcommand{\rrb}{)\mkern-2mu)}
\newcommand{\lls}{[\mkern-2mu[}
\newcommand{\rrs}{]\mkern-2mu]}
\newcommand{\alg}{\mathcal{A}}
\newcommand{\lseries}[1]{\llb #1 \rrb}
\newcommand{\pseries}[1]{\lls #1\rrs}
\newcommand{\Tr}{\operatorname{Tr}}
\newcommand{\act}{\curvearrowright}
\newcommandx{\stab}[3][2={},3={}]{\ifthenelse{\equal{#2#3}{}}%
  {{\ifthenelse{\equal{#1}{}}{P}{P_{#1}}}}%
  {{{\ifthenelse{\equal{#1}{}}{P^{#3\ifthenelse{\equal{#2}{}}{}{(#2)}}}{P_{#1}^{#3\ifthenelse{\equal{#2}{}}{}{(#2)}}}}}}}
\newenvironment{smallarray}[1]
 {\null\,\vcenter\bgroup\scriptsize
  \arraycolsep=.13885em
  \hbox\bgroup$\array{@{}#1@{}}}
 {\endarray$\egroup\egroup\,\null}
\begin{document}

\title{On panel-regular $\tilde{A}_2$ lattices}

\author{Stefan Witzel\thanks{The research for this article is part of the DFG project WI 4079/2. It was also supported by the SFB~701.}}
\institute{S. Witzel \at Department of Mathematics\\Bielefeld University, PO Box 100131, 33501 Bielefeld, Germany \email{switzel@math.uni-bielefeld.de}}

\date{}

\maketitle

\begin{abstract}
We study lattices on $\tilde{A}_2$ buildings that preserve types, act regularly on each type of edge, and whose vertex stabilizers are cyclic. We show that several of their properties, such as their automorphism group and isomorphism class, can be determined from purely combinatorial data. As a consequence we can show that the number of such lattices (up to isomorphism) grows super-exponentially with the thickness parameter $q$.

We look in more detail at the $3295$ lattices with $q \in \{2,3,4,5\}$. We show that with one exception for each $q$ these are all exotic. For the exotic examples we prove that the automorphism group of the lattice and of the building coincide, and that two lattices are quasi-isometric only if they are isomorphic.
\end{abstract}

\subclass{20F65 \and 51E24}

\keywords{Lattices \and buildings \and difference sets \and Singer groups}


\section{Introduction}

A simply connected triangle complex $X$ is a building of type $\tilde{A}_2$ if every vertex link is isomorphic to the incidence graph of a projective plane. A group $\Gamma$ acting on $X$ is a uniform lattice if there are only finitely many $\Gamma$-orbits of cells and if the stabilizer in $\Gamma$ of any cell is finite. If $X$ admits a uniform lattice, it has to be locally finite.

There are algebraic ways to construct uniform lattices on buildings as groups of integral points of anisotropic algebraic groups but it is not easy to understand these explicitly, for example to determine presentations, fundamental domains, or stabilizers. It is therefore useful to be able to construct a lattice more explicitly from combinatorial data for example by amalgamating together well-chosen stabilizers. Ronan \cite[Section~3]{ronan84} and Kantor \cite[Section~C.3]{kantor86} were the first to use difference sets and Singer groups for this purpose. More recently Essert \cite{essert13} has determined the complexes of groups that correspond to certain uniform building lattices. We build on his work and investigate a subclass of his lattices. We call a \emph{Singer lattice} a lattice that preserves types and acts regularly (transitively and freely) on the edges of each type. We call it a \emph{Singer cyclic lattice} if in addition every vertex stabilizer is cyclic.

Essert shows that Singer cyclic lattices have very nice presentations of the form
\[
\Gamma = \gen{\sigma_0, \sigma_1, \sigma_2 \mid \sigma_i^{q^2+q+1}, 0 \le i \le 2, \sigma_0^{E_{j,0}} \sigma_1^{E_{j,1}} \sigma_2^{E_{j,2}}, 1 \le j \le q}
\]
and conversely, that every group with such a presentation is a Singer cyclic lattice. Here $q$ is a prime power and $E$ is what we call a \emph{based difference matrix} (defined in Section~\ref{sec:singer_lattices}). The geometric meaning of the parameter $q$ is that every edge is contained in $q+1$ triangles.

The reason why Singer cyclic lattices can be studied so efficiently is that many of their properties can be derived from the difference matrix $E$ by combinatorial means. Most importantly, the difference matrix allows to reconstruct in a very elementary manner large balls in the associated building (see Theorem~\ref{thm:hjelmslev}), which is fundamental for most of the results below. More immediately, we define a notion of equivalence of difference matrices and define a group $\Aut(E)$ that satisfy:

\begin{introtheorem}
\label{thm:intro_equivalence}
There is a bijective correspondence between Singer cyclic lattices up to isomorphism and difference matrices up to equivalence.

If $\Gamma$ corresponds to $E$ then $\Out(\Gamma) = \Aut(E)$.
\end{introtheorem}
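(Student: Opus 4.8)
The starting point is Essert's presentation theorem quoted above: it makes the assignment $E\mapsto\Gamma(E)$, sending a based difference matrix $E$ to the group with the displayed presentation, a surjection from the set of based difference matrices onto the set of isomorphism classes of Singer cyclic lattices. So for the first assertion it remains to identify the fibres, that is, to prove $\Gamma(E)\cong\Gamma(E')$ if and only if $E$ and $E'$ are equivalent; the second assertion will come out of the same analysis with $E'=E$. In Section~\ref{sec:singer_lattices} I will set up equivalence as relatedness by a short list of geometric moves on the data underlying $E$: relabelling the three types by a permutation in $S_3$, replacing each generator $\sigma_i$ by another generator $\sigma_i^{a_i}$ of $\langle\sigma_i\rangle$ (so $a_i$ a unit modulo $q^2+q+1$), and re-reading the matrix at another base chamber of the building. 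With this definition the implication ``$E\sim E'\Rightarrow\Gamma(E)\cong\Gamma(E')$'' is immediate, since each move merely rewrites the presentation by Tietze transformations — the change-of-base-chamber move amounting to conjugating the chosen generators — and so does not alter the isomorphism type.

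For the converse, fix an isomorphism $\phi\colon\Gamma(E)\to\Gamma(E')$. The essential point is that the cyclic subgroups $\langle\sigma_i\rangle$ are group-theoretically recognizable. Indeed $\Gamma(E)$ acts on its building $X(E)$, a complete $\CAT(0)$ space, type-preservingly and freely on edges, hence freely on triangles; a finite subgroup therefore fixes a point of $X(E)$, stabilizes the minimal simplex containing that point, and — being type-preserving — fixes each of its vertices, which (as edge and triangle stabilizers are trivial) forces the subgroup into a vertex stabilizer. Thus the maximal finite subgroups of $\Gamma(E)$ are exactly the conjugates of $\langle\sigma_0\rangle,\langle\sigma_1\rangle,\langle\sigma_2\rangle$; these are pairwise incomparable, each is self-normalizing (its normalizer fixes the same vertex), and because $\Gamma(E)$ preserves types they form three distinct conjugacy classes. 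Consequently $\phi$ permutes these classes, and each $\phi(\langle\sigma_i\rangle)$, being a maximal finite subgroup of $\Gamma(E')$, equals the stabilizer $\Gamma'_{w_i}$ of a unique vertex $w_i$ of $X(E')$; writing $\pi(i)$ for the type of $w_i$ defines a permutation $\pi\in S_3$.

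Now applying $\phi$ to the relators $\sigma_i^{q^2+q+1}$ and $\sigma_0^{E_{j,0}}\sigma_1^{E_{j,1}}\sigma_2^{E_{j,2}}$ shows that $(\phi(\sigma_0),\phi(\sigma_1),\phi(\sigma_2))$ is again a ``Singer generating triple'': its members generate $\Gamma(E')$, each $\phi(\sigma_i)$ generates the vertex stabilizer $\Gamma'_{w_i}$, and every relation among them has the form $\phi(\sigma_0)^{e_0}\phi(\sigma_1)^{e_1}\phi(\sigma_2)^{e_2}$. By Theorem~\ref{thm:hjelmslev} this data reconstructs large balls of $X(E')$ around the relevant simplices, and comparing it with the corresponding data for $\sigma_\bullet$ in $X(E)$ forces $w_0,w_1,w_2$ to span a common chamber $C$ of $X(E')$ and forces $\phi$ to be induced by a $\phi$-equivariant isomorphism $X(E)\to X(E')$ taking the fundamental chamber to $C$. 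The difference matrix of the triple $(\phi(\sigma_i))$ is then literally $E$, while by construction $(\phi(\sigma_i))$ is obtained from $(\sigma'_i)$ by the moves defining equivalence — a change of base chamber, a relabelling of types by $\pi$, and a rescaling of generators; hence $E\sim E'$. Taking $E'=E$, every automorphism of $\Gamma(E)$ is realized by such a self-move of $E$, inner automorphisms corresponding exactly to the self-moves obtained by translating the base data with a group element, which gives $\Out(\Gamma(E))=\Aut(E)$ once $\Aut(E)$ is defined as the group of self-moves modulo those.

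The step I expect to be the main obstacle is the one in the third paragraph: upgrading the abstract matching of the cyclic subgroups into a matching of the chambers they bound, and so of all the combinatorial information packaged in $E$. This is precisely where Theorem~\ref{thm:hjelmslev} does the work — it is the bridge from relations in the group back to incidence in the building — and it is also where one must check that the bookkeeping of the allowed moves (the permutation $\pi$, the units $a_i$, the choice of base chamber, the residual freedom of conjugation) matches exactly the bookkeeping in the reduction, so that the correspondence is both surjective and injective and $\Out(\Gamma)$ is neither over- nor undercounted. Once that dictionary is in place, what remains are routine manipulations of the presentation.
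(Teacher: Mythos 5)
Your overall architecture matches the paper's: Essert's theorem to pass between based difference matrices and lattices-with-presenting-triples, the Bruhat--Tits fixed point theorem to recognize the $\gen{\sigma_i}$ as the maximal finite subgroups (three conjugacy classes, one per vertex type), and then a bookkeeping of moves (type permutation, change of generator, change of base chamber, inner automorphisms) to identify isomorphism of lattices with equivalence of matrices and $\Out(\Gamma)$ with $\Aut(E)$. The paper packages the bookkeeping in groupoid language (Lemma~\ref{lem:extra_morphisms}, Lemma~\ref{lem:equivalence}, Theorem~\ref{thm:groupoid_exact_sequence}), but that is presentation, not substance.

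There is, however, a genuine gap exactly at the step you flag as the main obstacle, and the tool you reach for there is the wrong one. Given an isomorphism $\phi\colon\Gamma(E)\to\Gamma(E')$, you know that $(\phi(\sigma_0),\phi(\sigma_1),\phi(\sigma_2))$ is a presenting triple of $\Gamma(E')$ and that each $\phi(\sigma_i)$ generates the stabilizer of some vertex $w_i$ of $X(E')$; what you must show is that $w_0,w_1,w_2$ span a chamber, i.e.\ that a \emph{presenting} triple is a \emph{chamber} triple for the given action. Theorem~\ref{thm:hjelmslev} cannot deliver this: it describes balls of $X$ \emph{assuming} the $\sigma_i$ already fix the vertices of a common chamber, so invoking it here is circular. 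The paper's argument (Lemma~\ref{lem:triples}) is different: the converse half of Essert's theorem produces \emph{some} building $Y$ on which the presenting triple acts as a chamber triple, and QI-rigidity for irreducible Euclidean buildings of dimension $\ge 2$ (Theorem~\ref{thm:qi_rigidity}, both the existence and the uniqueness statements) supplies a $\Gamma(E')$-equivariant isomorphism $X(E')\to Y$, transporting the chamber back. Without this (or an equivalent rigidity input) the adjacency of the $w_i$ does not follow from the relations alone, and the reduction of $\phi$ to a sequence of elementary moves on $E'$ does not get off the ground. Once that lemma is in place, the rest of your outline goes through as in the paper (with one further small point you elide: the freeness of the row-operation action on based difference matrices, Lemma~\ref{lem:diff_set_difference}, which is what makes $\Aut(E)$ computable as a subgroup of the column group and the count of $\Out(\Gamma)$ neither over- nor undercounted).
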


Estimating the number of difference matrices up to equivalence we get

\begin{introtheorem}
\label{thm:intro_bound}
The number of Singer cyclic lattices with parameter $q = p^\eta$ is bounded below by
\[
\frac{1}{162 \eta^3}((q+1)!)^2 \sim \frac{\pi}{81 \eta^3}\frac{(q+1)^{2q+3}}{e^{2q+2}}\text{.}
\]
In particular, the number grows super-exponentially.
\end{introtheorem}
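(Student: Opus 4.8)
The plan is to exploit the bijection from Theorem~A: it suffices to count based difference matrices $E$ up to equivalence, and then to bound the size of the equivalence classes. First I would recall that a based difference matrix is an array of size $q \times 3$ whose rows $(E_{j,0}, E_{j,1}, E_{j,2})$ encode the relators; the "based" normalization pins down finitely many coordinates, and the remaining freedom is essentially a choice of two bijections between $q$-element (or $(q+1)$-element) index sets coming from the two non-reference types of panel. Concretely I expect the counting to reduce to the following: an unconstrained such matrix is determined by a pair of permutations in $S_{q+1}$ (or $S_q$ after the basing kills one point), giving on the order of $((q+1)!)^2$ raw objects, and one must check that the defining (difference-set / "every pair of types occurs exactly once") conditions are automatically satisfied for the relevant family — or restrict to a subfamily where they are — so that one genuinely gets at least $((q+1)!)^2$ based difference matrices before quotienting.

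Next I would bound the size of each equivalence class, i.e. bound $\abs{\Aut(E)}$ together with the number of "relabelings" identified by the equivalence relation. The equivalence relation on difference matrices is generated by: (i) cyclically permuting / relabeling the three types (a group of order dividing $6$), (ii) multiplying the exponents within a Singer cyclic group $\Z/(q^2+q+1)$ by a unit coming from a field automorphism or a Singer-normalizer action — this is where the $\eta$ enters, since $\Gal(\F_{q}/\F_p)$ has order $\eta$ and the relevant normalizer quotient has order $3\eta$ — and (iii) permuting the $q$ relators among themselves, a group of order dividing $q!$. Multiplying these together, each equivalence class has size at most something like $6 \cdot 3\eta \cdot q! \cdot (\text{a small factor})$; the constant $162 = 6 \cdot 27$ and the cube $\eta^3$ should come out of carefully accounting for one factor of $3\eta$ from each of the (at most) three coordinate directions, or from a $3\eta$ acting three times. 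Dividing $((q+1)!)^2$ by this bound gives the stated $\frac{1}{162\eta^3}((q+1)!)^2$.

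Finally, the asymptotic equivalence $\frac{1}{162\eta^3}((q+1)!)^2 \sim \frac{\pi}{81\eta^3}\frac{(q+1)^{2q+3}}{e^{2q+2}}$ is just Stirling: $(q+1)! \sim \sqrt{2\pi(q+1)}\,(q+1)^{q+1} e^{-(q+1)}$, so $((q+1)!)^2 \sim 2\pi (q+1)^{2q+3} e^{-2q-2}$, and $\frac{2\pi}{162} = \frac{\pi}{81}$. Super-exponential growth in $q$ then follows since $\log((q+1)!)^2 \sim 2q\log q$ dominates any linear function of $q$, while $\eta = \log_p q$ grows only logarithmically, so the $\eta^{-3}$ factor is negligible.

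The main obstacle I anticipate is step one — pinning down the exact count of based difference matrices and verifying that the parametrization by a pair of permutations really produces valid difference matrices (not just candidate arrays). In particular one must be careful that the "difference" conditions don't cut the count down below $((q+1)!)^2$; the likely resolution is to identify an explicit injection from (pairs of permutations) into (based difference matrices), perhaps by fixing one "standard" difference set and letting the permutations act on the other two coordinate slots, so that the difference conditions hold by construction. The bound on equivalence-class sizes (step two) is conceptually routine once the equivalence relation is spelled out, but getting the constant and the power of $\eta$ exactly right requires care about how the type-permutation, the Singer-normalizer, and the relator-permutation groups interact (whether they act independently or with overlaps).
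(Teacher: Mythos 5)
Your proposal follows essentially the same route as the paper: reduce via Theorem~A to counting difference matrices up to equivalence, fix one Desarguesian difference set $D$ and parametrize matrices whose columns are orderings of $D$ by (in effect) a pair of permutations in $S_{q+1}$ — which makes the difference conditions hold by construction, exactly as you anticipated — and then divide by the column-permutation group $S_3$ (order $6$) and by the cube of the stabilizer of $D$ in $\Aff_1(\Z/\delta\Z)$ (order $3\eta$ each, giving $27\eta^3$), followed by Stirling. The paper additionally records two small technical points you would need to make precise: that the stabilizer of $D$ really has order $3\eta$ (via the normalizer of the Singer cycle in $\PGaL_3(\F_q)$), and that the row-operation group $R\cong S_{q+2}$ (which includes subtracting a row, not just permuting rows) acts on matrices with columns equal to $D$ only through honest row permutations, which follows from the fact that $D-a\neq D$ for $a\in D$.
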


For example, for $q = 7$ there are more than $10^8$ Singer cyclic lattices.

The rest of the article is concerned with a more explicit study of Singer cyclic lattices with parameter $q \in \{2,3,4,5\}$. Of these parameters, the only one that admits Singer lattices that are not Singer cyclic is $q = 4$;
for the others ``Singer cyclic lattice'' could be replaced by ``Singer lattice'' in the following statements. Most of the results are computer aided although not computationally intensive.

\begin{introtheorem}
\label{thm:intro_count}
The number of Singer cyclic lattices up to isomorphism is
\begin{itemize}
\item $2$ for $q = 2$,
\item $7$ for $q = 3$,
\item $17$ for $q = 4$,
\item $3269$ for $q = 5$.
\end{itemize}
\end{introtheorem}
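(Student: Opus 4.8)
The plan is to reduce the count to a finite enumeration using Theorem~\ref{thm:intro_equivalence}: Singer cyclic lattices up to isomorphism correspond bijectively to difference matrices up to equivalence, so for each $q \in \{2,3,4,5\}$ — i.e.\ $q^2+q+1 \in \{7,13,21,31\}$ — it suffices to list all based difference matrices $E$ (a matrix of size $q \times 3$ over $\Z/(q^2+q+1)$ satisfying the difference conditions of Section~\ref{sec:singer_lattices}, the normalization ``based'' being chosen so that every equivalence class contains such a representative) and then to identify the ones that are equivalent. Both the set of candidates and the set of equivalence moves are finite and explicit, so this terminates. For $q = 2$ and $q = 3$ the candidate set is small enough to handle by hand; for $q = 4$ and especially $q = 5$ a short computer search is needed — this is what the accompanying GAP code does — but it is not computationally heavy.

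First I would enumerate the based difference matrices efficiently. Instead of ranging over all $(q^2+q+1)^{3q}$ arrays, I would construct $E$ row by row and discard a partial array as soon as a difference condition fails; together with the ``based'' normalization (which uses up the freedom in the choice of the Singer generators $\sigma_0,\sigma_1,\sigma_2$ to pin down, say, the first row and one further entry) this prunes the search to a very manageable size. As a cross-check one can verify for each surviving candidate, via the reconstruction of balls in Theorem~\ref{thm:hjelmslev}, that the links of the associated complex really are the incidence graph of $\PG(2,q)$; for these $q$ the underlying difference sets are forced up to equivalence to be the classical Singer ones, so it is essentially the ``only if'' direction of Essert's criterion that is being tested.

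Next comes isomorph rejection. For each candidate $E$ I would compute a canonical representative of its class under the group of equivalence moves (equivalently, the relabelings recorded by the $\Aut(E)$-type data), and count the distinct canonical forms; this yields $2,\,7,\,17,\,3269$. To rule out both overcounting (a missed equivalence) and undercounting (an incomplete search), I would confirm each total by an independent route — for instance an orbit count (Burnside) for the equivalence group acting on the full candidate set, or recomputing canonical forms under a different normalization — and, for the small $q$, by writing down the lattices explicitly and separating them with invariants such as $\Out(\Gamma) = \Aut(E)$. It is also worth noting that for $q = 5$ the answer $3269$ sits just above the lower bound $((q+1)!)^2/162 = 3200$ of Theorem~\ref{thm:intro_bound}, which is a reassuring sanity check.

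The main obstacle is the $q = 5$ case: here $\Z/31$ and $5$ rows make the raw search space enormous, so everything depends on pruning aggressively with the difference conditions and on having a complete list of equivalence moves, since obtaining exactly $3269$ (rather than a nearby wrong number) requires the isomorph rejection to be provably exhaustive. The other cases, and the check that these lattices are Singer \emph{cyclic} and not merely Singer — a distinction that only arises for $q = 4$, where it simply means that the enumeration over difference matrices already excludes the non-cyclic ones — are comparatively routine.
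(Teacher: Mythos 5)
Your proposal is correct and follows the paper's overall strategy: Theorem~\ref{thm:intro_equivalence} (in its precise form, Corollary~\ref{cor:count_classes_isom_equiv}) reduces the count to enumerating difference matrices up to equivalence, and the four totals are then produced by a finite, computer-aided isomorph rejection. The one place where the paper's route is genuinely tighter is how the enumeration is organized. Rather than backtracking over all based $(q+1)\times 3$ matrices with entries in $\Z/\delta\Z$ (note it is $(q+1)\times 3$, not $q\times 3$: each column is a difference set with $q+1$ elements), the paper passes to not-necessarily-based matrices (Lemma~\ref{lem:diff_mats_to_based_diff_mats}) and then uses Theorem~\ref{thm:small_desarguesian} (every plane of order $\le 8$ is Desarguesian) together with Berman's transitivity theorem (Theorem~\ref{thm:difference_sets_affine}) to replace every column, up to equivalence, by a single fixed difference set $D$; Lemma~\ref{lem:diff_mat_representatives} gives injectivity of the induced map on equivalence classes, and surjectivity holds because for $q\le 5$ every difference matrix is Desarguesian. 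This shrinks the candidate set to the $((q+1)!)^3$ column-orderings of $D$ (about half a million after normalizing one column by row permutations when $q=5$) and, more importantly, makes exhaustiveness of the search a theorem rather than a property of the pruning code — which is exactly the worry you raise for $q=5$. Two smaller imprecisions: the ``based'' normalization only fixes the zero row, whereas replacing $\sigma_i$ by $\sigma_i^{e_i}$ is a separate equivalence move (multiplying a column by a unit), so it does not ``pin down one further entry''; and the proposed cross-check of the links via Theorem~\ref{thm:hjelmslev} is redundant, since the difference-set condition on each column already forces the links to be projective planes, which are Desarguesian by smallness.
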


The two lattices for $q = 2$ are each contained in one of the four well-known chamber regular lattices first studied by Ronan~\cite{ronan84}, Tits~\cite{tits85,tits86b}, and Köhler--Meixner--Wester \cite{koemeiwes84a}.

Lattices on two-dimensional euclidean buildings live in an interesting border region for rigidity: the classification of euclidean buildings \cite{weiss09} which implies that a building of type $\tilde{A}_d$ is \emph{Bruhat--Tits}, namely that it comes from $\PGL_{d+1}(K)$ for some (finite-dimensional division algebra over a) local field $K$, only applies for $d \ge 3$. In dimension two there are other buildings, which we call \emph{exotic}. But if a lattice is contained in $\PGL_3(K)$ then Margulis arithmeticity \cite{margulis} implies that it actually is an arithmetic subgroup. It is known (and we explain explicitly) that for every $q$ there is a Singer cyclic lattice that is arithmetic, contained in $\PGL_3(\F_q\lseries{t})$. We provide a general method for finding embeddings of Singer cyclic lattices into $\PGL_3(K)$ but also prove

\begin{introtheorem}
\label{thm:intro_exotic}
For each $q \le 5$ among the Singer cyclic lattices there is a single arithmetic one while all others act on exotic buildings.
\end{introtheorem}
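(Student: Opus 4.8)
The strategy is to use Theorem~\ref{thm:hjelmslev} (the reconstruction of large balls in the building from the difference matrix $E$) as a bridge between the combinatorial data and the geometry of the building. I would proceed in two directions.

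First, exhibiting \emph{one} arithmetic lattice for each $q$: take the classical Singer-group construction. The point stabilizer $\PGL_3(\F_q\lseries{t})$ contains a Singer cycle — the image of a generator of $\F_{q^3}^\times/\F_q^\times$ acting on $\P^2(\F_q)$ — and amalgamating three copies of this cyclic group along the appropriate relations produces a Singer cyclic lattice with a based difference matrix $E$ coming from a perfect difference set in $\Z/(q^2+q+1)\Z$. I would write this lattice down explicitly, read off its based difference matrix, and record the equivalence class. The building it acts on is the Bruhat--Tits building of $\PGL_3(\F_q\lseries{t})$, hence not exotic; and by Margulis arithmeticity any lattice embedding into $\PGL_3(K)$ forces arithmeticity, so this is the arithmetic one. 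The genuinely substantive content here is that \emph{for $q \le 5$ this is the only one}: for each such $q$ I would enumerate all based difference matrices $E$ up to equivalence (using Theorem~\ref{thm:intro_equivalence}, so that distinct equivalence classes give distinct lattices), which by Theorem~\ref{thm:intro_count} is a list of $2, 7, 17, 3269$ classes, and check computationally that exactly one of them is equivalent to the Singer-difference-set example above.

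Second, and this is the hard part, showing every other lattice in the list acts on an \emph{exotic} building, i.e.\ a building that is not of the form associated to $\PGL_3(K)$ for a local field $K$. By the classification dichotomy mentioned in the introduction, a thick $\tilde A_2$ building is either Bruhat--Tits or exotic, and Bruhat--Tits buildings are exactly those whose links are Desarguesian (equivalently Moufang) projective planes of order $q$. So it suffices to show: for each non-arithmetic $E$ in the list, the projective plane that appears as a vertex link of the associated building is non-Desarguesian. Here Theorem~\ref{thm:hjelmslev} is essential — it lets me reconstruct, from $E$ alone, a ball of radius large enough to contain a full vertex link together with enough of its neighbours to read off the incidence structure of that link's plane. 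For $q \le 5$ the only projective planes are: $q=2,3$ — unique, hence Desarguesian; $q=4$ — unique; $q=5$ — unique. This seems to make the claim vacuous, so the resolution must instead be that a lattice can act on the \emph{Bruhat--Tits building of a local field of residue characteristic different from $p$}, or more precisely that ``exotic'' here means ``not of the form $\PGL_3(K)$ for $K$ a local field'' where $K$ need not have residue field $\F_q$: the building associated to a non-discretely-valued or higher-dimensional-division-algebra situation, or simply a building with no cocompact $\PGL_3$-action at all. The correct formulation (which I would take from the earlier sections) is that an $\tilde A_2$ building is Bruhat--Tits iff it satisfies the Moufang condition at every panel, and for a Singer cyclic lattice this Moufang condition can be tested on the difference matrix: it forces $E$ to be (equivalent to) one built from a perfect difference set with a multiplier structure. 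So the real task is:

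\emph{Key step.} Formulate a combinatorial Moufang-type criterion $\mathsf{M}(E)$ on based difference matrices such that the building of the lattice $\Gamma_E$ is Bruhat--Tits if and only if $\mathsf{M}(E)$ holds, prove this equivalence using the ball-reconstruction theorem (checking the Moufang condition only requires knowing a bounded-radius ball, uniformly in the building), and then verify by computer that among the $2+7+17+3269$ equivalence classes exactly one satisfies $\mathsf{M}(E)$ — namely the Singer-difference-set lattice of the first part. The main obstacle is the proof that $\mathsf{M}(E)$ is genuinely finitary, i.e.\ that being Moufang (equivalently Bruhat--Tits) is detected on a ball of radius bounded independently of $q$ in the range considered; once that is in hand the remaining enumeration is a routine, non-intensive computation as the introduction promises, and it matches the single arithmetic example produced in the first part, completing the proof.
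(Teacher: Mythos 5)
Your first half (producing one arithmetic lattice per $q$ from the classical Singer difference set and identifying its equivalence class of difference matrices) matches the paper, which realizes this lattice inside $\PGL_3(\F_q\lseries{t})$ via the Cartwright--Mantero--Steger--Zappa cyclic-algebra construction and shows its difference matrix is the one with all rows constant. Your second half has the right overall shape --- a combinatorial obstruction read off from $E$ via the ball-reconstruction theorem, checked by computer over the $2+7+17+3269$ classes --- but the key step as you formulate it contains a genuine gap. You ask for a criterion $\mathsf{M}(E)$ with ``Bruhat--Tits \emph{if and only if} $\mathsf{M}(E)$,'' detected on a ball of bounded radius, and you identify proving this equivalence as the main obstacle. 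That obstacle is real: no such finitary characterization is proved in the paper, and it is not needed. The paper only uses one direction: in the Bruhat--Tits building of $\PGL_3(K)$ the combinatorial ball of radius $k$ around a vertex is the Hjelmslev (Klingenberg) plane of $\calO/\frakm^k$, and these planes are Moufang because they come from a commutative local ring. Hence if \emph{some} Hjelmslev plane of level $2$ or $3$ of the building of $\Gamma_E$ fails to be Moufang, the building is exotic --- a sufficient condition for exoticness, not a characterization of Bruhat--Tits. The computation shows this failure occurs for every class except one per $q$; that remaining class is then matched against the explicitly constructed arithmetic lattice, so no converse is ever invoked. Your plan as written would stall at the unproved ``only if'' direction.

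A secondary correction: your statement that Bruhat--Tits $\tilde A_2$-buildings are exactly those with Desarguesian (Moufang) links is false as a characterization --- all the exotic buildings here have Desarguesian links, since every projective plane of order at most $8$ is Desarguesian. You noticed this makes the link-level test vacuous, but the repair is not a panel-Moufang or link-level condition; it is to pass to the level-$2$ and level-$3$ Hjelmslev planes, where the Moufang property genuinely discriminates, and to be content with a one-sided implication.
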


We have seen that the automorphism groups of Bruhat--Tits buildings are non-discrete locally compact groups. In most known cases the automorphism group of an exotic building is discrete and in particular is a finite extension of any uniform lattice on it (see \cite[Section~7]{vanmaldeghem90} for examples with non-discrete, vertex-transitive automorphism groups). We confirm that the same is true here. In fact more is true:

\begin{introtheorem}
\label{thm:intro_aut}
Let $\Gamma \curvearrowright X$ be a Singer cyclic lattice with $q \le 5$ acting on an exotic building. Then $\Aut(\Gamma) = \Aut(X)$ and this group is not transitive on types.
\end{introtheorem}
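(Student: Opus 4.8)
The plan is to prove $\Aut(X) = \Aut(\Gamma)$ first, by exhibiting $\Gamma$ as a normal subgroup of $\Aut(X)$ whose conjugation action realises the isomorphism, and then to deduce non-transitivity on types from the combinatorial classification behind Theorems~\ref{thm:intro_equivalence} and~\ref{thm:intro_count}. As a preliminary remark, $\operatorname{Stab}_\Gamma(v)$ is cyclic of order $q^2+q+1$ and acts on $\lk(v) \cong \PG(2,q)$ as a Singer cycle, hence without fixed vertices; since geodesics in the $\CAT(0)$ space $X$ are unique, $v$ is the only vertex of $X$ fixed by $\operatorname{Stab}_\Gamma(v)$. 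Therefore any $g \in \Aut(X)$ centralising $\Gamma$ fixes $v$ --- because $h(gv) = g(hv) = gv$ for a generator $h$ of $\operatorname{Stab}_\Gamma(v)$ --- and then fixes all vertices of the type of $v$ since $\Gamma$ is transitive on them, and then all vertices of $X$, as a collineation of $\PG(2,q)$ is determined by its action on points; so $g = \id$. Thus $C_{\Aut(X)}(\Gamma) = 1$, and in particular $Z(\Gamma) = 1$.

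Next I would realise $\Aut(\Gamma)$ inside $\Aut(X)$. By Theorem~\ref{thm:intro_equivalence}, $\Out(\Gamma) = \Aut(E)$, and every equivalence of based difference matrices lifts to an automorphism of $X$ that normalises $\Gamma$ and induces the prescribed outer automorphism; together with $\Inn(\Gamma) = \Gamma \le \Aut(X)$ this produces a section of the conjugation homomorphism $N_{\Aut(X)}(\Gamma) \to \Aut(\Gamma)$. By the previous paragraph that conjugation homomorphism is injective, hence --- having a section --- an isomorphism. It therefore suffices to prove that $\Gamma$ is normal in $\Aut(X)$; then $\Aut(X) = N_{\Aut(X)}(\Gamma) \cong \Aut(\Gamma)$.

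For normality the task is to control $\operatorname{Stab}_{\Aut(X)}(v)$. Here I would use Theorem~\ref{thm:hjelmslev}, which reconstructs the balls $B(v,r)$ of $X$ from $E$ as finite projective Hjelmslev planes; this reduces the control of $\operatorname{Stab}_{\Aut(X)}(v)$ to a computation with these finite objects. The image of $\operatorname{Stab}_{\Aut(X)}(v)$ in $\PGaL_3(q) = \Aut(\lk(v))$ contains the Singer cycle $\operatorname{Stab}_\Gamma(v)$, so by the classical description of overgroups of Singer cycles it either normalises that cycle or contains $\PSL_3(q)$; the second possibility would force every reconstructed Hjelmslev plane --- and hence $X$ itself --- to be Desarguesian, i.e.\ the classical building of $\PGL_3(\F_q\lseries{t})$, contradicting exoticness (Theorem~\ref{thm:intro_exotic}), and for $q \le 5$ it is in any case ruled out by direct computation, which moreover shows $\operatorname{Stab}_{\Aut(X)}(v)$ to be a small group in which $\operatorname{Stab}_\Gamma(v)$ is characteristic. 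Consequently every $g \in \Aut(X)$ carries $\operatorname{Stab}_\Gamma(v)$ to $\operatorname{Stab}_\Gamma(gv)$ and permutes this family of subgroups; since $\Gamma$ is generated by the cyclic vertex stabilisers $\gen{\sigma_i}$ of a fundamental chamber, $g$ normalises $\Gamma$.

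It remains to handle the types. Under the identification $\Aut(X) = \Aut(\Gamma)$ the action on the three vertex types is the action on the three $\Gamma$-conjugacy classes of vertex stabilisers; it is trivial on $\Inn(\Gamma) = \Gamma$ and so factors through $\Out(\Gamma) = \Aut(E)$, that is, through the action of the automorphism group of $E$ on its three columns. A subgroup of $S_3$ is transitive exactly when it contains a $3$-cycle, so non-transitivity on types amounts to the absence of a column-rotation in $\Aut(E)$, which is a finite check on the matrices underlying Theorem~\ref{thm:intro_count}. The hard part is the control of $\operatorname{Stab}_{\Aut(X)}(v)$ in the normality step: a single link $\lk(v) \cong \PG(2,q)$ is far too homogeneous to provide any rigidity, so one genuinely needs the reconstruction of balls of positive radius from Theorem~\ref{thm:hjelmslev} and the exoticness hypothesis to pin $\operatorname{Stab}_{\Aut(X)}(v)$ down; both conclusions fail for the unique arithmetic lattice, where $\Aut(X)$ contains the non-discrete group $\PGL_3(\F_q\lseries{t})$ and is transitive on types.
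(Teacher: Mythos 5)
Your skeleton --- trivial centralizer of $\Gamma$ in $\Aut(X)$, a section of $N_{\Aut(X)}(\Gamma)\to\Aut(\Gamma)$ coming from $\Out(\Gamma)=\Aut(E)$, normality of $\Gamma$ via control of vertex stabilizers, and reading off the type action from $\Aut(E)\le S_3$ --- is essentially the route of Section~\ref{sec:building_auts}, and the final reduction of type-transitivity to the absence of a $3$-cycle in $\Aut(E)$ is sound. The genuine gap is in the sentence claiming that Theorem~\ref{thm:hjelmslev} ``reduces the control of $\operatorname{Stab}_{\Aut(X)}(v)$ to a computation with these finite objects.'' It does not, by itself: automorphism groups of the finite balls only bound the \emph{image} of the stabilizer in each ball from above; they say nothing about the kernel of the restriction map, which a priori could be an infinite profinite group (and for the arithmetic lattice it is --- the congruence subgroups of $\PGL_3(\calO)$ fix large balls). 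What is needed is the propagation statement of Lemma~\ref{lem:stabilizer_faithful}: if the pointwise stabilizer of the $1$-ball acts trivially on the $3$-ball around one vertex, or on the $2$-ball around vertices of two different types, then it is trivial, by an induction that pushes the fixed set outward using transitivity on vertices of each type. Only this lemma, combined with the tabulated triviality of the relevant finite subquotients, turns ball computations into the statement that $\operatorname{Stab}_{\Aut(X)}(v)$ acts faithfully on $\lk v$; without it your ``characteristic Singer subgroup'' normality argument has nothing to act on. This is the heart of the theorem and must be stated and proved.

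A second, smaller defect: your dichotomy for the image of the stabilizer in $\Aut(\lk v)$ is correct (normalize the Singer cycle or contain $\PSL_3(\F_q)$, by Kantor's theorem together with Lemma~\ref{lem:normalizer}), but your exclusion of the second branch is not. Containing $\PSL_3(\F_q)$ on links does \emph{not} force the building to be Desarguesian: for $q=2$ the exotic building $X_{2,2}$ admits a chamber-transitive automorphism group (Section~\ref{sec:ronan}), so ``large link stabilizer $\Rightarrow$ classical building'' is false. The paper instead excludes this branch by Timmesfeld's classification (Theorem~\ref{thm:chamber_trans}): a type-preserving chamber-transitive lattice forces $q\in\{2,8\}$, which disposes of $q\in\{3,4,5\}$, while $q=2$ is handled separately via the explicit identification of $\hat{\Gamma}_{2,2}$ with $\Gamma_{R4}$ and the fact that its three level-$2$ Hjelmslev planes are not all isomorphic. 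Substitute that argument (or an honest finite verification on the reconstructed $2$- and $3$-balls) for your Desarguesian claim and the proof closes.
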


One consequence is that none of the exotic Singer cyclic lattices with $q \le 5$ are quasi-isometric (or commensurable) to any of the vertex-regular lattices studied by Cartwright--Steger--Mantero--Zappa \cite{carmanstezap93a}.

Finally, we distinguish the buildings that Singer cyclic lattices can act on. Since buildings are QI-rigid, this also yields a quasi-isometry classification of Singer cyclic lattices.

\begin{introtheorem}
\label{thm:intro_qi}
Singer cyclic lattices with $q \le 5$ acting on isomorphic buildings are isomorphic. As a consequence, quasi-isometric (or commensurable) Singer cyclic lattices with $q \le 5$ are isomorphic.
\end{introtheorem}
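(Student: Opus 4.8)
The plan is to reduce the quasi-isometry statement to the isomorphism statement, and the isomorphism statement to a finite computation built on the combinatorial reconstruction of balls in the building (Theorem~\ref{thm:hjelmslev}) together with the equivalence classification (Theorem~\ref{thm:intro_equivalence}). For the second sentence: since the building $X$ on which a Singer cyclic lattice $\Gamma$ acts is a thick euclidean building of dimension $\geq 2$, it is quasi-isometrically rigid by Kleiner--Leeb; a quasi-isometry $\Gamma \to \Gamma'$ therefore induces a quasi-isometry, hence (after bounded perturbation and using that both act cocompactly) an isomorphism, of the buildings $X$ and $X'$. Commensurable lattices act on the same building a~fortiori. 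So everything comes down to the first sentence: two Singer cyclic lattices with $q \le 5$ acting on isomorphic buildings are isomorphic.

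For that, I would argue as follows. Let $\Gamma \act X$ and $\Gamma' \act X'$ be Singer cyclic lattices with $X \cong X'$, corresponding to based difference matrices $E$ and $E'$. By Theorem~\ref{thm:intro_equivalence} it suffices to show $E$ and $E'$ are equivalent. The key point is that the building $X$ (as an abstract building, forgetting the $\Gamma$-action) is determined up to isomorphism by a large enough ball around a vertex — more precisely, by its ``local structure at bounded distance'', because an $\tilde A_2$ building is determined by its link data together with the way large balls close up, and the non-arithmetic ones in this range have discrete automorphism group so no surprises in how charts glue. Concretely, I would use Theorem~\ref{thm:hjelmslev}: the difference matrix $E$ reconstructs balls of bounded radius in $X$ in a canonical way, so from $E$ one extracts a finite combinatorial invariant $B_r(E)$ (an isomorphism type of an $r$-ball, together with the finitely many ways it can be extended), and isomorphic buildings give the same invariant for every $r$. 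Since there are only finitely many based difference matrices for each $q \le 5$ (as tabulated, giving the counts $2,7,17,3269$ of Theorem~\ref{thm:intro_count}), one picks $r$ large enough to separate all of their associated buildings and then verifies by computer that distinct equivalence classes of $E$ yield distinct $B_r(E)$. This reduces the whole first sentence to a finite check.

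There is one subtlety I would need to address: a~priori different equivalence classes of difference matrices could give \emph{isomorphic} buildings carrying \emph{non-isomorphic} lattices — i.e.\ the building might not remember the lattice. Here is where the exotic/arithmetic dichotomy of Theorem~\ref{thm:intro_exotic} and the automorphism-group computation of Theorem~\ref{thm:intro_aut} do the work. For the exotic buildings, $\Aut(X)$ is discrete, equal to $\Aut(\Gamma)$, and every Singer cyclic lattice on $X$ is a finite-index subgroup of $\Aut(X)$; since $\Aut(X)$ is not transitive on types, one counts the type-preserving, edge-regular, cyclic-vertex-stabilizer subgroups inside this concrete finite-covolume group and checks there is at most one up to conjugacy (again a finite verification, feasible because $\Aut(X)$ is commensurable with $\Gamma$ and explicitly known). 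For the single arithmetic building per $q$, one identifies $X$ with the building of $\PGL_3(\F_q\lseries{t})$ and checks directly that the arithmetic Singer cyclic lattice in it is unique up to conjugacy — this is essentially Essert's uniqueness statement combined with the fact (from the embedding method) that an edge-regular cyclic lattice in $\PGL_3(\F_q\lseries{t})$ is conjugate to the one coming from a Singer cycle in $\PGL_3(\F_q)$.

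The main obstacle, and the step I expect to be most delicate, is making precise and checkable the claim that the isomorphism type of $X$ is detected by a \emph{bounded} ball $B_r(E)$ — i.e.\ exhibiting an effective $r = r(q)$ and verifying it. In principle one might worry that two non-isomorphic exotic buildings agree on arbitrarily large balls; this cannot happen for fixed $q \le 5$ because there are only finitely many candidates, but turning ``finitely many'' into an explicit radius requires either a stabilization argument (balls of radius $r$ determine balls of radius $r+1$ once $r$ exceeds some bound depending only on $q$, because the local data is finite and the building is simply connected) or a brute-force increase of $r$ until all $2$, $7$, $17$, respectively $3269$ invariants are pairwise distinct and no further merging is possible. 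I would carry this out computationally, relying on Theorem~\ref{thm:hjelmslev} to generate $B_r(E)$ from $E$, and present the resulting table; the theoretical content is the reduction above, and the rest is verification.
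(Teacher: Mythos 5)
Your reduction of the second sentence to the first via QI-rigidity of buildings (Theorem~\ref{thm:qi_rigidity}) is exactly what the paper does. The problem is the first sentence, and here your primary route has a genuine gap. You propose to show that inequivalent difference matrices yield non-isomorphic buildings by computing, for a sufficiently large radius $r$, the isomorphism type of the $r$-ball $B_r(E)$ via Theorem~\ref{thm:hjelmslev} and checking that all classes are separated. A separating radius does exist in principle (by a compactness/inverse-limit argument for locally finite complexes, together with finiteness of the list), but your stopping criterion is circular: if two buildings still agree at radius $r$ you cannot tell whether to increase $r$ or whether they are genuinely isomorphic, and the ``stabilization'' claim that $r$-balls determine $(r+1)$-balls beyond some effective bound is asserted, not proved. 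Moreover the paper's own computations show this route is not what happens in practice: the level-$2$ and level-$3$ invariants demonstrably fail to separate the buildings (for $q=5$, a single row of Table~\ref{tab:buildings_5} accounts for $2179$ lattices with identical data, and already for $q=3$ the paper notes that the pairs $X_{3,2},X_{3,3}$ and $X_{3,5},X_{3,6}$ are not separated by these invariants). The paper never separates the buildings pairwise by local invariants, and the theorem is not proved that way.

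The viable argument is the one you relegate to a ``subtlety'': show that each building carries at most one Singer cyclic lattice up to isomorphism, using $\Aut(X)=\Aut(\Gamma)$ (Theorem~\ref{thm:intro_aut}). That alone proves the first sentence and makes your ball computation unnecessary. But your execution of this step --- ``count the edge-regular cyclic-stabilizer subgroups of $\Aut(X)$ up to conjugacy, a finite verification'' --- is not justified as stated; enumerating such subgroups of an infinite discrete group is not obviously finite, and you supply no mechanism. The paper's key observation, which is missing from your proposal, is quantitative: $\Gamma$ has index at most $3$ in $\Aut_0(X)$ (read off from the normalizer computations), whereas if $\Gamma\neq\Gamma'$ were two Singer cyclic lattices on the same $X$, a Singer cycle of $\Gamma'$ not lying in $\Gamma$ has prime order $\delta$, so $\gen{\Gamma,\Gamma'}$ contains $\Gamma$ with index at least $5$ --- a contradiction. (For most lattices one even has $\Gamma=\Aut_0(X)$, and the Bruhat--Tits case is handled by Theorem~\ref{thm:intro_exotic}.) Without this index argument, or some concrete substitute, your proof does not close.
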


Based on Theorems~\ref{thm:intro_exotic} and~\ref{thm:intro_qi} we formulate the following:

\begin{fatconjecture}
Almost all Singer cyclic lattices are exotic and pairwise not quasi-isometric in the following sense:
\[
\lim_{q \to \infty} \frac{\abs{\{\text{exotic Singer cyclic lattices with parameter }q\}/\text{QI}}}{\abs{\{\text{Singer lattices with parameter }q\}}}  = 1
\]
where $q$ ranges over prime powers.
\end{fatconjecture}

The paper is organized as follows. In Section~\ref{sec:diff_sets} we recall some classical facts about incidence geometry, Singer groups, and difference sets. Singer lattices and their parametrization through based difference matrices (leading to Theorem~\ref{thm:intro_equivalence}) are introduced in Section~\ref{sec:singer_lattices}. Theorem~\ref{thm:intro_count} and a more precise version of the bound in Theorem~\ref{thm:intro_bound} are proven in Section~\ref{sec:census}. Section~\ref{sec:brutit_examples} explains how Bruhat--Tits examples can be produced using the construction in \cite{carmanstezap93a}. In Section~\ref{sec:ronan} we describe explicitly how the two Singer cyclic lattices for $q = 2$ can be extended do become chamber transitive, making the connection with the Köhler--Meixner--Wester/Ronan lattices. A general method to decide, given a Singer cyclic lattice $\Gamma$ and a local field $K$, whether $\Gamma$ embeds into $\PGL_3(K)$ is developed in Section~\ref{sec:linearity}. As an illustration we provide explicit embeddings (in terms of matrices) of the Bruhat--Tits Singer cyclic lattices for $q \in \{2,3\}$. Sections~\ref{sec:hjelmslev} and~\ref{sec:building_auts} are concerned with identifying the buildings that Singer cyclic lattices act on, and their automorphism groups. We obtain Theorems~\ref{thm:intro_exotic},~\ref{thm:intro_aut}, and~\ref{thm:intro_qi} as a consequence. The code for the computer experiments, including some documentation, can be found in the GitHub repository \cite{github_sl}. It is written in Python~\cite{python} and GAP~\cite{gap} using the library GRAPE~\cite{grape} which in turn depends on the library nauty~\cite{nauty}.

\begin{acknowledgements}
Linus Kramer first suggested to me to look at Singer lattices. I am especially indebted to Pierre-Emmanuel Caprace and Hendrik van Maldeghem who made many suggestions that were crucial for the success of the project. Nicolas Radu suggested to use the software GRAPE to compute stabilizers and pointed out to me the existence of exotic buildings with indiscrete automorphism group. I would like to thank all of them for their help.
\end{acknowledgements}


\section{Singer cycles and difference sets}
\label{sec:diff_sets}

We start by introducing some classical concepts from incidence geometry. A good general reference for the contents of this section is \cite{dembowski68}. Let $\ProjPla = (P,L,I)$ be a finite projective plane: $P$ is the set of points, $L$ the set of lines, and $I$ the incidence relation satisfying the familiar axioms. It is \emph{Desarguesian} if it is isomorphic to $\P^2\F_q$ for some prime power $q$ and in that case $\abs{P} = \abs{L} = q^2+q+1$. We therefore introduce the following notation that will be used throughout the article:
\begin{gather*}
q= p^\eta, \quad p\text{ prime}, \quad \eta \in \N \setminus\{0\}, \quad \delta= q^2+q+1
\end{gather*}

If there is an action of a group $S$ on a projective plane $\ProjPla$ that is regular (that is transitive and free) on the set of points $P$ then $\ProjPla$ is a \emph{Singer plane} and $S$ is a \emph{Singer group} for $\ProjPla$.

\begin{proposition}[{\cite[4.2.7]{dembowski68}}]
If $S$ is a Singer group on a projective plane $\ProjPla = (P,L,I)$ then the action of $S$ on the set of lines $L$ is regular as well.
\end{proposition}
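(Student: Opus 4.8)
The plan is to exploit the numerical coincidence $\abs{S} = \abs{P} = \abs{L}$ together with the classical fact that a collineation of a finite projective plane fixes exactly as many lines as points. Let $n$ be the order of $\ProjPla$ and put $v = n^2+n+1 = \abs{P} = \abs{L}$; every line carries $n+1$ points. Since $S$ is regular on $P$ we have $\abs{S} = v$, and every nontrivial $g \in S$ is fixed-point-free on $P$. The action of $S$ on $L$ is by collineations, so it is well-defined; to prove it regular it suffices to prove it \emph{free}, because a free action of a group of order $v$ on a $v$-element set has all orbits of size $v$, hence exactly one orbit, hence is transitive. Thus the statement reduces to showing that no nontrivial element of $S$ stabilises a line.

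The key auxiliary fact is that every collineation $g$ of $\ProjPla$ fixes the same number of lines as points. To see it, let $A \in \{0,1\}^{P \times L}$ be the point--line incidence matrix and let $B$, $C$ be the permutation matrices of the actions of $g$ on points and on lines; preservation of incidence is precisely the identity $BA = AC$. The standard identity $AA^{\top} = nI + J$ (with $J$ the all-ones matrix and $I$ the identity) gives $\det(A)^2 = \det(AA^{\top}) = (n+1)^2\, n^{\,v-1} \neq 0$, so $A$ is invertible and $C = A^{-1} B A$ is conjugate to $B$. Since the trace of a permutation matrix counts its fixed points, $g$ fixes $\operatorname{tr} C = \operatorname{tr} B$ lines, i.e. as many lines as points. (Alternatively this may be quoted from \cite{dembowski68}.)

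Applying this with an arbitrary $g \in S$, $g \neq 1$: freeness of $S$ on $P$ means $g$ fixes no point, hence $g$ fixes no line either, so the $S$-stabiliser of every line is trivial and $S$ acts freely on $L$. By the reduction of the first paragraph, a free action of the order-$v$ group $S$ on the $v$-element set $L$ is automatically transitive, so $S$ is regular on $L$.

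I expect the only real content to be the invertibility of the incidence matrix; the rest is bookkeeping. The points to handle with care are orienting $BA = AC$ so that $B$ and $C$ genuinely come out conjugate (rather than merely equivalent), and observing that the two inputs about $\ProjPla$ used here — that $\abs{P} = \abs{L}$ and that $AA^{\top} = nI + J$ — hold for every finite projective plane, so the hypothesis that $\ProjPla$ be Singer is not secretly invoked for them.
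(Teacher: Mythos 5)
Your proof is correct. The paper does not prove this proposition itself but cites Dembowski [4.2.7], and your argument is exactly the classical one underlying that reference: the incidence matrix $A$ of a finite projective plane is nonsingular because $AA^{\top} = nI + J$ has determinant $(n+1)^2 n^{v-1} \neq 0$, so the point and line permutation matrices of any collineation are conjugate via $A$ and hence have equal trace; a fixed-point-free element of $S$ therefore fixes no line, and freeness of the order-$v$ group $S$ on the $v$-element set $L$ forces a single orbit. The orientation of $BA = AC$ does work out to genuine conjugacy as you anticipated, and no property of $S$ beyond regularity on $P$ is used, so there is nothing to add.
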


Historically mathematicians (such as Singer) have been most interested in the case where $S$ is cyclic. In that case any generator of $S$ will be called a \emph{Singer cycle}.

\begin{theorem}[{\cite{singer38}}]
Every finite Desarguesian projective plane admits a Singer cycle.
\end{theorem}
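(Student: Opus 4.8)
The plan is to realize a cyclic Singer group inside $\PGL_3(\F_q)$ by means of a field extension, following the classical argument. Fix the Desarguesian plane $\ProjPla = \P^2\F_q$, whose points are the one-dimensional $\F_q$-subspaces of $\F_q^3$, whose lines are the two-dimensional subspaces, and with incidence given by inclusion. Identify the $\F_q$-vector space $\F_q^3$ with the field $\F_{q^3}$, which is a three-dimensional extension of $\F_q$. For any $a \in \F_{q^3}^\times$, multiplication by $a$ is an invertible $\F_q$-linear map $m_a$ of $\F_{q^3}$, hence lies in $\GL_3(\F_q)$; since such a map is a bijection of the subspace lattice carrying $k$-dimensional subspaces to $k$-dimensional subspaces and preserving inclusion, its image $[m_a]$ in $\PGL_3(\F_q)$ is an incidence-preserving automorphism of $\ProjPla$.

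Now choose a generator $\zeta$ of the cyclic group $\F_{q^3}^\times$, which has order $q^3-1$, and set $S = \gen{[m_\zeta]} \le \PGL_3(\F_q)$. First I would compute the order of $[m_\zeta]$: the map $m_\zeta^k = m_{\zeta^k}$ is a scalar in $\GL_3(\F_q)$ precisely when $\zeta^k \in \F_q^\times$, and since $\F_q^\times$ is the unique subgroup of order $q-1$ in the cyclic group $\F_{q^3}^\times = \gen{\zeta}$, this holds exactly when $\delta = (q^3-1)/(q-1)$ divides $k$. Hence $[m_\zeta]$ has order exactly $\delta$, so $\abs{S} = \delta = \abs{P}$.

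Next I would check that $S$ acts freely on $P$. If $[m_{\zeta^k}]$ fixes the point spanned by some $v \in \F_{q^3}^\times$, then $\zeta^k v = c v$ in $\F_{q^3}$ for some $c \in \F_q^\times$; cancelling $v$ in the field gives $\zeta^k = c \in \F_q^\times$, so by the previous paragraph $\delta \mid k$ and $[m_{\zeta^k}]$ is the identity. Thus $S$ acts freely on $P$, and since $\abs{S} = \abs{P}$ the action is also transitive, hence regular. Therefore $S$ is a cyclic Singer group for $\ProjPla$, and any generator of $S$ — for instance $[m_\zeta]$ — is a Singer cycle. (That $S$ is then automatically regular on the line set $L$ is the content of the preceding Proposition, though this is not needed for the statement.)

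The only genuinely substantive point is the order computation for $[m_\zeta]$, which rests on identifying $\F_q^\times$ as the unique subgroup of order $q-1$ in the cyclic group $\F_{q^3}^\times$; everything else is routine bookkeeping about invertible linear maps and subspaces. I do not anticipate a real obstacle.
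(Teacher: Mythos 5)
Your proof is correct and follows essentially the same route as the paper: identify $\F_q^3$ with $\F_{q^3}$, let the multiplicative group act by multiplication, and observe that the induced action of $\F_{q^3}^\times/\F_q^\times \cong C_{q^2+q+1}$ on the points is regular. The only cosmetic difference is that you establish freeness first and deduce transitivity by counting, whereas the paper notes transitivity directly and then identifies the point stabilizer as $\F_q^\times$; these are equivalent.
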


\begin{proof}
If $\ProjPla$ is finite Desarguesian it is isomorphic to the projective geometry of a $3$-dimensional $\F_q$-vector space for some $q$. Taking the $\F_q$-vector space to be $\F_{q^3}$ we get  an action of the cyclic group $\F_{q^3}^\times$ on $\ProjPla$ that is transitive on projective points. The stabilizer of a point is $\F_q^\times$ so the group $\F_{q^3}^\times/\F_q^\times \cong C_{q^2+q+1}$ acts freely.\qed
\end{proof}

This construction has the following generalization due to Ellers and Karzel~\cite{ellkar64}. Since their work substantially uses Dickson--Veblen near-fields we give a description that avoids these (but does not recover the classification result).

\begin{lemma}
\label{lem:singer_group}
Let $n$ be a divisor of $\gcd(3 \eta, \delta)$ and put $\eta_0 \defeq 3\eta/n$ and $q_0 \defeq p^{\eta_0}$. Let $\xi$ be a generator of $\F_{q^3}^\times / \F_q^\times$ and let $\varphi \in \Gal(\F_{q^3}/\F_{q_0})$ be the Frobenius automorphism.  The subgroup $S = \gen{g,h}$ of $\Gal(\F_{q^3}/\F_{q_0}) \ltimes \F_{q^3}^\times/\F_q^\times$ generated by $g \defeq (1, \xi^n)$ and $h \defeq (\varphi, \xi)$ is a Singer group.
\end{lemma}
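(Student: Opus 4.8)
The plan is to realise $S$ as a group of collineations of the projective plane $\ProjPla = \P^2\F_q$ acting regularly on its $\delta$ points. Following the proof of Singer's theorem above, view $\F_{q^3}$ as a three–dimensional $\F_q$–vector space, so that the points of $\ProjPla$ are identified with $N \defeq \F_{q^3}^\times/\F_q^\times$ and $N$ acts on them by multiplication, simply transitively. Every element of $\Gal(\F_{q^3}/\F_{q_0})$ is $\F_{q_0}$–linear and restricts to a field automorphism of the subfield $\F_q$, hence is $\F_q$–semilinear and induces a collineation of $\ProjPla$; on $N$ the generator $\varphi$ then acts by $a \mapsto a^{q_0}$. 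Altogether this is a faithful action of $G \defeq \Gal(\F_{q^3}/\F_{q_0}) \ltimes N$ on $\ProjPla$, under which $g$ acts by $a \mapsto \xi^n a$ and $h$ acts by $a \mapsto \xi a^{q_0}$.

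Since $\abs{\ProjPla} = \delta$, it suffices to show that $\abs{S} = \delta$ and that $S$ is transitive on points, for then the action is regular and $S$ is a Singer group by definition. For the order I would first pin down the structure of $S$: because $p \nmid \delta$ and $\varphi$ has order $n$, conjugation by $h$ carries $\xi^n$ to $\xi^{nq_0}$, which again generates $\langle \xi^n\rangle$, so $\langle g\rangle = \langle\xi^n\rangle$ is normal in $S$ and $S/\langle g\rangle$ is cyclic, generated by the image of $h$. An element $h^k$ can lie in $N \supseteq \langle g\rangle$ only if $n \mid k$, and $h^n = (1,\xi^m)$ with $m = (q_0^n-1)/(q_0-1) = (q^3-1)/(q_0-1)$, so the order of $S/\langle g\rangle$ is the least $t>0$ with $n \mid tm$; granting $n \mid m$ this order is $n$, whence $\abs{S} = \abs{\langle g\rangle}\cdot n = (\delta/n)\cdot n = \delta$. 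For transitivity, note that $\langle g\rangle = \langle\xi^n\rangle$ acts freely with exactly $n$ orbits, the cosets of $\langle\xi^n\rangle$ in $N$; as $\langle g\rangle \trianglelefteq S$, the group $S/\langle g\rangle = \langle \bar h\rangle$ acts on this $n$–element set, and under $N/\langle\xi^n\rangle \cong \Z/n\Z$ the element $\bar h$ acts by the affine permutation $j \mapsto q_0 j + 1$. Thus $S$ is transitive precisely when this permutation is an $n$–cycle; by the Chinese remainder theorem this splits over the prime powers $\ell^b$ exactly dividing $n$, and I would verify it by showing $q_0 \equiv 1 \pmod{\ell}$ for every prime $\ell \mid n$, which turns the permutation into a translation on each $\Z/\ell^b\Z$ up to conjugacy, hence an $\ell^b$–cycle.

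This reduces the lemma to two arithmetic assertions about a divisor $n$ of $\gcd(3\eta,\delta)$, with $q = p^\eta$ and $q_0 = p^{3\eta/n}$: (a) $n \mid (q^3-1)/(q_0-1)$, and (b) $\ell \mid q_0 - 1$ for every prime $\ell \mid n$. I would prove both prime by prime, using that $q^2+q+1 \equiv 0 \pmod n$ forces $q^3 \equiv 1 \pmod n$ (so $\operatorname{ord}_n(q) \mid 3$), together with the lifting–the–exponent lemma. The inputs that make the bookkeeping go through are: $\delta$ is odd, so $n$ is odd; $v_3(\delta) \le 1$, so $v_3(n)\le 1$; and for a prime $\ell\ne 3$ dividing $\delta$ one has $\operatorname{ord}_\ell(q)=3$ and, since $\ell \mid 3\eta$, also $\ell \mid \eta$. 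Comparing $\operatorname{ord}_\ell(p)$, $v_\ell(\eta)$, $v_\ell(\delta)$ and $v_\ell(n)$ in the two cases $\ell=3$ and $\ell\ne 3$ then yields (a) and (b). It is this elementary but somewhat delicate divisibility analysis — not anything in the geometry or group theory — that I expect to be the main obstacle.
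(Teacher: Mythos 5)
Your group-theoretic reduction is correct and is in fact more careful than the paper's own argument. The paper derives exactly your relations ($g^{\delta/n}=1$, $g^h=g^{q_0}$, $h^n=g^{m/n}$ with $m=(q^3-1)/(q_0-1)$), concludes $\abs{S}=\delta$, and then disposes of transitivity in one sentence (``because $q_0$ is a power of $p$ and $n$ is prime to $p$''), which does not address the actual issue. You correctly identify that the remaining point is whether the induced permutation $j\mapsto q_0j+1$ of $N/\gen{g}\cong\Z/n\Z$ is an $n$-cycle, and your reduction of this, via the Chinese remainder theorem and lifting the exponent (legitimate since $n\mid\delta$ is odd), to the assertion (b) that $\ell\mid q_0-1$ for every prime $\ell\mid n$ is exactly right; so is the role of (a) in getting $\abs{S}=\delta$.

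The genuine gap is that you never carry out the deferred ``divisibility analysis,'' and it cannot be carried out: assertion (b) is false in general. Take $p=67$, $\eta=7$, $q=67^7$. Since $67\equiv 1\pmod 3$ and $\operatorname{ord}_7(67)=3$ one gets $q\equiv 1\pmod 3$ and $q\equiv 4\pmod 7$, hence $21\mid\delta$ and $\gcd(3\eta,\delta)=21$; taking $n=21$ gives $q_0=p^{3\eta/n}=67\equiv 4\not\equiv 1\pmod 7$, violating (b). The permutation $j\mapsto 4j+1$ of $\Z/21\Z$ has $\{0,1,5\}$ as an orbit, so by your own (correct) criterion $S$ is not transitive; concretely, since $1+67+67^2=4557=21\cdot 217$, the element $h^3g^{-217}=(\varphi^3,1)$ is a nontrivial element of $S$ fixing the point $\F_q^\times$. (Assertion (a), by contrast, does always hold by the same LTE computation, so $\abs{S}=\delta$ in any case; only freeness/transitivity breaks.) Your plan does go through when $n$ is $1$ or a prime power --- in particular for $n\in\{1,3\}$ and for the case $q=128$, $n=7$ that the paper highlights --- but as a proof of the lemma as stated it has a hole at precisely the point where the paper's proof does, and your reduction in effect shows that the lemma requires the additional hypothesis $q_0\equiv 1\pmod\ell$ for every prime $\ell$ dividing $n$.
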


\begin{proof}
Define $a \defeq \delta/n = (q^2+q+1)/n$ and $b = (q_0^{n-1} + \ldots + q_0 + 1)/n$. First note that
\begin{equation}
\label{eq:frobenius_conjugation}
\zeta^{\varphi} = \zeta^{q_0}
\end{equation}
for any $\zeta \in \F_{q^3}^\times / \F_q^\times$. 

Now we verify relations of $S$. Trivially
\[
g^a = 1
\]
From \eqref{eq:frobenius_conjugation} we also see
\[
g^h = g^{q_0}\text{.}
\]
Finally iterated applications of \eqref{eq:frobenius_conjugation} yield
\[
h^n = (\varphi \xi)^n = \varphi^{n} \xi^{1 + q_0 + \ldots + q_0^{n-1}} = g^b\text{.}
\]
From this we see that $S$ has the right order $a \cdot n = \delta$ and that $\gen{g}$ is normal in $S$. Since $\gen{g}$ acts transitively on itself, it remains to see that $S/\gen{g} \cong \gen{\varphi}$ acts transitively on $(\F_{q^3}^\times/\F_{q}^\times)/\gen{g}$. This is the case by \eqref{eq:frobenius_conjugation} because $q_0$ is a power of $p$ and $(\F_{q^3}^\times/\F_{q}^\times)/\gen{g}$ has order $n$ which is relatively prime to $p$ (since $n$ divides $\delta$).\qed
\end{proof}

The case where $S$ is cyclic is recovered with $n = 1$. Note that $S$ is linear over $\F_q$ if and only if $n \in \{1,3\}$. Taking $n = 3$ is possible whenever $q \equiv 1 \mmod 3$. The least prime power where $\eta$ and $\delta$ are not relatively prime is $128$.

We can now state a consequence of \cite[Sätze~5,6]{ellkar64} as:

\begin{theorem}
Every Singer group on a finite Desarguesian projective plane is equivalent to one as in Lemma~\ref{lem:singer_group}.
\end{theorem}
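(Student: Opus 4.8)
The plan is to deduce the statement from the Ellers--Karzel classification \cite[Sätze~5,6]{ellkar64}; the only real work is translating their near-field formalism into the Galois-theoretic description of Lemma~\ref{lem:singer_group}. First I would recall the precise content of \cite[Sätze~5,6]{ellkar64}: up to equivalence (that is, up to conjugacy by an automorphism of $\P^2\F_q$), every Singer group of a finite Desarguesian plane is the group of right translations of a finite Dickson--Veblen near-field $N$ with additive group $(\F_{q^3},+)$, acting on the points of $\P^2\F_q$ realized as the $\F_q$-lines of $\F_{q^3}$ (with $\F_q$ the subfield of $\F_{q^3}$ of order $q$). These near-fields are parametrized by the index $n$ of the kernel $\F_{q_0}$ in $\F_{q^3}$ together with a Dickson coupling, and Dickson's construction produces exactly those for which $n$ meets the congruence conditions that here amount to $n \mid \gcd(3\eta, \delta)$.

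The second step is to make the relevant group of translations explicit. A Dickson near-field on $(\F_{q^3},+)$ with kernel $\F_{q_0}$ has a twisted product $x \ast y = x^{\varphi^{\nu(y)}}\cdot y$, where $\varphi$ generates $\Gal(\F_{q^3}/\F_{q_0})$ and $\nu$ is the associated coupling homomorphism into $\Z/n$; the additive (right) translation $x \mapsto x \ast g$ is the composite of the field automorphism $\varphi^{\nu(g)}$ with multiplication by $g$, so the Singer group sits inside $\Gal(\F_{q^3}/\F_{q_0}) \ltimes \F_{q^3}^\times$ as $\{(\varphi^{\nu(g)},g)\}$. For this to act on $\F_q$-lines one needs $\F_q^\times \subseteq \ker\nu$, i.e.\ $n \mid \delta$; dividing by the scalar subgroup $\F_q^\times$ then leaves precisely the group generated by the images of $(\id,\xi^n)$ and $(\varphi,\xi)$ for a suitable generator $\xi$ of $\F_{q^3}^\times/\F_q^\times$ --- that is, the group $S = \gen{g,h}$ of Lemma~\ref{lem:singer_group}, and the relations $g^a = 1$, $g^h = g^{q_0}$, $h^n = g^b$ checked in the proof of that lemma are exactly the defining identities of the Dickson near-field pushed to this quotient. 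In particular the two parametrizations, and the conditions on $n$, agree.

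The third step closes the argument. By \cite[Sätze~5,6]{ellkar64} every Singer group of $\P^2\F_q$ is equivalent to a group of translations of such a Dickson near-field, and by the translation above this group is equivalent to one of the $S$ of Lemma~\ref{lem:singer_group}; since conversely every such $S$ is a Singer group (which is the content of that lemma), the theorem follows. It is worth recording that equivalence is respected: for fixed $n$, varying $\xi$ and $\varphi$ changes $S$ only by conjugation with a collineation induced by a field automorphism of $\F_{q^3}$ together with a multiplication, while distinct values of $n$ give inequivalent groups --- this too is part of \cite[Sätze~5,6]{ellkar64} and is visible already from the extension $1 \to C_{\delta/n} \to S \to C_n \to 1$.

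I expect the main obstacle to be the bookkeeping in the second step: matching the Ellers--Karzel data with the pair $(\varphi,\xi)$ so that the kernel index equals the order of $\varphi$ and the Dickson congruence (each prime of $n$ divides $q_0-1$, together with a $2$-adic condition) becomes exactly $n \mid \gcd(3\eta,\delta)$, and in particular verifying that the anomalous range $\gcd(\eta,\delta) > 1$ --- first occurring at $q = 128$, where $\F_{q_0} \not\subseteq \F_q$ and the near-field is genuinely non-linear --- is fully covered. A proof avoiding \cite{ellkar64} seems possible in principle (show that a subgroup of $\PGL_3(q)$ acting semiregularly on points whose order is divisible by a primitive prime divisor of $q^3-1$ is contained in a Singer cycle, then control the possible extensions by field automorphisms), but this requires the subgroup structure of $\PGL_3(q)$ and is far less efficient than quoting the classification, so I would not take that route.
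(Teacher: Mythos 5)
Your proposal takes the same route as the paper: the paper gives no proof of this theorem at all, simply stating it as a consequence of \cite[S\"atze~5,6]{ellkar64}, which is exactly the classification you invoke. Your translation of the Dickson--Veblen near-field data (kernel index $n$, coupling $\nu$, descent to $\F_q$-lines forcing $n \mid \delta$) into the generators $g = (1,\xi^n)$ and $h = (\varphi,\xi)$ of Lemma~\ref{lem:singer_group} is a reasonable and more detailed account of the bookkeeping the paper leaves implicit.
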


If $S$ is a Singer group on a projective plane $\ProjPla$ we can fix a point $x \in P$ and a line $y \in L$ and define the \emph{difference set} $D = \{s \in S \mid s.x \inc y\}$ (this is not the most general notion of a difference set, see \cite[§2.3.29]{dembowski68}). It is clear that $\ProjPla$ can be recovered (up to $S$-equivariant isomorphism) from $S$ and $D$. In this article we will only be interested in the case where $S$ is cyclic of order $\delta \defeq q^2+q+1$ for some prime $q$. In fact, given a Singer cycle $s \in S$ we will identify $S$ with $\Z/\delta\Z$ via $\Z/\delta\Z \to S, i \mapsto s^i$. Then for $D \subseteq \Z/\delta\Z$ to be a difference set (in our sense) it is necessary and sufficient that every $s \in \Z/\delta\Z \setminus \{0\}$ can be written in a unique way as $d - d'$ with $d,d' \in D$ (which also explains the name).

There are manipulations of the difference set $D$ that do not essentially change the projective plane it defines. One is translation: for $a \in \Z/\delta\Z$ the set $a + D = \{a+d \mid d \in D\}$ clearly is again a difference set; it is obtained from the action of $\Z/\delta\Z$ on $\ProjPla$ by choosing the base line $-a.\ell$ instead of $\ell$. The other operation is acting by automorphisms: for $m \in \Z/\delta\Z^\times$ the set $m \cdot D = \{m \cdot d \mid d \in D\}$ is a difference set; from our perspective this is best interpreted as replacing the Singer cycle $s$ by $s^{m^{-1}}$. In summary we have an action of the group $\Aff_1(\Z/\delta\Z) = (\Z/\delta\Z)^\times \ltimes \Z/\delta\Z$ on the difference sets in $\Z/\delta\Z$. We say that two difference sets are \emph{equivalent} if they lie in the same $\Aff_1(\Z/\delta\Z)$-orbit. A difference set $D$ is \emph{based} if $0 \in D$; every difference set is equivalent to a based one. For future reference we record the following easy fact:

\begin{lemma}
\label{lem:diff_set_difference}
Let $D \subseteq \Z/\delta\Z$ be a difference set and let $a \in D$. Then $D - a \defeq \{d - a \mid d \in D\}$ does not equal $D$.
\end{lemma}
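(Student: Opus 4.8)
The plan is to prove the sharp statement that $D - a = D$ forces $a = 0$, which is exactly what the lemma asserts as soon as $a \neq 0$ (and for $a = 0$ the equality $D - a = D$ holds trivially, so nothing better is true). The first and main step is an averaging trick. Since $d \mapsto d - a$ is a bijection of $\Z/\delta\Z$, one has
$\sum_{x \in D-a} x = \sum_{d \in D}(d-a) = \bigl(\sum_{d \in D} d\bigr) - \abs{D}\, a$
in $\Z/\delta\Z$; if $D - a = D$ as subsets, the left-hand side is simply $\sum_{d \in D} d$, the two sums cancel, and we are left with $\abs{D}\, a = 0$.

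It then remains to use that $\abs{D} = q+1$ — this is the number of points incident with the line used to define $D$ — so the relation reads $(q+1)\, a = 0$ in $\Z/\delta\Z$, and to observe that $q+1$ is a unit modulo $\delta$: indeed $\delta = q^2+q+1 = q(q+1) + 1$, so $\gcd(q+1,\delta) = 1$. Multiplying $(q+1)a = 0$ by the inverse of $q+1$ modulo $\delta$ gives $a = 0$, as required.

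I do not anticipate any genuine obstacle: this is a two-line computation, and the only point worth a comment is the degenerate value $a = 0$. If one prefers to avoid invoking $\abs{D} = q+1$, an equally short alternative is available: for $a \neq 0$, iterating $D - a = D$ shows that $D$ is invariant under the nontrivial cyclic subgroup $H = \gen{a}$ of $\Z/\delta\Z$, hence is a nonempty union of cosets of $H$; fixing one coset $c + H \subseteq D$, every nonzero $h \in H$ arises as a difference of two elements of $c + H$ in $\abs{H} \geq 2$ distinct ways, contradicting the uniqueness clause in the definition of a difference set. The only subtlety in this second route is that differences of elements lying in distinct $H$-cosets fall outside $H$ and so cannot remove the ambiguity, which is immediate.
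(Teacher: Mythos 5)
Your proof is correct and takes a genuinely different route from the paper's. The paper argues purely combinatorially and leans on the hypothesis $a \in D$: since $a \in D$, the translate $D-a$ contains $0$, so if $D$ is not based the two sets already differ; and if $D$ is based and $D-a=D$, then $D$ would contain $-a$, $0$ and $a$ simultaneously, so the nonzero element $a$ would admit the two representations $a-0$ and $0-(-a)$ as a difference, contradicting uniqueness. Your averaging argument instead uses the quantitative facts $\abs{D}=q+1$ and $\gcd(q+1,\delta)=1$ (from $\delta=q(q+1)+1$) to get $(q+1)a=0$ and hence $a=0$; this buys a strictly stronger conclusion, namely that $D-a=D$ forces $a=0$ for arbitrary $a\in\Z/\delta\Z$, with no need for $a\in D$ and no case split on whether $D$ is based. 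Your fallback coset argument is closer in spirit to the paper's proof (both exploit uniqueness of difference representations) and is likewise sound. One genuinely useful point in your write-up: as literally stated the lemma fails in the degenerate case where $D$ is based and one takes $a=0\in D$, a case the paper's proof silently excludes (its final step only produces a contradiction when $a\neq 0$); your formulation ``$D-a=D$ implies $a=0$'' is the correct sharp statement and is what the paper actually needs in its applications.
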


\begin{proof}
Obviously $D-a$ is based so if $D$ is not based, we are done. If $D$ is based, then it contains $0$ and $a$ and $D - a$ contains $0$ and $-a$. So $D$ and $D - a$ cannot be equal because a difference set cannot contain all three of $-a$, $0$, and $a$ at the same time.\qed
\end{proof}

A difference set is \emph{Desarguesian} if the projective plane that it gives rise to is Desarguesian. It is a long-standing open conjecture that every finite projective plane admitting a point-transitive group of automorphisms is Desarguesian. In particular, this would apply to finite projective planes admitting a cyclic Singer group but the problem is open even for these. A result in this direction is that if a finite projective plane admits two distinct cyclic Singer groups then it is Desarguesian \cite{ott75}.

All the projective planes that we will concretely be concerned with are Desarguesian by virtue of being small:

\begin{theorem}[{\cite[Theorem~3.2.15]{dembowski68}}]
\label{thm:small_desarguesian}
Every projective plane of order $\le 8$ is Desarguesian.
\end{theorem}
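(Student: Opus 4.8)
The plan is to reduce the assertion to a uniqueness statement and then dispatch the finitely many remaining orders one by one. First I would note that the only orders $n \le 8$ for which there is anything to prove are $n \in \{2,3,4,5,7,8\}$: there is no projective plane of order $6$ by the Bruck--Ryser theorem (since $6 \equiv 2 \mmod 4$ and $6$ is not a sum of two integer squares), and each of the remaining values is a prime power. For each such $n$ the Desarguesian plane $\PG(2,n) = \P^2\F_n$ exists, and any two Desarguesian planes of the same order are isomorphic; so the theorem is equivalent to the statement that for $n \in \{2,3,4,5,7,8\}$ there is, up to isomorphism, only one projective plane of order $n$.

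To prove this uniqueness I would use the classical dictionary between projective planes of order $n$ and complete sets of $n-1$ mutually orthogonal Latin squares of order $n$, under which isomorphism of planes corresponds to the appropriate equivalence of such sets. For $n = 2, 3, 4$ this is a short finite check: beginning with four points in general position one shows that every further incidence is forced, so the incidence structure is rigidly determined. For $n = 5$ and $n = 7$ the same kind of argument works but the case analysis is much longer; these are the classical results whose original references are collected in \cite{dembowski68}, and they amount to a careful but finite elimination of the possible completions of a partial plane. For $n = 8$ one is reduced to an exhaustive search over such completions (equivalently, over complete sets of seven mutually orthogonal Latin squares of order $8$); this is the Hall--Swift--Walker computation, one of the earliest computer-assisted theorems in combinatorics. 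For the purposes of the present article only the orders $\le 5$ are needed, so the delicate order-$8$ case plays no role here beyond completeness of the cited statement.

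The genuine obstacle is therefore the order-$8$ case: no short conceptual proof is known, and the search, although finite, is sizeable. The bound $n \le 8$ is moreover sharp for a uniqueness statement of this form, since there are exactly four pairwise non-isomorphic projective planes of order $9$ (the Desarguesian one, the Hughes plane, and two others); so one cannot hope to extend the theorem simply by pushing the same method further. For the orders $n = 2, 4, 8$ one may alternatively invoke Gleason's theorem, that a finite projective plane in which the diagonal points of every complete quadrangle are collinear is Desarguesian of order a power of $2$, but verifying that hypothesis for an abstract plane of order $4$ or $8$ is no easier than the enumeration sketched above.
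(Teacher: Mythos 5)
This statement is not proved in the paper at all: it is imported verbatim as \cite[Theorem~3.2.15]{dembowski68}, so there is no in-paper argument to compare against. Your outline is a correct and historically accurate account of how the classical proof goes: the reduction to uniqueness of the plane of each order in $\{2,3,4,5,7,8\}$ is the right formulation, the exclusion of order $6$ is sound (Bruck--Ryser applies since $6 \equiv 2 \mmod 4$ and $6$ is not a sum of two squares, though historically Tarry's 1900 resolution of the $36$ officers problem came first), the elementary rigidity argument for orders $2,3,4$ is standard, uniqueness for orders up to $7$ goes back to MacInnes and Hall, and order $8$ is indeed the Hall--Swift--Walker exhaustive search. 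Your remarks that the bound is sharp (four planes of order $9$) and that Gleason's theorem offers no shortcut are also correct. The only caveat is that what you have written is a survey of the proof rather than a proof: the substantial content for orders $5$, $7$, and especially $8$ is delegated to citations and to a computer search you do not reproduce. For this particular theorem that is unavoidable --- no short self-contained argument is known --- and it is precisely what the paper itself does by citing Dembowski, so the proposal is acceptable as a justification of the cited statement.
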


For the set of Desarguesian difference set we have the following:

\begin{theorem}[{\cite{berman53}}]
\label{thm:difference_sets_affine}
The group $\Aff_1(\Z/\delta\Z)$ acts transitively on the Desarguesian difference sets in $\Z/\delta\Z$.
\end{theorem}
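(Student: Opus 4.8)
The plan is to recast the statement in terms of Singer groups: it suffices to show that $\Aut(\P^2\F_q) = \PGaL_3(\F_q)$ acts transitively by conjugation on its cyclic Singer subgroups. Indeed, a Desarguesian difference set $D \subseteq \Z/\delta\Z$ carries the projective plane $\ProjPla_D$ with point set $\Z/\delta\Z$, lines the translates $a + D$, and incidence by membership; the group $\Z/\delta\Z$ acts on $\ProjPla_D$ regularly by translation, so fixing an isomorphism $\ProjPla_D \xrightarrow{\ \sim\ } \P^2\F_q$ identifies it with a cyclic Singer subgroup $S \le \Aut(\P^2\F_q)$. Conversely, $D$ is recovered from such an $S$ together with a choice of generator $s$, point $x$, and line $\ell$ as $D = \{i \in \Z/\delta\Z : s^i.x \inc \ell\}$, the map $i \mapsto s^i.x$ being an isomorphism $\ProjPla_D \to \P^2\F_q$. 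Changing $x$ or $\ell$ translates $D$, and changing $s$ to $s^m$ replaces $D$ by $m^{-1}D$, so the $\Aff_1(\Z/\delta\Z)$-orbit of $D$ does not depend on these choices; and if the Singer subgroups coming from two configurations are conjugate, then the resulting isomorphism between the corresponding planes $\ProjPla_D$ and $\ProjPla_{D'}$ is equivariant for the two translation actions up to a group automorphism $a \mapsto ma$ of $\Z/\delta\Z$, and such a map is necessarily affine, $a \mapsto ma + b$, which forces $D' = mD + b$. Since every Desarguesian difference set arises from some cyclic Singer subgroup, the reduction follows.

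To prove that all cyclic Singer subgroups of $\Aut(\P^2\F_q)$ are conjugate, I would first invoke the Ellers--Karzel classification recalled above: a cyclic Singer group is equivalent to the group of Lemma~\ref{lem:singer_group} with $n = 1$, which is $\F_q$-linear, and since $\PGL_3(\F_q) \trianglelefteq \PGaL_3(\F_q)$ it follows that every cyclic Singer subgroup $S$ lies in $\PGL_3(\F_q)$. Let $\widetilde S \le \GL_3(\F_q)$ be the full preimage of $S$, of order $\delta(q-1) = q^3 - 1$. Then $\widetilde S$ is generated by the scalar matrices together with a single preimage of a generator of $S$, hence is abelian; and $\widetilde S$ acts irreducibly on $V = \F_q^3$, because an invariant subspace of dimension $1$ or $2$ would be a point, resp.\ a line, of $\P^2\F_q$ fixed by $S$, which is impossible as $S$ acts regularly on points and (as recalled above) a Singer group acts regularly on lines as well. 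Consequently the $\F_q$-algebra $A \defeq \F_q[\widetilde S] \subseteq M_3(\F_q)$ is commutative and has a faithful simple module, hence is a field; irreducibility makes $V$ one-dimensional over $A$, so $A \cong \F_{q^3}$, and $\widetilde S = A^\times$ by comparing orders. Given two cyclic Singer subgroups $S_1, S_2$, the corresponding subalgebras $A_1 = \F_q[\widetilde S_1]$ and $A_2 = \F_q[\widetilde S_2]$ of $M_3(\F_q)$ are conjugate by some $g \in \GL_3(\F_q)$ — by the Skolem--Noether theorem, or concretely because each $A_i$ makes $V$ into a one-dimensional $\F_{q^3}$-space and the $\F_q$-linear bijection sending a basis of one to a basis of the other intertwines the two field structures. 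Then $g\widetilde S_1 g^{-1} = gA_1^\times g^{-1} = A_2^\times = \widetilde S_2$, and the image of $g$ in $\PGL_3(\F_q)$ conjugates $S_1$ to $S_2$; combined with the first paragraph this proves the theorem.

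I expect the most delicate step to be the translation in the first paragraph, where one must check that $\Aff_1(\Z/\delta\Z)$-equivalence of difference sets matches conjugacy of the attached Singer subgroups; the crux is that an incidence isomorphism $\ProjPla_D \to \ProjPla_{D'}$ which is equivariant for the two translation actions up to an automorphism of $\Z/\delta\Z$ must be affine. One can reduce the bookkeeping by fixing once and for all the standard difference set $\{i : \Tr_{\F_{q^3}/\F_q}(\theta^i) = 0\}$ attached to a primitive element $\theta$ of $\F_{q^3}$ and comparing every other Desarguesian difference set to it. Using the Ellers--Karzel classification as a black box is the second non-elementary ingredient; avoiding it would require ruling out directly that a cyclic Singer group has nontrivial image in $\Gal(\F_q/\F_p)$, which can only occur when $\gcd(\delta,\eta) > 1$ and needs its own argument.
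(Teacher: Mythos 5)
The paper offers no proof of this statement at all --- it is imported verbatim from Berman's 1953 paper via the citation --- so your argument is compared against the literature rather than against anything in the text. Your proof is correct. The dictionary in your first paragraph between $\Aff_1(\Z/\delta\Z)$-orbits of Desarguesian difference sets and conjugacy classes of cyclic Singer subgroups of $\PGaL_3(\F_q)$ is the right one, and you verify the one direction that actually needs checking: if $gSg^{-1}=S'$ with $gsg^{-1}=(s')^m$, then writing $g.x=(s')^b.x'$ and $g.\ell=(s')^c.\ell'$ gives $g(s^i.x)=(s')^{mi+b}.x'$ and hence $D'=mD+(b-c)$. The second paragraph is also sound: the preimage $\widetilde S\le\GL_3(\F_q)$ is abelian (central extension of a cyclic group), irreducible, of order $q^3-1$, so $\F_q[\widetilde S]$ is a field isomorphic to $\F_{q^3}$ with $\widetilde S$ its full unit group, and Skolem--Noether conjugates any two such subfields of $M_3(\F_q)$. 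Two remarks. First, your only indispensable use of Ellers--Karzel is to place a cyclic Singer group inside $\PGL_3(\F_q)$; if ``equivalent'' in the Ellers--Karzel theorem as quoted in Section~\ref{sec:diff_sets} already means conjugate in the collineation group, then the $n=1$ case of Lemma~\ref{lem:singer_group} by itself gives the conjugacy of all cyclic Singer subgroups, and your Skolem--Noether argument becomes an independent, more elementary proof of that conjugacy needing the classification only for linearity --- which, as you note, is automatic whenever $\gcd(\delta,\eta)=1$, hence for every $q<128$. Second, your irreducibility step (no invariant plane) silently invokes the fact that a Singer group is regular on lines as well as on points; that is exactly the Dembowski proposition quoted at the start of Section~\ref{sec:diff_sets} and should be cited at that point.
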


In what follows, we will talk about buildings of type $A_2$ rather than projective planes which is just a shift in notation: if $\ProjPla = (P,L,I)$ is a projective plane, the corresponding building $\Delta$ (of type $A_2$) is the graph with vertex set $P \sqcup L$ and edge set $\{\{x,y\} \mid x \inc y\}$. Conversely, every building of type $A_2$ gives rise to a projective plane.


\section{Singer lattices}
\label{sec:singer_lattices}

A $2$-dimensional simplicial complex is a building of type $\tilde{A}_2$ if it simply connected and the link of every vertex is a building of type $A_2$ (this uses \cite[Theorem~1]{tits81}, see also \cite[Theorem~4.9]{ronan}). The triangles are called \emph{chambers}, the edges are called \emph{panels}. The vertices can be colored (but not canonically) by elements of $\Z/3\Z$ and then the \emph{type} of a simplex is the set of colors of its vertices. Let $X$ be a locally finite building of type $\tilde{A}_2$ and let $\Gamma$ be a group acting on $X$. We say that $X$ is a \emph{Singer lattice} if the action preserves types and is regular (transitive and free) on the three sets of edges of a given type. Note that this implies in particular that the action is transitive on vertices of each type.

\begin{observation}
Let $\Gamma$ be a Singer lattice on $X$.
\begin{enumerate}
\item For every vertex $x \in X$ the stabilizer $\Gamma_x$ acts as a Singer group on the building $\lk x$.
\item If $\{x,y,z\}$ is a chamber in $X$ then $\Gamma$ is generated by $\Gamma_x$, $\Gamma_y$, and $\Gamma_z$.
\end{enumerate}
\end{observation}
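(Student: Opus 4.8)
The plan is to prove~(1) directly from the regularity hypotheses and then deduce~(2) from~(1) using that $X$, being simply connected, is connected.

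For~(1), fix a vertex $x$ of color $i$ and let $j,k$ be the remaining two colors. The vertices of $\lk x$ are precisely the edges of $X$ through $x$, and since $X$ is $3$-colored, sorting these edges by their type (necessarily $\{i,j\}$ or $\{i,k\}$) gives a proper $2$-coloring of the connected bipartite graph $\lk x$, hence is its unique bipartition into ``points'' and ``lines''. As $\Gamma$ preserves types, $\Gamma_x$ preserves this bipartition, so it acts on $\lk x$ by type-preserving automorphisms. I then check that $\Gamma_x$ acts regularly on the set of edges of type $\{i,j\}$ through $x$: given two such edges $e_1,e_2$, any $\gamma\in\Gamma$ with $\gamma e_1=e_2$ (which exists by transitivity of $\Gamma$ on type-$\{i,j\}$ edges) must fix $x$, the unique color-$i$ vertex of each $e_\ell$, so $\gamma\in\Gamma_x$; and any $\gamma\in\Gamma_x$ fixing an edge through $x$ is trivial by freeness of $\Gamma$ on edges. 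Thus $\Gamma_x$ is a Singer group on $\lk x$, regularity on the ``lines'' being automatic (a Singer group is regular on lines as well) or obtainable by repeating the argument with type $\{i,k\}$.

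For~(2), set $\Gamma' \defeq \gen{\Gamma_x,\Gamma_y,\Gamma_z}\le\Gamma$, with $x,y,z$ of colors $0,1,2$. First I show $\Gamma'$ is transitive on the vertices of each color by induction on the distance $d$ from a vertex $v$ to the set $\{x,y,z\}$ in the $1$-skeleton. The case $d=0$ is trivial; if $d\ge 1$, choose a neighbour $w$ of $v$ at distance $d-1$ from $\{x,y,z\}$, and by induction there is $\gamma\in\Gamma'$ with $\gamma w = u$ for some $u\in\{x,y,z\}$. Then $\gamma v$ is a neighbour of $u$, so $\{u,\gamma v\}$ shares its type with exactly one of the two edges of the chamber $\{x,y,z\}$ that contain $u$; by~(1), $\Gamma_u\le\Gamma'$ is transitive on edges of that type through $u$, so some $\delta\in\Gamma_u$ carries $\{u,\gamma v\}$ onto it, whence $\delta\gamma v$ is its other endpoint, a member of $\{x,y,z\}$. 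Hence $\delta\gamma\in\Gamma'$ moves $v$ into $\{x,y,z\}$, completing the induction. Finally, for arbitrary $\gamma\in\Gamma$, transitivity of $\Gamma'$ on color-$0$ vertices gives $\gamma'\in\Gamma'$ with $\gamma'\gamma x = x$, so $\gamma'\gamma\in\Gamma_x\le\Gamma'$ and therefore $\gamma\in\Gamma'$; thus $\Gamma=\Gamma'$.

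Everything here is elementary; the one spot that wants a moment's care — the nearest thing to an obstacle — is the color bookkeeping in~(1), namely matching the two possible types $\{i,j\},\{i,k\}$ of the edges through $x$ with the two sides of the bipartition of $\lk x$, so that ``$\Gamma$ regular on each type of edge'' cleanly translates into ``$\Gamma_x$ regular on each side of $\lk x$''. Once that is in place, part~(1) is immediate and part~(2) is the routine connectedness-plus-induction argument above.
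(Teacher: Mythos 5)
Your proof is correct. The paper states this as an \emph{Observation} and gives no proof at all, and your argument is exactly the expected one: type-preservation plus regularity on each type of edge restricts to regularity of $\Gamma_x$ on each side of the bipartition of $\lk x$, and then connectedness of $X$ together with (1) propagates $\gen{\Gamma_x,\Gamma_y,\Gamma_z}$-transitivity outward from the chamber, forcing this subgroup to be all of $\Gamma$.
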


We say that $\Gamma$ is a \emph{Singer cyclic lattice} if it is a Singer lattice and in addition the vertex stabilizer of each (type of) vertex is cyclic.

The goal of this section is to understand in detail the relationship between Singer cyclic lattices and difference matrices. Throughout the section we fix a prime power $q$ and, as always, put $\delta = q^2+q+1$. A \emph{difference matrix} is a $(q+1) \times 3$-matrix with entries in $\Z/\delta\Z$ such that each column (as a set) forms a difference set. A difference matrix is \emph{based} if some row is $(0,0,0)$. Difference matrices naturally arise in the following way:

\begin{lemma}
\label{lem:canonical_difference_matrix}
Let $\Gamma$ be a Singer cyclic lattice on $X$, let $x_0$, $x_1$, $x_2$ be the vertices of a chamber, and let $\sigma_j$ generate $\Gamma_{x_j}$ for $j \in \{0,1,2\}$. Associated to these data is a based difference matrix $E$ such that the relations
\begin{equation}
\label{eq:difference_cycle}
\sigma_0^{E_{i,0}} \sigma_1^{E_{i,1}} \sigma_2^{E_{i,2}} = 1 \quad 1 \le i \le q+1
\end{equation}
hold in $\Gamma$. The matrix is uniquely determined up to row permutations. The relations
\begin{equation}
\label{eq:order_relation}
\sigma_j^\delta = 1 \quad 0 \le j \le 2
\end{equation}
also hold.
\end{lemma}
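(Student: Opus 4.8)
The plan is to read $E$ off the $q+1$ chambers that contain one fixed panel of the fundamental chamber, attaching a row of $E$ to each such chamber. I first handle the order relations \eqref{eq:order_relation}: by the Observation, each $\Gamma_{x_j}$ acts on $\lk x_j$ — a building of type $A_2$, i.e.\ the incidence graph of a projective plane, necessarily of order $q$ since every panel lies in $q+1$ chambers — as a Singer group, hence regularly on the $\delta$ points; so $\abs{\Gamma_{x_j}} = \delta$, and since $\Gamma_{x_j} = \gen{\sigma_j}$ is cyclic we get $\sigma_j^\delta = 1$ with $\sigma_j$ of order exactly $\delta$ (this is what lets exponents be read in $\Z/\delta\Z$). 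I will also use repeatedly that $\Gamma$ acts freely on the panels of each type, so that for $i \neq j$ the group $\Gamma_{x_i} \cap \Gamma_{x_j}$ — which is the stabilizer of the panel $\{x_i,x_j\}$, as types are preserved — is trivial.

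Fix a coloring with $x_j$ of color $j$ and put $P = \{x_0,x_1\}$, the $\{0,1\}$-panel of the fundamental chamber $C_0 = \{x_0,x_1,x_2\}$. Then $P$ lies in exactly $q+1$ chambers $C^{(1)} = C_0, \dots, C^{(q+1)}$, and regularity of $\Gamma$ on $\{0,1\}$-panels makes these a complete, irredundant set of chamber-orbit representatives (so $E$ will have $q+1$ rows). Write $C^{(i)} = \{x_0,x_1,w_i\}$ with $w_1 = x_2$. For each $i$ there is a unique group element taking the panel $\{x_0,x_2\}$ to $\{x_0,w_i\}$; both have type $\{0,2\}$, so it fixes $x_0$ and equals $\sigma_0^{a_i}$ with $\sigma_0^{a_i}x_2 = w_i$; likewise a unique $\sigma_1^{b_i}$ takes $\{x_1,x_2\}$ to $\{x_1,w_i\}$, with $\sigma_1^{b_i}x_2 = w_i$. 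Then $\sigma_1^{-b_i}\sigma_0^{a_i}$ fixes $x_2$, hence equals $\sigma_2^{c_i}$ for a unique $c_i$, and rearranging gives $\sigma_0^{-a_i}\sigma_1^{b_i}\sigma_2^{c_i} = 1$. Declaring the $i$-th row of $E$ to be $(-a_i,b_i,c_i)$, the relations \eqref{eq:difference_cycle} hold by construction, and triviality of $\Gamma_{x_0}\cap\Gamma_{x_2}$ and $\Gamma_{x_1}\cap\Gamma_{x_2}$ forces $a_1=b_1=c_1=0$ for $C^{(1)}=C_0$, so $E$ is based.

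What remains — and is the crux — is that the columns of $E$ are difference sets, which comes from recognizing each column as an incidence set in one link. In $\lk x_0$ the panel $\{x_0,x_1\}$ is a point on the line $\{x_0,x_2\}$, and $\sigma_0^{a_i}$ sends $\{x_0,x_2\}$ onto the $q+1$ lines through $\{x_0,x_1\}$; so the $a_i$ form the set $\{k : \sigma_0^k.\{x_0,x_2\}\ni\{x_0,x_1\}\}$, which is the negative of the planar difference set of that incident point–line pair, and difference sets are closed under negation. The same argument in $\lk x_1$ handles the middle column. For the last one, observe that $\sigma_1^{-b_i}C^{(i)} = \{\sigma_1^{-b_i}x_0,x_1,x_2\}$ is a chamber, so $\sigma_1^{-b_i}x_0$ ranges over the $q+1$ type-$0$ vertices completing $\{x_1,x_2\}$ to a chamber; since $\sigma_2^{c_i}=\sigma_1^{-b_i}\sigma_0^{a_i}$ maps $x_0$ to $\sigma_1^{-b_i}x_0$, it carries the point $\{x_0,x_2\}$ of $\lk x_2$ onto exactly the points on the line $\{x_1,x_2\}$, so the $c_i$ form that pair's planar difference set outright. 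Finally, $E$ is canonical up to row permutation because its rows are indexed by the canonically determined set of chambers through $P$, each contributing a uniquely determined triple. I expect the bookkeeping of this last paragraph — matching each column to the right link with the correct point/line roles and sign, especially spotting that $\sigma_1^{-b_i}$ carries $C^{(i)}$ onto a chamber on $\{x_1,x_2\}$ — to be the main obstacle; everything else is a direct unwinding of the definitions.
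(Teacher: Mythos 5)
Your proof is correct and follows essentially the same route as the paper's: both read the $q+1$ rows off the $q+1$ chambers containing one fixed panel of the base chamber (the paper indexes them by the vertices in the link of $[x_0,x_2]$, you by the chambers through $\{x_0,x_1\}$) and use regularity on panels to pin down each exponent, with you being rather more explicit than the paper about why each column is a difference set. The only point worth adding is the one-line converse that \emph{every} relation $\sigma_0^{e_0}\sigma_1^{e_1}\sigma_2^{e_2}=1$ arises from one of your chambers --- since $\sigma_1^{e_1}.x_2=\sigma_0^{-e_0}.x_2$ is a type-$2$ vertex adjacent to both $x_0$ and $x_1$, hence some $w_i$, forcing $(e_0,e_1,e_2)=(-a_i,b_i,c_i)$ --- which is what makes ``uniquely determined up to row permutations'' mean that the rows are exactly the relation triples and not merely that your construction is canonical.
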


When $\Gamma$ acts as a Singer cyclic lattice on $X$ and $\sigma_0, \sigma_1, \sigma_2$ are the generators of the vertex stabilizers of a chamber, we will call the triple $(\sigma_0, \sigma_1, \sigma_2)$ a \emph{chamber triple}.

\begin{proof}
Let $q+1$ be the order of the building $X$. Then the link of an edge has $q+1$ elements and the link of a vertex has $v = q^2 + q + 1$ vertices of each type.

In order for a relation $\sigma_0^{e_0} \sigma_1^{e_1} \sigma_2^{e_2} = 1$ to hold, it is necessary that $x \defeq \sigma_0^{e_0}.x_1 = \sigma_0^{e_0} \sigma_1^{e_1}.x_1 = \sigma_2^{-e_2}.x_1$. Clearly $x$ is adjacent to both $x_0$ and $x_2$. Since $\Gamma$ is regular on panels of type $\{0,1\}$ as well as on panels of type $\{1,2\}$, such elements $\sigma_0^{e_0}$ and $\sigma_2^{-e_2}$ do in fact exist for any vertex $x$ in the link of $[x_0,x_2]$ and are uniquely determined by it. Finally $\sigma_1^{e_1} = \sigma_0^{-e_0} \sigma_2^{-e_2} \in \Gamma_1$ is uniquely determined by the other two elements. The zero row is the one corresponding to the vertex $x = x_1$.

The second statement is clear because $\Gamma_{x_j}$ is cyclic and acts regularly on the points of each type in the link of $x_j$.\qed
\end{proof}

\begin{remark}
The columns of difference matrices will be indexed by numbers $0, 1, 2$ throughout (as in Lemma~\ref{lem:canonical_difference_matrix}). These are really representatives for the type set $\Z/3\Z$ so the reader who prefers positive indices may just identify the indices $0$ and $3$.
\end{remark}

The following is Essert's classification result in the formulation for Singer cyclic lattices, see \cite[Theorems~5.6,~5.8]{essert13}:

\begin{theorem}[Essert's theorem]
\label{thm:essert}
If $\Gamma$ is a Singer cyclic lattice then the generators $\sigma_j$ together with the relations \eqref{eq:difference_cycle} and \eqref{eq:order_relation} form a presentation for $\Gamma$.

Conversely if $E$ is a based difference matrix then the group presented by the generators $\sigma_j$ subject to the relations \eqref{eq:difference_cycle} and \eqref{eq:order_relation} is a Singer cyclic lattice.
\end{theorem}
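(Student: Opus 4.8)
The plan is to route both directions through the theory of complexes of groups over the quotient $Y = X/\Gamma$, exploiting that every building of type $\tilde A_2$ is $\CAT(0)$ for the metric in which each chamber is a unit equilateral triangle. This is essentially Essert's argument \cite[Section~5]{essert13} specialized to cyclic vertex groups, and the reader will see that the hypothesis ``vertex stabilizers cyclic'' is exactly what collapses Essert's general local data into a single difference set per pair of panel types, i.e.\ into the matrix $E$.

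For the forward direction, fix a chamber triple $(\sigma_0,\sigma_1,\sigma_2)$ and first record the combinatorics of $Y$. Since $\Gamma$ is free on panels of each type it is free on chambers, so $Y$ has exactly one vertex $\bar x_j$ of each color, one edge of each type, and — the $q+1$ chambers on a given panel lying in distinct orbits because the panel stabilizer is trivial, and every chamber having a unique panel of each type — exactly $q+1$ chambers, each glued onto all three edges; the link of $\bar x_j$ in $Y$ is the graph with two vertices joined by $q+1$ edges. Hence the complex of groups $G(Y)$ has vertex groups $\Gamma_{\bar x_j} = \gen{\sigma_j} \cong C_\delta$ and trivial edge and face groups. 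Because $X$ is $\CAT(0)$, $G(Y)$ is developable with development $X$ and $\pi_1(G(Y)) = \Gamma$. It then remains to extract a presentation of $\pi_1(G(Y))$: the vertex groups contribute the generators $\sigma_j$ subject to \eqref{eq:order_relation}; a spanning tree of the $1$-skeleton of $Y$, say the path $\bar x_0 - \bar x_1 - \bar x_2$, trivializes two of the three edge generators; and the $q+1$ chambers contribute one relation each, the zero row of Lemma~\ref{lem:canonical_difference_matrix} serving to trivialize the remaining edge generator, so that the surviving relations are precisely \eqref{eq:difference_cycle}. (Lemma~\ref{lem:canonical_difference_matrix} already produces $E$ and verifies that \eqref{eq:difference_cycle} and \eqref{eq:order_relation} hold; the content here is that they form a complete set of relations.)

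For the converse, given a based difference matrix $E$ I would build $G(Y)$ abstractly on the same base $Y$, with vertex groups $C_\delta = \gen{\sigma_j}$, trivial edge and face groups, and local structure dictated by the columns of $E$: at $\bar x_j$ the two sides of the putative $A_2$-link are copies of $C_\delta$ acted on by translation, the $q+1$ flags through the distinguished point correspond to the $q+1$ rows, and incidence is governed by the entries of column $j$. The decisive point — and the only place the defining property of a difference set is used — is that this link is the incidence graph of a projective plane of order $q$ \emph{precisely because} column $j$ of $E$ is a difference set: this is the classical correspondence between difference sets and cyclic Singer planes recalled in Section~\ref{sec:diff_sets}, applied locally (two rows determine, through their difference, a unique incidence). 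Granting this, every vertex link of $G(Y)$ is an $A_2$ building, so the links are $\CAT(1)$ (combinatorial girth $6$ in the equilateral metric); hence $G(Y)$ is non-positively curved, therefore developable, and its development $X$ is a simply connected $2$-complex all of whose vertex links are incidence graphs of projective planes of order $q$ — that is, a building of type $\tilde A_2$ by the local characterization \cite{tits81} used in Section~\ref{sec:singer_lattices}. By the presentation computation of the previous paragraph, $\pi_1(G(Y))$ is the group presented by \eqref{eq:difference_cycle} and \eqref{eq:order_relation}, and it acts on $X$ type-preservingly, freely and transitively on panels of each type, with cyclic vertex stabilizers; i.e.\ it is a Singer cyclic lattice.

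The main obstacle is the dictionary in the converse direction between ``the $j$-th column of $E$ is a difference set'' and ``the reconstructed link is a generalized triangle'': this is the one genuinely combinatorial step, and it is the local incarnation of the difference-set/projective-plane correspondence of Section~\ref{sec:diff_sets}. Everything else is bookkeeping — matching the complex-of-groups presentation with \eqref{eq:difference_cycle} and \eqref{eq:order_relation} through the choice of spanning tree — together with standard, already available machinery: developability of non-positively curved complexes of groups, the $\CAT(0)$-ness of $\tilde A_2$ buildings, and Tits' local characterization of buildings. As indicated above, the whole statement is \cite[Theorems~5.6,~5.8]{essert13} once the cyclic hypothesis is used to identify the local data with $E$.
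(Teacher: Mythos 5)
Your proposal is correct, and it is essentially a reconstruction of the argument in the cited source: the paper itself gives no proof of this theorem but attributes it directly to Essert \cite[Theorems~5.6, 5.8]{essert13}, whose proof runs exactly along the lines you describe (the quotient complex of groups with cyclic vertex groups and trivial edge and face groups, the difference-set/projective-plane correspondence identifying the local developments as $A_2$ buildings, non-positive curvature and developability, and Tits's local characterization). Your bookkeeping of the quotient $Y$ (one vertex and one edge per type, $q+1$ chambers, the zero row killing the surplus edge generator so that exactly the relations \eqref{eq:difference_cycle} and \eqref{eq:order_relation} survive) is accurate, so there is nothing to add.
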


Another result that will be of fundamental importance throughout the paper is the following, see \cite[Theorem~1.1.3]{klelee97}, \cite[Theorem~III]{krawei14}:

\begin{theorem}[QI-rigidity for buildings]
\label{thm:qi_rigidity}
Let $X$ and $Y$ be thick, irreducible Euclidean buildings of dimension at least $2$.
\begin{enumerate}
\item Any quasi-isometry $X \to Y$ is at bounded distance from an isomorphism.
\item Two isomorphisms $X \to Y$ that are at a bounded distance are the same.
\end{enumerate}
\end{theorem}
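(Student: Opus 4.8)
The plan is to treat the two parts by completely different means: part~(2) is an elementary fixed-point argument, whereas part~(1) is where essentially all the work sits and needs the machinery of higher-rank rigidity.

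For part~(2), suppose $g,h\colon X\to Y$ are isomorphisms with $d(g(x),h(x))\le C$ for all $x\in X$, and set $\phi\defeq h^{-1}\circ g\in\Aut(X)$. Since building isomorphisms are isometries for the $\CAT(0)$ metric, $\phi$ has bounded displacement, $d(x,\phi(x))\le C$ for all $x$, and I want to conclude $\phi=\id$. First, bounded displacement forces $\phi$ to fix the visual boundary $\partial_\infty X$ pointwise, because any geodesic ray $\rho$ and its image $\phi\circ\rho$ stay within distance $C$ and are therefore asymptotic. Second, an automorphism of a Euclidean building is semisimple (vertex-to-vertex distances form a discrete set, so the displacement function attains its infimum); being of bounded displacement on an irreducible Euclidean building of dimension $\ge 2$, which has no Euclidean factor and is not a tree, $\phi$ cannot be hyperbolic and must be elliptic, so it fixes a point $p$. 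Finally, $\phi$ fixes $p$ and fixes $\partial_\infty X$ pointwise, hence it fixes the unique geodesic ray from $p$ to each $\xi\in\partial_\infty X$; as these rays cover $X$, we get $\phi=\id$.

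For part~(1), let $f\colon X\to Y$ be a quasi-isometry; here I would follow the Kleiner--Leeb/Kramer--Weiss scheme. The heart of the matter is higher-rank quasi-flat rigidity: a top-dimensional quasi-flat in a thick Euclidean building of rank $\ge 2$ lies in a uniformly bounded neighborhood of a finite union of Weyl sectors, and from this one extracts that $f$ coarsely carries the complete apartment system of $X$ onto that of $Y$ and induces an isomorphism $\partial_T f\colon\partial_T X\to\partial_T Y$ of the spherical buildings at infinity (for $\tilde{A}_2$ these are the projective planes at infinity). Along each apartment $f$ is affine up to bounded error, and because the wall pattern of an apartment is a discrete Coxeter arrangement with a fixed minimal spacing there is no room to rescale, so on each apartment $A$ the map $f$ is within bounded distance of an affine isomorphism onto the corresponding apartment of $Y$ matching the two $\tilde{A}_2$-tilings (possibly up to the diagram automorphism). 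Since any two simplices of $X$ lie in a common apartment and intersections of apartments are unions of sectors, these local isomorphisms are forced to agree on overlaps and patch to a single simplicial isomorphism $g\colon X\to Y$; by construction $d(f,g)<\infty$.

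I expect the main obstacle to be exactly the quasi-flat rigidity together with the patching bookkeeping. The case $\dim=2$ is the borderline one: the building is not Gromov-hyperbolic, so the classical Mostow--Prasad arguments do not apply and one is pushed into the asymptotic-cone analysis underlying the cited theorems --- the asymptotic cone of a Euclidean building is again a Euclidean building, and bi-Lipschitz homeomorphisms of Euclidean buildings preserve apartments and the chamber structure, which is what ultimately upgrades the boundary map to an honest isomorphism of buildings. Once that structure theory is granted, extracting the combinatorial isomorphism and checking that it is bounded distance from $f$ is comparatively routine.
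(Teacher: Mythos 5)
The paper does not prove this statement at all: it is imported verbatim from the literature, with the citation \cite[Theorem~1.1.3]{klelee97}, \cite[Theorem~III]{krawei14}, so there is no in-paper argument to compare yours against. Your part~(2) is a genuine self-contained proof and matches the standard one; the only step you should firm up is ``bounded displacement $\Rightarrow$ not hyperbolic'': fixing $\partial_\infty X$ pointwise does not by itself exclude hyperbolic elements (Euclidean translations fix the boundary), so you need the convexity of the displacement function together with geodesic completeness to conclude that a bounded-displacement hyperbolic isometry is a Clifford translation, hence splits off an $\R$-factor, contradicting thickness and irreducibility in rank $\ge 2$ --- or, alternatively, the more combinatorial route of observing that $\phi$ preserves every apartment (apartments are determined by their boundary spheres), acts on each by a translation, and that thickness forces all these translation vectors to vanish. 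Your part~(1) is an accurate outline of the Kleiner--Leeb/Kramer--Weiss strategy (quasi-flat rigidity, the induced isomorphism of Tits boundaries, rigidity of the discrete wall pattern ruling out rescaling, and reassembly), but it explicitly grants the quasi-flat theorem and the extension of the boundary isomorphism, so it is a guided tour of the cited proofs rather than an independent argument; in particular the patching step is more delicate than stated (intersections of apartments need not be unions of sectors, and the apartment-wise affine maps are a priori only determined up to translation), which is precisely why the published proofs route everything through the structure at infinity. Since the paper itself treats this as a black box, your write-up is, if anything, more informative than the source, and contains no error that would affect its use elsewhere in the paper.
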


We call a triple $(\sigma_0, \sigma_1, \sigma_2)$ in an abstract group $\Gamma$ (without action) a \emph{presenting triple} if there is a difference matrix $E$ such that the generators $\sigma_0, \sigma_1, \sigma_2$ together with the relations \eqref{eq:difference_cycle} and \eqref{eq:order_relation} present $\Gamma$. Thus Essert's theorem can be phrased as saying that every chamber triple for $\Gamma \act X$ is a presenting triple in $\Gamma$ and that for every presenting triple in $\Gamma$ there is an action $\Gamma \act X$ for which it is a chamber triple. The following lemma allows us to completely drop the distinction.

\begin{lemma}
\label{lem:triples}
If $\Gamma \act X$ is a presenting triple in $\Gamma$ then it is a chamber triple for the action.
\end{lemma}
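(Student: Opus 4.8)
The plan is to show that the building on which $\Gamma$ acts is essentially unique, so that the building Essert attaches to the presenting triple coincides $\Gamma$-equivariantly with $X$. Write $(\sigma_0,\sigma_1,\sigma_2)$ for the presenting triple and let $E$ be a difference matrix witnessing it. By the converse part of Essert's theorem (Theorem~\ref{thm:essert}), reformulated as in the paragraph preceding the lemma, there is an action $\Gamma \act Y$ as a Singer cyclic lattice for which $(\sigma_0,\sigma_1,\sigma_2)$ is a chamber triple, say with chamber $\{y_0,y_1,y_2\}$ and $\Gamma_{y_j} = \gen{\sigma_j}$. It then suffices to produce a $\Gamma$-equivariant isomorphism $\phi\colon X \to Y$: the set $\{\phi^{-1}(y_0),\phi^{-1}(y_1),\phi^{-1}(y_2)\}$ is a chamber of $X$ because $\phi$ is simplicial, and equivariance gives $\Gamma_{\phi^{-1}(y_j)} = \Gamma_{y_j} = \gen{\sigma_j}$, which is exactly the statement that $(\sigma_0,\sigma_1,\sigma_2)$ is a chamber triple for $\Gamma \act X$.

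To build $\phi$ I would use QI-rigidity. Both $X$ and $Y$ are locally finite, thick (order $q+1 \ge 3$), irreducible, $2$-dimensional Euclidean buildings, and $\Gamma$ acts cocompactly and properly discontinuously on each, since edge and chamber stabilizers are trivial and vertex stabilizers are cyclic of order $\delta$. Fixing base vertices in $X$ and $Y$ and applying the \v{S}varc--Milnor lemma to the two orbit maps yields a quasi-isometry $f\colon X \to Y$ that is \emph{coarsely} $\Gamma$-equivariant, i.e.\ $d\bigl(f(\gamma z),\gamma f(z)\bigr)$ is bounded uniformly in $\gamma \in \Gamma$ and $z \in X$. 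By Theorem~\ref{thm:qi_rigidity}(1), $f$ is at bounded distance from an isomorphism $\phi\colon X \to Y$.

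The one substantive point is upgrading $\phi$ to an honest $\Gamma$-equivariant map, and this is precisely what Theorem~\ref{thm:qi_rigidity}(2) supplies: for each $\gamma \in \Gamma$ the map $z \mapsto \gamma^{-1}\phi(\gamma z)$ is again an isomorphism $X \to Y$, and combining the bound between $\phi$ and $f$, the coarse equivariance of $f$, and the fact that $\gamma$ acts isometrically shows that it lies at bounded distance from $\phi$; hence the two isomorphisms are equal, i.e.\ $\phi$ intertwines the $\Gamma$-actions. With $\phi$ in hand the conclusion follows as in the first paragraph. I expect the only real work to be the bookkeeping in this last step, tracking the constants carefully enough to invoke Theorem~\ref{thm:qi_rigidity}(2); by contrast no geometric input beyond Essert's theorem and QI-rigidity should be needed, and the non-canonicity of the type functions is irrelevant here since the claim concerns only the stabilizers of the vertices of a single chamber.
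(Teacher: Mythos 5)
Your proposal is correct and follows essentially the same route as the paper: invoke the converse part of Essert's theorem to get an action $\Gamma \act Y$ with the given triple as a chamber triple, use cocompactness to get a (coarsely equivariant) quasi-isometry $X \to Y$, apply Theorem~\ref{thm:qi_rigidity}(1) to replace it by an isomorphism, and then use the uniqueness statement (2) to upgrade that isomorphism to an honest $\Gamma$-equivariant one, from which the chamber-triple property transfers. The only cosmetic difference is that the paper verifies equivariance just on the generators $\sigma_i$ rather than on all of $\Gamma$.
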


\begin{proof}
Let $(\sigma_i)_{0 \le i \le 2}$ be the presenting triple in question. The second part of Essert's theorem tells us that there is an action $\Gamma \act Y$ for which $(\sigma_i)_{0 \le i \le 2}$ is a chamber triple. Now $\Gamma$ acting cocompactly on $X$ as well as on $Y$ we obtain quasi-isometries
\[
X \leftarrow \Gamma \rightarrow Y\text{.}
\]
By Theorem~\ref{thm:qi_rigidity} there exists an isomorphism $\alpha \colon X \to Y$ at bounded distance from this quasi-isometry (in particular $X$ and $Y$ are isomorphic). Furthermore looking at the diagram
\begin{diagram}
X & \lTo & \Gamma & \rTo & Y\\
\dTo^{\sigma_i} && \dTo^{\sigma_i} && \dTo^{\sigma_i}\\
X & \lTo & \Gamma & \rTo & Y
\end{diagram}
we see that $\sigma_i \circ \alpha$ and $\alpha \circ \sigma_i$ are two isomorphisms at bounded distance from each other. By the uniqueness statement of Theorem~\ref{thm:qi_rigidity} this means they are equal. In other words the actions of $\sigma_i$ on $X$ and on $Y$ are conjugate to each other via $\alpha$, irrespective of $i$. Thus if $(\sigma_i)_{0 \le i \le 2}$ is a chamber triple for $\Gamma \act Y$ then it is one for $\Gamma \act X$.\qed
\end{proof}

As a corollary we obtain the following rigidity statement.

\begin{proposition}
\label{prop:recover_building}
Let $\Gamma$ be a Singer cyclic lattice. The building $X$ that $\Gamma$ acts on can be recovered (up to $\Gamma$-equivariant isomorphism) from $\Gamma$. In particular, $\Aut(\Gamma)$ is naturally a subgroup of $\Aut(X)$.
\end{proposition}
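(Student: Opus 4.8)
The plan is to assemble Lemma~\ref{lem:triples}, Essert's theorem, and QI-rigidity into the two assertions. Since $\Gamma$ is a Singer cyclic lattice it carries \emph{some} action $\Gamma \act X$, and by the Observation together with Essert's theorem that action possesses a chamber triple $(\sigma_0,\sigma_1,\sigma_2)$, which automatically is a presenting triple: reading the relations \eqref{eq:difference_cycle} off from a chamber triple produces a based difference matrix $E$, and Essert's theorem says that the $\sigma_i$ subject to \eqref{eq:difference_cycle} and \eqref{eq:order_relation} present $\Gamma$. Feeding $E$ into the converse part of Essert's theorem returns a building $X_E$ carrying a Singer cyclic action of $\Gamma$ for which $(\sigma_i)$ is again a chamber triple; crucially this datum is manufactured from $\Gamma$ and the chosen presenting triple alone.

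First I would show that $X$ and $X_E$ are $\Gamma$-equivariantly isomorphic. This is verbatim the argument in the proof of Lemma~\ref{lem:triples}: both actions being cocompact, orbit maps yield a coarsely $\Gamma$-equivariant quasi-isometry $X \to X_E$; by Theorem~\ref{thm:qi_rigidity}(1) there is an isomorphism $\alpha$ at bounded distance from it; and for every $\gamma \in \Gamma$ the isomorphisms $\alpha \circ \gamma$ and $\gamma \circ \alpha$ are at bounded distance, hence coincide by Theorem~\ref{thm:qi_rigidity}(2), so $\alpha$ is $\Gamma$-equivariant. To eliminate the dependence on the presenting triple, suppose $(\sigma_i')$ is another presenting triple of $\Gamma$, giving $X_{E'}$. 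By Lemma~\ref{lem:triples} the triple $(\sigma_i')$ is also a chamber triple for $\Gamma \act X_E$, so the previous paragraph (applied with $X_E$ in the role of $X$) furnishes a $\Gamma$-equivariant isomorphism $X_E \cong X_{E'}$. Hence the $\Gamma$-equivariant isomorphism type of $X_E$ does not depend on the presenting triple and, by the first sentence of this paragraph, agrees with that of the original $X$; this is the claim that $X$ is recovered from $\Gamma$.

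For the ``in particular'', fix $\phi \in \Aut(\Gamma)$ and let ${}^{\phi}X$ denote $X$ with the action in which $\gamma$ acts as $\phi(\gamma)$ did. Since precomposing with an automorphism preserves type-preservation, regularity on each type of panel, and cyclicity of vertex stabilizers, ${}^{\phi}X$ is again a Singer cyclic lattice, so by the first part there is a $\Gamma$-equivariant isomorphism $\alpha_\phi \colon X \to {}^{\phi}X$, i.e.\ an element $\alpha_\phi \in \Aut(X)$ with $\alpha_\phi \circ \gamma = \phi(\gamma) \circ \alpha_\phi$ for all $\gamma \in \Gamma$. That $\phi \mapsto \alpha_\phi$ is well defined follows once the centralizer of $\Gamma$ in $\Aut(X)$ is shown to be trivial: if $c$ centralizes $\Gamma$, then for each vertex $x$ the stabilizer $\Gamma_x$ — cyclic of order $\delta$ and acting as a Singer group on $\lk x$ — fixes $c(x)$; a Singer cycle has odd order and acts freely on the vertices of the incidence graph $\lk x$, so it fixes no point of the space of directions at $x$, whence the $\CAT(0)$-convex fixed set of $\Gamma_x$ is $\{x\}$ and $c(x)=x$. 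Thus $c = \id$, $\alpha_\phi$ is unique, $\phi \mapsto \alpha_\phi$ is a homomorphism by uniqueness, and it is injective because $\alpha_\phi = \id$ forces $\gamma = \phi(\gamma)$ for all $\gamma$.

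The one point that is more than bookkeeping is the triviality of the centralizer of $\Gamma$ in $\Aut(X)$, and even that reduces to the elementary geometric fact that a Singer vertex stabilizer has its vertex as its only fixed point in the building; everything else is an application of Lemma~\ref{lem:triples} and QI-rigidity exactly as already carried out in the proof of that lemma.
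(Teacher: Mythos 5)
Your argument is correct, but it is not the route the paper takes. You recover $X$ \emph{externally}: choose a presenting triple, rebuild a building $X_E$ from the resulting difference matrix via the converse half of Essert's theorem (Theorem~\ref{thm:essert}), and use the QI-rigidity argument of Lemma~\ref{lem:triples} to show that the $\Gamma$-equivariant isomorphism type of $X_E$ is independent of the chosen triple and agrees with the original $X$; the embedding $\Aut(\Gamma)\hookrightarrow\Aut(X)$ then comes from twisting the action and from your (correct) observation that the centralizer of $\Gamma$ in $\Aut(X)$ is trivial because $\mathrm{Fix}(\Gamma_x)=\{x\}$ for every vertex $x$. The paper instead recovers $X$ \emph{internally}: by the Bruhat--Tits fixed point theorem and freeness on panels, the maximal finite subgroups of $\Gamma$ are exactly the vertex stabilizers, the three conjugacy classes of these correspond to the three vertex types, and a triple of such subgroups $\gen{\sigma_0},\gen{\sigma_1},\gen{\sigma_2}$ spans a chamber precisely when $(\sigma_0,\sigma_1,\sigma_2)$ is a presenting triple --- the last step being exactly Lemma~\ref{lem:triples}. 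So both proofs hinge on Lemma~\ref{lem:triples}, but the paper's version produces a concrete model of $X$ inside the subgroup structure of $\Gamma$, which makes the naturality of $\Aut(\Gamma)\le\Aut(X)$ visible without any auxiliary uniqueness argument, whereas your version is softer, leans a second time on Theorem~\ref{thm:qi_rigidity}, and requires the extra centralizer-triviality step (itself a clean geometric fact: a Singer cycle fixes no direction in $\lk x$, so the $\CAT(0)$-convex fixed set of $\Gamma_x$ is a single vertex) to get a well-defined injective homomorphism $\phi\mapsto\alpha_\phi$. Both are complete proofs; the paper's has the added benefit of exhibiting the vertices of $X$ as group-theoretic data, which is used implicitly elsewhere, while yours generalizes more readily to situations where an intrinsic description of vertex stabilizers is unavailable.
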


\begin{proof}
Every finite subgroup of $\Gamma$ needs to fix a point of $X$ by the Bruhat--Tits fixed point theorem \cite[Lemme 3.2.3]{brutit72} (see also \cite[Corollary~II.2.8]{brihae}). But the action preserves types and is free on panels, so the only fixed point sets of non-trivial subgroups of $\Gamma$ are vertices. Hence every maximal finite subgroup of $\Gamma$ is a vertex stabilizer.

Since the action of $\Gamma$ preserves types and is transitive on vertices of each type, there are three conjugacy classes of maximal finite subgroups, one for each vertex type.

It remains to recover edges or equivalently (since $X$ is flag and every edge is contained in a chamber) the chambers. By Lemma~\ref{lem:triples} the vertices fixed by the maximal finite subgroups $\gen{\sigma_0}$, $\gen{\sigma_1}$, $\gen{\sigma_2}$ span a chamber if and only if $(\sigma_i)_{0 \le i \le 2}$ is a presenting triple.\qed
\end{proof}

\begin{remark}
It would be nice to find a more elementary way to recover the edges. One might expect that $\gen{\sigma_0}$ and $\gen{\sigma_1}$ fix adjacent vertices if any only if there is a vertex such that $\gen{\sigma_0}.v \cap \gen{\sigma_1}.v$ has at least $q + 1$ elements (the only ``only if'' part is clear by construction). However we have not managed to prove this.
\end{remark}

Whenever a Singer cyclic lattice $\Gamma$ is equipped with a presenting triple $(\sigma_0, \sigma_1, \sigma_2)$ (for example because it is given by a difference matrix), we equip the associated building $X$ with a type function such that the fixed point set of $\sigma_i$ has type $i$.

Before we move on, we record two basic facts.

\begin{observation}
Let $E$ be a based difference matrix and let $\Gamma$ be the associated Singer cyclic lattice. Then $H_1(\Gamma) = (\Z/\delta\Z)^3/\im E^T$.
\end{observation}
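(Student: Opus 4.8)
The plan is to compute the abelianization $\Gamma^{\mathrm{ab}} = H_1(\Gamma)$ directly from the presentation supplied by Essert's theorem. By Theorem~\ref{thm:essert}, $\Gamma$ is presented by the generators $\sigma_0, \sigma_1, \sigma_2$ subject to the relations \eqref{eq:difference_cycle} and \eqref{eq:order_relation}. Abelianizing a group presentation amounts to passing to the free abelian group on the generators modulo the subgroup generated by the (additively written) relators, so $H_1(\Gamma) = \Z^3/N$, where $\Z^3$ has basis $e_0, e_1, e_2$ corresponding to $\sigma_0, \sigma_1, \sigma_2$, and $N$ is the subgroup generated by the images of the relators.

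Next I would read off those images. The relator $\sigma_j^\delta$ contributes $\delta e_j$, so the three relations \eqref{eq:order_relation} together contribute the subgroup $\delta\Z^3$. The relator $\sigma_0^{E_{i,0}}\sigma_1^{E_{i,1}}\sigma_2^{E_{i,2}}$ contributes the vector $E_{i,0}e_0 + E_{i,1}e_1 + E_{i,2}e_2$, that is, the $i$-th row of $E$ viewed inside $\Z^3$ (with the entries lifted arbitrarily to $\Z$; the choice of lift is irrelevant once we also quotient by $\delta\Z^3$). Hence $N = \delta\Z^3 + R$, where $R$ is the subgroup of $\Z^3$ generated by the rows of $E$.

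Finally I would assemble the quotient in two steps: $\Z^3/\delta\Z^3 = (\Z/\delta\Z)^3$, and then further quotienting by the image of $R$ yields $(\Z/\delta\Z)^3$ modulo the subgroup generated by the rows of $E$ reduced mod $\delta$. That subgroup is precisely the column span of the $3\times(q+1)$ matrix $E^T$, i.e.\ $\im E^T$ regarded as a homomorphism $(\Z/\delta\Z)^{q+1}\to(\Z/\delta\Z)^3$. Therefore $H_1(\Gamma) = (\Z/\delta\Z)^3/\im E^T$, as claimed. I do not expect any genuine obstacle; the only point deserving a word of care is that the entries $E_{i,j}$ lie in $\Z/\delta\Z$ rather than in $\Z$, which is harmless because we quotient by $\delta\Z^3$ in any case.
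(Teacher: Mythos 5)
Your proposal is correct and is exactly the paper's argument: the paper's proof is the one-line remark that the statement follows by abelianizing the presentation \eqref{eq:difference_cycle}, \eqref{eq:order_relation}, and your write-up simply spells out that computation (the relations \eqref{eq:order_relation} give $\delta\Z^3$ and the relations \eqref{eq:difference_cycle} give the rows of $E$, whose span mod $\delta$ is $\im E^T$). Nothing further is needed.
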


\begin{proof}
This follows by abelianizing the presentation \eqref{eq:difference_cycle}, \eqref{eq:order_relation}.\qed
\end{proof}

A similar argument shows that a Singer lattice has trivial center, but more is true:

\begin{lemma}[{\cite[Corollary~2.7]{capmon09b}}]
A Singer cyclic lattice has no non-trivial normal amenable subgroup.
\end{lemma}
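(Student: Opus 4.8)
The plan is to show that every amenable normal subgroup $N \trianglelefteq \Gamma$ is trivial, using the action $\Gamma \act X$: recall that $X$ is a thick (order $q+1 \ge 3$), irreducible, $2$-dimensional Euclidean building, hence a proper $\CAT(0)$ space without Euclidean factor, and that $\Gamma$ acts on it cocompactly and type-preservingly, with trivial stabilizers on cells of positive dimension and finite cyclic vertex stabilizers. I would split into the cases ``$N$ finite'' and ``$N$ infinite''.

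Suppose first that $N$ is finite and non-trivial. By the Bruhat--Tits fixed point theorem \cite[Lemme~3.2.3]{brutit72} the set $\mathrm{Fix}(N)$ is non-empty; it is convex (an intersection of fixed-point sets of individual isometries) and, since the action is type-preserving, it is a subcomplex. As $N \ne 1$ and $\Gamma$ acts freely on panels, $\mathrm{Fix}(N)$ contains no panel, hence no chamber, so it is a single vertex $v$. But then normality yields a contradiction: for all $\gamma \in \Gamma$, $n \in N$ we get $n(\gamma v) = \gamma\bigl((\gamma^{-1} n \gamma) v\bigr) = \gamma v$, so $N$ fixes the whole orbit $\Gamma v$, whereas $\mathrm{Fix}(N) = \{v\}$ and $\Gamma$ is transitive on the infinite set of vertices of type $\typ(v)$. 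So there is no non-trivial finite normal subgroup.

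Now let $N$ be infinite. It is a countable amenable group of isometries of the proper $\CAT(0)$ space $X$, so by the Adams--Ballmann theorem it fixes a point of $\partial X$ or stabilizes a flat $F \subseteq X$; since point stabilizers in $\Gamma$ are finite, $F$ is not a point, i.e.\ $\dim F \in \{1,2\}$. Because $\Gamma$ normalizes $N$, the geometric datum furnished by this alternative can be made $\Gamma$-invariant: the closed set $(\partial X)^N \subseteq \partial X$ in the first case, the family of minimal-dimensional $N$-invariant flats in the second. Unravelling, $\Gamma$ either fixes a point at infinity, or preserves a flat of dimension $1$ or $2$, or preserves a proper non-empty closed subset of the spherical building $\partial X$; each possibility has to be excluded.

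The core of the matter is precisely this exclusion, which is the content of \cite[Corollary~2.7]{capmon09b}: a group acting cocompactly on a proper, geodesically complete $\CAT(0)$ space without Euclidean factor has trivial amenable radical, and $X$ satisfies these hypotheses. I would simply invoke it. The two model cases can also be checked by hand and indicate why it is true. A $\Gamma$-invariant flat $F$ would be coarsely dense in $X$, since by cocompactness any $\Gamma$-orbit through a point of $F$ is coarsely dense; this is impossible because $F$ has at most quadratic growth whereas the thick $\tilde{A}_2$ building $X$ has exponential growth. A $\Gamma$-fixed point $\xi \in \partial X$ is ruled out because $H_1(\Gamma)$, being a quotient of $(\Z/\delta\Z)^3$, is finite, so $\Hom(\Gamma,\R)=0$; hence the Busemann cocycle of $\xi$ vanishes, $\Gamma$ preserves a horosphere based at $\xi$, and by cocompactness that horosphere would be coarsely dense in $X$ --- absurd, since a point lying at Busemann depth $t$ in the corresponding horoball is at distance $\ge t$ from the horosphere. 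The main obstacle I anticipate is the general case --- that a uniform lattice on an irreducible thick Euclidean building preserves no proper closed subset of its visual boundary --- and here the structure theory of \cite{capmon09b} does the real work. With the alternative excluded, $N = 1$.
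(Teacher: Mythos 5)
Your proposal is correct and takes essentially the same route as the paper, which states the lemma with no argument beyond the citation of \cite[Corollary~2.7]{capmon09b}: you likewise reduce the substantive step (excluding an infinite normal amenable subgroup) to that same result after verifying its hypotheses for the thick, irreducible, locally finite $\tilde{A}_2$ building with its cocompact $\Gamma$-action. Your by-hand disposal of the finite case via the fixed-point set being a single vertex, and your sanity checks of the invariant-flat and fixed-point-at-infinity cases, are sound but become redundant once the Caprace--Monod corollary is invoked.
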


We have seen that a Singer cyclic lattice together with a presenting triple determines a based difference matrix and conversely a difference matrix gives rise to a Singer cyclic lattice with a distinguished presenting triple. We want to pin down this correspondence more precisely mainly to obtain two pieces of information: the isomorphism classes of Singer cyclic lattices, and for each Singer cyclic lattice its automorphism group. For this purpose it will be useful to employ the language of groupoids. We refer to \cite{higgins71} as a reference but we will not need much theory. The only non-trivial concept that we will make use of is that of a quotient groupoid, see \cite[Chapter~12]{higgins71}. Rather than introducing the concepts in general, we will discuss an elementary example.

\begin{example}
\label{exmpl:action_groupoid}
A group $H$ acting on a set $M$ gives rise to a groupoid ${}^HM$ in an obvious way: the objects of ${}^HM$ are the elements of $M$ and the morphisms $m \to n$ are the elements $h \in H$ with $h.m = n$. If $h$ is one of them, the set of these elements $H_{m,n}$ equals $h H_m$ and also $H_n h$. In particular, the automorphisms group of an object $m$ is just its stabilizer $H_m$.

Now suppose that $K < H$ is a subgroup with the property that the normal span of all stabilizers $\gen{K_m^H, m \in M}$ is contained in $K$ (i.e.\ an automorphism in ${}^KM$ conjugated by a morphism in ${}^HM$ is in ${}^KM$). Then there is a groupoid $K \backslash {}^HM$ whose elements are orbits in $K \backslash M$ and the morphisms $Km \to Kn$ are equivalence classes of morphisms modulo precomposition by $K_m$ or (equivalently) postcomposition by $K_n$ (the equivalence comes from the fact that if $h.m = n$ then ${}^hK_m = K_n$). Thus the set of morphisms corresponds to $H_{m,n}/K_m = K_n \backslash H_{m,n}$. The assumption on $K$ asserts that composition is well-defined.

An important special case is when the action of $K$ on $M$ is free. Then the normality condition of the last paragraph is automatically satisfied. Moreover, the natural map on morphisms $\hom_{{}^HM}(m,n) \to \hom_{K \backslash{}^HM}(K_m m, K_n n)$ is a bijection.
\end{example}

The phenomenon in the last paragraph of the example generalizes as follows:

\begin{observation}
\label{obs:free_action_groupoid}
Let $\Groupoid$ be a groupoid and let $\Normal$ be a full subgroupoid of $\Groupoid$ (meaning it contains all the objects of $\Groupoid$) that has no automorphisms other than identity morphisms. Then $\Normal$ is normal in $\Groupoid$ and the quotient map $q \colon \Groupoid \to \Normal\backslash\Groupoid$ induces isomorphisms $\hom_{\Groupoid}(x,y) \to \hom_{\Normal \backslash \Groupoid}(q(x), q(y))$.
\end{observation}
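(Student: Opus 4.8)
The plan is to recall Higgins' construction of the quotient groupoid $\Normal\backslash\Groupoid$ and to observe that the hypothesis on $\Normal$ makes all the double cosets that appear in it trivial, so that the quotient functor does not change hom-sets.

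First I would verify that $\Normal$ is normal in $\Groupoid$, so that the quotient is even defined. Being full, $\Normal$ already contains every object of $\Groupoid$, so the only remaining condition is that $g n g^{-1} \in \Normal$ whenever $g \in \hom_\Groupoid(x,y)$ and $n \in \hom_\Normal(x,x)$; since $\hom_\Normal(x,x) = \{1_x\}$ by hypothesis this reads $1_y \in \hom_\Normal(y,y)$, which holds trivially. Next I would record the structural fact I intend to use repeatedly: if two objects $x,y$ lie in the same connected component of $\Normal$, then there is exactly one morphism $x \to y$ in $\Normal$. Existence is the meaning of ``same component''; uniqueness holds because two such morphisms would differ by a nontrivial element of $\hom_\Normal(x,x)$ (or of $\hom_\Normal(y,y)$), contradicting the hypothesis. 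I write $\nu_{x,y}$ for this morphism and note $\nu_{x,x} = 1_x$ and $\nu_{y,z} \circ \nu_{x,y} = \nu_{x,z}$.

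Then I would unwind the quotient. Its objects are the connected components of $\Normal$, the functor $q$ sends each object to its component, and every morphism of $\Normal$ is sent to an identity. For two components, after choosing representative objects $x_0, y_0$, the set $\hom_{\Normal\backslash\Groupoid}(q(x_0), q(y_0))$ is the double-coset set $\hom_\Normal(y_0,y_0) \backslash \hom_\Groupoid(x_0,y_0) / \hom_\Normal(x_0,x_0)$ (normality is precisely what makes composition of such classes well-defined), and on $\hom_\Groupoid(x_0,y_0)$ the functor $q$ is the passage to this quotient. Since both vertex groups are trivial, this double-coset set is just $\hom_\Groupoid(x_0,y_0)$ and $q$ is a bijection there. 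For arbitrary objects $x,y$, post- and pre-composition with $\nu_{y,y_0}$ and $\nu_{x_0,x}$ identifies $\hom_\Groupoid(x,y)$ with $\hom_\Groupoid(x_0,y_0)$ compatibly with $q$, because $q(\nu_{x_0,x}) = \id$ and $q(\nu_{y,y_0}) = \id$; hence $q\colon \hom_\Groupoid(x,y) \to \hom_{\Normal\backslash\Groupoid}(q(x),q(y))$ is a bijection in general, and functoriality of $q$ gives compatibility with composition, so it is an isomorphism.

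The only place demanding any care — and the step I would treat as the ``main obstacle'', though it is really just bookkeeping — is matching the description of the morphisms of $\Normal\backslash\Groupoid$ as double cosets (with the induced composition) to Higgins' actual definition, and checking that the choice of component representatives is immaterial via the morphisms $\nu_{x,y}$. Once that is in place the statement is purely formal.
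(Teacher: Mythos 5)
Your proof is correct. The paper gives no proof of this Observation -- it is stated as the evident generalization of the free-action case sketched at the end of Example~\ref{exmpl:action_groupoid} -- and your argument (the unique $\Normal$-morphism $\nu_{x,y}$ between objects in the same component, triviality of the resulting double cosets, and the reduction to chosen representatives) is exactly the routine verification the paper leaves implicit.
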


The first groupoid we want to construct will be denoted $\SingTrip$.

The objects of $\SingTrip$ are equivalence classes of pairs $(\Gamma,T)$ where $\Gamma$ is a Singer cyclic lattice and $T \subseteq \Gamma$ is a presenting triple. (The reader who is concerned with this not becoming a small category will fix his or her favorite countable set and take ``Singer cyclic lattice'' to mean ``group structure isomorphic to a Singer cyclic lattice on that set''.) Two pairs $(\Gamma,T)$ and $(\Gamma',T')$ are equivalent if there is an isomorphism $\alpha \colon \Gamma \to \Gamma'$ that takes $T$ to $T'$ (as ordered triples). Note that such a morphism, if it exists, is unique since $T$ and $T'$ generate $\Gamma$. Note also, that it exists precisely if $(\Gamma,T)$ and $(\Gamma,T')$ give rise to the same difference matrix. We denote the equivalence class of $(\Gamma,T)$ by $[\Gamma,T]$.

A morphism consists of a Singer cyclic lattice $\Gamma$ together with two presenting triples $T$ and $T'$ and we write them as $\Gamma, T \to \Gamma,T'$. The symbols $\Gamma,T \to \Gamma,T'$ and $\Lambda, S \to \Lambda, S'$ represent the same morphism if there is an isomorphism $\alpha \colon \Gamma \to \Lambda$ that takes $T$ to $S$ and $T'$ to $'S$ (again, as ordered tuples). We denote the morphism defined by $\Gamma,T \to \Gamma,T'$ as $[\Gamma,T \to \Gamma,T']$.

Composition is defined by $[\Gamma, T' \to \Gamma,T''] \circ [\Gamma,T \to \Gamma,T'] = [\Gamma,T \to \Gamma,T'']$. Thus in terms of general representatives, if $[\Gamma,T \to \Gamma,T']$ and $[\Lambda,S \to \Lambda,S']$ are morphisms with $[\Gamma,T'] = [\Lambda,S]$ then the product $[\Lambda,S \to \Lambda,S'] \circ [\Gamma,T \to \Gamma,T']$ equals $[\Lambda,\alpha(T) \to \Lambda,S'] = [\Gamma,T \to \Gamma,\alpha^{-1}(S')]$ where $\alpha \colon \Gamma \to \Lambda$ is the isomorphism taking $T'$ to $S$.

A good way to think about this groupoid is as follows: if $\alpha = [\Gamma,T \to \Gamma,T']$ is an automorphism in $\SingTrip$, i.e.\ if $[\Gamma,T] = [\Gamma,T']$, then $\alpha$ in fact represents an automorphism of $\Gamma$, namely the one taking $T$ to $T'$. Thus $\SingTrip$ naturally contains the groupoid consisting of automorphism groups of Singer cyclic lattices. The morphisms between different objects may be thought of as changing the presenting triple.

\begin{observation}
For every object $[\Gamma,T]$ in $\SingTrip$ we have $\Aut_\SingTrip([\Gamma,T]) \cong \Aut(\Gamma)$.
\end{observation}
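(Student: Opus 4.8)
\emph{The plan.} Fix once and for all a representative $(\Gamma,T)$ of the object $[\Gamma,T]$, where $T=(\sigma_0,\sigma_1,\sigma_2)$. The idea is to reduce every automorphism of $[\Gamma,T]$ in $\SingTrip$ to a \emph{canonical} representative of the shape $[\Gamma,T\to\Gamma,T']$ with this fixed source, and then to read off from $T'$ the unique automorphism of $\Gamma$ carrying $T$ to $T'$. First I would check that such a canonical representative exists and is unique: an arbitrary representative $[\Lambda,S\to\Lambda,S']$ of a morphism with source $[\Gamma,T]$ comes with an isomorphism $\beta\colon\Lambda\to\Gamma$ taking $S$ to $T$ (this is what $[\Lambda,S]=[\Gamma,T]$ means, and $\beta$ is unique because $S$ generates $\Lambda$), so $[\Lambda,S\to\Lambda,S']=[\Gamma,T\to\Gamma,\beta(S')]$; and if a second representative $[\Lambda',R\to\Lambda',R']$ is joined to the first by an isomorphism $\alpha\colon\Lambda\to\Lambda'$ with $\alpha(S)=R$, $\alpha(S')=R'$, and $\beta'\colon\Lambda'\to\Gamma$ takes $R$ to $T$, then $\beta'\circ\alpha$ takes $S$ to $T$, hence equals $\beta$, so $\beta'(R')=\beta(S')$. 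Since the morphism is an automorphism, the target of its canonical representative is again $[\Gamma,T]$, i.e.\ $[\Gamma,T']=[\Gamma,T]$; unwinding the definition, this says precisely that there is an automorphism of $\Gamma$ carrying $T$ to $T'$, and it is unique because $T$ generates $\Gamma$. Call it $\phi_m$ (for $m$ the morphism in question).

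\emph{The map.} I would then define $\Phi\colon\Aut_\SingTrip([\Gamma,T])\to\Aut(\Gamma)$ by $\Phi(m)=\phi_m^{-1}$ and verify bijectivity. Injectivity is immediate: $\phi_m$ determines $T'=\phi_m(T)$, which determines the canonical representative, hence $m$. For surjectivity, given $\psi\in\Aut(\Gamma)$ I would observe that $\psi(T)=(\psi\sigma_0,\psi\sigma_1,\psi\sigma_2)$ is again a presenting triple for $\Gamma$: if a difference matrix $E$ turns the relations \eqref{eq:difference_cycle}, \eqref{eq:order_relation} into a presentation of $\Gamma$ via $T$, then the canonical homomorphism from the presented group to $\Gamma$ associated with $\psi(T)$ is just $\psi$ postcomposed with the (iso) homomorphism associated with $T$, hence itself an isomorphism. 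So $[\Gamma,\psi(T)]$ is an object of $\SingTrip$, $\psi$ witnesses $[\Gamma,\psi(T)]=[\Gamma,T]$, and $m\defeq[\Gamma,T\to\Gamma,\psi(T)]$ is an automorphism of $[\Gamma,T]$ with $\phi_m=\psi$, so $\Phi(m)=\psi^{-1}$; as $\psi$ was arbitrary, $\Phi$ is onto.

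\emph{Multiplicativity.} Finally I would check compatibility with composition. Let $m_1,m_2\in\Aut_\SingTrip([\Gamma,T])$ have canonical representatives $[\Gamma,T\to\Gamma,T_1]$ and $[\Gamma,T\to\Gamma,T_2]$, so $\phi_{m_1}(T)=T_1$ and $\phi_{m_2}(T)=T_2$. Using $\phi_{m_1}$ to rewrite $m_2$ with source triple $T_1$ gives $m_2=[\Gamma,T_1\to\Gamma,\phi_{m_1}(T_2)]$, so by the composition rule stated before the observation $m_2\circ m_1=[\Gamma,T\to\Gamma,\phi_{m_1}(T_2)]$. Hence $\phi_{m_2\circ m_1}$ sends $T$ to $\phi_{m_1}(T_2)=\phi_{m_1}(\phi_{m_2}(T))$, i.e.\ $\phi_{m_2\circ m_1}=\phi_{m_1}\circ\phi_{m_2}$, and therefore $\Phi(m_2\circ m_1)=(\phi_{m_1}\circ\phi_{m_2})^{-1}=\Phi(m_2)\circ\Phi(m_1)$. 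Thus $\Phi$ is an isomorphism of groups.

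\emph{Main obstacle.} There is no genuine obstacle here; the whole argument is bookkeeping with representatives in $\SingTrip$, and it essentially spells out the ``good way to think about this groupoid'' paragraph preceding the statement. The only subtle points to get right are the well-definedness of the canonical representative (and hence of $\phi_m$) in the first step, and the fact that, with the composition convention of the excerpt, the naive assignment $m\mapsto\phi_m$ is an \emph{anti}-isomorphism — which is why $\Phi$ is defined using the inverse. (Of course even the anti-isomorphism would already yield $\Aut_\SingTrip([\Gamma,T])\cong\Aut(\Gamma)$, since any group is isomorphic to its opposite.)
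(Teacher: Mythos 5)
Your proof is correct and is exactly the bookkeeping the paper leaves implicit: the Observation is stated without proof, justified only by the preceding paragraph asserting that an automorphism $[\Gamma,T\to\Gamma,T']$ of $[\Gamma,T]$ ``represents'' the automorphism of $\Gamma$ taking $T$ to $T'$, and your canonical-representative argument makes that precise. Your remark that the paper's composition convention makes $m\mapsto\phi_m$ an anti-isomorphism (so one should invert, or pass to the opposite group) is a genuine and correctly handled subtlety that the paper glosses over.
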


Second we consider a subgroupoid $\Inner$ of $\SingTrip$ which has the same objects but whose only morphisms are inner isomorphisms. That is, $[\Gamma,T \to \Gamma,T']$ is in $\Inner$ if and only if there is a $g \in \Gamma$ with $T^g = T'$.

\begin{lemma}
The subgroupoid $\Inner$ is normal in $\SingTrip$.
\end{lemma}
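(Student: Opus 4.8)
The plan is to check normality straight from the definition recalled in Example~\ref{exmpl:action_groupoid}: since $\Inner$ has the same objects as $\SingTrip$, it suffices to show that for every morphism $\phi$ of $\SingTrip$ and every automorphism $n$ in $\Inner$ of the source of $\phi$, the conjugate $\phi\, n\, \phi^{-1}$ again lies in $\Inner$ (it is automatically an automorphism of the target of $\phi$, so it has a chance of being in $\Inner$).

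First I would record that $\Inner$ is totally disconnected. Indeed, a morphism of $\Inner$ with source $[\Gamma,T]$ can be brought into the form $[\Gamma,T \to \Gamma,T^{g}]$ for some $g \in \Gamma$ by conjugating everything into $\Gamma$ (using the equivalence of morphisms and the fact that homomorphisms respect conjugation); and since conjugation by $g$ is an automorphism of $\Gamma$ carrying $T$ to $T^{g}$, we have $[\Gamma,T^{g}]=[\Gamma,T]$, so this morphism is in fact an automorphism of $[\Gamma,T]$. Hence $\Inner([\Gamma,T]) = \{[\Gamma,T\to\Gamma,T^{g}] : g \in \Gamma\}$, which under $\Aut_{\SingTrip}([\Gamma,T]) \cong \Aut(\Gamma)$ is exactly the image of $\Inn(\Gamma)$. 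So conceptually the statement is just that $\Inn(\Gamma)$ is normal in $\Aut(\Gamma)$; what remains is to check this compatibly as the object varies, which is the computation below.

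For the main step, I would normalize $\phi$: every morphism of $\SingTrip$ with source $[\Gamma,T]$ can be written as $\phi = [\Gamma,T\to\Gamma,T']$ for some presenting triple $T'$ of $\Gamma$, and write $n = [\Gamma,T\to\Gamma,T^{g}]$ as above. Using the composition law $[\Gamma,B\to\Gamma,C]\circ[\Gamma,A\to\Gamma,B] = [\Gamma,A\to\Gamma,C]$ one gets $n\circ\phi^{-1} = [\Gamma,T'\to\Gamma,T^{g}]$. To compose with $\phi$ on the left I would first rewrite $\phi$ through the automorphism ``conjugation by $g$'' of $\Gamma$, which sends $T$ to $T^{g}$ and $T'$ to $(T')^{g}$, so that $\phi = [\Gamma,T^{g}\to\Gamma,(T')^{g}]$; then
\[
\phi\, n\, \phi^{-1} \;=\; [\Gamma,T^{g}\to\Gamma,(T')^{g}]\circ[\Gamma,T'\to\Gamma,T^{g}] \;=\; [\Gamma,T'\to\Gamma,(T')^{g}]\text{,}
\]
which lies in $\Inner$ because $(T')^{g}$ is a conjugate of $T'$ (and it is an automorphism of $[\Gamma,T']$, the target of $\phi$). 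This gives normality.

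I do not expect a real obstacle here; the one thing to get right is the bookkeeping with the equivalence-class descriptions of objects and morphisms — in particular one must pass $\phi$ through the automorphism ``conjugation by $g$'' before invoking the composition law, since the naive representatives of $n\circ\phi^{-1}$ and $\phi$ do not have matching middle terms.
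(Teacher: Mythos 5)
Your proposal is correct and follows essentially the same route as the paper: both reduce to the observation that all morphisms of $\Inner$ are automorphisms and then compute the conjugate directly via the composition law, arriving at $\phi\, n\, \phi^{-1} = [\Gamma,T' \to \Gamma,(T')^{g}] \in \Inner$. Your extra care in rewriting $\phi$ through the automorphism ``conjugation by $g$'' before composing is just a more explicit version of the bookkeeping the paper leaves implicit.
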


\begin{proof}
We have to verify that if $\alpha = [\Gamma,T \to \Gamma,T'] \in \SingTrip$ and $\beta = [\Gamma,T'' \to \Gamma,T] \in \Inner$ with $[\Gamma,T] = [\Gamma,T'']$ then $\alpha \beta \alpha^{-1} \in \Inner$. Let $g \in \Gamma$ be such that ${T''}^g = T$. Then $\alpha \beta \alpha^{-1} = [\Gamma,T' \to \Gamma,{T'}^g] \in \Inner$.\qed
\end{proof}

The next step is to see how much larger $\SingTrip$ is than $\Inner$. We have

\begin{lemma}
\label{lem:extra_morphisms}
Any morphism $\alpha$ in $\SingTrip$ can be decomposed uniquely as  $\alpha = \alpha_4 \alpha_3 \alpha_2 \alpha_1$ where
\begin{enumerate}
\item $\alpha_1 \in \Inner$, so $\alpha_1 = [\Gamma, (\sigma_0,\sigma_1,\sigma_2) \to \Gamma, (\sigma_0^g, \sigma_1^g, \sigma_2^g)]$ for some $g \in G$,
\item $\alpha_2 = [\Gamma, (\sigma_0,\sigma_1,\sigma_2) \to \Gamma, (\sigma_{\pi^{-1}(0)},\sigma_{\pi^{-1}(1)},\sigma_{\pi^{-2}(2)})]$ for some $\Gamma$ and $\pi \in S_3$,
\item $\alpha_3 = [\Gamma, (\sigma_0,\sigma_1,\sigma_2) \to \Gamma, (\sigma_0^{\sigma_1^{e_1}}, \sigma_1, \sigma_2)]$ $\Gamma$ where $e_1$ is an entry in the first column of the difference matrix of $\Gamma$ with respect to $(\sigma_0,\sigma_1,\sigma_2)$.
\item $\alpha_4 = [\Gamma, (\sigma_0,\sigma_1,\sigma_2) \to \Gamma, (\sigma_0^{e_0}, \sigma_1^{e_1}, \sigma_2^{e_2})]$ $\Gamma$ and $(e_0, e_1,e_2) \in (\Z/\delta\Z^\times)^3$,
\end{enumerate}
\end{lemma}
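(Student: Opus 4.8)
The plan is to transport the statement into the geometry of the building $X$ on which $\Gamma$ acts. By Proposition~\ref{prop:recover_building} this building is intrinsic to $\Gamma$, and by Lemma~\ref{lem:triples} together with Theorem~\ref{thm:essert} a morphism of $\SingTrip$ with source $[\Gamma,T]$ is exactly the datum of a second presenting triple $T'=(\sigma_0',\sigma_1',\sigma_2')$ of $\Gamma$; such a triple is in turn the datum of a chamber $C'$ of $X$, an ordering $(x_0',x_1',x_2')$ of its vertices, and a chosen generator $\sigma_i'$ of each vertex stabiliser $\Gamma_{x_i'}$, where one also remembers that $\Gamma_{x_i'}$ acts as a Singer cycle on $\lk x_i'$ and that $\Gamma$ acts regularly on the edges of each type. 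Fix the data $(x_0,x_1,x_2;\sigma_0,\sigma_1,\sigma_2)$ of the source triple, so that the type function on $X$ is the one making $x_i$ (the fixed point of $\sigma_i$) have type $i$. I will reconstruct the parameters $g$, $\pi$, $e_1$, $(e_0,e_1,e_2)$ of the factorization one at a time, in that order, each forced by the ones already determined.

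The bookkeeping hinges on the remark that $\alpha_1$, $\alpha_3$ and $\alpha_4$ leave unchanged the type of the vertex occupying each position of the triple (they move vertices within a single type only, $\Gamma$ being type-preserving), while $\alpha_2$ simply permutes the positions. Hence, letting $\tau\in S_3$ be the unique permutation with $x_i'$ of type $\tau(i)$ --- well defined since the three vertices of the chamber $C'$ have distinct types --- the permutation of $\alpha_2$ must be $\pi=\tau^{-1}$. Once $\pi$ is fixed, the vertices in positions $1$ and $2$ after $\alpha_1$ and $\alpha_2$ are $g^{-1}.x_{\tau(1)}$ and $g^{-1}.x_{\tau(2)}$, and $\alpha_3$, $\alpha_4$ leave them fixed; so $g$ must carry the edge $\{x_1',x_2'\}$ onto the edge $\{x_{\tau(1)},x_{\tau(2)}\}$, both of type $\{\tau(1),\tau(2)\}$. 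Regularity of $\Gamma$ on edges of that type pins down $g$ uniquely --- transitivity for existence, freeness for uniqueness.

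After $\alpha_1$ and $\alpha_2$ the triple reads $(v_0,v_1,v_2)$ with $v_i=\sigma_{\tau(i)}^g$; here $v_1$ generates $\Gamma_{x_1'}$ and acts as a Singer cycle on $\lk x_1'$, hence regularly on its type-$\tau(0)$ vertices, among which lie both $g^{-1}.x_{\tau(0)}$ (carried by $v_0$) and the target $x_0'$. So there is a unique exponent $e_1$ with $v_1^{-e_1}.(g^{-1}.x_{\tau(0)})=x_0'$, and it is forced to be the exponent of $\alpha_3$; that it is an entry of the relevant column of the difference matrix of $(v_0,v_1,v_2)$ boils down to the fact that $(v_0^{v_1^{e}},v_1,v_2)$ is again a presenting triple precisely when $v_1^{-e}.(g^{-1}.x_{\tau(0)})$ is adjacent to $x_2'=g^{-1}.x_{\tau(2)}$, which by a defining relation $v_0^{a}v_1^{b}v_2^{c}=1$ happens exactly for $e$ among the corresponding column entries (using that a column of a difference matrix consists of $q+1$ distinct elements) --- a re-reading of the proof of Lemma~\ref{lem:canonical_difference_matrix}. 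Finally, after $\alpha_3$ the three vertices coincide with those of $C'$, so $\alpha_4$ is forced to be the triple of exponents $(e_0,e_1,e_2)$ with $w_i^{e_i}=\sigma_i'$, where $w_i$ is the $i$-th entry after $\alpha_3$; as $\gen{w_i}=\Gamma_{x_i'}=\gen{\sigma_i'}$ is cyclic of order $\delta$, each $e_i$ is a uniquely determined unit. Running this construction forwards establishes existence, and as every parameter was forced, it also gives uniqueness.

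The step I expect to be the main obstacle is $\alpha_3$: it is immediate that \emph{some} element of $\Gamma$ conjugating $v_0$ moves its vertex onto $x_0'$, but one must show such an element can be chosen of the exact shape $v_1^{e_1}$ with $e_1$ a column entry of the difference matrix, so that $\alpha_3$ is genuinely a morphism of the listed kind and not an ad hoc automorphism. The secondary difficulty is the purely organizational one of tracking, through a fourfold composition, which vertex sits in which position and carries which type; extracting the parameters in the order $\pi$, $g$, $e_1$, $(e_0,e_1,e_2)$ --- each determined by the data already read off --- is designed to keep that under control.
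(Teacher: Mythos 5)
Your proof is correct and follows essentially the same route as the paper: pass to the building via Lemma~\ref{lem:triples}, align the triples step by step using regularity on edges, the Singer action on links, and cyclicity of vertex stabilizers, with each parameter forced so that uniqueness comes for free. The only (harmless) difference is organizational: you read off $\pi$ from the types before choosing $g$, whereas the paper first applies the inner automorphism and then permutes, and its uniqueness argument separately invokes types, chamber orbits, and freeness on chambers.
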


Of course there is nothing special about the $0$th element being conjugated by the first in the definition of $\alpha_3$, we just make a choice for definiteness.

\begin{proof}
Let $\alpha = [\Gamma,T \to \Gamma,T']$ with $T = (\sigma_i)_{0 \le i \le 2}$ and $T' = (\sigma_i')_{0 \le i \le 2}$. We will successively multiply by elements as described in the statement to simplify $\alpha$ until in the end we are left with the identity morphism.

Using Lemma~\ref{lem:triples} we may think of $\Gamma$ as acting on a building $X$. In particular, each triple determines an ordered tuple of vertices of a chamber (the fixed point sets of the triple). At first we will look at what happens to the chamber (as a coarser version of $T$ as it were). 
Since $\Gamma$ acts regularly on chambers of each type, after multiplying by an inner automorphism we may assume that the vertices fixed by $\sigma_1$ and $\sigma_2$ are also fixed vertices of two of the $\sigma_i'$.

After permuting $T'$ by an appropriate $\alpha_2$ we may assume that the fixed points of $\sigma_1$ and $\sigma_1'$ and of $\sigma_2$ and $\sigma_2'$ are the same; in particular, $\alpha$ is type-preserving at this point.

Let $v_0$ be the vertex fixed by $\sigma_0$ and let $v_0'$ be the vertex fixed by $\sigma_0'$. Note that both $v_0$ and $v_0'$ are adjacent to the vertices fixed by $\sigma_1$ and by $\sigma_2$. Thus, since $\gen{\sigma_1}$  and $\gen{\sigma_2}$ act regularly on vertices of each type in their link, there are unique elements $e_1$ and $e_2$ with $\sigma_1^{-e_1}.v_0 = v_0'$ and $\sigma_2^{e_2}.v_0 = v_0'$. It follows that $\sigma_1^{e_1} \sigma_2^{e_2} \in \gen{\sigma_0}$, say it is $\sigma_0^{-e_0}$ and so $(e_0, e_1, e_2)$ is a row of the difference matrix of $\Gamma$ with respect to $T$. Thus composing with an appropriate $\alpha_3$ we may assume that the chambers of fixed points of $T$ and of $T'$ coincide.

Finally replacing each $\sigma_i$ by a different generator in its span using $\alpha_4$ we achieve that $T = T'$.

It remains to verify uniqueness. Note that $\pi$ can be recovered from the action on types: if we assign types to the vertices of $X$ in such a way that the fixed vertex of $\sigma_i$ has type $i$ then $\pi(j)$ is the type of the fixed point vertex of $\gen{\sigma_j'}$. Since $\alpha_1$, $\alpha_3$, and $\alpha_4$ preserve types, this determines $\alpha_2$.

Similarly, $\alpha_3$ can be recovered from chamber orbits: there are $q+1$ orbits of chambers of $X$ under the action of $\Gamma$. Each of the $q+1$ choices of $e_1$ in $\alpha_3$ replaces the chamber corresponding to $T$ by one in a different orbit. Since $\alpha_1$, $\alpha_2$, and $\alpha_4$ preserve the chamber orbit, this determines $\alpha_3$.

It remains to show that $\alpha_4\alpha_1 = \id$ then $\alpha_1$ and $\alpha_4$ are trivial. If conjugation by $g$ leaves $\gen{\sigma_i}$ invariant then $g$ fixes the chamber whose vertices a fixed by the $\gen{\sigma_i}$. Since $\Gamma$ acts freely on chambers we deduce that $g = 1$.\qed
\end{proof}

With $\Inner$ being normal in $\SingTrip$ there is a canonical quotient groupoid and it is what we are after. We first describe what we claim to be the quotient groupoid and then prove that it is what we claim in the end. The following Lemma gives a good idea of where we are heading.

\begin{lemma}
\label{lem:equivalence}
Let $\Gamma$ be a Singer cyclic lattice acting on $X$. Let $T$ be a presenting triple in $\Gamma$ and let $E$ be the associated based difference matrix
\begin{enumerate}
\setcounter{enumi}{-1}
\item which is only well defined up to permutation of rows.\label{item:row_permutation}
\end{enumerate}
The morphisms in Lemma~\ref{lem:extra_morphisms} have the following effect on $E$.
\begin{enumerate}
\item $\alpha_1$ has no effect on $E$.\label{item:new_chamber_in_orbit}
\item $\alpha_2$ amounts to permuting the columns of $E$ by $\pi$.\label{item:column_permutation}
\item $\alpha_3$ amounts to subtracting the row $(e_0, e_1, e_2)$ of $E$ from all rows of $E$.\label{item:row_subtraction}
\item $\alpha_4$ amounts to multiplying the $i$th column of $E$ by $(e_i + \delta\Z)^{-1}$.\label{item:new_generator}
\end{enumerate}
\end{lemma}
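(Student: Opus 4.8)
The plan is to reduce all four statements --- together with item~\eqref{item:row_permutation} --- to one clean reformulation of what the difference matrix of a triple is, and then to do each item as a one-line computation in the group $\Gamma$. The reformulation is this: if $T=(\sigma_0,\sigma_1,\sigma_2)$ is a presenting triple for a Singer cyclic lattice $\Gamma$, then by Lemma~\ref{lem:triples} it is a chamber triple for the action of $\Gamma$ on some building, and the proof of Lemma~\ref{lem:canonical_difference_matrix} shows that the rows of the associated based difference matrix, regarded as an \emph{unordered} set (which is exactly what item~\eqref{item:row_permutation} asserts), are precisely
\[
R(T)=\{(e_0,e_1,e_2)\in(\Z/\delta\Z)^3 : \sigma_0^{e_0}\sigma_1^{e_1}\sigma_2^{e_2}=1\text{ in }\Gamma\}\text{;}
\]
indeed $(e_0,e_1,e_2)\mapsto \sigma_0^{e_0}.x_1$ is a bijection from $R(T)$ onto the $q+1$ vertices in the link of the panel $[x_0,x_2]$, where $x_j$ is the fixed point of $\sigma_j$. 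Each $\alpha_k$ of Lemma~\ref{lem:extra_morphisms} is by construction a morphism of $\SingTrip$, so its target is again a pair (Singer cyclic lattice, presenting triple) and $R(\,\cdot\,)$ --- hence the difference matrix --- is defined for the new triple $T'$; it therefore suffices to express $R(T')$ in terms of $R(T)$.

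Granting this, the computations use only two moves that hold in any group: cyclically rotating a relation $\sigma_0^a\sigma_1^b\sigma_2^c=1$ (conjugate by the outermost power) and inverting it, together with the fact that a conjugate of a relation is a relation. For $\alpha_1=[\Gamma,T\to\Gamma,T^g]$ one has $(\sigma_0^g)^{e_0}(\sigma_1^g)^{e_1}(\sigma_2^g)^{e_2}=(\sigma_0^{e_0}\sigma_1^{e_1}\sigma_2^{e_2})^g$, which vanishes iff $\sigma_0^{e_0}\sigma_1^{e_1}\sigma_2^{e_2}$ does, so $R(T^g)=R(T)$ and the matrix is literally unchanged. For $\alpha_4=[\Gamma,T\to\Gamma,(\sigma_0^{e_0},\sigma_1^{e_1},\sigma_2^{e_2})]$ with the $e_i$ units, $(\sigma_0^{e_0})^x(\sigma_1^{e_1})^y(\sigma_2^{e_2})^z=\sigma_0^{e_0x}\sigma_1^{e_1y}\sigma_2^{e_2z}$ is trivial iff $(e_0x,e_1y,e_2z)\in R(T)$, i.e.\ the new matrix has $i$th column equal to $e_i^{-1}$ times the old one. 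For $\alpha_3$, expand $(\sigma_0^{\sigma_1^{e_1}})^x\sigma_1^y\sigma_2^z=\sigma_1^{-e_1}\sigma_0^x\sigma_1^{e_1+y}\sigma_2^z$; using the row $(e_0,e_1,e_2)$ of $E$ in the form $\sigma_1^{-e_1}=\sigma_2^{e_2}\sigma_0^{e_0}$ and rotating once, the relation becomes $\sigma_0^{e_0+x}\sigma_1^{e_1+y}\sigma_2^{e_2+z}=1$, so $R(T')=R(T)-(e_0,e_1,e_2)$, i.e.\ $(e_0,e_1,e_2)$ is subtracted from all rows. In each case one notes along the way that the result is again a \emph{based} difference matrix: the zero row of $E$ maps to a zero row, and every column remains a difference set since translates and unit multiples of difference sets are difference sets (Section~\ref{sec:diff_sets}).

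I expect $\alpha_2$ to be the step requiring real care. When $\pi$ is a $3$-cycle the relation $\sigma_{\pi^{-1}(0)}^a\sigma_{\pi^{-1}(1)}^b\sigma_{\pi^{-1}(2)}^c=1$ is literally a cyclic rotation of one in standard form, so $R(T')$ is $R(T)$ with its coordinates cyclically shifted, which is precisely the permutation of columns by $\pi$. When $\pi$ is a transposition one must also invert, and inverting $\sigma_0^a\sigma_1^b\sigma_2^c=1$ reverses the cyclic order of the factors and negates all three exponents; so the effect of $\alpha_2$ on $E$ is then the column permutation $\pi$ \emph{followed by negating every entry}. The extra sign is harmless: negating all columns is the special case $e_0=e_1=e_2=-1$ of $\alpha_4$, and it corresponds to the point--line duality that an odd permutation of the types induces on the $A_2$-links; with that understanding ``permuting the columns by $\pi$'' is correct for every $\pi\in S_3$, and in any event the sign is absorbed when one passes to the equivalence of difference matrices modeled by $\SingTrip/\Inner$. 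I would either fold this into the statement explicitly or flag it as a harmless sign. The only other point to record is that each $T'$ really is a presenting triple (so that its difference matrix exists), which, as noted, is built into Lemma~\ref{lem:extra_morphisms}.
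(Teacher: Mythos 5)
Your proof is correct and, for the one point the paper actually argues ($\alpha_3$), it is the same computation: rewrite $\sigma_1^{-e_1}$ using the row $(e_0,e_1,e_2)$ and rotate the relator cyclically. The reformulation of the row set as $R(T)=\{(e_0,e_1,e_2):\sigma_0^{e_0}\sigma_1^{e_1}\sigma_2^{e_2}=1\}$ is a clean way to package item~(0) and makes the remaining cases genuinely one-line.

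Your observation about $\alpha_2$ is a real (if harmless) correction to the statement, which the paper's proof skips by declaring everything but $\alpha_3$ obvious: for an odd $\pi$ the factors of a relator can only be brought into the order prescribed by $T'$ after inverting it, so the effect on $E$ is the column permutation by $\pi$ composed with negation of every entry, i.e.\ right multiplication by $-P_\pi$ rather than $P_\pi$. As you say, $-P_\pi$ is still a monomial matrix, the global sign is the $\alpha_4$-effect with $e_0=e_1=e_2=-1$, and $-D$ is again a difference set; so the group of column operations generated is still all of $C$ and nothing in Theorem~\ref{thm:groupoid_exact_sequence}, Corollary~\ref{cor:count_classes_isom_equiv}, or the description of $\Aut(E)$ is affected. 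It would indeed be cleaner to record the sign in item~\eqref{item:column_permutation}.
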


\begin{proof}
Only the point concerning $\alpha_3$ needs justification. Let $(e_0, e_1, e_2)$ be the row of $E$ concerning the exponent in question and let $(f_0,f_1,f_2)$ be any row of $E$. We compute the relation
\begin{align*}
\sigma_2^{-e_2} \sigma_1^{-e_1} \sigma_0^{-e_0} \sigma_0^{f_0} \sigma_1^{f_1} \sigma_2^{f_2} & \approx \sigma_1^{-e_1} \sigma_0^{f_0-e_0} \sigma_1^{f_1} \sigma_2^{f_2-e_2}\\
& = (\sigma_0^{f_0-e_0})^{\sigma_1^{e_1}} \sigma_1^{f_1-e_1} \sigma_2^{f_2-e_2}\\
& = (\sigma_0^{(\sigma_1^{e_1})})^{f_0-e_0} \sigma_1^{f_1-e_1} \sigma_2^{f_2-e_2}
\end{align*}
where $\approx$ means equality up to cyclic permutation (we only care that the expressions are relators). We see that the difference matrix associated to the generators $\sigma_0'$, $\sigma_1$, $\sigma_2$ is obtained from $E$ by subtracting the row $(e_0, e_1, e_2)$ from all rows. In particular, this row becomes the new zero row.\qed
\end{proof}

Let $\DiffMatBSet$ be the set of all based difference matrices (for the fixed parameter $q$). The effects of the morphisms $\alpha_2$ and $\alpha_4$ give rise to a right action of the wreath product $C \defeq (\Z/\delta\Z^\times)^3 \rtimes S_3$ on $\DiffMatBSet$ ($C$ for acting on columns). This action can be thought of as multiplication by $3$-by-$3$ monomial matrices over $\Z/\delta\Z$ from the right.

Similarly \eqref{item:row_permutation} gives rise to an action of $S_{q+1}$ on $\DiffMatBSet$. In terms of matrices it is multiplication by $(q+1)$-by-$(q+1)$ permutation matrices from the left. We let $\bar{R}$ denote $S_{q+1}$ acting on $\DiffMatBSet$ in this way. In order to take $\alpha_3$ into account we note the following:

\begin{lemma}
\label{lem:big_symmetric}
The subgroup of $\GL_{q+1}(\Z/\delta\Z)$ generated by the matrices
\[
P_i \defeq
\begin{pmatrix}
I_{i-1}&&&\\
& 0 & 1 &\\
& 1 & 0 &\\
& &  &I_{q-i}\\
\end{pmatrix}
\text{ and }
M_i \defeq
\begin{pmatrix}
I_{i-1}& \vdots  & \\
 & -1 & \\
& \vdots& I_{q-i+1}\\
\end{pmatrix},
\]
is isomorphic to $S_{q+2}$.
\end{lemma}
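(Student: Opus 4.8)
The plan is to realize the subgroup $G \defeq \gen{P_1, \dots, P_q, M_1, \dots, M_{q+1}}$ as the full symmetric group on an explicit set of $q+2$ vectors inside the module $V \defeq (\Z/\delta\Z)^{q+1}$ on which it acts; conceptually $V$ is the reduction mod $\delta$ of the standard module $\Z^{q+2}/\gen{(1,\dots,1)}$ of $S_{q+2}$, with the $P_i$ and $M_i$ playing the role of transpositions. Write $\epsilon_1, \dots, \epsilon_{q+1}$ for the standard basis of $V$, put $\epsilon_{q+2} \defeq -(\epsilon_1 + \dots + \epsilon_{q+1})$, and set $\Omega \defeq \{\epsilon_1, \dots, \epsilon_{q+1}, \epsilon_{q+2}\}$. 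Because $\delta > 1$, there is no nontrivial $\Z/\delta\Z$-linear relation among $\epsilon_1, \dots, \epsilon_{q+1}$, so $\Omega$ has exactly $q+2$ elements; I would then identify $S_{q+2}$ with $\operatorname{Sym}(\Omega)$ via $j \leftrightarrow \epsilon_j$.

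The main step is a short matrix computation for the action of the generators on $\Omega$. One checks directly that $P_i$ transposes $\epsilon_i$ and $\epsilon_{i+1}$ and fixes every other $\epsilon_j$, hence in particular fixes $\epsilon_{q+2}$ (which is symmetric in $\epsilon_1, \dots, \epsilon_{q+1}$), so $P_i$ acts on $\Omega$ as the transposition $(i, i+1)$. Similarly $M_i$ fixes $\epsilon_j$ for $j \le q+1$, $j \ne i$, sends $\epsilon_i$ to $\epsilon_{q+2}$, and — recomputing, or using that $M_i^2 = I_{q+1}$, itself a one-line check — sends $\epsilon_{q+2}$ back to $\epsilon_i$; thus $M_i$ acts on $\Omega$ as the transposition $(i, q+2)$. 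Since each generator of $G$ preserves $\Omega$ setwise, so does every element of $G$, and restriction to $\Omega$ defines a homomorphism $\rho \colon G \to \operatorname{Sym}(\Omega) = S_{q+2}$ sending $P_1, \dots, P_q$ to $(1,2), \dots, (q, q+1)$ and $M_{q+1}$ to $(q+1, q+2)$.

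To finish I would show $\rho$ is an isomorphism. It is surjective because $(1,2), \dots, (q, q+1), (q+1, q+2)$ are a generating set of $S_{q+2}$ and all lie in $\rho(G)$. It is injective because $\Omega$ contains the basis $\epsilon_1, \dots, \epsilon_{q+1}$ of $V$: any element of $\ker \rho$ fixes every vector in $\Omega$, in particular fixes this basis pointwise, and hence equals $I_{q+1}$. Therefore $G \cong S_{q+2}$, which is the claim.

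No part of this is genuinely difficult. The one place that needs care is the bookkeeping in the two matrix computations — especially confirming that $M_i$ acts as $(i, q+2)$ rather than as something involving $\epsilon_{i+1}$ — together with the small but essential remarks that $\delta > 1$ makes $\Omega$ have the full $q+2$ elements and that a matrix fixing the standard basis pointwise is the identity $I_{q+1}$.
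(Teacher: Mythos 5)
Your proof is correct and is essentially the paper's argument run in the opposite direction: the paper starts from the natural action of $S_{q+2}$ on the sum-zero submodule of $(\Z/\delta\Z)^{q+2}$ and reads off the generators' matrices in the basis $e_j - e_{q+2}$, while you start from the matrices and exhibit the invariant $(q+2)$-element set $\Omega$ they permute. The only added value of your write-up is that it makes explicit the faithfulness of the identification (injectivity of $\rho$, since $\Omega$ contains a basis), which the paper's one-line proof leaves implicit.
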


\begin{proof}
The natural action of $S_{q+2}$ on $(\Z/\delta\Z)^{q+2}$ preserves the submodule of those vectors $(x_1, \ldots, x_{q+2})$ that satisfy $\sum_{x_i} = 0$. This submodule has a basis consisting of the vectors $(\ldots, 0, 1, 0, \ldots, 0, -1)$. The standard generators with respect to this basis are $P_i, 1 \le i \le q$ and $M_{q+1}$.\qed
\end{proof}

Note that the matrices $M_i$ in Lemma~\eqref{lem:big_symmetric} correspond to the effect of $\alpha_3$ in Lemma~\ref{lem:equivalence}. We define $R$ to be $S_{q+2}$ acting on $\DiffMatBSet$ as above. So all the operations of Lemma~\ref{lem:equivalence} are induced by an action of $R \times C$ on $\DiffMatBSet$.

The third groupoid we want to consider will be denoted $\DiffMat$. It is defined to be $\bar{R} \backslash{}^{R \times C}\DiffMatBSet$ in the notation of Example~\ref{exmpl:action_groupoid} (note that since $C$ acts from the right, its elements need to be inverted). Thus objects are orbits in $\bar{R} \backslash \DiffMatBSet$ and morphisms are induced by the action of $R \times C$.

We now define a groupoid map $q \colon \SingTrip \to \DiffMat$. On objects it takes $[\Gamma,T]$ to the difference matrix associated to the pair $(\Gamma,T)$ by Lemma~\ref{lem:canonical_difference_matrix}. On morphisms, it takes $\alpha_4\alpha_3\alpha_2\alpha_1$ as in Lemma~\ref{lem:extra_morphisms} to the morphism associated to it via Lemma~\ref{lem:equivalence}.

\begin{theorem}
\label{thm:groupoid_exact_sequence}
The sequence
\[
\Inner \lhd \SingTrip \stackrel{q}{\to}  \DiffMat\text{.}
\]
is exact in the sense that $q$ is the quotient morphism of the normal inclusion on the left.
\end{theorem}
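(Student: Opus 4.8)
The plan is to verify that $q$ satisfies the conditions characterising the quotient morphism of the normal inclusion $\Inner\lhd\SingTrip$ in the sense of \cite[Chapter~12]{higgins71}: that $q$ is surjective on objects, that $q$ is full, and that the wide subgroupoid $\{\alpha : q(\alpha)\text{ is an identity}\}$ equals $\Inner$. Granting also that $q$ is a well-defined morphism of groupoids, these force the canonical functor $\Inner\backslash\SingTrip\to\DiffMat$ induced by $q$ to be an isomorphism: it is surjective on objects because $q$ is; it is injective on objects because $q(x)=q(y)$ produces, by fullness applied to $\id_{q(x)}$, a morphism $x\to y$ lying in $\ker q=\Inner$; and it is fully faithful by the same reasoning applied to hom-sets. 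So there are really four things to check --- well-definedness, surjectivity on objects, fullness, and $\ker q=\Inner$ --- and I would take them in that order.

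Well-definedness on objects is Lemma~\ref{lem:canonical_difference_matrix}, the residual row-permutation ambiguity being exactly what gets divided out in $\bar R\backslash{}^{R\times C}\DiffMatBSet$. On morphisms $q$ is defined by sending the unique factorisation $\alpha=\alpha_4\alpha_3\alpha_2\alpha_1$ of Lemma~\ref{lem:extra_morphisms} to the composite of the effects computed factor-by-factor in Lemma~\ref{lem:equivalence}; each such effect is an instance of the $R\times C$-action used to build $\DiffMat$, so these are genuine morphisms of $\DiffMat$. Functoriality is then nearly automatic: a morphism $[\Gamma,T\to\Gamma,T']$ records two presenting triples for one lattice, and the transformation carrying the difference matrix of $T$ to that of $T'$ is multiplicative under composition of triple-changes by construction, so $q(\alpha\beta)=q(\alpha)q(\beta)$; Lemma~\ref{lem:equivalence} only serves to name this transformation on the generators.

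Surjectivity on objects is the converse half of Essert's theorem (Theorem~\ref{thm:essert}): a representative $E$ of a given object presents a Singer cyclic lattice $\Gamma$ whose distinguished generators form a presenting triple $T$ with $q([\Gamma,T])$ the given object. For fullness I would note that the effects of the $\alpha_i$ already generate all of the $R\times C$-action: an $\alpha_2$ realises a column permutation (the $S_3$ in $C$), an $\alpha_4$ a column rescaling (the $(\Z/\delta\Z^\times)^3$ in $C$), and an $\alpha_3$ the subtraction of an arbitrary row --- its parameter ranges over all entries of a column --- which by Lemma~\ref{lem:big_symmetric} produces the generators of the copy of $S_{q+2}\cong R$ acting by row operations. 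A morphism of $\DiffMat$ is then a word in these generators, which one realises as $q$ of the corresponding composite of $\alpha_i$'s, choosing at each intermediate stage a source object in the (nonempty, by the previous step) $q$-fibre. This threading of sources through the word, and the matching of the quotient-groupoid formalism to the concrete row/column operations, is where care is needed; I do not expect a deep obstacle, but it is the part of the argument most liable to a slip.

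Finally $\ker q=\Inner$. The inclusion $\Inner\subseteq\ker q$ is immediate from Lemma~\ref{lem:equivalence}, since an inner isomorphism has no effect on the difference matrix. For the reverse inclusion, let $q(\alpha)$ be an identity and write $\alpha=\alpha_4\alpha_3\alpha_2\alpha_1$; then the composite of the effects of $\alpha_4,\alpha_3,\alpha_2$ is the identity morphism of $\DiffMat$, hence lies in $\bar R$ inside $R\times C$. The $C$-component of this composite is the product in $C$ of the column permutation of $\alpha_2$ and the column rescaling of $\alpha_4$; since the $S_3$-part of that product is the permutation $\pi$ of $\alpha_2$, triviality of the $C$-component forces $\alpha_2=\id$ and then $\alpha_4=\id$. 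The remaining contribution, the row operation of $\alpha_3$, must then itself lie in $\bar R$; but subtracting a nonzero row changes the $\Gamma$-orbit of the base chamber (see the uniqueness part of Lemma~\ref{lem:extra_morphisms}) and so is not a mere row permutation, which forces $\alpha_3=\id$. Hence $\alpha=\alpha_1\in\Inner$, completing the verification.
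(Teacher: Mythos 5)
Your proof is correct and follows essentially the same route as the paper's: the kernel is identified with $\Inner$ via the uniqueness of the decomposition $\alpha=\alpha_4\alpha_3\alpha_2\alpha_1$, and surjectivity on morphisms is obtained by realising the generators of the $R\times C$-action (the column operations and the row subtractions $M_i$) as images of explicit $\alpha_i$'s. The only difference is one of packaging: where the paper invokes Higgins' characterisation of quotient morphisms, you re-derive the needed criterion (surjective on objects, full, prescribed kernel) directly.
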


\begin{proof}
First we need to check that $\Inner$ is the kernel of $q$. That $\Inner$ lies in the kernel is clear by construction. The converse is true by the uniqueness statement in Lemma~\ref{lem:equivalence}.

Using \cite[Proposition~25]{higgins71} it remains to show that two objects in $\SingTrip$ having same image in $\DiffMat$ lie in the same component of $\Inner$ and that $q$ is surjective (meaning that any morphism in $\DiffMat$ is the image of a morphism in $\SingTrip$). The first part is clear: if $[\Gamma,T]$ and $[\Gamma',T']$ have same image in $\DiffMat$ then $[\Gamma,T] = [\Gamma,T']$. For surjectivity we just have to read Lemma~\ref{lem:equivalence} backwards: let $E$ be a difference matrix and let $c \defeq ((e_0,e_1,e_2), \pi) \in C$ be an element. We take $(\Gamma,(\sigma_0,\sigma_1,\sigma_2))$ to be a Singer lattice with presenting triple obtained from $E$ via Essert's theorem. Now $[\Gamma,(\sigma_0,\sigma_1,\sigma_2) \to \Gamma, (\sigma_{\pi^{-1}(0)}^{\pi^{-1}(e_0^{-1})}, \sigma_{\pi^{-1}(1)}^{\pi^{-1}(e_1^{-1})}, \sigma_{\pi^{-1}(2)}^{\pi^{-1}(e_2^{-1})})]$ has image the morphism induced by $q$. Similarly the morphisms induced by $R$ are images of the morphisms $[\Gamma, (\sigma_0,\sigma_1,\sigma_2) \to \Gamma, (\sigma_0^{\sigma_1^{e_1}},\sigma_1,\sigma_2)]$ where $e_1$ ranges over the $1$st column of $E$. Since these morphisms generate the component of $\bar{R}.E$, this shows surjectivity.\qed
\end{proof}

From this discussion of groupoids we draw two concrete conclusions about Singer lattices. We say that two based difference matrices are \emph{equivalent} if they lie in the same $R \times C$ orbit. That is the set of equivalence classes is $R \backslash \DiffMatBSet /C$.

\begin{corollary}
\label{cor:count_classes_isom_equiv}
There is a bijective correspondence between isomorphism classes of Singer cyclic lattices and equivalence classes $R \backslash \DiffMatBSet / C$.
\end{corollary}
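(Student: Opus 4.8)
The plan is to extract the bijection from the groupoid machinery already in place: I identify the isomorphism classes of Singer cyclic lattices with the connected components of $\SingTrip$, transport this along the exact sequence $\Inner \lhd \SingTrip \stackrel{q}{\to} \DiffMat$ of Theorem~\ref{thm:groupoid_exact_sequence}, and then compute the components of $\DiffMat$. For the first identification: send the class of a Singer cyclic lattice $\Gamma$ to the component of $[\Gamma, T]$, where $T$ is any presenting triple in $\Gamma$ (one exists by Lemma~\ref{lem:canonical_difference_matrix} together with Essert's theorem). This is well defined, since for two presenting triples $T, T'$ in $\Gamma$ the symbol $[\Gamma, T \to \Gamma, T']$ is a morphism of $\SingTrip$, so $[\Gamma,T]$ and $[\Gamma,T']$ lie in the same component. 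It is surjective because every object of $\SingTrip$ is of the form $[\Gamma,T]$ with $\Gamma$ a Singer cyclic lattice, and injective because any two representatives of one morphism of $\SingTrip$ have isomorphic underlying groups, so two objects joined by a morphism have isomorphic underlying groups. The one point requiring a word is that ``being a presenting triple'' is invariant under group isomorphism, which is immediate because it is phrased purely in terms of a presentation.

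Next I would pass from $\SingTrip$ to $\DiffMat$. By Theorem~\ref{thm:groupoid_exact_sequence} the morphism $q$ exhibits $\DiffMat$ as the quotient groupoid $\Inner\backslash\SingTrip$; a quotient map of groupoids is surjective on objects and full on morphisms (cf.\ the description recalled in Example~\ref{exmpl:action_groupoid}), hence it induces a bijection on sets of connected components, giving $\pi_0(\DiffMat) = \pi_0(\SingTrip)$. It then remains to unwind the definition $\DiffMat = \bar{R}\backslash{}^{R\times C}\DiffMatBSet$: the objects are the $\bar{R}$-orbits in $\DiffMatBSet$ and the morphisms are those induced by the $R\times C$-action, so two $\bar{R}$-orbits lie in the same component precisely when the corresponding based difference matrices lie in a common $R\times C$-orbit. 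Since $\bar{R} \le R$, and the left action of $R$ and the right action of $C$ commute, the set of $R\times C$-orbits on $\DiffMatBSet$ is exactly $R\backslash\DiffMatBSet/C$. Chaining the three identifications yields the asserted bijection.

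I expect the only real obstacle to be a matter of care rather than depth: keeping the bookkeeping of the first paragraph honest --- well-definedness and, above all, injectivity, where one genuinely uses that a morphism of $\SingTrip$ only connects lattices with isomorphic underlying group --- and being precise about what the quotient-groupoid structure of Theorem~\ref{thm:groupoid_exact_sequence} buys at the level of $\pi_0$. Everything else is formal. A reader who prefers to bypass $\SingTrip$ could instead read the bijection off directly from Lemmas~\ref{lem:canonical_difference_matrix} and~\ref{lem:equivalence}, but routing through connected components keeps the correspondence manifestly natural and meshes with the subsequent computation of $\Out(\Gamma)$.
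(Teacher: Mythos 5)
Your proposal is correct and follows essentially the same route as the paper: identify isomorphism classes with components of $\SingTrip$, identify $R\backslash\DiffMatBSet/C$ with components of $\DiffMat$, and use that the quotient morphism of Theorem~\ref{thm:groupoid_exact_sequence} induces a bijection on components. Your version merely spells out the well-definedness and injectivity bookkeeping that the paper leaves implicit.
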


\begin{proof}
Isomorphism classes of Singer cyclic lattices correspond to components of $\SingTrip$, the set $R \backslash \DiffMatBSet / C$ corresponds to components of $\DiffMat$. Since a quotient morphism of groupoids establishes a bijective correspondence between components, the statement follows from Theorem~\ref{thm:groupoid_exact_sequence}.\qed
\end{proof}

To a difference matrix $E$ we want to associate a group $\Aut(E)$ of automorphisms. The natural candidate is the automorphism group of $\bar{R}.E$ in $\DiffMat$. We will take a further quotient to obtain a more handy description. Lemma~\ref{lem:diff_set_difference} implies that the action of $R$ on $\DiffMatBSet$ is free. Thus by Observation~\ref{obs:free_action_groupoid} we may consider the groupoid $R \backslash {}^{R \times C} \DiffMatBSet \cong {}^C(R \backslash \DiffMatBSet)$. In fact, the observation implies that the quotient $\DiffMat \to {}^{C} (R \backslash \DiffMatBSet)$ induces isomorphisms in automorphism groups, which on the right hand side are just subgroups of $C$. Concretely, this means that we can define $\Aut(E)$ to be the subgroup of $C$ that preserves the $R$-orbit of $E$.

\begin{table}
\caption{For small values of $q$: the parameter $\delta = q^2+q+1$, a difference set, the number of Singer cyclic lattices, and the bound (as a rounded decimal fraction) of Theorem~\ref{thm:bound}.}
\label{tab:parameters}
\begin{gather*}
\begin{array}{cr@{}lcrr}
q & \multicolumn{2}{c}{\delta} & \text{difference set}& \# & \text{bound}\\
2 & 7 &= 7 & (0, 1, 3) & 2 & 0.4\\
3 & 13 &= 13 & (0, 1, 3, 9) & 7 & 4.1\\
4 & 21 &= 3 \cdot 7 & (0, 1, 4, 14, 16) & 17 & 11.4\\
5 & 31 &= 31 & (0, 1, 3, 8, 12, 18) & 3269 & 3214.3\\
7 & 57 &= 3 \cdot 19 & (0, 1, 3, 13, 32, 36, 43, 52) &  & 10035961.9\\
8 & 73 &= 73 & (0, 1, 3, 7, 15, 31, 36, 54, 63) &  & 30105851.5\\
9 & 91 &= 7 \cdot 13 & (0, 1, 3, 9, 27, 49, 56, 61, 77, 81) &  & 10160648447.9\\
11 & 133 &= 7 \cdot 19 & (0, 1, 3, 12, 20, 34, 38, 81, 88, 94, 104, 109) &  & 1416311939759987.8\end{array}
\end{gather*}
\end{table}

\begin{corollary}
\label{thm:group_exact_sequence}
Let $E$ be a difference matrix and let $\Gamma$ be the associated Singer cyclic lattice. There is an exact sequence of groups
\[
\Inn(\Gamma) \to \Aut(\Gamma) \to \Aut(E)
\]
thus $\Out(\Gamma) \cong \Aut(E)$.
\end{corollary}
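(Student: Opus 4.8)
The plan is to read the desired statement off the groupoid-level Theorem~\ref{thm:groupoid_exact_sequence} by restricting the quotient morphism $q\colon\SingTrip\to\DiffMat$ to the automorphism groups of a single object. Let $T$ be the presenting triple in $\Gamma$ coming from $E$ via Essert's theorem, so that $[\Gamma,T]$ is an object of $\SingTrip$ with $q([\Gamma,T])=\bar R.E$. The proof of Theorem~\ref{thm:groupoid_exact_sequence} already records the two structural facts I need: the kernel of $q$ is $\Inner$, and $q$ is injective on objects (two objects with the same image in $\DiffMat$ are equal). In addition every morphism of the full subgroupoid $\Inner$ is a loop, because $[\Gamma,T]=[\Gamma,T^g]$ as objects for every $g\in\Gamma$. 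Hence passing to the quotient $\Inner\backslash\SingTrip\cong\DiffMat$ leaves the objects in place and simply divides each vertex group by the corresponding loop group of $\Inner$; so on the vertex group of $[\Gamma,T]$ the morphism $q$ is the quotient homomorphism $\Aut_\SingTrip([\Gamma,T])\to\Aut_\SingTrip([\Gamma,T])/\Aut_\Inner([\Gamma,T])\cong\Aut_\DiffMat(\bar R.E)$.

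Next I would identify the three groups occurring here. By the Observation that $\Aut_\SingTrip([\Gamma,T])\cong\Aut(\Gamma)$ — the isomorphism sending a loop $[\Gamma,T\to\Gamma,T']$ to the unique automorphism of $\Gamma$ carrying $T$ to $T'$ — and by the definition of $\Inner$, the subgroup $\Aut_\Inner([\Gamma,T])$, consisting of the loops with $T'=T^g$ for some $g\in\Gamma$, corresponds under this isomorphism exactly to the inner automorphisms $\Inn(\Gamma)\lhd\Aut(\Gamma)$. On the other side, $\Aut_\DiffMat(\bar R.E)\cong\Aut(E)$: this is precisely the identification made just before the statement, which uses that the $R$-action on $\DiffMatBSet$ is free (Lemma~\ref{lem:diff_set_difference}), so that Observation~\ref{obs:free_action_groupoid} applies to the further quotient $\DiffMat\to{}^{C}(R\backslash\DiffMatBSet)$ and identifies its vertex group at $R.E$ with the subgroup of $C$ preserving the $R$-orbit of $E$.

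Substituting these three identifications into the vertex-group homomorphism from the first paragraph turns it into the short exact sequence $1\to\Inn(\Gamma)\to\Aut(\Gamma)\to\Aut(E)\to1$, the first map being the inclusion of the inner automorphisms and the second the quotient. This is the asserted exact sequence $\Inn(\Gamma)\to\Aut(\Gamma)\to\Aut(E)$, and it exhibits $\Out(\Gamma)=\Aut(\Gamma)/\Inn(\Gamma)\cong\Aut(E)$.

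The argument is essentially bookkeeping, and the only point that needs genuine care is the structural claim in the first paragraph: once one knows that $q$ is bijective on objects and that all morphisms of $\Inner$ are loops, the restriction of $q$ to the vertex group of $[\Gamma,T]$ is literally a quotient homomorphism of groups, so surjectivity onto $\Aut(E)$ (the only thing that could conceivably fail) is automatic — an automorphism of $\bar R.E$ lifts to a morphism of $\SingTrip$ whose source and target both lie over $\bar R.E$, hence to a loop at $[\Gamma,T]$. That bijectivity on objects in turn rests on the remark recorded when $\SingTrip$ was defined, that $(\Gamma,T)$ and $(\Gamma',T')$ represent the same object precisely when they give rise to the same difference matrix (up to permutation of rows); so I would want to double-check that this remark is phrased strongly enough to be used in this way.
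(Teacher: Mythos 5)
Your proposal is correct and follows the same route as the paper: restrict the groupoid-level exact sequence of Theorem~\ref{thm:groupoid_exact_sequence} to the loop (automorphism) groups at the single object $[\Gamma,T]$, and then identify the three groups with $\Inn(\Gamma)$, $\Aut(\Gamma)$, and $\Aut(E)$ exactly as you do. The extra care you take over surjectivity onto $\Aut(E)$ and over the fact that $[\Gamma,T]=[\Gamma,T^g]$ (so that $\Inner$ consists of loops and $q$ is injective on objects) is sound and is precisely what the quotient-groupoid formalism in the paper is supplying implicitly.
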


\begin{proof}
Let $T$ be the presenting triple of $\Gamma$ given by $E$. It follows from Theorem~\ref{thm:groupoid_exact_sequence} that there is an exact sequence
\[
\hom_{\Inner}([\Gamma,T],[\Gamma,T]) \to \hom_{\SingTrip}([\Gamma,T],[\Gamma,T]) \to \hom_{\DiffMat}(\bar{R}.E, \bar{R}.E)\text{.}
\]
The first two are naturally isomorphic to $\Inn(\Gamma)$ and $\Aut(\Gamma)$ by construction. The last one is naturally isomorphic to $\Aut(E)$ by the discussion above.\qed
\end{proof}


\section{Census}
\label{sec:census}

The goal of this section is to estimate the number of Singer cyclic lattices up to isomorphism using Corollary~\ref{cor:count_classes_isom_equiv}. For this purpose it will be helpful to take non-based difference matrices into view. Let $\DiffMatSet$ be the set of all difference matrices (based or not) for our fixed parameter $q$. Dropping the requirement that the difference matrices be based allows us to extend the action of $C$ to the supergroup $\tilde{C} \defeq \Aff_1(\Z/\delta\Z)^3 \rtimes S_3$.

\begin{lemma}
\label{lem:diff_mats_to_based_diff_mats}
There is a well-defined map
\[
R \backslash \DiffMatSet \to R \backslash \DiffMatBSet
\]
that induces a bijection
\[
R \backslash \DiffMatSet / \tilde{C} \to R \backslash \DiffMatBSet  / C\text{.}
\]
\end{lemma}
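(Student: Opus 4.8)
The plan is to produce the map and the bijection from a single ``rebasing'' operation, after which everything is formal manipulation of the semidirect-product decomposition $\tilde{C} = T \rtimes C$, where $T \cong (\Z/\delta\Z)^3$ denotes the normal subgroup of $\tilde{C}$ acting by a (possibly different) translation in each of the three columns, so that $\tilde{C}/T \cong C$. For $E \in \DiffMatSet$ write $r_i(E) \in (\Z/\delta\Z)^3$ for its $i$-th row and set $\phi(E) \defeq E - r_1(E)$, the difference matrix obtained by subtracting the first row from every row; since a translate of a difference set is again a difference set, $\phi(E) \in \DiffMatBSet$.

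The first thing I would record is how $\phi$ interacts with $\tilde{C}$. If $t \in T$ translates the $k$-th column by $a_k$, then $(E \cdot t)_{i,k} = E_{i,k} + a_k$ and the translations cancel under rebasing: $\phi(E \cdot t) = \phi(E)$. If $c \in C$ acts on the $k$-th column by $x \mapsto m_k x$ together with a permutation of the columns, the same cancellation gives equivariance, $\phi(E \cdot c) = \phi(E) \cdot c$. Taken together these two facts say precisely that $\phi$ intertwines the $\tilde{C}$-action on $\DiffMatSet$ with the $C$-action on $\DiffMatBSet$ through the quotient $\tilde{C} \to \tilde{C}/T \cong C$; in particular any $h \in \tilde{C}$ satisfies $\phi(E \cdot h) = \phi(E) \cdot \bar{h}$ with $\bar{h} \in C$ its image.

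Next I would check that $\phi$ descends to $R$-orbits, i.e.\ that $\phi(gE) \sim_R \phi(E)$ for $g$ ranging over a generating set of $R$. For a row-permuting generator $g = \pi$ one has $\phi(\pi E) = \pi(E - r_{\pi^{-1}(1)}(E))$, and $E - r_{\pi^{-1}(1)}(E)$ lies in the $R$-orbit of $E$ because subtracting one of the rows of $E$ from all rows of $E$ is one of the operations generating the $R$-action --- this is the effect of $\alpha_3$ in Lemma~\ref{lem:equivalence}(\ref{item:row_subtraction}), realized by the matrices $M_i$ of Lemma~\ref{lem:big_symmetric}; for such a row-subtraction generator $g$ itself one computes $\phi(gE) = \phi(E)$ outright. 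Hence $[\phi(E)]_R$ depends only on $[E]_R$, which yields the map $R\backslash\DiffMatSet \to R\backslash\DiffMatBSet$; the same observation gives $\phi(E) \sim_R E$, so this map sends the $R$-orbit of $E$ to its (nonempty) intersection with $\DiffMatBSet$. Combining this with the previous paragraph, $E \sim_{R \times \tilde{C}} E'$ implies $\phi(E) \sim_{R \times C} \phi(E')$, so $\Phi\colon [E]_{R \times \tilde{C}} \mapsto [\phi(E)]_{R \times C}$ is well defined. Bijectivity is then formal: for a based matrix $B$ we have $\phi(B) \sim_R B$, giving surjectivity; and since $E \sim_{\tilde{C}} \phi(E)$ (they differ by an element of $T \subseteq \tilde{C}$), if $\phi(E) \sim_{R \times C} \phi(E')$ then $E \sim_{R \times \tilde{C}} \phi(E) \sim_{R \times \tilde{C}} \phi(E') \sim_{R \times \tilde{C}} E'$, giving injectivity.

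The one step I expect to require care is the claim that ``subtract a row from all rows'' really is available inside the group $R$ --- equivalently, that the matrices $M_i$ of Lemma~\ref{lem:big_symmetric} implement the operation $\alpha_3$ of Lemma~\ref{lem:equivalence} and interact with $\phi$ as used above. Once this identification is in hand, the remainder is routine bookkeeping with $\tilde{C} = T \rtimes C$ and the observation that rebasing kills exactly the translation part.
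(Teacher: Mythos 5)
Your proposal is correct and follows essentially the same route as the paper: rebase by subtracting a row (the paper allows any row and checks independence via $E - f = (E-e) - (f-e)$, you fix the first row and instead verify descent along the generators of $R$), observe that the translation part $T$ of $\tilde{C}$ is killed by rebasing while $C$ passes through, and conclude bijectivity formally. The one point you flag as delicate --- that ``subtract a row from all rows'' is genuinely an operation of $R$, i.e.\ the identification of the $M_i$ of Lemma~\ref{lem:big_symmetric} with the effect of $\alpha_3$ in Lemma~\ref{lem:equivalence} --- is exactly the ingredient the paper's proof also relies on.
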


\begin{proof}
The map takes the $R$-orbit of $E$ to the $R$-orbit of the based difference matrix $E_0$ that is obtained from $E$ by subtracting some row from all rows. Clearly this new difference matrix is based. The map is well-defined because it does not matter, up to the $R$-action, which row one chooses: if $e$ and $f$ are rows of $E$ then
\[
E - f = (E - e) - (f-e)
\]
and $(f-e)$ is a row of $E-e$ (here $f-e$ means the componentwise difference of $f$ and $e$ and $E-f$ means $E$ with $f$ subtracted from each row).

Clearly adding a constant to some column of $E$ has no effect on $E_0$ thus we get a bijection
\[
R \backslash \DiffMatSet / (\Z/\delta\Z)^3 \to R \backslash \DiffMatBSet
\]
and quotienting further the bijection claimed in the statement.\qed
\end{proof}

We say that two difference matrices are \emph{equivalent} if they lie in the same $R \times \tilde{C}$-orbit. Note that this is consistent with the notion of equivalence for based difference matrices by Lemma~\ref{lem:diff_mats_to_based_diff_mats}: if two based difference matrices are equivalent modulo $R \times \tilde{C}$ then they already lie in the same $R \times C$-orbit.

The advantage of of this approach lies in the fact that it is easier to list representatives. To obtain representatives of based difference matrices modulo equivalence we would need to list difference sets modulo $(\Z/\delta\Z)^\times$, which is not so clear how to do. By contrast in order to obtain difference matrices up to equivalence we need to list difference sets modulo $\Aff_1(\Z/\delta\Z)$. At least for Desarguesian ones we know from Theorem~\ref{thm:difference_sets_affine} that there is only one. We call a difference matrix \emph{Desarguesian} if all of its columns are Desarguesian difference sets and get:

\begin{lemma}
Let $D$ be any Desarguesian difference set and let $E$ be a Desarguesian difference matrix. Then $E$ is equivalent to a difference matrix where each column is equal to $D$ as a set.
\end{lemma}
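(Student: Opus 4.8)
The statement asserts that a Desarguesian difference matrix $E$ is equivalent (in the sense just introduced, i.e.\ modulo $R \times \tilde C$) to one all of whose columns coincide with a fixed Desarguesian difference set $D$. The plan is to work column by column using the action of $\Aff_1(\Z/\delta\Z)^3$, which acts on the three columns independently and, crucially, contains on each column a \emph{full} copy of $\Aff_1(\Z/\delta\Z)$ rather than only the linear part $(\Z/\delta\Z)^\times$ that was available for based difference matrices. The key input is Theorem~\ref{thm:difference_sets_affine}: $\Aff_1(\Z/\delta\Z)$ acts transitively on Desarguesian difference sets in $\Z/\delta\Z$.

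First I would spell out how $\tilde C = \Aff_1(\Z/\delta\Z)^3 \rtimes S_3$ acts: the $S_3$ factor permutes the three columns, and the $j$th copy of $\Aff_1(\Z/\delta\Z)$ acts on the $j$th column by $x \mapsto m_j x + a_j$, i.e.\ it sends the column $C_j$ (regarded as a subset of $\Z/\delta\Z$) to $m_j C_j + a_j$. Since $E$ is Desarguesian, each column $C_j$ is a Desarguesian difference set. By Theorem~\ref{thm:difference_sets_affine} there exists, for each $j \in \{0,1,2\}$ independently, an element $g_j = (m_j, a_j) \in \Aff_1(\Z/\delta\Z)$ with $g_j \cdot C_j = D$ (as sets). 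Applying $(g_0, g_1, g_2, \id) \in \tilde C$ to $E$ on the right therefore produces a difference matrix $E'$ in the same $R \times \tilde C$-orbit as $E$ — in fact in the same $\tilde C$-orbit — each of whose columns equals $D$ as a set. This is exactly the asserted conclusion.

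The only subtlety worth a sentence is bookkeeping about whether the action is a left or a right action and whether one needs the $R$ factor at all; as the computation shows, one does not need $R$ here — the $\tilde C$-part alone suffices — but the statement is phrased with the coarser equivalence relation $R \times \tilde C$, so it certainly holds. I would also remark that one cannot in general further arrange the columns to be equal as \emph{ordered} tuples (rows of $E'$ need not line up), which is why the statement says ``as a set'' and why the residual freedom — permuting the rows within each column independently — is precisely what will be studied next via the $R$-action and the more refined invariants.

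The main obstacle is essentially nonexistent: the entire content is the transitivity statement of Theorem~\ref{thm:difference_sets_affine} applied three times, once per column, using that $\tilde C$ (unlike $C$) supplies translations as well as multiplications on each column. The one place to be careful is to confirm that the three columns can be normalized simultaneously and independently — which holds because $\Aff_1(\Z/\delta\Z)^3$ is a direct product acting coordinatewise — so that no conflict arises between the normalizations of different columns.
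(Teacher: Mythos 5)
Your argument is correct and is exactly the intended one: the paper states this lemma without proof as an immediate consequence of Theorem~\ref{thm:difference_sets_affine}, applied independently to each column via the $\Aff_1(\Z/\delta\Z)^3$ factor of $\tilde{C}$. Your additional remarks (that the $R$-factor is not needed, and that equality holds only as sets, not as ordered columns) are accurate and consistent with how the paper uses the lemma afterwards.
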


So let us take $D$ to be any Desarguesian difference set and let $A_D < \Aff_1(\Z/\delta\Z)$ be the stabilizer of $D$. Denoting by $\DiffMatSet(D)$ the set of difference matrices whose columns setwise equal $D$, inclusion induces a map
\begin{equation}
\label{eq:enumerating_difference_sets}
\bar{R} \backslash \DiffMatSet(D) / (A_D^3 \rtimes S_3) \to R \backslash \DiffMatSet / \tilde{C}\text{.}
\end{equation}

\begin{lemma}
\label{lem:diff_mat_representatives}
The map \eqref{eq:enumerating_difference_sets} is injective.
\end{lemma}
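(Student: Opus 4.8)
The plan is to show that the map \eqref{eq:enumerating_difference_sets} is injective by unwinding both quotients and exploiting the transitivity statement of Theorem~\ref{thm:difference_sets_affine}. So suppose $E$ and $E'$ are two difference matrices in $\DiffMatSet(D)$ whose images in $R \backslash \DiffMatSet / \tilde{C}$ coincide; that is, there are $r \in R$, $\gamma = ((\gamma_0, \gamma_1, \gamma_2), \pi) \in \tilde{C}$ with $E' = r \cdot E \cdot \gamma$ (interpreting the $\tilde C$-action as right multiplication by monomial-affine data as in the excerpt). I want to produce $\bar r \in \bar R$ and an element of $A_D^3 \rtimes S_3$ taking $E$ to $E'$.

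First I would isolate what $\gamma$ does columnwise. The permutation part $\pi$ moves the $j$th column of $E$ (a copy of $D$) to the $\pi(j)$th slot, and then $\gamma_{\pi(j)} \in \Aff_1(\Z/\delta\Z)$ is applied to it. Since each column of both $E$ and $E'$ is setwise equal to $D$, the set $\gamma_{\pi(j)}(D)$ must equal $D$ (the left $R$-action only permutes rows, hence does not change any column as a \emph{set}). Therefore each $\gamma_{\pi(j)}$ already lies in the stabilizer $A_D$, and $\gamma$ in fact lies in $A_D^3 \rtimes S_3 \le \tilde C$. That is the crucial point: the constraint "columns equal $D$" forces the affine parts of the transition element into $A_D$.

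Having done that, it remains to check that the left factor $r \in R = S_{q+2}$ can be taken in $\bar R = S_{q+1}$. The group $R$ is generated by the row-permutation matrices $P_i$ together with the matrices $M_i$ coming from $\alpha_3$ (Lemma~\ref{lem:big_symmetric}), and an arbitrary $M_i$ changes the difference matrix by subtracting a row from all rows — which, for a difference matrix with columns equal to $D$, will generally destroy the property of having columns equal to $D$. Concretely, writing $r$ in terms of the $S_{q+2}$-action on $(\Z/\delta\Z)^{q+2}$, the condition that $r \cdot E \cdot \gamma$ again lies in $\DiffMatSet(D)$ — combined with $\gamma \in A_D^3 \rtimes S_3$, so that $E \cdot \gamma$ already has columns equal to $D$ — forces $r \cdot (E\cdot\gamma)$ to have columns equal to $D$ as well, hence no row gets subtracted and $r$ only permutes the $q+1$ rows, i.e.\ $r \in \bar R$. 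Then $E' = \bar r \cdot E \cdot \gamma$ with $\bar r \in \bar R$ and $\gamma \in A_D^3 \rtimes S_3$, exactly exhibiting $E$ and $E'$ in the same orbit on the left side of \eqref{eq:enumerating_difference_sets}.

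The main obstacle I anticipate is the last step: making precise that an element of $R \setminus \bar R$ (one that involves a genuine $M_i$, i.e.\ a nontrivial row-subtraction) cannot send a matrix with all columns equal to $D$ to another such matrix. This should follow because subtracting a nonzero row $e$ from all rows of $E$ replaces the $j$th column $D$ by $D - e_j$, and by Lemma~\ref{lem:diff_set_difference} one has $D - e_j \ne D$ whenever $e_j \in D$ — and here $e_j$ is an entry of the $j$th column, hence lies in $D$; so unless the subtracted row is the zero row, at least one column changes. Care is needed because the overall transformation also includes $\gamma$ and the remaining genuine row-permutation part, so one must argue that these cannot conspire to restore the columns; but since $\gamma$ acts on the right (columns) and the row-permutation part of $r$ acts on the left by set-preserving permutations, the columnwise effect of the $M_i$-part is not cancellable, and the argument goes through.
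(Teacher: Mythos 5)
Your overall strategy --- control the columns as sets and invoke Lemma~\ref{lem:diff_set_difference} --- is the right one and is essentially the paper's, but the execution has a genuine circularity. In your second paragraph you deduce $\gamma_{\pi(j)}(D)=D$ from the parenthetical claim that ``the left $R$-action only permutes rows, hence does not change any column as a set.'' That is precisely the statement you still have to prove: it holds for $\bar R$ but fails for $R$, whose extra generators translate each column set by one of its entries. Worse, both of your intermediate claims are false as assertions about the \emph{given} pair $(r,\gamma)$: if $m_i\in R$ is the rebasing that subtracts a row $E_{i,*}\neq(0,0,0)$ from all rows, and $t\in\tilde C$ is the right translation of the three columns by the entries of that row, then $m_i\cdot E\cdot t=E$, so $E$ is carried to an element of $\DiffMatSet(D)$ (namely itself) by a pair with $r\notin\bar R$ and with affine parts that are nontrivial translations, hence not in $A_D$. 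This does not threaten the lemma --- injectivity between double quotients only asks that \emph{some} transition element lie in the smaller groups --- but it does break your argument, which reasons about the pair you were handed; it also refutes the assertion in your last paragraph that the columnwise effect of the $M_i$-part ``is not cancellable'' by the right action. It is cancellable, by a translation, and that cancellation is exactly the mechanism the proof needs.

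The repair is to modify the pair before arguing. Each rebasing generator acts on a fixed matrix $F$ as the right translation by $-F_{i,*}$ (up to a relabelling of rows), so every $r\in R$ satisfies $r\cdot E=\bar r\cdot E\cdot t$ for some $\bar r\in\bar R$ and some $t\in(\Z/\delta\Z)^3\le\tilde C$. Absorbing $t$ into $\gamma$ you may therefore assume from the outset that $r\in\bar R$; only now does your column-set argument apply verbatim, since $\bar r^{-1}E'=E\cdot(t\gamma)$ has all columns equal to $D$, forcing each affine part of $t\gamma$ to stabilize $D$, i.e.\ $t\gamma\in A_D^3\rtimes S_3$. Lemma~\ref{lem:diff_set_difference} is what guarantees the other normal form, namely that a genuine rebasing cannot by itself preserve $\DiffMatSet(D)$. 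The paper's own (very terse) proof is organized around these two separately true statements --- a pure $\tilde C$-transition between matrices in $\DiffMatSet(D)$ lies in $A_D^3\rtimes S_3$, and a pure $R$-transition can be replaced by one in $\bar R$ --- rather than around the given mixed transition element, which is where your version goes astray.
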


\begin{proof}
It is not hard to see that an element of $\tilde{C}$ taking one difference matrix in $\DiffMatSet(D)$ to another actually has to lie in $A_D^3 \rtimes S_3$. That two difference matrices in $\DiffMatSet(D)$ that differ by an element of $R$ actually differ by an element of $\bar{R}$ follows from Lemma~\ref{lem:diff_set_difference}.\qed
\end{proof}

We want to use \eqref{lem:diff_mat_representatives} to bound the number of difference matrices up to equivalence. Lemmas~\ref{lem:diff_mats_to_based_diff_mats} and~\ref{lem:diff_mat_representatives} imply the last two of the inequalities
\begin{equation}
\label{eq:prepare_bound}
\frac{\abs{S_{q+1} \backslash \DiffMatSet(D) / S_3}}{\abs{A_D^3}} \le \abs{\bar{R} \backslash \DiffMatSet(D) / (A_D^3 \rtimes S_3)} \le \abs{R \backslash \DiffMatSet / \tilde{C}}= \abs{R \backslash \DiffMatBSet  / C}\text{.}
\end{equation}
The next two lemmas compute the numerator respectively denominator of the left hand side of the inequality.

\begin{lemma}
\label{lem:matrix_count}
Let $D$ be a set of cardinality $k$. Let $\calC$ be the set of $k \times 3$-matrices where each column setwise equals $D$. Then $S_k \backslash \calC /S_3$ has cardinality
\begin{equation}
\label{eq:count}
\frac{1}{6}\left ((k!)^2 + 3k! + 2(\tau_k + 1)\right)
\end{equation}
where $\tau_k$ is the number of elements of order (exactly) $3$ in $S_k$.
\end{lemma}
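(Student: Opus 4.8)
The plan is to treat this as a Burnside (orbit-counting) computation. First I would fix, once and for all, a bijection between $D$ and $\{1,\dots,k\}$; then specifying a column of a matrix in $\calC$ amounts to specifying a bijection $\{1,\dots,k\}\to D$, i.e.\ (via the reference bijection) an element of $S_k$. This identifies $\calC$ with $S_k^3$. Under the identification, the left action of $S_k$ by permuting rows becomes the diagonal action $(a,b,c)\cdot\rho=(a\rho,b\rho,c\rho)$, and the right action of $S_3$ becomes permutation of the three coordinates; the two actions commute, so $S_k\backslash\calC/S_3$ is the orbit set of the product group $S_k\times S_3$ acting on $\calC=S_k^3$, and Burnside's lemma gives
\[
6\cdot k!\cdot\abs{S_k\backslash\calC/S_3}=\sum_{\rho\in S_k,\ \tau\in S_3}\abs{\{M\in\calC : (\rho,\tau).M=M\}}\text{.}
\]

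Next I would evaluate the sum by splitting over the conjugacy type of $\tau$. Because each column of a matrix in $\calC$ has $k$ pairwise distinct entries, a nontrivial row permutation moves some entry in every column, so for $\tau=\id$ only $\rho=\id$ contributes, giving $(k!)^3$. For $\tau$ a transposition, the fixed-point equations force $\rho=\id$ and then identify two of the three columns, giving $(k!)^2$ fixed matrices; there are three transpositions, contributing $3(k!)^2$. For $\tau$ a $3$-cycle, the fixed-point equations say that the three columns are $a,a\rho,a\rho^2$ in some order and that $a\rho^3=a$; thus there is a solution precisely when $\rho^3=\id$, in which case the column $a$ may be chosen freely, giving $k!$ fixed matrices for each such $\rho$. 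The number of $\rho\in S_k$ with $\rho^3=\id$ is the number of elements of order dividing $3$, namely $\tau_k+1$ (the identity plus the $\tau_k$ elements of order exactly $3$); since there are two $3$-cycles in $S_3$ this term contributes $2(\tau_k+1)k!$. Adding the three contributions and dividing by $6\cdot k!$ yields exactly \eqref{eq:count}.

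Alternatively, and perhaps more transparently, one can use that the $S_k$-action on $\calC$ is free to pass first to $S_k\backslash\calC$, which has $(k!)^2$ elements (represented by the matrices whose third column is the reference bijection, i.e.\ $(a,b,c)\mapsto(ac^{-1},bc^{-1})$), and then apply Burnside for the induced $S_3$-action on this set: the identity fixes all $(k!)^2$, each transposition fixes $k!$, and each $3$-cycle fixes $\tau_k+1$. I expect the only real content to be the $3$-cycle count — where $\tau_k$ enters — together with the purely clerical task of pinning down how column permutations act on the normalized representatives (equivalently, of writing the fixed-point conditions for the pairs $(\rho,\tau)$ correctly); everything else is immediate.
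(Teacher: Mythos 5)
Your proof is correct and uses essentially the same tool as the paper: a Burnside orbit count for the action of $S_k\times S_3$ on $\calC\cong S_k^3$, with $\tau_k$ entering through the order-$3$ elements. The only difference is organizational — you sum fixed points over group elements (the classical form of Burnside), whereas the paper uses the rearranged form $\abs{\calC/G}=\abs{\calC}/\abs{G}+\sum_{cG}(1-1/\abs{G_c})$ and classifies orbits by the isomorphism type of their stabilizers; both computations hinge on the same observation that the per-column $S_k$-action is free, and both yield \eqref{eq:count}.
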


\begin{proof}
Let $G = S_k \times S_3$ acting on $\calC$. The Burnside Lemma gives
\[
\abs{\calC/G} = \frac{\abs{\calC}}{\abs{G}} + \sum_{cG \in \calC/G} \left(1 - \frac{1}{\abs{G_c}} \right)
\]
where $G_c$ is the stabilizer of an element $c \in \calC$. We have to determine which of the $(k!)^3$ elements (or more precisely: which of their equivalence classes) have a non-trivial stabilizer.

We start by observing that the projection $\rho \colon G \to S_3$ always restricts to an injective morphism on $G_c$. Indeed, given two column indices $i$ and $j$ there is a unique permutation $\pi_{i,j} \in S_k$ that takes the $i$th column to the $j$th. We distinguish cases by what $\rho(G_c)$ is.

If $\rho(G_c)$ is all of $S_3$ then all columns of $c$ have to coincide. This is because there are $3$-cycles $(\pi, (i\,j\,k)) \in G_c$ which satisfy $\pi = \pi_{i,j} = \pi_{j,k} = \pi_{k,i}$ and $\pi^3 = \id$, and there are elements $(\pi_{i,j},(i\,j))$ satisfying $\pi_{i,j}^2 = \id$ from which we see that $\pi_{i,j} = \id$ for all $i,j$. Conversely if all columns coincide it is obvious that $\rho(G_c) = S_3$. There are $k!$ such elements in $\calC$, which form a single equivalence class.

If $\rho(G_c) = C_3$ then there is a permutation $\pi$ ($=\pi_{1,2} = \pi_{2,3} = \pi_{3,1}$) of order $3$ in $S_k$ such that $G_c$ is generated by $(\pi, (1\,2\,3))$. The number of these clements is $k! \cdot \tau_k$, and the number of their equivalence classes is $1/2 \cdot \tau_k$.

If $\rho(G_c) = C_2$ then $G_c$ is generated by an element $(\pi, (i\,j))$ of order $2$. Let $k$ be the third index. We see that $\pi_{i,j} = \pi = \pi_{k,k} = \id$ and thus that the $i$th and the $j$th columns have to coincide (but the $k$th must not). There are ${3 \choose 2} \cdot k! \cdot (k! - 1)$ of these elements and $(k!-1)$ of their equivalence classes.

The only remaining case is where $\rho(G_c)$ is trivial so $G_c$ is trivial and we need not count these.

Putting everything together, we find that
\[
\abs{\calC/G} = \frac{(k!)^3}{6k!} + \frac{5}{6} + \frac{2}{3} \cdot \frac{1}{2} \tau_k + \frac{1}{2}(k! -1)
\]
which simplifies to the expression in the statement.\qed
\end{proof}

\begin{lemma}
\label{lem:frobenius_order}
Write $q = p^\eta$ with $p$ prime. If $D \subseteq \Z/\delta\Z$ is a Desarguesian difference set then its stabilizer in $\Aff_1(\Z/\delta\Z)$ has order $3\eta$.
\end{lemma}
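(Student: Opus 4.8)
The plan is to identify the stabilizer $A_D \le \Aff_1(\Z/\delta\Z) = (\Z/\delta\Z)^\times \ltimes \Z/\delta\Z$ with the group of numerical multipliers of $D$ and then to compute the latter. First I would note that the projection to $(\Z/\delta\Z)^\times$ is injective on $A_D$: if $(m,a)$ and $(m,a')$ both stabilize $D$, then translation by $a-a'$ fixes the difference set $mD$, and summing its elements forces $(q+1)(a-a') = 0$; since $\delta = q(q+1)+1$ we have $\gcd(q+1,\delta) = 1$, so $a = a'$. Hence $\abs{A_D} = \abs M$ where $M = \{m \in (\Z/\delta\Z)^\times : mD \text{ is a translate of } D\}$, and everything reduces to showing $\abs M = 3\eta$.

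For the lower bound I would realize $D$ via the Singer construction: $\ProjPla = \P^2\F_q$ with point set $\F_{q^3}^\times/\F_q^\times$ identified with $\Z/\delta\Z$ through a Singer cycle $\theta$, the base line being a $2$-dimensional $\F_q$-subspace $H$ of $\F_{q^3}$. The Frobenius $\varphi\colon x \mapsto x^p$ of $\F_{q^3}$ fixes $\F_q$ setwise, hence carries $H$ to another line, and intertwines the Singer action with multiplication by $p$ on the index set; reading off incidences then shows $pD$ is a translate of $D$, so $p \in M$. Moreover $p$ has order exactly $3\eta$ in $(\Z/\delta\Z)^\times$: since $\delta \mid q^3-1$ one has $p^{3\eta} = q^3 \equiv 1 \pmod \delta$, while for each proper divisor $j$ of $3\eta$ a short estimate (distinguishing $j \mid \eta$ from $j = 3d$ with $d \mid \eta$, $d < \eta$) gives $0 < p^j-1 < \delta$. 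Thus $\gen p \le M$ and $\abs M \ge 3\eta$.

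For the upper bound $M \le \gen p$ I would turn a multiplier back into a collineation: if $mD = D+a$, the map $i \mapsto mi+a$ on $P = \Z/\delta\Z$ sends each line $D+j$ to the line $D+(mj+2a)$, so it is a collineation $\gamma$ of $\ProjPla$ normalizing the Singer cycle $S = \gen\theta$ with $\gamma\theta\gamma^{-1} = \theta^m$. Since $\ProjPla$ is Desarguesian, the fundamental theorem of projective geometry gives $\Aut(\ProjPla) = \PGaL_3(q)$; lifting $\gamma$ to a semilinear bijection of $\F_{q^3}$ that normalizes the Singer torus $\F_{q^3}^\times$, conjugation by it is forced to be a field automorphism of $\F_{q^3}$, i.e.\ an element of $\Gal(\F_{q^3}/\F_p) = \gen\varphi$, so $m \in \gen p$. (Alternatively one may quote the classical fact that the normalizer of a Singer cycle in $\PGaL_3(q)$ is $S \rtimes C_{3\eta}$, or directly the classical determination of the multiplier group of the Singer difference set.) Combining the two bounds, $\abs{A_D} = \abs M = 3\eta$.

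The routine steps are the reduction to multipliers and the production of the $3\eta$ multipliers coming from Frobenius. The genuine obstacle is the upper bound — showing there is no multiplier other than a power of $p$ — which rests on the fundamental theorem of projective geometry together with the structure of the normalizer of a Singer torus in $\PGaL_3(q)$ (or on citing the classical multiplier-group computation).
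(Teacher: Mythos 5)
Your proof is correct and follows essentially the same route as the paper: the lower bound comes from the Frobenius acting as the multiplier $p$, and the upper bound from the fact that the normalizer of a Singer cycle in the full collineation group $\PGaL_3(\F_q)$ is $\F_{q^3}^\times/\F_q^\times \rtimes \Gal(\F_{q^3}/\F_p)$ (the paper cites Huppert II.7.3a for this). You are somewhat more explicit than the paper about the reduction to multipliers, the injectivity of the projection to $(\Z/\delta\Z)^\times$, and the verification that $p$ has order exactly $3\eta$ modulo $\delta$, all of which the paper leaves implicit.
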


\begin{proof}
The normalizer of the cyclic Singer group $\F_{q^3}^\times/\F_q^\times$ in the full group of colineations $\PGaL_3(\F_{q}) = \PGL_3(\F_{q}) \rtimes \Gal(\F_{q}/\F_p)$ is $N \defeq \F_{q^3}^\times/\F_q^\times \rtimes \Gal(\F_{q^3}/\F_p)$. This can be seen using \cite[II.7.3a]{huppert67} which implies that the normalizer in $\PGL_3(\F_q)$ is $\F_{q^3}^\times/\F_q^\times \rtimes \Gal(\F_{q^3}/\F_q)$.

Identifying the Singer group with $\Z/\delta\Z$, the group $N$ is identified with the subgroup $\Z/\delta\Z \rtimes \gen{p}$ of $\Aff_1(\Z/\delta\Z)$. Let $x$ be a point and $\ell$ be a line such that $\sigma^i.x \inc L$ iff $i \in D$. Since $N$ acts by colineations, there is a line $\ell'$ such that $\sigma^{pi}.x \inc \ell'$ iff $i \in D$. Say $\ell' = \sigma^a.\ell$. Thus $\sigma^{pi-a}.x \inc \ell$ iff $i \in D$, showing that the subgroup of $\Z/\delta\Z \rtimes \gen{p}$ generated by $i \mapsto pi-a$ preserves $D$.\qed
\end{proof}

Combining Lemmas~\ref{lem:matrix_count},~\ref{lem:frobenius_order}, and~Corollary~\ref{cor:count_classes_isom_equiv} with \eqref{eq:prepare_bound} we get:

\begin{theorem}
\label{thm:bound}
Let $q = p^\eta$ with $p$ prime. The number of Singer cyclic lattices with parameter $q$ up to isomorphism is bounded below by
\[
\frac{1}{6 \cdot 27\eta^3}\left (((q+1)!)^2 + 3(q+1)! + 2(\tau_{q+1} + 1)\right)
\]
where $\tau_{q+1}$ is the number of elements of order precisely $3$ in $S_{q+1}$. In particular, the number grows super-exponentially with $q$.
\end{theorem}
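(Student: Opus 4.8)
The plan is to splice together the ingredients already in hand. By Corollary~\ref{cor:count_classes_isom_equiv} the number of Singer cyclic lattices with parameter $q$ up to isomorphism is exactly $\abs{R \backslash \DiffMatBSet / C}$, and the chain~\eqref{eq:prepare_bound} bounds this from below by $\abs{S_{q+1} \backslash \DiffMatSet(D) / S_3}/\abs{A_D^3}$, where $D$ is any Desarguesian difference set in $\Z/\delta\Z$ and $A_D$ is its stabilizer in $\Aff_1(\Z/\delta\Z)$. So the proof splits into (i) producing such a $D$, (ii) evaluating the numerator, and (iii) evaluating the denominator. Step (i) is immediate: the Desarguesian plane $\P^2\F_q$ exists for every prime power $q$ and carries a Singer cycle by Singer's theorem; choosing one identifies a cyclic Singer group with $\Z/\delta\Z$ and produces a difference set $D$, which has cardinality $q+1$ since counting ordered differences gives $\abs{D}(\abs{D}-1) = \delta-1$.

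For (ii) I would note that $\DiffMatSet(D)$, the set of $(q+1)\times 3$ matrices whose columns equal $D$ as sets, is exactly the object $\calC$ of Lemma~\ref{lem:matrix_count} with $k = q+1$: any reordering of a difference set is still a difference set, so every permutation of the entries within a column is allowed. Lemma~\ref{lem:matrix_count} then evaluates the numerator to $\tfrac16\bigl(((q+1)!)^2 + 3(q+1)! + 2(\tau_{q+1}+1)\bigr)$, where $\tau_{q+1}$ counts the elements of order exactly $3$ in $S_{q+1}$. For (iii), Lemma~\ref{lem:frobenius_order} gives $\abs{A_D} = 3\eta$, hence $\abs{A_D^3} = 27\eta^3$. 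Dividing the numerator by this and invoking that the left-hand side of~\eqref{eq:prepare_bound} is a genuine lower bound for the count produces the asserted inequality with right-hand side $\tfrac1{6\cdot27\eta^3}\bigl(((q+1)!)^2 + 3(q+1)! + 2(\tau_{q+1}+1)\bigr)$.

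Finally, for the super-exponential growth it suffices to keep the dominant summand $((q+1)!)^2/(162\eta^3)$ and discard the remaining positive terms. Since $\eta = \log_p q \le \log_2 q$, the denominator $162\eta^3$ grows at most poly-logarithmically in $q$, while $((q+1)!)^2$ outgrows $C^q$ for every fixed $C>1$ by Stirling's formula (indeed $((q+1)!)^2 \sim 2\pi(q+1)^{2q+3}e^{-2q-2}$, as in Theorem~\ref{thm:intro_bound}). Hence $\log(\text{bound})/q \to \infty$, i.e.\ the number of Singer cyclic lattices grows super-exponentially with $q$.

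I do not expect a serious obstacle in any of this, since the two genuinely nontrivial counting statements (Lemmas~\ref{lem:matrix_count} and~\ref{lem:frobenius_order}) are already available; the assembly is routine. The only point requiring care is that the same Desarguesian $D$ is used throughout the chain~\eqref{eq:prepare_bound} --- that it has exactly $q+1$ elements so that $k=q+1$ in Lemma~\ref{lem:matrix_count}, and that $\DiffMatSet(D)$ is literally the set $\calC$ to which that lemma applies --- after which the argument is pure bookkeeping.
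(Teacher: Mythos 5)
Your proposal is correct and follows exactly the route the paper takes: the theorem is obtained by combining Corollary~\ref{cor:count_classes_isom_equiv} with the chain~\eqref{eq:prepare_bound}, evaluating the numerator via Lemma~\ref{lem:matrix_count} with $k=q+1$ (noting $\DiffMatSet(D)=\calC$) and the denominator via Lemma~\ref{lem:frobenius_order} as $(3\eta)^3=27\eta^3$. Your filling-in of the details the paper leaves implicit (existence and cardinality of $D$, the identification of $\DiffMatSet(D)$ with $\calC$, and the Stirling estimate for super-exponential growth) is accurate.
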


Table~\ref{tab:parameters} shows that this bound quickly becomes relatively good, and also that it grows very fast.

For $q \in \{2,3,4\}$ representatives for difference matrices up to equivalence are shown in Tables~\ref{tab:lattices_2}, \ref{tab:lattices_3},~and~\ref{tab:lattices_4} together with some invariants of the associated Singer cyclic lattice.

\begin{table}
\caption{Singer cyclic lattices for $q = 2$. The table shows a difference matrix with fixed difference set, a based difference matrix, the outer automorphism group (which is the automorphism group of the difference matrix), the abelianization, and the abelianization of the commutator subgroup.}
\label{tab:lattices_2}
\begin{gather*}
\begin{array}{cccccc}
\text{Name} & \text{Diff mat} & \text{Based DM} & \textrm{Out}(\Gamma) & H_1(\Gamma) & H_1([\Gamma,\Gamma])\\
\Gamma_{2, 1}& \left(\begin{smallarray}{ccc}0&0&0\\1&1&1\\3&3&3\end{smallarray}\right)& \left(\begin{smallarray}{ccc}0&0&0\\1&1&1\\3&3&3\end{smallarray}\right)& C_3 \rtimes S_3& (\Z/7\Z)^{2}& (\Z/2\Z)^{6}\\
\Gamma_{2, 2}& \left(\begin{smallarray}{ccc}0&0&0\\1&1&3\\3&3&1\end{smallarray}\right)& \left(\begin{smallarray}{ccc}0&0&0\\1&1&3\\3&3&1\end{smallarray}\right)& C_3 \rtimes (0, 1)& \Z/7\Z& 0

\end{array}
\end{gather*}
\end{table}

\begin{table}
\caption{Singer cyclic lattices for $q = 3$. The columns are the same as in Table~\ref{tab:lattices_2}.}
\label{tab:lattices_3}
\begin{gather*}
\begin{array}{cccccc}
\text{Name} & \text{Diff mat} & \text{Based DM} & \textrm{Out}(\Gamma) & H_1(\Gamma) & H_1([\Gamma,\Gamma])\\
\Gamma_{3, 1}& \left(\begin{smallarray}{ccc}0&0&0\\1&1&1\\3&3&3\\9&9&9\end{smallarray}\right)& \left(\begin{smallarray}{ccc}0&0&0\\1&1&1\\3&3&3\\9&9&9\end{smallarray}\right)& C_3 \rtimes S_3& (\Z/13\Z)^{2}& (\Z/3\Z)^{6}\\
\Gamma_{3, 2}& \left(\begin{smallarray}{ccc}0&0&0\\1&1&1\\3&3&9\\9&9&3\end{smallarray}\right)& \left(\begin{smallarray}{ccc}0&0&0\\1&1&1\\3&3&9\\9&9&3\end{smallarray}\right)& C_3 \rtimes (0, 1)& \Z/13\Z& 0\\
\Gamma_{3, 3}& \left(\begin{smallarray}{ccc}0&0&1\\1&1&0\\3&3&3\\9&9&9\end{smallarray}\right)& \left(\begin{smallarray}{ccc}0&0&0\\1&1&1\\3&3&4\\9&9&6\end{smallarray}\right)& (0, 1)& \Z/13\Z& 0\\
\Gamma_{3, 4}& \left(\begin{smallarray}{ccc}0&0&1\\1&1&0\\3&3&9\\9&9&3\end{smallarray}\right)& \left(\begin{smallarray}{ccc}0&0&0\\1&1&1\\3&3&6\\9&9&4\end{smallarray}\right)& (0, 1)& \Z/13\Z& 0\\
\Gamma_{3, 5}& \left(\begin{smallarray}{ccc}0&0&1\\1&1&0\\3&9&3\\9&3&9\end{smallarray}\right)& \left(\begin{smallarray}{ccc}0&0&0\\1&1&1\\3&9&4\\9&3&6\end{smallarray}\right)& 1& 0& 0\\
\Gamma_{3, 6}& \left(\begin{smallarray}{ccc}0&1&1\\1&0&3\\3&3&0\\9&9&9\end{smallarray}\right)& \left(\begin{smallarray}{ccc}0&0&0\\1&1&6\\3&4&4\\9&6&1\end{smallarray}\right)& 1& 0& 0\\
\Gamma_{3, 7}& \left(\begin{smallarray}{ccc}0&1&1\\1&0&3\\3&9&0\\9&3&9\end{smallarray}\right)& \left(\begin{smallarray}{ccc}0&0&0\\1&1&4\\3&6&1\\9&4&6\end{smallarray}\right)& 1& 0& 0

\end{array}
\end{gather*}
\end{table}


\section{Bruhat--Tits examples}
\label{sec:brutit_examples}

This section as well as Section~\ref{sec:linearity} are devoted to investigating which of the Singer cyclic lattices act on Bruhat--Tits buildings. The Bruhat--Tits buildings relevant for us arise as follows. If $K$ is a local field and $\calO$ is its ring of integers, then there is a Bruhat--Tits building $X$ of type $\tilde{A}_2$ on which the group $\PGL_3(K)$ acts strongly transitively. Its vertices are homothety classes of $\calO$-lattices in $K^3$, and in particular $\PGL_3(\calO)$ is the stabilizer of a vertex. The full automorphism group of $X$ is $(\PGL_3(K) \rtimes C_2) \rtimes \Aut(K)$ where the non-trivial element of $C_2$ takes a matrix to its transpose inverse.

In this section we will describe for each $q$ one Singer cyclic lattice that is an arithmetic subgroup of $\PGL_3(\F_q\lseries{t})$ and in particular acts on the associated Bruhat--Tits building. The existence of lattices these was first noted by Tits~\cite[Section~3.1]{tits86b}. In \cite{caprumtho12} they were recovered in a similar way to how we will describe them below. The main point will therefore be to work out precisely which difference matrices produce these arithmetic lattices.

To start with, we need to recall a construction by Cartwright, Mantero, Steger, and Zappa \cite{carmanstezap93a} (see also \cite{carste98}) to construct a vertex regular lattice in $\PGL_3(K)$. We begin with a setup as in Section~\ref{sec:diff_sets}: let $q$ be a prime power and consider the field extension $\F_{\smash{q^3}}/\F_q$. Let $\xi$ be a generator of $\F_{\smash{q^3}}^*/\F_q^*$. Now take $K = \F_{q}(Y)$ to be the field of rational functions over $\F_q$ and $L = \F_{q^3}(Y)$ the one over $\F_{q^3}$. CMSZ consider the automorphism $\varphi$ of $L$ that takes $x \in \F_{q^3}$ to $x^q$ and fixes $Y$ and define the cyclic algebra $\alg = L[\sigma]$ with relations
\begin{equation}
\label{eq:relations}
\sigma^3 = 1 + Y\quad \text{and}\quad\sigma x \sigma^{-1} = \varphi(x)\text{.}
\end{equation}
(this is a central algebra over $K$). They define an element $b_1 \in \alg$ and for $u \in \F_{\smash{q^3}}^*/\F_q^*$ put $b_u = u b_1 u^{-1}$ to show:

\begin{theorem}
\label{thm:cmsz}
The algebra $\alg$ splits giving rise to an isomorphism
\[
\alg^*/Z(\alg)^* \to \PGL_3(\F_q\lseries{Y})\text{.}
\]
Under this isomorphism, the group $\Gamma_{\text{CMSZ}}$ generated by all the $b_u$ acts regularly on vertices of the associated building. More specifically, there is a vertex $v_0$ such that the maps $u \mapsto b_u.v_0$, $u^\perp \mapsto b_{u^{-1}}.v_0$ induce an isomorphism of the projective plane $\Pi(\F_{\smash{q^3}})$ with $\lk(v_0)$.
\end{theorem}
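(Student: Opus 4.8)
The plan is to split the statement into an algebraic half---that $\alg$ becomes $M_3(\F_q\lseries{Y})$ after base change, producing the map $\alg^*/Z(\alg)^* \to \PGL_3(\F_q\lseries{Y})$---and a geometric half---that $\Gamma_{\text{CMSZ}}$ acts regularly on vertices with the prescribed link at $v_0$. The algebraic half is elementary and I would prove it directly; the geometric half is the substance of \cite{carmanstezap93a} (see also \cite{carste98}) and I would reduce it to the theory of triangle presentations developed there.

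For the splitting, first note that $L \otimes_{\F_q(Y)} \F_q\lseries{Y} = \F_q\lseries{Y}[\theta]$ for $\theta$ a generator of $\F_{q^3}$ over $\F_q$; since the residue field of $\F_q\lseries{Y}$ is $\F_q$, the minimal polynomial of $\theta$ remains irreducible, so this algebra is the unramified cubic extension $\F_{q^3}\lseries{Y}$ and $\varphi$ becomes its Frobenius. Hence $\alg \otimes_{\F_q(Y)} \F_q\lseries{Y}$ is the cyclic algebra $(\F_{q^3}\lseries{Y}/\F_q\lseries{Y}, \varphi, 1+Y)$, and a cyclic algebra splits exactly when its distinguished element is a norm. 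Now $1+Y$ is a unit of $\F_q\pseries{Y}$, and every unit of $\F_q\pseries{Y}$ is a norm from $\F_{q^3}\pseries{Y}$ (the norm map of residue fields $\F_{q^3}^\times \to \F_q^\times$ is onto, and one lifts by successive approximation), so $\alg \otimes_{\F_q(Y)} \F_q\lseries{Y} \cong M_3(\F_q\lseries{Y})$ and we obtain the homomorphism $\alg^*/Z(\alg)^* \to \PGL_3(\F_q\lseries{Y})$ of the statement. Observing that $Y+1$ has valuation $1$ at the place where it vanishes, hence is not a norm there, one sees in passing that $\alg$ itself is a division algebra over $\F_q(Y)$, so the homomorphism is injective.

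For the regular action I would invoke the dictionary of \cite{carmanstezap93a} between triangle presentations and simply transitive actions on $\tilde{A}_2$ buildings: a cyclically symmetric set $\mathcal{T}$ of triples from an index set of size $q^2+q+1$, in which each ordered pair lies in a unique triple and the pairs that occur encode the incidence graph of a projective plane of order $q$, yields a group $\langle a_x \mid a_x a_y a_z = 1,\ (x,y,z)\in\mathcal{T}\rangle$ acting simply transitively on the vertices of an $\tilde{A}_2$ building whose base vertex has that plane as its link. What remains is (i) to verify, by a computation in $\alg$ with $\sigma^3 = 1+Y$, $\sigma x \sigma^{-1} = \varphi(x)$, and the specific $b_1$, that the $b_u$ satisfy relations of exactly this shape with $\mathcal{T}$ the incidence (difference-set) data of the Singer model $\Pi(\F_{q^3})$ of $\P^2\F_q$; and (ii) to identify the building so produced with $X$. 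For (ii): part (i) and the CMSZ dictionary give that $\Gamma_{\text{CMSZ}}$ has trivial vertex stabilizers, so it is discrete in $\PGL_3(\F_q\lseries{Y})$ and cocompact on $X$; since $X$ is simply connected and the quotient $\Gamma_{\text{CMSZ}}\backslash X$ carries precisely the combinatorial data $\Pi(\F_{q^3})$ of the quotient of the abstract building, covering theory identifies the two, and the claimed maps $u \mapsto b_u.v_0$, $u^\perp \mapsto b_{u^{-1}}.v_0$ then read off the asserted isomorphism $\Pi(\F_{q^3}) \cong \lk(v_0)$ from (i).

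I expect the main obstacle to be step (i): one must produce $b_1$---CMSZ build it from a difference set, arranging that the products $b_u b_{u'} b_{u''}$ collapse to scalars exactly along the incidence triples---and carry out the multiplicative identity in $\alg$, which is precisely where the relation $\sigma^3 = 1+Y$ is used. The identification (ii) is a secondary difficulty, relying on uniqueness of the $\tilde{A}_2$ building of $\PGL_3$ over a fixed residue field together with the discreteness just noted. I would give full details only for the splitting and cite \cite{carmanstezap93a} for (i) and (ii).
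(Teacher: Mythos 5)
The paper gives no proof of this theorem: it is stated as a result recalled from Cartwright--Mantero--Steger--Zappa \cite{carmanstezap93a}, so the only "approach" on record is the citation, and your proposal is consistent with that. Your outline is correct as a reconstruction of the cited argument --- the splitting of the cyclic algebra over $\F_q\lseries{Y}$ because the unit $1+Y$ is a norm from the unramified cubic extension (and the division-algebra remark via the valuation at $Y=-1$), followed by the triangle-presentation dictionary for the simply transitive vertex action --- and your decision to prove only the splitting in detail while citing \cite{carmanstezap93a} for the construction of $b_1$ and the identification of the building matches how the result is used in the paper.
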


A vertex regular lattice as above gives rise to the following data: a polarity $\lambda \colon \lk(v_0) \to \lk(v_0)$ defined by $\lambda(b_u.v_0) = b_u^{-1}.v_0$ and a \emph{triangle presentation} $\mathcal{T}$ which is just a set of triples $(u,v,w)$ such that $b_ub_vb_w = 1$. The lattice $\Gamma$ then has a presentation
\[
\gen{b_u, u \in \F_{\smash{q^3}}^*/\F_q^* \mid b_ub_vb_w = 1, (u,v,w) \in \mathcal{T}}\text{.}
\]
Let $\Tr \defeq \Tr_{\F_{\smash{q^3}}/\F_q}$ denote the relative trace and $(x,y) = \Tr(xy)$ the associated bilinear form. In Theorem~\ref{thm:cmsz} the involution (seen as a map of $\Pi(\F_{\smash{q^3}})$) is $\lambda(u\F_q) = u^\perp\F_q$ and the triangle presentation consists of triples $(u,u\zeta,u\zeta^{q+1})$ where $u,\zeta \in \F_{\smash{q^3}}^*$ and $\Tr(\zeta) = 0$. In other words, $(u,v,w) \in \mathcal{T}$ if and only if $\Tr(v/u) = 0$ and $w u^q = v^{q+1}$.

Note that from the theorem it is obvious that $\xi$ acts a Singer cycle on $\lk(v_0)$. Thus it is natural to look at the supergroup $\hat{\Gamma} \defeq \gen{\xi, b_1}$ of $\Gamma_{\text{CMSZ}}$. One expects that the normal closure $\Gamma \defeq \gen{\xi^{\hat{\Gamma}}}$ is a Singer cyclic lattice. This is indeed the case, and not surprisingly, their difference matrices are the most symmetric ones:

\begin{corollary}
\label{cor:cartwright_to_panel}
The group $\Gamma$ is a Singer cyclic lattice with difference matrix $E = (d\ d\ d)_{d \in D}$ where $D$ is a Desarguesian difference set.
\end{corollary}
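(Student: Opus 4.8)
The plan is to start from the CMSZ lattice $\Gamma_{\text{CMSZ}} \le \PGL_3(\F_q\lseries{Y})$ of Theorem~\ref{thm:cmsz} and the supergroup $\hat\Gamma = \gen{\xi, b_1}$, and to analyze how $\hat\Gamma$ (and its subgroup $\Gamma = \gen{\xi^{\hat\Gamma}}$) acts on the Bruhat--Tits building $X$. First I would record what is already visible: $\xi$ acts as a Singer cycle on $\lk(v_0)$ by Theorem~\ref{thm:cmsz}, in particular $\xi^\delta = 1$ on $X$ (it even has order $\delta$ in $\PGL_3$), and the identification of $\lk(v_0)$ with $\Pi(\F_{q^3})$ is $\xi$-equivariant, so the stabilizer $\Gamma_{v_0}$ contains the cyclic group $\gen{\xi}$ of order $\delta$ acting regularly on points and lines of that plane. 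The key structural fact I would extract is that $\hat\Gamma$ is transitive on vertices of $X$ of each type: $\Gamma_{\text{CMSZ}}$ is already regular on \emph{all} vertices (ignoring type), and inside $\hat\Gamma$ the element $\xi$ fixes $v_0$ and permutes the three classes of vertices adjacent to $v_0$ — concretely, conjugating $b_1$ by $\xi$ moves $v_0$ to neighbours, so $\gen{\xi}$ together with the $b_u$ already reaches all vertices while $\xi$ mixes the types. Thus the type-preserving subgroup $\Gamma \le \hat\Gamma$ still acts transitively on the vertices of each fixed type.

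Next I would show $\Gamma$ is a Singer cyclic lattice by exhibiting a presenting triple. Pick a chamber $\{v_0, v_1, v_2\}$ with $v_1, v_2 \in \lk(v_0)$ corresponding under the CMSZ identification to a point $u\F_q$ and a line with $\Tr(v/u)=0$, so that there is a relation $b_u b_v b_w = 1$ with $(u,v,w)\in\mathcal T$. Conjugating $\xi$ appropriately (or using that $\hat\Gamma$ is vertex-transitive) I get elements $\sigma_0 = \xi$, $\sigma_1 = {\xi}^{g_1}$, $\sigma_2 = {\xi}^{g_2}$ generating the stabilizers $\Gamma_{v_0}, \Gamma_{v_1}, \Gamma_{v_2}$, each cyclic of order $\delta$ and acting regularly on points of each type in the respective link (the stabilizer cannot be larger, since $\gen{\xi}$ already acts regularly on the $\delta$ points and a Singer group of $\P^2\F_q$ has exactly this order by \cite[4.2.7]{dembowski68}). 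Now $\Gamma$ is a type-preserving group acting regularly on panels of each type — regularity on panels follows because $\gen{\sigma_i}$ acts regularly on the panels through $v_i$ of a given type and $\Gamma$ is vertex-transitive, while freeness on panels follows because $\Gamma$ preserves types and has no fixed points other than vertices (as in Proposition~\ref{prop:recover_building}, using the Bruhat--Tits fixed-point theorem). Hence $\Gamma$ is a Singer cyclic lattice and $(\sigma_0,\sigma_1,\sigma_2)$ is a chamber triple, so by Lemma~\ref{lem:canonical_difference_matrix} it has an associated based difference matrix $E$.

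Finally I would identify $E$. Since $\sigma_0,\sigma_1,\sigma_2$ are all conjugate to the \emph{same} Singer cycle $\xi$, the three vertex links $\lk(v_0), \lk(v_1), \lk(v_2)$ carry the ``same'' projective plane $\Pi(\F_{q^3})$ with the same difference set $D$ in the identification $\Z/\delta\Z \cong \gen{\xi}$; the point is to check that the identifications are compatible around the chamber so that the difference set read off in each column is literally $D$, and that the zero row is consistent — but by Lemma~\ref{lem:equivalence}(\ref{item:row_subtraction}) the based representative is determined only up to subtracting a common row, and any choice of compatible base point makes all three columns equal. Concretely: $\sigma_0^{e_0}\sigma_1^{e_1}\sigma_2^{e_2}=1$ corresponds, after translating back through the conjugations to powers of $\xi$ and using the relation $b_ub_vb_w=1$ of $\mathcal T$, to a single relation among three copies of $D$, and the symmetry of the triangle presentation under cyclic rotation $(u,v,w)\mapsto(v,w,u)$ forces the three columns to agree. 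So $E = (d\ d\ d)_{d\in D}$ with $D$ Desarguesian (the plane is $\P^2\F_{q^3}$-related, in fact $\P^2\F_q$, which is Desarguesian, and by Theorem~\ref{thm:difference_sets_affine} we may normalize $D$ to any chosen Desarguesian difference set).

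The main obstacle I anticipate is the bookkeeping in the last paragraph: making the three link-identifications genuinely \emph{compatible} around the chamber, so that one reads off the identical difference set in all three columns rather than three $\Aff_1(\Z/\delta\Z)$-equivalent ones. This amounts to tracking how conjugation by the $b_u$ intertwines the action of $\xi$ on neighbouring links, and leaning on the rotational symmetry of the triangle presentation $\mathcal T$ (triples of the form $(u, u\zeta, u\zeta^{q+1})$ with $\Tr(\zeta)=0$), which is exactly what encodes that the based difference matrix has all columns equal. Everything else — cyclicity of stabilizers, regularity and freeness on panels, the order $\delta$ — is either immediate from Theorem~\ref{thm:cmsz} or a routine application of the fixed-point argument already used in Proposition~\ref{prop:recover_building}.
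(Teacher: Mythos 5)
Your first two paragraphs (that $\Gamma$ contains a Singer cycle around every vertex, that the $\sigma_i$ are conjugates of $\xi$ by suitable $b_u$, and that $\Gamma$ is a Singer cyclic lattice) follow the same route as the paper. The gap is in the final step, which is the actual content of the corollary: identifying the difference matrix. You assert that because the triangle presentation $\mathcal{T}$ is closed under cyclic rotation, the three columns ``literally agree.'' This is false for the naive generators. The cyclic symmetry of $\mathcal{T}$ is twisted by Frobenius: rotating $(u,u\zeta,u\zeta^{q+1})$ replaces $\zeta$ by $\zeta^q$. Carrying out the computation with $t_1=\xi$, $t_2=b_\zeta\xi b_\zeta^{-1}$, $t_3=b_1^{-1}\xi b_1$ (as the paper does), a relation $t_1^kt_2^\ell t_3^m=1$ forces $\ell=qk$ and $m=(q+1)k$ with $k$ ranging over a translate of the classical difference set $D'$. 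So the three columns read off from $(t_1,t_2,t_3)$ are $D$, $qD$, $(q+1)D$ up to translation --- equivalent but not equal.

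To finish one must replace $t_2$ and $t_3$ by $t_2^{q}$ and $t_3^{q+1}$, which is legitimate because $q$ and $q+1$ are coprime to $\delta=q^2+q+1$ and is exactly the operation (multiplying a column by a unit, i.e.\ choosing a different generator of the cyclic stabilizer, the $\alpha_4$-move of Lemma~\ref{lem:extra_morphisms}) that equalizes the columns. Your appeal to Lemma~\ref{lem:equivalence}\eqref{item:row_subtraction} (subtracting a common row) is the wrong normalization: row subtraction changes the base chamber, not the relative scaling of the three columns, and cannot turn $(D,qD,(q+1)D)$ into $(D,D,D)$. You correctly anticipated that the intertwining of $\xi$ with the $b_u$ around the chamber is the crux, but the resolution you sketch would not produce the stated matrix; the explicit relation computation and the generator-power adjustment are indispensable.
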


\begin{proof}
First we note that
\begin{equation}
\label{eq:xi_action}
\xi^k b_u \xi^{-k} = b_{\xi^k u}\text{.}
\end{equation}
Thus $\xi^k$ takes $u.v_0$ to $(\xi^ku).v_0$ and in particular acts as a Singer cycle on $\lk v_0$. Since $\Gamma_{\text{CMSZ}}$ is vertex-transitive, it follows that $\Gamma$ contains a Singer cycle around every vertex.

To check that it is a Singer cyclic lattice and to determine the difference matrix we define the vertices $w_1 \defeq b_1.v_0$ and let $v_2 \defeq b_1^{-1}.v_0$. Note that $\xi^k.w_1$ is adjacent to $v_2$ if and only if $\xi^k\F_q$ is adjacent to $\F_q^\perp$ if and only if $\Tr(\xi^k) = 0$. The set $D'$ of those $k$ is the classical Singer difference set of $\F_{\smash{q^3}}/\F_q$.

Let $\zeta \in \F_{\smash{q^3}}^*/\F_q^*$ be such that $\Tr(\zeta) = 0$ and put $v_1 \defeq \zeta.w_1 = b_{\zeta}.v_0$. Note that $v_0$, $v_1$, and $v_2$ are pairwise adjacent. Thus $\Gamma$ is generated by
\[
t_1 \defeq \xi \quad t_2\defeq b_\zeta \xi b_\zeta^{-1} \quad \text{and} \quad t_3 \defeq b_1^{-1} \xi b_1\text{.}
\]
We have to determine the tuples $(k,\ell,m)$ such that $t_1^k t_2^\ell t_3^m = 1$. Expanding and using the triangle presentation we find that
\[
t_1^kt_2^\ell t_3^m = \xi^k b_\zeta \xi^\ell b_\zeta^{-1} b_1^{-1} \xi^m b_1 = \xi^k b_\zeta \xi^\ell b_{\zeta^{q+1}} \xi^m b_1\text{.}
\]
We permute cyclically and apply \eqref{eq:xi_action} to get
\[
b_1 \xi^k b_\zeta \xi^\ell b_{\zeta^{q+1}} \xi^m = b_1 b_{\xi^k\zeta}b_{\xi^{\ell+k}\zeta^{q+1}} \xi^{m+\ell+k}\text{.}
\]
The power of $\xi$ on the right fixes $v_0$, so for the whole expression to be trivial, the left part also has to fix $v_0$. But the left part lies in $\Gamma$, so if it fixes $v_0$ it is trivial. Thus the whole expression is trivial if and only if the two factors are trivial individually. This is equivalent to the following relations:
\begin{align*}
\xi^{m+\ell+k} &\in \F_q\\
(1,\xi^k\zeta,\xi^{\ell+k}\zeta^{q+1}) & \in \mathcal{T}\text{.}
\end{align*}
If we let $r \in D'$ be such that $\zeta = \xi^r$ we we can write these as
\begin{align*}
m+\ell+k &\equiv 0 \mod q^2+q+1\\
k + r &\in D'\\
(q+1)k &\equiv (\ell+k) \mod q^2 + q + 1\text{.}
\end{align*}
The solutions are triples $(k,\ell,m) = (k, qk, (q + 1)k)$ with $k \in -r + D'$. Now $q$ and $q + 1$ are relatively prime to $q^2 + q + 1$ (since $q^2 + q + 1 = q(q+1) + 1$) so we may replace the above Singer cycles by
\[
s_1 \defeq t_1 \quad s_2 \defeq t_2^{q} \quad \text{and} \quad s_3 \defeq t_3^{q+1}
\]
and the difference set by $D \defeq -r + D'$.\qed
\end{proof}

Note that any two difference matrices of the form $E = (d\ d\ d)_{d \in D}$ with $D$ a Desarguesian difference set are equivalent. So the corollary says that the unique equivalence class of difference matrices that has representatives with all rows constant (and the difference set Desarguesian) parametrizes Bruhat--Tits lattices.

\section{Relation to chamber regular lattices}
\label{sec:ronan}

This section is a slight deviation from the general train of thought. After having made the connection with the vertex regular lattices studied by CMSZ in the last section, we study a similar relationship in this section. Ronan \cite{ronan84}, Tits~\cite{tits85,tits86b} and Köhler--Meixner--Wester \cite{koemeiwes84a} (see also \cite[Theorem~C.3.6]{kantor86}) have introduced four lattices that act chamber regularly (preserving types) on buildings of type $\tilde{A}_2$ and of order $2$. They are:

\begin{align*}
\Gamma_{R1} & = \gen{a,b,c \mid a^3 = b^3 = c^3 = 1, (ab)^2 = ba, (bc)^2 = cb, (ca)^2 = ac}\\
\Gamma_{R2} & = \gen{a,b,c \mid a^3 = b^3 = c^3 = 1, (ab)^2 = ba, (bc)^2 = cb, (ac)^2 = ca}\\
\Gamma_{R3} & = \gen{a,b,c \mid a^3 = b^3 = c^3 = 1, (ab)^2 = ba, (c^2b)^2 = bc^2, (ac)^2 = ca}\\
\Gamma_{R4} & = \gen{a,b,c \mid a^3 = b^3 = c^3 = 1, (ab)^2 = ba, (bc^2)^2 = c^2b, (ac)^2 = ca}\\
\end{align*}

Besides these there is one other known family of chamber-regular lattices on $\tilde{A}_2$-buildings. Tits \cite[Section~3.2]{tits85} describes $44$ such lattices for $q = 8$. Timmesfeld \cite{timmesfeld89} classified chamber-transitive lattices and in particular showed that these are the only possibilities for type $\tilde{A}_2$:

\begin{theorem}
\label{thm:chamber_trans}
Let $X$ be a locally finite building of type $\tilde{A}_2$ and order $q$. If $X$ admits a type-preserving chamber-transitive lattice $\Gamma$ then $q = 2$ or $q = 8$ and $\Gamma$ is chamber-regular.
\end{theorem}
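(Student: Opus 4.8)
The plan is to run the \emph{amalgam method}. A type-preserving chamber-transitive action of $\Gamma$ on $X$ presents $\Gamma$ as the fundamental group of a complex of groups over a single triangle: three vertex groups $G_0,G_1,G_2$ (the stabilizers of the vertices of a chamber), three edge groups $G_{ij}=G_i\cap G_j$ (the stabilizers of the panels), and a face group $B=G_{ij}\cap G_{jk}$ (the chamber stabilizer). First I would read off the numerical constraints. A panel lies in $q+1$ chambers, so $[G_{ij}:B]=q+1$; the link $\lk(v_j)$ is a projective plane of order $q$, with $\delta(q+1)$ flags (one for each chamber through $v_j$), hence $[G_j:B]=\delta(q+1)$; and since transitivity passes to residues, $G_j$ acts \emph{flag-transitively} on $\lk(v_j)$ with $B$ the stabilizer of a flag. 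In particular $G_j$ is transitive on the $\delta$ points of $\lk(v_j)$.

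The next step is to bound the vertex groups using the classification of flag-transitive finite projective planes (Higman--McLaughlin, Ostrom--Wagner, Kantor): a flag-transitive collineation group of a finite projective plane of order $q$ is either a group $G$ with $\PSL_3(q)\le G\le\PGaL_3(q)$ acting on the Desarguesian plane, or --- when $\delta$ is prime --- a Frobenius group $C_\delta\rtimes C_s$ whose kernel $C_\delta$ is a cyclic Singer group of the plane. So $\Gamma$ falls into one of two regimes, according to whether its vertex groups are ``large'' (containing a copy of $\PSL_3$) or metacyclic.

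In the metacyclic regime the conclusion falls out. Flag-transitivity forces the point stabilizer $C_{s_j}$ of $G_j=C_\delta\rtimes C_{s_j}$ to permute the $q+1$ lines through a point transitively, so $q+1\mid s_j$. On the other hand $C_\delta$ is normal in $G_j$, so $G_j/C_\delta$ embeds into the multiplier group of the corresponding cyclic difference set; since $\gcd(q+1,\delta)=1$, every multiplier lies in $\gen{p}\subseteq(\Z/\delta\Z)^\times$, a group whose order divides $3\eta$ because $p^{3\eta}=q^3\equiv 1\pmod\delta$ (this is the mechanism underlying Lemma~\ref{lem:frobenius_order}). Hence $s_j\mid 3\eta$, so $q+1\mid 3\eta$ with $q=p^\eta$; as $p^\eta+1>3\eta$ once $p^\eta>8$, this forces $q\in\{2,8\}$, and in both cases $s_j=q+1$ is the only possibility. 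Then $\lvert G_j\rvert=\delta(q+1)=[G_j:B]$, so $B=1$: the action is chamber-regular. A direct check --- amalgamating the three $G_j$ along their common Borel subgroup, equivalently matching up triangle presentations --- then recovers exactly the four lattices of Köhler--Meixner--Wester, Ronan and Tits for $q=2$ and Tits's family for $q=8$.

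The main obstacle is to exclude the large regime, in which $\PSL_3(q)\le G_j$ for every $j$. Here I would argue that having the full little projective group $\PSL_3(q)$ acting (transitively, and coherently via the single group $\Gamma$) on every vertex link forces the root (transvection) subgroups of the links to assemble, through the local-to-global structure theory of affine buildings, into root groups on $X$ itself, so that $X$ is Moufang and hence the Bruhat--Tits building of $\PGL_3$ over a complete discretely valued field with residue field $\F_q$. One is then reduced to the assertion that a Bruhat--Tits $\tilde A_2$ building carries no type-preserving chamber-transitive lattice: the chamber stabilizer in $\PGL_3(K)$ is the Iwahori subgroup, which is infinite (an extension of a finite Borel by a pro-$p$ group), so the nonexistence becomes a statement about covolumes, or equivalently Euler characteristics, of cocompact lattices in $\PGL_3(K)$. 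Establishing this --- separating the exotic buildings, which can carry very small lattices, from the Bruhat--Tits ones, which cannot be chamber-transitive --- is where the real content of Timmesfeld's classification lies. Once it is in hand, only $q\in\{2,8\}$ with $\Gamma$ chamber-regular remain.
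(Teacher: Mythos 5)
First, a point of reference: the paper does not prove this statement at all --- it is quoted from Timmesfeld's classification \cite{timmesfeld89} (the relevant literature is Timmesfeld and Kantor--Liebler--Tits), so there is no in-paper argument to compare yours against. Your sketch does follow the architecture of the actual proof in the literature: pass to the vertex links, invoke the classification of flag-transitive finite projective planes, and split into the $\PSL_3$ regime and the Frobenius regime. But as written it has a genuine gap in each regime.

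In the Frobenius regime your decisive step is that every multiplier of the cyclic difference set lies in $\gen{p}$, whence $s_j\mid 3\eta$ and $q+1\mid 3\eta$. This has the known inclusion backwards: Hall's multiplier theorem puts $\gen{p}$ \emph{inside} the multiplier group, and the reverse containment (equivalently the bound of Lemma~\ref{lem:frobenius_order}) is only established for \emph{Desarguesian} planes, which Kantor's dichotomy does not give you in this case. Your argument uses only one link at a time, so if it were valid it would settle the flag-transitive projective plane conjecture; in fact ruling out large $q$ for a single sharply flag-transitive Frobenius plane is open (Feit, and later Thas--Zagier, obtained only astronomically large lower bounds for a hypothetical exceptional order). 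One can check directly that in the Frobenius case the multiplier group has order exactly $q+1$ and no elementary contradiction arises; whatever closes this case must exploit the amalgam of all three links. In the $\PSL_3$ regime, the statement you reduce to --- that a Bruhat--Tits $\tilde{A}_2$ building carries no type-preserving chamber-transitive lattice --- is false: $\Gamma_{R3}$ contains the arithmetic lattice $\Gamma_{2,1}$ and is chamber-regular on the building of $\PGL_3(\F_2\lseries{t})$, and $G(1,1,1)$ acts on the building of $\PGL_3(\Q_2)$. What you actually need is that no chamber-transitive lattice can induce a group containing $\PSL_3(q)$ on the links, and you give no argument for this (nor for the preliminary claim that inducing $\PSL_3(q)$ on every link forces the building to be Moufang --- note that merely having Desarguesian links does not, as every exotic building in this paper shows). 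So the proposal is a reasonable roadmap for the known proof, but both of its halves are open at the critical step.
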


Tits \cite{tits90b} has shown that the buildings that the lattices $\Gamma_{Ri}$ act on are distinct and can be distinguished by the Hjelmslev planes of level $2$ (we will look at Hjelmslev planes in more detail in Section~\ref{sec:hjelmslev}; for now we may just think of them as balls of combinatorial radius $2$). He assigns an invariant in $\Z/2\Z$ to the Hjelmslev planes of level $2$, that evaluates to $0$ on the plane of $\F_2\pseries{t}/t^2$ and to $1$ on the plane of $\Z_2/4$ (he also shows that this are the two only possible planes of level $2$ for $q = 2$). Evaluating the invariants on the three types of vertices he obtains a triple $(\Z/2\Z)^3$ (up to permutation).

Tits also defines an algebraic invariant in $\Z/2\Z$ for every generator as follows: first pick generators $\sigma_{ab}$, $\sigma_{bc}$ and $\sigma_{ca}$ for the normal subgroups of order $7$ in $\gen{a,b}$, $\gen{b,c}$, $\gen{a,c}$ (each vertex stabilizer is isomorphic to $C_7 \rtimes C_3$). Now the generator $a$ will carry invariant $0$ if it conjugates both, $\sigma_{ab}$ and $\sigma_{ca}$ to their square or both to their fourth power, and will carry invariant $1$ if it conjugates one to its square and the other to its fourth power. The other invariants for $b$ and $c$ are assigned analogously. He then shows that the invariant of the lattice coincides with the one of the building it acts on. Tits denotes the lattice with invariants $(c_0, c_1, c_2)$ by $G(c_0,c_1,c_2)$.

\begin{lemma}
The translation between Ronan's presentation and Tits's invariant is:
$\Gamma_{R1} = G(1,1,1)$, $\Gamma_{R2} = G(0,0,1)$, $\Gamma_{R3} = G(0,0,0)$, $\Gamma_{R4} = G(0,1,1)$.
\end{lemma}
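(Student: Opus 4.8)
By Tits's results recalled above, $G(c_0,c_1,c_2)$ denotes the chamber-regular lattice of order $2$ whose algebraic invariant triple equals $(c_0,c_1,c_2)$ (up to permutation), and distinct triples yield non-isomorphic lattices. So the task is just to compute this invariant directly from each of the four presentations and match. The whole calculation reduces to one structural fact about the vertex stabilizers, which I would isolate first.

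\textbf{The structural fact.} Every relator appearing in the four presentations has the shape $(uv)^2 = vu$, where $u$ and $v$ are powers of two distinct letters among $a,b,c$ (e.g.\ $u=c^2$, $v=b$ in the second relator of $\Gamma_{R3}$). I claim that whenever a group contains $u,v$ with $u^3=v^3=1$ and $(uv)^2=vu$, then, writing $\sigma\defeq uv$, the relation says exactly $\sigma^2=u^{-1}\sigma u$; a Tietze transformation (introduce $\sigma$, eliminate $v=u^{-1}\sigma$) turns the presentation of $\gen{u,v}$ into $\gen{u,\sigma\mid u^3,\ u^{-1}\sigma u=\sigma^2}$, the relation $v^3=1$ becoming redundant. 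From $u^3=1$ one gets $\sigma=u^{-3}\sigma u^3=\sigma^{8}$, hence $\sigma^7=1$, so $\gen{u,v}\cong C_7\rtimes C_3$ with normal $7$-subgroup $N\defeq\gen{\sigma}$. Reading off conjugation actions: $u$ sends $\sigma$ to $\sigma^4$ (the unique square root of $\sigma$ in $N$, because $u^{-1}\sigma u=\sigma^2$) and $v=u^{-1}\sigma$ sends $\sigma$ to $u^{-1}\sigma u=\sigma^2$. In short: in a relator $(uv)^2=vu$ the left factor $u$ acts on $N$ by $z\mapsto z^4$ and the right factor $v$ by $z\mapsto z^2$. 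Since $a^2=a^{-1}$ etc., if a letter $g$ occurs as $g^{2}$ in such a relator then $g$ itself acts by the inverse exponent ($2\leftrightarrow4$), so the rule still applies.

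\textbf{Bookkeeping.} Applying this to the three relators of a given $\Gamma_{Ri}$ identifies the vertex stabilizers $\gen{a,b}$, $\gen{b,c}$, $\gen{a,c}$ with $C_7\rtimes C_3$, supplies generators $\sigma_{ab},\sigma_{bc},\sigma_{ca}$ of their normal $7$-subgroups, and tells us, for each of $a,b,c$ and each of the two stabilizers containing it, whether that letter acts by $z\mapsto z^2$ or by $z\mapsto z^4$. Tits's invariant of a letter is $0$ if the two actions coincide and $1$ if they differ. Running through the four cases one finds invariant triples $(1,1,1)$ for $\Gamma_{R1}$, $(0,1,0)$ for $\Gamma_{R2}$, $(0,0,0)$ for $\Gamma_{R3}$ and $(0,1,1)$ for $\Gamma_{R4}$. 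As unordered triples these are pairwise distinct, so this both reproves that the $\Gamma_{Ri}$ are non-isomorphic and, by the definition of $G(\cdot)$, yields $\Gamma_{R1}=G(1,1,1)$, $\Gamma_{R2}=G(0,0,1)$, $\Gamma_{R3}=G(0,0,0)$, $\Gamma_{R4}=G(0,1,1)$.

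\textbf{Where the care is needed.} There is no genuine difficulty, only bookkeeping; the three points to get right are (i) the structural fact, especially checking that $v^3=1$ is a consequence of the other relations so that $\gen{u,v}$ is exactly $C_7\rtimes C_3$ and not a larger quotient; (ii) keeping straight which letter is the left and which the right factor in each relator, together with the $2\leftrightarrow4$ flip forced by the squared letters in $\Gamma_{R3}$ and $\Gamma_{R4}$; and (iii) noting that Tits's invariant does not depend on the chosen generators $\sigma_{ab},\sigma_{bc},\sigma_{ca}$ — squaring and fourth-powering are intrinsic operations on a cyclic group of order $7$ — which is what legitimizes comparing the actions across the three stabilizers.
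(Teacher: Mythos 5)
Your proof is correct and follows essentially the same route as the paper: compute Tits's invariant directly from each presentation by reading off, for every relator $(uv)^2=vu$, the conjugation exponents ($2$ vs.\ $4$) of $u$ and $v$ on the normal $C_7$ of the corresponding vertex stabilizer, with the $2\leftrightarrow4$ flip for squared letters. Your version merely packages the common computation into one structural lemma and therefore actually carries out the two verifications ($\Gamma_{R2}$, $\Gamma_{R3}$) that the paper skips; the resulting triples agree with the paper's.
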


\begin{proof}
For $\Gamma_{R1}$ we take elements of order $7$ to be $ab$, $bc$, $ab$. We find that $(ab)^a = ba = (ab)^2$ while $(ac)^a = ca = (ac)^4$ giving $a$ invariant $1$. The other generators are similar. We skip the other verifications except for $\Gamma_{R4}$. Here we take the elements of order $7$ to be $ab$, $bc^2$, and $ac$. We see that $a$ takes $ab$ as well as $ac$ to a square giving invariant $0$. On the other hand $b$ takes $bc^2$ to a square but $ab$ to its fourth power, giving invariant $1$. Similarly $c$ takes $bc^2$ to its square but takes $ac$ to its fourth power, giving invariant $1$.\qed
\end{proof}

Both Singer cyclic lattices for $q = 2$ embed into one of the chamber regular lattices. That $\Gamma_{2,1}$ embeds into $\Gamma_{R3}$ was already noted by Essert \cite[Remark, p. 1559]{essert13}.

Now we consider the lattice
\[
\Gamma_{2,2} = \gen{x,y,z \mid x^7=y^7=z^7, xyz^3=x^3y^3z = 1}\text{.}
\]
We will show that it has an extension that is isomorphic to one of the chamber regular lattices.
Note that $z = (xy)^2$, $z^3 = y^{-1}x^{-1}$, $z^4 = xy$, $z^6 = x^3y^3$. As a consequence, one sees $\Gamma_{2,2}$ admits the following automorphism of order three: $x \mapsto x^2$, $y \mapsto y^2$, $z \mapsto yx$. If we take the semidirect product with the group generated by this automorphism we get the lattice
\begin{equation}
\label{eq:22_ronan}
\hat{\Gamma}_{2,2} = \gen{x,y,z,a \mid \text{the above}, a^3, xa=ax^2, ya=ay^2, za=ayx}
\end{equation}
which is chamber regular because $a$ permutes the three $\Gamma_{2,2}$-orbits of chambers. Indeed, it is immediate from the relations that $\gen{a,x}$ and $\gen{a,y}$ generate Frobenius groups of order $21$ ($C_7 \rtimes C_3$) which are the building blocks of the chamber regular lattices. To recover Ronan's presentation, we need to find two further elements of order $3$ that normalize $x$ and $z$ respectively $y$ and $z$. That is, we are looking for conjugates of $a$ by $x$ and $y$ respectively that normalize $z$. We find
\[
z^{x^iax^{-i}} = z^{ax^{2i}x^{-i}} = (yx)^{x^i}\text{.}
\]
Taking $i = -1$ we get
\[
z^{x^{-1}ax} = xy = z^4\text{.}
\]
Similarly one computes that
\[
z^{yay^{-1}} = xy = z^4\text{.}
\]
So $b \defeq x^{-1}ax$ and $c \defeq ya^{-1}y^{-1}$ work. We note in passing that $ba = x$ and $ca = y^{-1}$.

\begin{lemma}
\label{lem:22_ronan_tits}
The lattice $\hat{\Gamma}_{2,2}$ has the presentation
\begin{equation}
\label{eq:ronan_4}
\hat{\Gamma}_{2,2} = \gen{a,b,c \mid a^3 = b^3 = c^3 = 1, (ab)^2 = ba, (ac)^2 = ca, c^2b = (bc^2)^2}
\end{equation}
and in particular is isomorphic to $\Gamma_{R4}$.
\end{lemma}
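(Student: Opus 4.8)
Since \eqref{eq:ronan_4} is, after reordering the relators and rewriting $c^2b = (bc^2)^2$ as $(bc^2)^2 = c^2b$, verbatim the presentation of $\Gamma_{R4}$, the lemma amounts to the assertion that $a$, $b$, $c$ generate $\hat\Gamma_{2,2}$ and that \eqref{eq:ronan_4} is a presentation for it. The plan is to prove this by Dyck's theorem applied in both directions, using the elements introduced just before the statement: $b = x^{-1}ax$, $c = ya^{-1}y^{-1}$, together with the identities $x = ba$ and $y^{-1} = ca$ recorded there (so $z = (xy)^2$ is likewise a word in $a$, $b$, $c$).

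For the map $\Gamma_{R4}\to\hat\Gamma_{2,2}$, $a\mapsto a$, $b\mapsto x^{-1}ax$, $c\mapsto ya^{-1}y^{-1}$, I would check that the four defining relators of $\Gamma_{R4}$ vanish: $a^3 = 1$ is a relator of \eqref{eq:22_ronan}; $b^3 = c^3 = 1$ since $b$, $c$ are conjugates of $a^{\pm 1}$; the relators $(ab)^2 = ba$ and $(ac)^2 = ca$ are precisely the relations that make $\langle a,b\rangle = \langle a,x\rangle$ and $\langle a,c\rangle = \langle a,y\rangle$ the Frobenius groups $C_7\rtimes C_3$ built into \eqref{eq:22_ronan} (given $a^3 = 1$ they are equivalent to $b^3 = 1$, $x^7 = 1$, $xa = ax^2$, and symmetrically for $c$, $y$); and $(bc^2)^2 = c^2b$ is the reformulation, inside the Frobenius subgroup $\langle b,c\rangle$, of the two normalization identities $z^{x^{-1}ax} = z^4 = z^{yay^{-1}}$ computed above. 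This homomorphism is onto because $a$ and the images of $x = ba$, $y = (ca)^{-1}$, $z = (xy)^2$ lie in its image. Conversely, for $\hat\Gamma_{2,2}\to\Gamma_{R4}$, $a\mapsto a$, $x\mapsto ba$, $y\mapsto (ca)^{-1}$, $z\mapsto (xy)^2$, I would check that the relators of \eqref{eq:22_ronan} die in $\Gamma_{R4}$: the orders of the images of $x$, $y$, $z$ and the relations $xa = ax^2$, $ya = ay^2$ hold in the two visible Frobenius subgroups $\langle a,b\rangle$, $\langle a,c\rangle$, while $za = ayx$ and the ``triangle'' relators $xyz^3 = x^3y^3z = 1$ follow from these together with $(bc^2)^2 = c^2b$. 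The two composites fix the respective generating sets, so both maps are isomorphisms and $\hat\Gamma_{2,2}\cong\Gamma_{R4}$.

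The real work is in the last point of each direction: confirming that the single relator $(bc^2)^2 = c^2b$ captures \emph{all} the interaction between the three vertex groups encoded in \eqref{eq:22_ronan} — that nothing is lost in passing from the four-generator to the three-generator presentation. This is a bounded relator manipulation inside groups of the shape $C_7\rtimes C_3$, done by hand, or checked by coset enumeration on the two presentations. One could also sidestep it: $\hat\Gamma_{2,2}$ acts chamber-regularly on an $\tilde A_2$-building of order $2$, so by Theorem~\ref{thm:chamber_trans} and the fact that $\Gamma_{R1},\ldots,\Gamma_{R4}$ are all the chamber-regular lattices for $q = 2$ it is one of these four; computing Tits's algebraic invariant on the chamber triple $(a,b,c)$ — for each of $a$, $b$, $c$, whether it conjugates the order-$7$ normal subgroups of the two vertex stabilizers containing it both to squares/fourth powers, or one to a square and the other to a fourth power — yields $(0,1,1)$, so $\hat\Gamma_{2,2} = G(0,1,1) = \Gamma_{R4}$ by the identification recorded just above.
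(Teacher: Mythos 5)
Your overall architecture coincides with the paper's: verify that the chamber triple $(a,b,c)$ of $\hat{\Gamma}_{2,2}$ satisfies the defining relations of $\Gamma_{R4}$, and dispose of the converse not by relator manipulation but by the fact that $\hat{\Gamma}_{2,2}$ is a type-preserving chamber-regular lattice for $q=2$, hence by the classification one of $\Gamma_{R1},\dots,\Gamma_{R4}$. The paper skips the converse with exactly this justification, so the ``sidestep'' you offer at the end is not an alternative route --- it is the route.

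The gap is in how you justify $(ab)^2=ba$, $(ac)^2=ca$ and $(bc^2)^2=c^2b$. It is not true that these follow from $a^3=b^3=1$ together with $\gen{a,b}=\gen{a,x}\cong C_7\rtimes C_3$: for a pair of order-$3$ generators of the Frobenius group exactly one of $(ab)^2=ba$ or $(ba)^2=ab$ holds, and which one is precisely Tits's invariant, i.e.\ precisely the datum separating $\Gamma_{R4}$ from the other three chamber-regular lattices. So the claimed ``equivalence'' with $b^3=1$, $x^7=1$, $xa=ax^2$ cannot hold at the level of the specific element $b$; the relation must be computed for the particular conjugate chosen. Concretely, replacing $b$ by $b^{-1}$ leaves every hypothesis you invoke intact (same Frobenius subgroup, same orders, still a conjugate of $a^{\pm1}$ by $x$) but destroys the relation --- $ab^{-1}$ then has order $3$, not $7$; only one of the two conjugates $x^{-1}a^{\pm1}x$ satisfies $ba=x$ and $(ab)^2=ba$, namely the one the paper's displayed computation $ab=ax^{-1}a^{-1}x=x^4=(ba)^4$ actually uses. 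The same issue affects $(bc^2)^2=c^2b$: the normalization identities only record the conjugation action of $b$ and $c$ on $\gen{z}$, which determines $bc^2$ and $c^2b$ only up to their $\gen{z}$-parts; one must compute the products themselves ($bc^2=xy=z^4$ and $c^2b=y^4x^4=z$). These few explicit computations are the entire content of the paper's proof, and they are exactly what your proposal defers to ``bounded relator manipulation'' or to an unperformed evaluation of Tits's invariant on $(a,b,c)$ --- which is the same computation under another name.
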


\begin{proof}
We compute
\[
ab = ax^{-1}a^{-1}x = x^{-4}x = x^4 = (ba)^4\text{.}
\]
Taking squares we obtain the first relation. The second relation is obtained completely analogously by evaluating $ac$. Finally
\[
bc^2 = x^{-1}a^{-1}xyay^{-1} = xy = z^4 \quad \text{and} \quad c^2b = yay^{-1}x^{-1}ax = y^4x^4 = z
\]
so $(bc^2)^2 = c^2b$.

Since we only needed to verify, which of the chamber regular lattices $\hat{\Gamma}_{2,2}$ is, we skip the verification that conversely the relations in \eqref{eq:22_ronan} follow from those in \eqref{eq:ronan_4}.\qed
\end{proof}

\begin{corollary}
The lattice $\Gamma_{2,2}$ acts on a building with non-isomorphic Hjelmslev planes and, in particular, not on a Bruhat-Tits building.
\end{corollary}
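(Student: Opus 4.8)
The plan is to identify the building on which $\Gamma_{2,2}$ acts with the one on which $\Gamma_{R4}$ acts, and then to read off both assertions from Tits's analysis of the latter, using that Bruhat--Tits buildings cannot display the required asymmetry. First I would observe that $\Gamma_{2,2}$ is a finite-index subgroup of $\hat\Gamma_{2,2}$, so both act as lattices on one and the same building $X$; by Lemma~\ref{lem:22_ronan_tits} that building is the one acted on by $\Gamma_{R4} = G(0,1,1)$, and by Proposition~\ref{prop:recover_building} it is intrinsic to $\Gamma_{2,2}$. (That passing to the overgroup $\hat\Gamma_{2,2}$ leaves the building unchanged is immediate, e.g.\ from cocompactness together with Theorem~\ref{thm:qi_rigidity}.)

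Next I would invoke Tits's results on the $\Gamma_{Ri}$: for order $q = 2$ there are exactly two isomorphism types of level-$2$ Hjelmslev plane, the plane of $\F_2\pseries{t}/t^2$ and the plane of $\Z_2/4$, which are told apart by his $\Z/2\Z$-valued invariant (taking the values $0$ and $1$ respectively); and the invariant of $G(c_0,c_1,c_2)$ coincides, as a triple, with the invariants of the level-$2$ Hjelmslev planes at the three vertex types of its building. Hence the three vertex types of $X$ carry Hjelmslev planes of invariants $0$, $1$, $1$, so (at least) two of them are non-isomorphic. This is the first assertion.

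For the ``in particular'' I would argue that a Bruhat--Tits building of type $\tilde A_2$ has all its level-$2$ Hjelmslev planes isomorphic, and so cannot be $X$. Indeed, if $X$ arose from $\PGL_3(K)$ for a local field $K$, its vertices would be homothety classes of $\calO$-lattices in $K^3$; given such a class, the elementary-divisor theorem (valid since $\calO$ is a discrete valuation ring) lets us move a representative to $\calO^3$ by an element of $\GL_3(K)$, which is an isometry of $X$, so the level-$2$ ball around any vertex is carried isomorphically onto the level-$2$ ball around $[\calO^3]$, namely the projective Hjelmslev plane $\PG(2,\calO/\frakm^2)$. Thus all level-$2$ Hjelmslev planes of $X$ would be isomorphic, contradicting the previous paragraph, and $X$ is exotic. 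The routine ingredients here are the elementary-divisor argument and the identification of level-$2$ balls in Bruhat--Tits buildings with planes over $\calO/\frakm^2$; the one genuinely nontrivial input, which I would simply cite from \cite{tits90b} rather than reprove, is Tits's determination and distinction of the two level-$2$ Hjelmslev planes for $q = 2$.
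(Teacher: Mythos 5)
Your proposal is correct and follows essentially the same route as the paper: identify $\hat\Gamma_{2,2}$ with $\Gamma_{R4} = G(0,1,1)$ via Lemma~\ref{lem:22_ronan_tits}, read off from Tits's invariant that the three level-$2$ Hjelmslev planes are of types $0,1,1$ and hence not all isomorphic, and note that a Bruhat--Tits building has all its level-$2$ planes isomorphic (all being the plane of $\calO/\frakm^2$ by vertex-transitivity of $\GL_3(K)$ on lattice classes). The extra detail you supply on the routine steps (passing to the finite-index overgroup, the elementary-divisor argument) is exactly what the paper leaves implicit.
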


\section{Relation to vertex regular lattices}
\label{sec:vertex}

In this section we briefly discuss when Singer cyclic lattices give rise to vertex regular lattices. By a vertex regular lattice one could mean two things: a type-preserving lattice acting regularly on one type of vertex or a lattice acting regularly on all types of vertices. Both kinds will be discussed.

Let $\Gamma$ be a Singer cyclic lattice and let $(\sigma_0, \sigma_1, \sigma_2)$ be a presenting triple such that $\sigma_i$ fixes a vertex of type $i$ for each $i$. Suppose $\sigma_i$ maps non-trivially to the abelianization. Then (and only then) there is a homomorphism $\Gamma \to \Z/\delta\Z$ with $\sigma_i \mapsto 1$. Its kernel $\Lambda$ will obviously act freely on vertices of type $i$. Since $\Gamma$ acts transitively on these vertices with stabilizers of order $\delta$ and since $\Lambda$ has index $\delta$ in $\Gamma$ and acts freely, one sees that $\Lambda$ in fact is regular on vertices of type $i$.

\begin{observation}
\label{obs:normal_vertex_regular}
The following are equivalent for a Singer cyclic lattice $\Gamma$.
\begin{enumerate}
\item $\Gamma$ contains a normal subgroup $\Lambda$ acting regularly on vertices of type $i$,\label{item:v_regular_contains}
\item some (every) vertex stabilizer of type $i$ maps isomorphically to the abelianization,\label{item:v_abelianization}
\item $\Gamma = \Gamma_{v_i} \ltimes \Lambda$.
\end{enumerate}
\end{observation}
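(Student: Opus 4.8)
The plan is to prove the three equivalences by a cycle $\eqref{item:v_regular_contains} \Rightarrow \eqref{item:v_abelianization} \Rightarrow \text{(iii)} \Rightarrow \eqref{item:v_regular_contains}$, using the discussion preceding the observation for the main reduction. The core observation already in hand is that a homomorphism $\Gamma \to \Z/\delta\Z$ sending $\sigma_i \mapsto 1$ exists if and only if $\sigma_i$ maps non-trivially to $H_1(\Gamma)$, and that its kernel $\Lambda$ is then normal and acts regularly on vertices of type $i$. So the heart of the matter is to upgrade "maps non-trivially" to "maps isomorphically onto $H_1(\Gamma)$", i.e.\ the difference between \eqref{item:v_abelianization} as literally stated and the condition used in the preamble.

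First I would clarify why non-triviality of $\sigma_i$ in the abelianization is equivalent to $\gen{\sigma_i}$ mapping \emph{isomorphically} onto $H_1(\Gamma)$. The point is that $\gen{\sigma_i}$ is cyclic of prime-power-free order $\delta = q^2+q+1$, but more importantly that the image of $\gen{\sigma_i}$ in $H_1(\Gamma)$ is a cyclic quotient of $\Z/\delta\Z$, so it is $\Z/m\Z$ for some $m \mid \delta$; I must rule out $1 < m < \delta$. Here I would use that any homomorphism $\Gamma \to \Z/m\Z$ with $\sigma_i \mapsto 1$, composed with the difference-cycle relations $\sigma_0^{E_{j,0}}\sigma_1^{E_{j,1}}\sigma_2^{E_{j,2}} = 1$, is tightly constrained — in fact I'd rather argue geometrically: if $\sigma_i$ has image of order $m$ in $H_1(\Gamma)$, then the induced map $\Gamma \to \Z/m\Z$ has kernel $\Lambda'$ of index $m$, which still contains $\sigma_i^m$ but $\Gamma_{v_i} = \gen{\sigma_i}$ meets $\Lambda'$ in $\gen{\sigma_i^m}$ of order $\delta/m$, so $\Lambda'$ does \emph{not} act freely on vertices of type $i$ unless $m = \delta$. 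Combined with the preamble this gives \eqref{item:v_regular_contains} $\Leftrightarrow$ \eqref{item:v_abelianization} once I note that $\Lambda$ acting regularly forces $[\Gamma:\Lambda] = \delta$ and $\Lambda \cap \Gamma_{v_i} = 1$, hence $\Gamma_{v_i} \to \Gamma/\Lambda$ is injective between groups of order $\delta$, hence an isomorphism, and $\Gamma/\Lambda$ being abelian (it is a quotient of $\Z/\delta\Z$) makes it a quotient of $H_1(\Gamma)$; surjectivity of $\Gamma_{v_i} \to H_1(\Gamma)$ then follows because $H_1(\Gamma)$ is generated by the images of $\sigma_0,\sigma_1,\sigma_2$ and the abelianized difference-cycle relations.

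Next, for \eqref{item:v_abelianization} $\Leftrightarrow$ (iii): given \eqref{item:v_abelianization}, the composite $\Gamma_{v_i} = \gen{\sigma_i} \hookrightarrow \Gamma \twoheadrightarrow H_1(\Gamma)$ is an isomorphism, so $\Lambda \defeq \ker(\Gamma \to \Z/\delta\Z, \sigma_i \mapsto 1)$ is normal of index $\delta$ with $\Lambda \cap \Gamma_{v_i} = 1$ and $\Lambda \cdot \Gamma_{v_i} = \Gamma$ (the product has order $\delta \cdot \abs{\Lambda} = \abs{\Gamma}$), giving the internal semidirect product $\Gamma = \Gamma_{v_i} \ltimes \Lambda$. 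Conversely, a splitting $\Gamma = \Gamma_{v_i} \ltimes \Lambda$ with $\Lambda$ normal forces $\Gamma/\Lambda \cong \Gamma_{v_i} \cong \Z/\delta\Z$, which is abelian, so $\Lambda \supseteq [\Gamma,\Gamma]$, whence $\Gamma_{v_i} \to \Gamma/\Lambda$ factors the surjection $\Gamma_{v_i} \to H_1(\Gamma) \to \Gamma/\Lambda$ and the outer composite is an isomorphism; since $H_1(\Gamma)$ is generated by $\sigma_0,\sigma_1,\sigma_2$ one checks this forces $\Gamma_{v_i} \to H_1(\Gamma)$ itself to be an isomorphism. Finally (iii) $\Rightarrow$ \eqref{item:v_regular_contains} is immediate since the complement $\Lambda$ in (iii) is normal of index $\delta$, intersects each conjugate of $\Gamma_{v_i}$ trivially (as $\Lambda$ is normal and $\Lambda \cap \Gamma_{v_i} = 1$), and therefore acts freely on the $\Gamma$-transitive set of type-$i$ vertices, hence regularly by the index count.

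The step I expect to be the genuine obstacle — as opposed to bookkeeping — is the $\Z/m\Z$ argument in the second paragraph: ruling out intermediate divisors $m$ of $\delta$. Geometrically it reduces to the clean assertion "$\Lambda$ normal of finite index acting on type-$i$ vertices acts freely $\iff$ it acts regularly $\iff \Lambda \cap \Gamma_{v_i} = 1$", which follows from transitivity of $\Gamma$ on type-$i$ vertices and normality of $\Lambda$; once phrased this way the whole observation is a short orbit-stabilizer computation, and the "non-trivially" hypothesis in the preamble is seen to be \emph{equivalent} to the "isomorphically" phrasing here, with no gap. I would make sure to state this freeness lemma explicitly, since it is what glues the three conditions together.
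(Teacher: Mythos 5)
Your skeleton is the same as the paper's: the Observation carries no separate proof, and the paragraph preceding it does exactly what you do --- build $\Lambda$ as the kernel of a homomorphism $\Gamma \to \Z/\delta\Z$ with $\sigma_i \mapsto 1$, use normality to propagate $\Lambda \cap \Gamma_{v_i} = 1$ to all conjugate stabilizers, and get transitivity from the index count. Your identification of (1) with (3) via ``regular $\Leftrightarrow$ $\Gamma_{v_i} \to \Gamma/\Lambda$ is an isomorphism'' is correct and is the right way to organize the statement. The genuine problem is your surjectivity step. From $\Gamma = \Gamma_{v_i} \ltimes \Lambda$ you correctly deduce that $\Gamma_{v_i} \to \Gamma/\Lambda$ is an isomorphism with abelian target, hence that $\Gamma_{v_i} \to H_1(\Gamma)$ is (split) injective; but the claim that surjectivity ``follows because $H_1(\Gamma)$ is generated by the images of $\sigma_0, \sigma_1, \sigma_2$'' is a non sequitur --- generation by all three images says nothing about the image of one of them. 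In fact the implication is false: for $\Gamma_{2,1}$ one has $H_1(\Gamma_{2,1}) \cong (\Z/7\Z)^2$, so no vertex stabilizer (cyclic of order $7$) can surject onto it, and yet the kernel of $\sigma_0 \mapsto 1$, $\sigma_1 \mapsto -1$, $\sigma_2 \mapsto 0$ is a well-defined normal subgroup of index $7$ meeting $\gen{\sigma_0}$ trivially, hence acting regularly on vertices of type $0$. So condition (2) cannot be read as ``maps onto $H_1(\Gamma)$''; it has to be read as ``maps isomorphically onto its image'', i.e.\ injectively (equivalently, the image of $\sigma_i$ in $H_1(\Gamma)$ has order exactly $\delta$).

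With that reading your argument closes up once the surjectivity claims are deleted. For the converse direction you still owe the existence of a homomorphism $\Gamma \to \Z/\delta\Z$ with $\sigma_i \mapsto 1$: if $\gen{\sigma_i}$ injects into $H_1(\Gamma) = (\Z/\delta\Z)^3/\im E^T$, its image is a cyclic submodule isomorphic to $\Z/\delta\Z$, and since $\Z/\delta\Z$ is self-injective as a module over itself (immediately so when $\delta$ is squarefree, as it is for all $q$ treated in the paper), the identification of that image with $\Z/\delta\Z$ extends to all of $H_1(\Gamma)$; the kernel of the resulting map $\Gamma \to \Z/\delta\Z$ is the desired $\Lambda$. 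Finally, your closing ``freeness lemma'' overstates itself: a normal finite-index subgroup acting freely on type-$i$ vertices need not act regularly (the commutator subgroup of $\Gamma_{2,1}$ acts freely with $7$ orbits); you use the index-$\delta$ hypothesis correctly in the body of the argument, so just do not drop it from the summary statement.
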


In the situation of Observation~\ref{obs:normal_vertex_regular} the lattice $\Lambda$ is the unique subgroup of $\Gamma$ that acts regularly on vertices of type $i$. Using Theorem~\ref{thm:aut} one can see that it also unique with this property acting on the building. We do not know if there are subgroups acting transitively on vertices of type $i$ that are not normal in $\Gamma$.

The relationship with lattices acting regularly (or in fact transitively) on all vertices is quickly explained. For the Bruhat--Tits examples it was discussed in Section~\ref{sec:brutit_examples}, for the others we have:

\begin{proposition}
Besides the Bruhat--Tits examples no Singer cyclic lattice with $q \le 5$ is quasi-isometric to a lattice acting transitively on all vertices.
\end{proposition}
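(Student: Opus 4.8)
The plan is to combine QI-rigidity of Euclidean buildings (Theorem~\ref{thm:qi_rigidity}) with the automorphism computation of Theorem~\ref{thm:aut}. For the Bruhat--Tits examples the statement is clear: the vertex-regular CMSZ lattice of Section~\ref{sec:brutit_examples} and the corresponding Singer cyclic lattice (Corollary~\ref{cor:cartwright_to_panel}) are both uniform lattices on the same Bruhat--Tits building, hence quasi-isometric to it and therefore to each other. So it remains to prove that if $\Gamma$ is a Singer cyclic lattice with $q \le 5$ acting on an \emph{exotic} building $X$, then $\Gamma$ is not quasi-isometric to any lattice $\Lambda$ acting transitively on all vertices of an $\tilde{A}_2$ building $Y$.

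Suppose, for contradiction, that such a $\Lambda$ exists. As $\Gamma$ acts cocompactly on $X$ and $\Lambda$ acts cocompactly on $Y$, we have quasi-isometries $X \leftarrow \Gamma$ and $\Lambda \rightarrow Y$; composing with the given quasi-isometry $\Gamma \rightarrow \Lambda$ shows that $X$ and $Y$ are quasi-isometric. Both are thick, irreducible, two-dimensional Euclidean buildings, so Theorem~\ref{thm:qi_rigidity} provides an isomorphism $\alpha \colon X \to Y$. Conjugating the action of $\Lambda$ by $\alpha$ yields a subgroup of $\Aut(X)$ acting transitively on all vertices of $X$, hence in particular transitively on the three types of vertices; thus $\Aut(X)$ is transitive on types. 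But Theorem~\ref{thm:aut} asserts that for an exotic building with $q \le 5$ the group $\Aut(X)$ is \emph{not} transitive on types, a contradiction.

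The only point requiring a word of care is that $Y$ must satisfy the hypotheses of Theorem~\ref{thm:qi_rigidity}: it is irreducible and two-dimensional because it has type $\tilde{A}_2$, and it is thick because it admits the uniform lattice $\Lambda$ and has order $q \ge 2$. Granting this, no computation is needed; the entire substance of the argument has been outsourced to Theorem~\ref{thm:aut} (proved in Section~\ref{sec:building_auts}), and that is where the real difficulty lies.
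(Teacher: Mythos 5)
Your proof is correct and follows essentially the same route as the paper: the paper's own argument is exactly the observation that a vertex-transitive lattice is type-transitive, combined with Theorem~\ref{thm:aut} (no exotic building for $q \le 5$ has a type-transitive automorphism group) and Theorem~\ref{thm:qi_rigidity}. Your version just spells out the transport of the action via the isomorphism furnished by QI-rigidity, plus the (not strictly required) remark on why the Bruhat--Tits exclusion is genuine.
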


\begin{proof}
A lattice acting transitively on all vertices in particular acts transitively on types. By Theorem~\ref{thm:aut} no exotic Singer cyclic lattice with $q \le 5$ acts on a building with type-transitive automorphism group. The statement now follows from Theorem~\ref{thm:qi_rigidity}.\qed
\end{proof}

\section{Linearity}
\label{sec:linearity}

Starting with Section~\ref{sec:brutit_examples} we have become interested in when a Singer cyclic lattice acts on the Bruhat--Tits building $X$ associated to $\PGL_3(K)$ where $K$ is a finite-dimensional division algebra over a local field. Since $\Aut(X) = (\PGL_3(K) \rtimes C_2) \rtimes \Aut(K)$, a natural first step is to wonder whether $\Gamma$ is a lattice in $\PGL_3(K)$. In this section we will discuss how this problem can be systematically approached by a kind of Hensel lifting, which can be (and has been) implemented on a computer.

The method has originally been used to find out which Singer cyclic lattices for $q \le 3$ embed into $\PGL_3(K)$. However, at the time of this writing, we can answer the question for lattices with $q \le 5$ using different methods: using the classification from Section~\ref{sec:singer_lattices} we will see in Section~\ref{sec:hjelmslev} using geometric methods that none of them can be Bruhat--Tits except for those constructed in Section~\ref{sec:brutit_examples}. Nonetheless the method was useful before we had obtained all of these results and we expect that it (or variants) will be useful in future investigations. The reader who is most interested in learning about concrete examples may want to directly jump to the examples at the end of the section (starting with paragraph \ref{sec:embedding_21}).

\subsection{Field automorphisms}

From now on we restrict to the case where $K$ is commutative. How closely related the problem of embedding $\Gamma$ in $\PGL_3(K)$ is to the problem of making $\Gamma$ act on the building of $\PGL_3(K)$ depends on the field $K$:

\begin{lemma}
\label{lem:aut_pos}
Let $K$ be a local field of positive characteristic $p$ whose residue field is $\F_q$. Then $K$ is isomorphic to $\F_q\lseries{t}$ with $q = p^\eta$. The automorphism group of $K$ is
\[
\Gal(\F_q/\F_p) \ltimes \F_q^\times \ltimes P \cong C_\eta \ltimes C_{q-1} \ltimes P
\]
where $P$ is a pro-$p$-group.
\end{lemma}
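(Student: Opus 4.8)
The plan is to first pin down the isomorphism type of $K$ and then analyse its automorphisms by a filtration argument. For the first assertion, I would invoke the classical structure theory of local fields of positive characteristic: a complete discretely valued field of characteristic $p$ with (perfect) residue field $k$ is isomorphic to $k\lseries{t}$ (Cohen structure theorem for the equal-characteristic case). Here $k = \F_q$ with $q = p^\eta$, so $K \cong \F_q\lseries{t}$. I would state this as a citation rather than reprove it. From now on identify $K = \F_q\lseries{t}$, with ring of integers $\calO = \F_q\pseries{t}$ and maximal ideal $\frakm = t\calO$.

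For the automorphism group, the key observation is that every automorphism $\alpha$ of $K$ is continuous (it preserves the unique valuation, since the valuation ring is characterised algebraically as the integral closure, or because an element is a square-times-unit pattern detects valuation parity, etc.), hence restricts to an automorphism of $\calO$ fixing $\F_p$, and is determined by the pair $(\alpha|_{\F_q}, \alpha(t))$. Here $\alpha|_{\F_q} \in \Gal(\F_q/\F_p) \cong C_\eta$ — note $\F_q$ sits inside $\calO$ canonically as the Teichm\"uller/coefficient field, and $\alpha$ must permute the roots of $X^q - X$, so it sends $\F_q$ to $\F_q$ — and $\alpha(t)$ can be any uniformizer, i.e.\ any element of $\frakm \setminus \frakm^2$. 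Conversely any choice of a field embedding $\F_q \to \F_q$ together with a uniformizer extends (uniquely, by continuity) to an automorphism of $\F_q\lseries{t}$. This gives the underlying set; the task is to organise it as the claimed iterated semidirect product.

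I would build the filtration explicitly. Let $N$ be the subgroup of automorphisms fixing $\F_q$ pointwise; then $\Aut(K)/N \cong \Gal(\F_q/\F_p) \cong C_\eta$, and the Frobenius $x \mapsto x^p$ (extended by fixing $t$) splits this, giving the outer $C_\eta \ltimes (\,\cdot\,)$. An element of $N$ is determined by where it sends $t$, namely $t \mapsto \sum_{i \ge 1} c_i t^i$ with $c_1 \in \F_q^\times$. Sending this automorphism to its leading coefficient $c_1$ is a homomorphism $N \to \F_q^\times$ (composition multiplies leading coefficients), it is surjective (the automorphism $t \mapsto c t$ fixing $\F_q$ hits $c$), and it splits via $c \mapsto (t \mapsto ct)$; this is where the $\F_q^\times \cong C_{q-1}$ factor comes from. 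Let $P$ be its kernel, i.e.\ the automorphisms of the form $t \mapsto t + c_2 t^2 + c_3 t^3 + \cdots$ fixing $\F_q$. It remains to check $P$ is a pro-$p$-group: filter $P$ by the subgroups $P_n = \{\text{automorphisms with } t \mapsto t + O(t^{n+1})\}$; each $P_n/P_{n+1}$ embeds into the additive group $(\F_q, +)$ (the induced map on the $t^{n+1}$-coefficient is additive on this quotient, by a direct expansion), hence is a finite elementary abelian $p$-group, and $P = \varprojlim P/P_n$ is an inverse limit of finite $p$-groups, so pro-$p$. Finally I would note $N = C_{q-1} \ltimes P$ and $\Aut(K) = C_\eta \ltimes N$ assemble into the stated $C_\eta \ltimes C_{q-1} \ltimes P$, with the conjugation actions being the obvious ones (Galois permutes the coefficients $c_i$, scaling conjugates the $P_n$-layers).

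The main obstacle — really the only non-formal point — is justifying that every field automorphism of $K$ is continuous, so that it is genuinely determined by finite/coefficient data and the semidirect-product bookkeeping applies; once continuity is in hand everything else is an elementary expansion of power series. I would handle continuity by the standard argument that the valuation ring $\calO$ is characterised purely algebraically (e.g.\ $x \in \calO$ iff $1 + x^{q-1}$ avoids being a $(q-1)$st power in a suitable sense, or more cleanly: $\calO$ is the unique maximal subring on which ``$1+\frakm$ is $p$-divisible'' fails appropriately — in any case $\calO$ is preserved under every ring automorphism because it is the integral closure of $\F_p[t]$-or-$\F_p[t^{-1}]$ inside $K$, whichever contains $t$), hence $\frakm$ is preserved and the automorphism is $\frakm$-adically continuous.
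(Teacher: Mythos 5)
Your decomposition is the same one the paper uses: the split sequence coming from $\F_q \hookrightarrow \F_q\lseries{t} \twoheadrightarrow \F_q$ gives the outer $C_\eta$, and the group $N$ of automorphisms fixing $\F_q$ is analysed via the leading coefficient of $\alpha(t)$. The only real difference is that where you compute the structure of $N$ directly (the leading-coefficient homomorphism onto $\F_q^\times$ split by $t\mapsto ut$, and the filtration $P_n$ with elementary abelian $p$-quotients, which is correct), the paper simply cites Schilling for ``pro-$p$-by-$\F_q^\times$'' and Zariski--Samuel for the bijection between automorphisms over $\F_q$ and power series of valuation $1$; your version is more self-contained and the extra computations all check out. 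One caveat on the step you yourself single out as the only non-formal point: the justification of continuity via ``$\calO$ is the integral closure of $\F_p[t]$ in $K$'' does not work --- $t$ is not canonical, and the integral closure of $\F_p[t]$ in $K$ consists only of elements algebraic over $\F_p(t)$, which is far smaller than $\calO$. The standard algebraic characterisation you want is via divisibility: $\calO^\times=\bigcap_m (K^\times)^m$ where $m$ ranges over integers prime to $p(q-1)$ (Hensel makes $1+\frakm$ an $m$-th power group for such $m$, while a nonzero valuation obstructs $m$-th powers for $m$ large), and then $\frakm\setminus\{0\}=\{x\in K^\times\setminus\calO^\times: 1+x\in\calO^\times\}$, so $\calO$ is preserved by any field automorphism. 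With that repair your argument is complete; note the paper itself does not address continuity at all in this lemma, absorbing it into the citations.
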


\begin{proof}
The inclusion and projection $\F_q \to \F_q\lseries{t} \to \F_q$ which compose to the identity show that $\Aut(\F_q\lseries{t})$ is a semidirect product of $\Aut(\F_q) = \Gal(\F_q/\F_p)$ and the group of automorphisms of $\F_q\lseries{t}$ that fix $\F_q$. This group is described in \cite[Theorem~2]{schilling44} to be pro-$p$-by-$\F_q^\times$. There is an automorphism for each power series of valuation $1$ (see \cite[Corollary~VII.1.1]{zarsam75}) and multiplication is evaluation. A splitting of the projection to $\F_q^\times$ is given by taking $u$ to the automorphism $t \mapsto u\cdot t$.\qed
\end{proof}

\begin{lemma}
\label{lem:aut_0}
Let $K$ be a local field of characteristic $0$ whose residue field is $\F_q$, $q = p^\eta$. Assume that $K/\Q_p$ is Galois with Galois group $G \defeq \Gal(K/\Q_p)$. There is a maximal unramified subextension $L/\Q_p$ with $I \defeq \Gal(K/L)$ (inertia group) and a maximal tamely ramified subextension $V/\Q_p$ with $R \defeq \Gal(K/V)$ (ramification group). The automorphism group of $K$ is $\Aut(K) = G$ and the Galois groups satisfy $R \lhd I \lhd G$ with $G/I \cong \Gal(\F_q/\F_p)$ and $I/R \cong \chi(K/\Q_p)$, the quotient of value groups. Here $\Gal(\F_q/\F_p)$ is cyclic of order $\eta$, the group $\chi(K/\Q_p)$ is cyclic, and $R$ is a $p$-group.
\end{lemma}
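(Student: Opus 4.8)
The plan is to treat Lemma~\ref{lem:aut_0} as a recollection of standard ramification theory for $p$-adic fields, establishing its clauses one after another; the only ingredient beyond pure bookkeeping is the classical automatic continuity of field automorphisms of a local field.

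First I would dispose of $\Aut(K) = G$. Since $\chr K = 0$ we have $\Q \subseteq K$, so every field automorphism of $K$ fixes $\Q$ pointwise. Invoking the classical fact that an abstract field automorphism of a local field is automatically continuous (equivalently, that $\calO_K$ admits a purely algebraic description inside $K$), such an automorphism preserves $\calO_K$ and fixes the closure $\Q_p = \overline{\Q}$; as $K/\Q_p$ is assumed Galois this gives $\Aut(K) = \Gal(K/\Q_p) = G$. The quotient $G/I$ is then the usual residue map: restriction to the maximal unramified subextension $L$ yields a surjection $G \twoheadrightarrow \Gal(L/\Q_p)$ with kernel the inertia group $I = \Gal(K/L)$, and $\Gal(L/\Q_p) \cong \Gal(\F_q/\F_p)$ because $L$ is the unramified extension of $\Q_p$ with residue field $\F_q$; since $q = p^\eta$ this group is cyclic of order $\eta$.

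Next I would invoke the lower-numbering ramification filtration $G = G_{-1} \supseteq G_0 \supseteq G_1 \supseteq \cdots$, defined intrinsically via $v_K(\sigma(\varpi) - \varpi)$ for a uniformizer $\varpi$ of $K$; each $G_i$ is normal in $G$ because this condition is conjugation-invariant. By construction $G_0 = I$; setting $R = G_1$ one obtains the wild inertia group, and $L = K^I$, $V = K^R$ are the maximal unramified and the maximal tamely ramified subextensions of $K/\Q_p$, so $R \lhd I \lhd G$ and both $L/\Q_p$ and $V/\Q_p$ are Galois. The structure of the successive quotients is classical: $\sigma \mapsto \overline{\sigma(\varpi)/\varpi}$ embeds $I/R = G_0/G_1$ into the multiplicative group of the residue field, so $I/R$ is cyclic of order prime to $p$, and tame ramification theory identifies it with the value-group quotient $\chi(K/\Q_p)$; for $i \ge 1$ each $G_i/G_{i+1}$ embeds into the additive group of the residue field, so $R = G_1$ is a finite $p$-group which, having prime-to-$p$ index in $I$, is the unique $p$-Sylow subgroup of $I$.

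I do not expect any step to be a serious obstacle: organized this way, the proof is a matter of quoting standard references on local fields. The point that is more than routine — and the natural place for a subtlety to hide — is the very first one, automatic continuity, which is what pins $\Aut(K)$ down to $G$ rather than to something larger; a minor secondary point is to keep normalizations consistent when identifying $I/R$ with the quotient of value groups $\chi(K/\Q_p)$ as that notion is set up in the paper.
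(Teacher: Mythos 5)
Your proposal is correct and follows essentially the same route as the paper: the key step in both is automatic continuity of field automorphisms (via the algebraic characterization of the ring of integers), which pins $\Aut(K)$ down to $\Gal(K/\Q_p)$, after which the remaining clauses are standard ramification theory — the paper simply cites Neukirch, Section II.9 for them, while you spell out the lower-numbering filtration argument that the reference contains.
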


\begin{proof}
An automorphism of a local field leaves its ring of integers invariant, and an automorphism of the ring of integers leaves the maximal ideal invariant. Therefore every field automorphism is automatically continuous. Any automorphism $\alpha$ of $\Q_p$ leaves the prime field $\Q$ invariant and $\alpha|_\Q$ is trivial. Since $\Q$ is dense in $\Q_p$, we see that $\alpha$ is trivial. Similarly, any automorphism of $K$ leaves $\Q_p$, the closure of the prime field, invariant. This shows that $\Aut(K) = \Gal(K/\Q_p)$.

For the remaining statements see for example \cite[Section~II.9]{neukirch99}.\qed
\end{proof}

Lemmas~\ref{lem:aut_pos} and~\ref{lem:aut_0} impose strong restrictions on what the image of a Singer cyclic lattice in $\Aut(K)$ could be. In particular we see:

\begin{corollary}
\label{cor:linear_lattice}
Let $q = p^\eta$ and let $K$ be a local field whose residue field is $\F_q$ and assume that $\gcd(\eta,\delta) = 1$. Assume further that
\begin{enumerate}
\item $q \not\equiv 1 \mmod 3$ if $\chr K = p > 0$
\item $\gcd(\delta,\abs{\chi(K/\Q_p)}) = 1$ if $\chr K = 0$.
\end{enumerate}
If $\Gamma$ is a Singer cyclic lattice with parameter $q$ then any morphism $\Gamma \to \Aut(K)$ is trivial.
\end{corollary}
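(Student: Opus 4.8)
The plan is to use that $\Gamma$ is generated by the three elements $\sigma_0,\sigma_1,\sigma_2$, each of order dividing $\delta = q^2+q+1$ (by relation~\eqref{eq:order_relation}), and then to walk down the normal series of $\Aut(K)$ provided by Lemmas~\ref{lem:aut_pos} and~\ref{lem:aut_0}, checking at each layer that the images of the $\sigma_j$ are forced to be trivial.

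First I would record two arithmetic observations. Since $q = p^\eta \equiv 0 \mmod p$, we get $\delta \equiv 1 \mmod p$, so $\delta$ is coprime to $p$ and hence to the order of any finite $p$-group. Reducing instead modulo $q-1$ gives $\delta \equiv 3 \mmod{q-1}$, so $\gcd(\delta,q-1) = \gcd(3,q-1)$, which equals $1$ exactly when $q \not\equiv 1 \mmod 3$.

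Now let $\varphi\colon \Gamma \to \Aut(K)$ be a homomorphism; since $\Gamma = \gen{\sigma_0,\sigma_1,\sigma_2}$ it suffices to show $\varphi(\sigma_j) = 1$ for each $j$, and each $\varphi(\sigma_j)$ has order dividing $\delta$. Suppose first $\chr K = p$, so by Lemma~\ref{lem:aut_pos} we may write $\Aut(K) = \Gal(\F_q/\F_p) \ltimes \F_q^\times \ltimes P$ with $P$ pro-$p$. Projecting $\varphi$ to $\Gal(\F_q/\F_p) \cong C_\eta$ sends each $\sigma_j$ to an element of order dividing $\gcd(\delta,\eta) = 1$, so $\varphi(\Gamma)$ lies in $\F_q^\times \ltimes P$; projecting further to $\F_q^\times \cong C_{q-1}$ sends each $\sigma_j$ to an element of order dividing $\gcd(\delta,q-1) = \gcd(3,q-1)$, which is $1$ by hypothesis (i); so $\varphi(\Gamma) \le P$; and a torsion element of the pro-$p$ group $P$ has $p$-power order, while $\varphi(\sigma_j)$ has order dividing $\delta$ coprime to $p$, whence $\varphi(\sigma_j) = 1$. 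If instead $\chr K = 0$, then (tacitly taking $K/\Q_p$ Galois, as in Lemma~\ref{lem:aut_0}) that lemma gives $\Aut(K) = G$ with $R \lhd I \lhd G$, $G/I \cong C_\eta$, and $I/R \cong \chi(K/\Q_p)$, and the same argument applies verbatim: $\gcd(\delta,\eta) = 1$ pushes $\varphi(\Gamma)$ into $I$, hypothesis (ii) that $\gcd(\delta, \abs{\chi(K/\Q_p)}) = 1$ pushes it into $R$, and $R$ being a $p$-group finishes it.

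The argument is essentially formal once Lemmas~\ref{lem:aut_pos} and~\ref{lem:aut_0} are in hand, so I do not expect a genuine obstacle. The only point that needs care is the observation that the bottom layers $P$ and $R$ of $\Aut(K)$ really are pro-$p$ (respectively $p$-)groups, so that an element of order prime to $p$ lying in one of them must be trivial — and this is exactly what those two lemmas supply.
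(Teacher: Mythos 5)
Your proof is correct and follows essentially the same route as the paper's: both descend through the normal series of $\Aut(K)$ from Lemmas~\ref{lem:aut_pos} and~\ref{lem:aut_0}, killing the image in each cyclic quotient by a gcd computation (your $\delta \equiv 3 \mmod{q-1}$ is the paper's $q^2+q+1-(q+2)(q-1)=3$) and finishing with the observation that $\delta$ is coprime to $p$ so the image in the pro-$p$ bottom layer is trivial. The only cosmetic difference is that the paper phrases the cyclic-quotient step via the abelianization being a $\Z/\delta\Z$-module, while you argue generator by generator; these are equivalent.
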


\begin{proof}
Let $\alpha \colon \Gamma \to \Aut(K)$ be a homomorphism.  Since $\Gamma$ is generated by elements of order $\delta = q^2+q+1$, its abelianization is a $\Z/\delta\Z$-module. The morphism to $\Gal(\F_q/\F_p) \cong C_\eta$ induced by $\alpha$ has to factor through the abelianization and since $\gcd(\eta,\delta) = 1$ it has to be trivial.

If $\chr K = p$ Lemma~\ref{lem:aut_pos} implies that there is an induced homomorphism to $\F_q^\times \cong C_{q-1}$. Since $q^2+q+1 - (q+2)(q-1) = 3$ the gcd of $q-1$ and $\delta$ can only be $1$ or $3$ and the latter is ruled out by assumption. Hence this morphism is trivial as well. Finally the morphism to a pro-$p$-group has to be trivial.

If $\chr K = 0$ Lemma~\ref{lem:aut_0} tell us that there is an induced morphism to $\chi(K/\Q_p)$ which has to be trivial by assumption. Again we are left with a morphism to a $p$-group which has to be trivial.\qed
\end{proof}

We do not know any Singer cyclic lattice on a Bruhat--Tits building that is not contained in $\PGL_3(K)$. Note however, that a Singer lattice with non-linear vertex stabilizer (cf.\ Lemma~\ref{lem:singer_group} and the discussion following it) cannot be contained in $\PGL_3(K)$.

\subsection{Projective groups and splittings}
Next we look at the circumstances under which $\PGL_3(K)$ may be replaced by $\SL_3(K)$ which can be more tractable in practice.

\begin{lemma}
\label{lem:mod_3}
Let $K$ be a local field whose residue field $\kappa$ has order $q$. Let $\calO$ be the ring of integers. The following are equivalent:
\begin{enumerate}
\item $x \mapsto x^3$, $\calO^\times \to \calO^\times$ is injective;\label{item:3rd_inj}
\item $K$ contains no non-trivial $3$rd roots of unity;\label{item:K_roots_unity}
\item $\kappa$ contains no non-trivial $3$rd roots of unity;\label{item:kappa_roots_unity}
\item $3 \nmid q - 1$;\label{item:3_div_q1}
\item $3 \nmid q^2 + q + 1$;\label{item:3_div_q2q1}
\end{enumerate}
\end{lemma}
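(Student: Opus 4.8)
The five conditions break into a purely arithmetic block (3)--(5), a formal equivalence (1)$\Leftrightarrow$(2), and one genuine bridge (2)$\Leftrightarrow$(3), and I would dispatch them in that order. For (3)$\Leftrightarrow$(4): $\kappa^\times = \F_q^\times$ is cyclic of order $q-1$, hence contains an element of order $3$ exactly when $3 \mid q-1$. For (4)$\Leftrightarrow$(5): a one-line check modulo $3$ gives $q^2+q+1 \equiv 1,0,1 \mmod 3$ according as $q \equiv 0,1,2 \mmod 3$, so $3 \mid q^2+q+1$ iff $q \equiv 1 \mmod 3$ iff $3 \mid q-1$. For (1)$\Leftrightarrow$(2): cubing is an endomorphism of the abelian group $\calO^\times$, so it is injective iff its kernel is trivial; that kernel is $\mu_3(\calO^\times) = \mu_3(K)$, since any $3$rd root of unity is a unit and is integral over $\Z$, hence already lies in $\calO^\times$.

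So the heart of the matter is (2)$\Leftrightarrow$(3): $K$ contains a primitive cube root of unity iff $\kappa$ does. One direction is quick — if $\kappa$ contains one, then $\chr\kappa \neq 3$, so $X^3-1$ is separable modulo the maximal ideal $\frakm$ and Hensel's lemma lifts all of its roots to $\calO$, producing primitive cube roots of unity in $K$. For the converse I would assume $3 \nmid q-1$ and aim to show $\mu_3(K) = 1$. Using the Teichm\"uller decomposition $\calO^\times \cong \mu_{q-1} \times (1+\frakm)$ (available because $\chr\kappa \nmid q-1$), the first factor has trivial $3$-part since $3 \nmid q-1$, so $\mu_3(K) = \mu_3(\calO^\times) = \mu_3(1+\frakm)$, and everything comes down to showing that the group of principal units $1+\frakm$ contains no nontrivial cube root of unity.

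That last step is the main obstacle. As $1+\frakm$ is pro-$p$ with $p = \chr\kappa$, it has no $3$-torsion as soon as $p \neq 3$, and we are done. The delicate case is $p = 3$, which is exactly when $3 \mid q$ and hence (3)--(5) hold automatically: here I would work in characteristic $3$, where $(1+x)^3 = 1 + x^3$, so $(1+x)^3 = 1$ forces $x = 0$ and therefore $\mu_3(1+\frakm) = 1$ — using that $K$ itself has characteristic $3$, i.e.\ $K \cong \F_q\lseries{t}$ by Lemma~\ref{lem:aut_pos}. (One should keep in mind that a mixed-characteristic local field of residue characteristic $3$ can contain a primitive cube root of unity even when its residue field does not — e.g.\ $\Q_3(\zeta_3)$ has residue field $\F_3$ — so the equivalence (2)$\Leftrightarrow$(3) really does rely on $K$ being of a shape, such as equal characteristic, where this pathology is absent.) Granting this, $\mu_3(K) = \mu_3(\kappa)$ in all cases, which closes the chain of equivalences.
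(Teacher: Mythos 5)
Your proof follows the same chain of equivalences $(1)\Leftrightarrow(2)\Leftrightarrow(3)\Leftrightarrow(4)\Leftrightarrow(5)$ as the paper, and three of the four links match the paper's almost verbatim: the paper gets $(1)\Leftrightarrow(2)$ by noting that a root of unity has valuation $0$ (equivalent to your ``the kernel of cubing is $\mu_3(K)$ and roots of unity are units''), $(3)\Leftrightarrow(4)$ from cyclicity of $\kappa^\times$, and $(4)\Leftrightarrow(5)$ from $q^2+q+1\equiv(q-1)^2 \mmod 3$.

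The one place you genuinely diverge is $(2)\Leftrightarrow(3)$, which the paper dispatches in a single clause as ``an application of Hensel's lemma,'' and your extra care here is not pedantry: it exposes a real gap in the lemma as stated. Hensel's lemma gives the lifting direction (a simple root of $X^2+X+1$ over $\kappa$ lifts to $\calO$ when $p\neq 3$), and your pro-$p$ argument on $1+\frakm$ gives the other direction when $p\neq 3$; but when $p=3$ the implication ``$\kappa$ has no nontrivial cube roots of unity, hence neither does $K$'' is false in mixed characteristic, and your counterexample is correct: $K=\Q_3(\zeta_3)=\Q_3(\sqrt{-3})$ is a ramified quadratic extension of $\Q_3$, hence a local field with residue field $\F_3$, so $q=3$ and conditions (3), (4), (5) all hold, yet $\zeta_3\in\calO^\times$ kills conditions (1) and (2). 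So the lemma requires an additional hypothesis when $3\mid q$ — equal characteristic suffices, as your $(1+x)^3=1+x^3$ computation shows, and so would absolute unramifiedness — and the paper's proof silently elides this. (The same caveat propagates to the claim in Lemma~\ref{lem:q_congruence_factor} that the group $Z$ of cube roots of unity in $\calO^\times$ is trivial whenever $q\not\equiv 1\mmod 3$; the concrete embeddings of Section~\ref{sec:linearity} all live in $\F_q\lseries{t}$, where your argument closes the gap.) In short: your proof is the intended one, correctly and more carefully executed, and the restriction you flag is genuine.
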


\begin{proof}
A root of unity must have valuation $0$ so \eqref{item:3rd_inj} and \eqref{item:K_roots_unity} are equivalent. The equivalence of \eqref{item:K_roots_unity} and \eqref{item:kappa_roots_unity} is an application of Hensel's lemma. Since $\kappa^\times$ is cyclic of order $q-1$ we have \eqref{item:kappa_roots_unity} $\Leftrightarrow$ \eqref{item:3_div_q1}. The equivalence \eqref{item:3_div_q1} $\Leftrightarrow$ \eqref{item:3_div_q2q1} follows from the fact that $q^2+q+1 \equiv (q-1)^2 \mod 3$.\qed
\end{proof}

\begin{lemma}
\label{lem:vertex_stab}
Assume $q \not\equiv 1 \mmod 3$. If $\sigma \in \PGL_3(K)$ is of order $q^2+q+1$ then $\sigma$ fixes a vertex.
\end{lemma}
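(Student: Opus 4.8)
The plan is to combine the Bruhat--Tits fixed point theorem with the arithmetic fact, supplied by Lemma~\ref{lem:mod_3}, that the hypothesis $q \not\equiv 1 \mmod 3$ forces $3 \nmid \delta$. Since $\delta = q^2+q+1 = q(q+1)+1$ is moreover odd, this gives $\gcd(\delta,6) = 1$, and that coprimality is exactly what is needed to upgrade a fixed point of $\sigma$ to a fixed \emph{vertex}.

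Concretely, I would argue as follows. Because $\sigma$ has finite order, $\gen{\sigma}$ is a finite subgroup of $\Aut(X)$ acting by isometries on the complete $\CAT(0)$ realization of $X$, so by the Bruhat--Tits fixed point theorem \cite[Lemme~3.2.3]{brutit72} it fixes some point $x \in X$. Let $s$ be the carrier of $x$, i.e.\ the unique simplex of $X$ containing $x$ in its interior. Since $\sigma$ acts simplicially and fixes $x$, it maps $s$ to the carrier of $\sigma(x) = x$, hence stabilizes $s$, and therefore induces a permutation of the vertex set of $s$, which has at most three elements. The order of this induced permutation divides $\abs{\sigma} = \delta$ and also divides $3! = 6$; as $\gcd(\delta,6) = 1$ by the previous paragraph, the permutation must be trivial. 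Thus $\sigma$ fixes every vertex of $s$, and in particular it fixes a vertex of $X$.

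The only point requiring a little care is that $\sigma$ lives in $\PGL_3(K)$ rather than in $\GL_3(K)$, so one cannot just lift it to a finite-order matrix and conjugate that into $\GL_3(\calO)$ to land on a vertex stabilizer; in fact a finite subgroup of $\PGL_3(K)$ need not fix any vertex in general, its carrier simplex being possibly an edge or a chamber on whose vertices it acts nontrivially. Working directly with the finite group $\gen{\sigma}$ acting on the $\CAT(0)$ building avoids this, and the coprimality $\gcd(\delta,6)=1$ is precisely what excludes the bad cases; everything else is routine.
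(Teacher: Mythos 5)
Your proof is correct and follows essentially the same route as the paper: apply the Bruhat--Tits fixed point theorem, pass to the carrier simplex of a fixed point, and use that the induced permutation of its at most three vertices has order dividing both $6$ and $\delta$, which are coprime since $\delta$ is odd and $q \not\equiv 1 \mmod 3$ forces $3 \nmid \delta$. The paper phrases the last step as ruling out the $C_2$-action on an edge by oddness and the $D_3$-action on a triangle by oddness plus $3 \nmid \delta$, which is the same computation.
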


\begin{proof}
The Bruhat--Tits fixed point theorem \cite[Lemme 3.2.3]{brutit72} (see also \cite[Corollary~II.2.8]{brihae}) implies that $\sigma$ fixes a point of $X$, thus it stabilizes a cell $A$ (the carrier of the point). If the cell is an edge or a triangle then $\sigma$ acts on it via a morphism to $C_2$ or $D_3$ respectively. But $q^2+q+1$ is odd, so the action on an edge has to be trivial and on a triangle it can only be by rotation. If in addition $q^2+q+1$ is not divisible by $3$, the action on a triangle has to be trivial as well. Hence $\sigma$ fixes all vertices of $A$.\qed
\end{proof}

\begin{lemma}
\label{lem:q_congruence_factor}
Assume $q \not\equiv 1 \mmod 3$. If $\Gamma$ is a group generated by elements of order $q^2+q+1$ then any morphism $\Gamma \to \PGL_3(K)$ factors through $\SL_3(K)$.
\end{lemma}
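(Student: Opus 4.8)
The plan is to exploit the determinant homomorphism $\det\colon\PGL_3(K)\to K^\times/(K^\times)^3$. This is well defined because rescaling a $3\times 3$ matrix by $\lambda$ multiplies its determinant by $\lambda^3$, so $\det$ descends from $\GL_3(K)$ to $\PGL_3(K)$ with values in cube classes. First I would identify its kernel: if $A\in\GL_3(K)$ has $\det A=\mu^3$ a cube, then $\mu^{-1}A\in\SL_3(K)$ represents the same element of $\PGL_3(K)$, and conversely the image of $\SL_3(K)$ plainly lies in the kernel; so $\ker(\det)$ equals the image $\PSL_3(K)$ of $\SL_3(K)$ in $\PGL_3(K)$.

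Next, given any homomorphism $\varphi\colon\Gamma\to\PGL_3(K)$, consider the composite $\psi\defeq\det\circ\varphi\colon\Gamma\to K^\times/(K^\times)^3$. The target is abelian of exponent dividing $3$. On the other hand $\Gamma$ is generated by elements $g$ with $g^{q^2+q+1}=1$, and for each such $g$ the class $\psi(g)$ is killed both by $3$ and by $q^2+q+1$. By Lemma~\ref{lem:mod_3}, the hypothesis $q\not\equiv 1\mmod 3$ gives $3\nmid q^2+q+1$, hence $\gcd(3,q^2+q+1)=1$ and $\psi(g)=1$. As this holds on a generating set, $\psi$ is trivial, so $\varphi(\Gamma)\subseteq\PSL_3(K)$.

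Finally, to upgrade this from "the image lies in $\PSL_3(K)$" to "the morphism factors through $\SL_3(K)$", I would invoke Lemma~\ref{lem:mod_3} a second time: since $q\not\equiv 1\mmod 3$, the field $K$ contains no non-trivial cube root of unity, so the kernel $\mu_3(K)$ of $\SL_3(K)\to\PGL_3(K)$ is trivial and this map is an isomorphism onto $\PSL_3(K)$. Composing $\varphi$ with its inverse produces the desired lift $\Gamma\to\SL_3(K)$, which is moreover unique.

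There is no real obstacle here; the argument is formal once the determinant sequence $\PSL_3(K)\to\PGL_3(K)\xrightarrow{\det}K^\times/(K^\times)^3$ is set up correctly. The only points needing a little care are verifying that $\det$ has values in cube classes with kernel exactly $\PSL_3(K)$, and noticing that the single hypothesis $q\not\equiv 1\mmod 3$ simultaneously delivers both facts we use — the coprimality of $3$ with $q^2+q+1$ and the absence of cube roots of unity in $K$ — via Lemma~\ref{lem:mod_3}.
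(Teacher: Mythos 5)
Your proof is correct, and it is a genuinely leaner route than the one in the paper. The paper first invokes the Bruhat--Tits fixed point theorem (via Lemma~\ref{lem:vertex_stab}) to conjugate each generator of order $\delta$ into the maximal compact subgroup $\PGL_3(\calO)$, and then chases a $3\times 3$ diagram of exact sequences over $\calO$, using that $Z = \mu_3(\calO^\times)$ is trivial and that the order of $\sigma$ is prime to the exponent of $\calO^\times/(\calO^\times)^3$, to conclude $\sigma \in \PSL_3(\calO)$. You bypass the reduction to $\calO$ entirely: the determinant-mod-cubes map $\det \colon \PGL_3(K) \to K^\times/(K^\times)^3$ is defined on the whole group, its target has exponent dividing $3$ regardless of its actual size, and so the image of any element of order $\delta = q^2+q+1$ is killed by the coprime integers $3$ and $\delta$, hence trivial; your identification of $\ker(\det)$ with $\PSL_3(K)$ and the final lift $\PSL_3(K) \cong \SL_3(K)$ (both resting on $3 \nmid q-1$, i.e.\ on Lemma~\ref{lem:mod_3}) are also correct. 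What your argument buys is the removal of any appeal to the building or to compactness --- it is purely group-theoretic and works verbatim for elements of any order prime to $3$, not just $\delta$; what the paper's version buys is essentially nothing extra here, beyond staying inside the integral model $\PGL_3(\calO)$ that the surrounding section works with anyway.
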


\begin{proof}
Let $\sigma$ be an element of $\PGL_3(K)$ of order $q^2+q+1$. By Lemma~\ref{lem:vertex_stab} $\sigma$ stabilizes a vertex and by conjugating we may assume that it lies in $\PGL_3(\calO)$. We consider the following commuting diagram with exact rows and columns:
\begin{diagram}
 && 1 && 1 && 1 &\\
 && \dTo && \dTo && \dTo &\\
1 & \rTo & Z & \rTo & \SL_3(\calO) & \rTo & \PSL_3(\calO) & \rTo & 1\\
 && \dTo && \dTo && \dTo &\\
1 & \rTo & \calO^\times & \rTo & \GL_3(\calO) & \rTo & \PGL_3(\calO) & \rTo & 1\\
 && \dTo && \dTo && \dTo &\\
1 & \rTo & (\calO^\times)^3 & \rTo & \calO^\times & \rTo & \calO^\times/(\calO^\times)^3 & \rTo & 1\\
 && \dTo && \dTo && \dTo &\\
 && 1 && 1 && 1 &
\end{diagram}
The left column consists of homotheties and $Z$ is the group of $3$rd roots of unity in $\calO^\times$, which in our case is trivial by assumption.

The group $\calO^\times/(\calO^\times)^3$ in the lower right corner is cyclic of order $3$. So since $q^2+q+1$ is not divisible by $3$, the image of $\sigma$ in $\calO^\times/(\calO^\times)^3$ is trivial. This means that $\sigma \in \PSL_3(\calO)$.

If $\Gamma \to \PGL_3(K)$ is as assumed, we conclude that every generator is mapped into $\PSL_3(K) \cong \SL_3(K)$.\qed
\end{proof}

\subsection{Finding embeddings}
If $\Gamma$ is a Singer cyclic lattice and $(\sigma_i)_{0 \le i \le 2}$ is a presenting triple, a homomorphism $\iota \colon \Gamma \to G \defeq \PGL_3(K)$ is determined by the images $\iota(\sigma_i)$. Thus to find such a $\iota$ it suffices to find three $3 \times 3$ matrices over $K$ that satisfy the defining relations of $\Gamma$. To ensure that no additional relations are satisfied, we use:

\begin{lemma}
\label{lem:injectivity}
Let $\Gamma$ be a Singer cyclic lattice, let $(\sigma_i)_{0 \le i \le 2}$ be a presenting triple and let $\iota \colon \Gamma \to \PGL_3(K)$. Assume that in the building of $\PGL_3(K)$ there is a chamber with vertices $(w_i)_{0 \le i \le 2}$ such that $\iota(\sigma_i)$ fixes $w_i$ and acts as a Singer cycle on $\lk v_i$.\\
Then $\iota$ is injective. Moreover, it defines a unique $\Gamma$-equivariant isomorphism between the buildings of $\Gamma$ and $\PGL_3(K)$.
\end{lemma}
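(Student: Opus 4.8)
\emph{Plan.} Write $Y$ for the Bruhat--Tits building of $\PGL_3(K)$ and let $C=\{w_0,w_1,w_2\}$ be the chamber from the hypothesis. Since $\Gamma$ is a Singer cyclic lattice and $(\sigma_i)_{0\le i\le 2}$ is a presenting triple, Lemma~\ref{lem:triples} tells us that it is in fact a chamber triple for $\Gamma\act X$, where $X$ is the building $\Gamma$ acts on; fix a chamber $\{x_0,x_1,x_2\}$ of $X$ with $\sigma_i$ generating $\Gamma_{x_i}$. The plan is to construct a $\Gamma$-equivariant simplicial map $f\colon X\to Y$ with $f(x_i)=w_i$ (where $\Gamma$ acts on $Y$ through $\iota$), to show that $f$ is an isomorphism of simplicial complexes, and to read off both assertions of the lemma from this.

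\emph{Construction of $f$.} Every vertex of $X$ has the form $\gamma.x_i$ with $i$ determined by the type and $\gamma$ determined up to right multiplication by $\Gamma_{x_i}=\gen{\sigma_i}$. Since $\iota(\gen{\sigma_i})\subseteq\gen{\iota(\sigma_i)}$ fixes $w_i$, the rule $f(\gamma.x_i)\defeq\iota(\gamma).w_i$ gives a well-defined, $\Gamma$-equivariant, type-preserving map on vertices. It sends edges to edges: by equivariance it suffices to treat edges containing $x_i$, and the vertices of $X$ adjacent to $x_i$ of a type $j\ne i$ are exactly the $\sigma_i^k.x_j$, because $\sigma_i$ acts as a Singer cycle on $\lk x_i$ and hence regularly on the vertices of each type there; their images $\iota(\sigma_i)^k.w_j$ lie in $\lk w_i$. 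As a building of type $\tilde A_2$ is a flag complex, three pairwise adjacent vertices of the three distinct types span a triangle, so $f$ extends over all simplices to a ($\Gamma$-equivariant) simplicial map.

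\emph{$f$ is a local isomorphism.} By equivariance it is enough to show that $f$ restricts to a graph isomorphism $\lk x_i\to\lk w_i$. On vertices this is a bijection: on each type $j\ne i$ the map $\sigma_i^k.x_j\mapsto\iota(\sigma_i)^k.w_j$ is a bijection because $\sigma_i$ and $\iota(\sigma_i)$ act regularly (freely and transitively) on the $\delta$ vertices of type $j$ in $\lk x_i$ respectively $\lk w_i$. In particular $\lk w_i$ has $2\delta$ vertices, so, like $\lk x_i$, it is the incidence graph of a projective plane of order $q$ and has exactly $\delta(q+1)$ edges. As above, $f$ sends an edge of $\lk x_i$, i.e.\ a triangle of $X$ through $x_i$, to an edge of $\lk w_i$; this map on edges is injective because $f$ is injective on the vertices of $\lk x_i$, hence bijective since both edge sets have the same finite cardinality $\delta(q+1)$. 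Thus $f$ is an isomorphism on every vertex link, and therefore a covering map of the geometric realizations.

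\emph{Conclusion and uniqueness.} Since $Y$ is connected and $X$ is nonempty the covering $f$ is surjective, and since $X$ and $Y$ are simply connected (being buildings of type $\tilde A_2$) it is an isomorphism. It is $\Gamma$-equivariant, and $\Gamma$ acts faithfully on $X$ (freely on edges), so $\ker\iota$ acts trivially on $X$ through $f$ and hence $\ker\iota=1$: $\iota$ is injective, and $f$ is the claimed $\Gamma$-equivariant identification of the building of $\Gamma$ with that of $\PGL_3(K)$. For uniqueness, note that $w_i$ is the only vertex of $Y$ fixed by $\iota(\sigma_i)$: it acts freely on $\lk w_i$, so a fixed vertex $v\ne w_i$ would force it to fix the first vertex of the unique geodesic from $w_i$ to $v$, which lies in $\lk w_i$. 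Hence for any $\Gamma$-equivariant isomorphism $f'\colon X\to Y$ the vertex $f'(x_i)$, being fixed by $f'\sigma_i f'^{-1}=\iota(\sigma_i)$, equals $w_i=f(x_i)$; as $X$ is the union of $\Gamma$-translates of the faces of $\{x_0,x_1,x_2\}$ and both maps are equivariant, $f'=f$.

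\emph{Main obstacle.} The substantive step is showing that $f$ is a local isomorphism; once this is established the simple connectedness of $X$ and $Y$ concludes via standard covering-space theory. The one mildly delicate point there is the surjectivity of $f$ on the edges of each link, which we obtain from a cardinality count rather than by comparing incidence structures directly (this is why one does not need to know a priori that $\iota(\Gamma)$ is a lattice on $Y$).
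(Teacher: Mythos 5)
Your argument is correct and follows essentially the same route as the paper: construct the $\Gamma$-equivariant simplicial map $f$ with $f(x_i)=w_i$, show it restricts to an isomorphism on vertex links (you supply an explicit counting argument where the paper simply asserts this), and conclude from simple connectivity; the paper packages the endgame as $Y\cong \ker(\iota)\backslash X$ with $\pi_1(Y)=1$, which is equivalent to your covering-space formulation, and your uniqueness argument makes explicit what the paper leaves implicit. The one imprecise spot is the phrase ``the first vertex of the unique geodesic,'' since a $\CAT(0)$ geodesic from $w_i$ to $v$ need not pass through any vertex of $\lk w_i$; the intended conclusion still holds because the fixed-point set of $\iota(\sigma_i)$ is convex and would meet $\st(w_i)\setminus\{w_i\}$ in the interior of some simplex, which $\iota(\sigma_i)$ must then stabilize and hence fix vertexwise (it is type-preserving, as it fixes $w_i$ while elements of $\PGL_3(K)$ permute types cyclically), contradicting freeness of the Singer cycle on $\lk w_i$.
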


\begin{proof}
Let $X$ denote the building of $\Gamma$ and $Y$ the building of $\PGL_3(K)$ and let $v_i$ be the vertex fixed by $\sigma_i$. We claim that $\iota$ induces a $\Gamma$-equivariant simplicial map $\bar{\iota} \colon X \to Y$. On vertices $\bar{\iota}$ is defined by $\bar{\iota}(v_i) = w_i$. Since $\Gamma$ is transitive on each type of vertices, this determines $\bar{\iota}$ on the vertices of $X$. Similarly it is uniquely determined on edges by taking $\{v_i,v_j\}$ to $\{w_i,w_j\}$. To extend to triangles, note that the relations in $\Gamma$ (that also hold in $\iota(\Gamma)$) imply that $\{w_0,w_1,\iota(\sigma_0)^{e_0} w_2\}$ is a chamber when $e_0$ is an entry of the $0$th column of the difference matrix of $\Gamma$ with respect to $(\sigma_i)_i$. Thus taking $\{v_0,v_1,\sigma_0^{e_0}\}$ to $\{w_0,w_1,\iota(\sigma_0)^{e_0}\}$ is well-defined and completely determines $\bar{\iota}$.

Next we verify that $\bar{\iota}$ is surjective: it is clear from the definition that it is an isomorphism on the star of every vertex and that it is surjective on vertices.

Now let $N$ be the kernel of $\iota$. Note that the action of $N$ on $X$ is free: the only non-trivial elements in $\Gamma$ that fix points are conjugate to elements of some $\gen{\sigma_i}$ and are mapped non-trivially by $\iota$.

We claim that $\bar{\iota}$ is the quotient map $X \mapsto N \backslash X$. Clearly two points in the same $N$-orbit are identified under $\bar{\iota}$. Conversely if $\bar{\iota}(x) = \bar{\iota}(g.x) = \iota(g)\bar{\iota}(x)$ then $g \in N$.

This shows that $N \backslash X \cong Y$ so that $N = \pi_1(Y)$ is trivial.\qed
\end{proof}

\begin{remark}
In unpublished work, Yehuda Shalom and Tim Steger prove that any proper quotient of an $\tilde{A}_2$-lattice must be finite. Using that fact, the injectivity of $\iota$ in Lemma~\ref{lem:injectivity} could be verified by just checking that $\iota(\sigma_i)^e\iota(\sigma_j)^f$ has infinite order for appropriate exponents $e, f$ (that are not part of a row of the difference matrix).
\end{remark}

From now on we will make the embedding $\iota$ implicit and think of the $\sigma_i$ as elements of $\PGL_3(K)$ (also we identify $X$ with $Y$, $v_i$ with $w_i$, etc.). Let $\calO$ be the ring of integers in the local field $K$ and let $\pi$ be a uniformizing element, so that the residue field is $\kappa = \calO/(\pi)$ (of order $q$).

We take $P \defeq \PGL_3(\calO)$, which is the stabilizer of a vertex and we put
\[
\rho \defeq
\begin{pmatrix}
0&0&\pi^{-1}\\
1&0&0\\
0&1&0
\end{pmatrix}\text{.}
\]
Then for each $i$ the group $P_i \defeq P^{\rho^{-i}}$ stabilizes a vertex $w_i$ and $C \defeq \{w_0,w_1,w_2\}$ is a chamber. Since the automorphism group of the building of $\PGL_3(K)$ acts transitively on ordered vertices of this form, we may and do assume that $\sigma_i \in P_i$ for $0 \le i \le 2$ if they exist. Since in practice, it will be easier to work with elements of $P$ rather than $P_i$, we introduce
\[
s_i \defeq \sigma_i^{\rho^{i}} \in P, 0 \le i \le 2\text{.}
\]

With these preparations in place our problem of deciding whether the Singer cyclic lattice with difference matrix $E$ embeds into $\PGL_3(K)$ reduces to the problem of finding elements $s_i \in P$ that project to Singer-cycles in $\PGL_3(\kappa)$ and satisfy the relations
\begin{align}
s_i^\delta &= 1, 0 \le i \le 2\label{eq:order_new}\\
\pi s_0^{E_{j,0}}\rho s_1^{E_{j,1}}\rho s_2^{E_{j,2}}\rho & = 1, 1 \le j \le q+1 \text{.}\label{eq:product_new}
\end{align}

We will generally think about matrices in $\GL_3(K)$ but need to keep in mind that relations above have to be understood up to homotheties if $q \equiv 1 \mmod 3$ by Lemma~\ref{lem:q_congruence_factor}. The plan to decide whether such $s_i$ exist is to regard \eqref{eq:order_new} and \eqref{eq:product_new} as polynomial equations in the matrix entries of the $s_i$ and look for solutions in $\calO$. Since this is a local ring, we start by looking for solutions in $\kappa$ and then successively lift them to $\calO/(\pi^\ell)$ for increasing $\ell$. Geometrically this corresponds to looking for partially defined $\sigma_i$ that satisfy the relations whenever defined.  Of course an implementation of this method will only either return that no solution exists (which in practice happens very quickly), or it will return an approximate solution up to a finite precision. However, in practice it was always possible to guess an exact solution from the approximate one.

Without further preparations this approach is hopeless for the following reason. Let $B \defeq P_0 \cap P_1 \cap P_2$ be the pointwise stabilizer of our base chamber $C$. If $\sigma_0, \sigma_1, \sigma_2$ correspond to a solution of the above equations then so do the conjugates $\sigma_0^b, \sigma_1^b, \sigma_2^b$ for any $b \in B$. Thus each solution mod $\pi^\ell$ will lift to many solutions mod $\pi^{\ell+1}$, making it impossible to keep track of all potential solutions. For this reason, the main part of this section will be devoted to finding additional conditions to impose on the $s_i$ in order to make solutions to \eqref{eq:order_new} and \eqref{eq:product_new} unique.

\begin{figure}[!htb]
\centering
\includegraphics{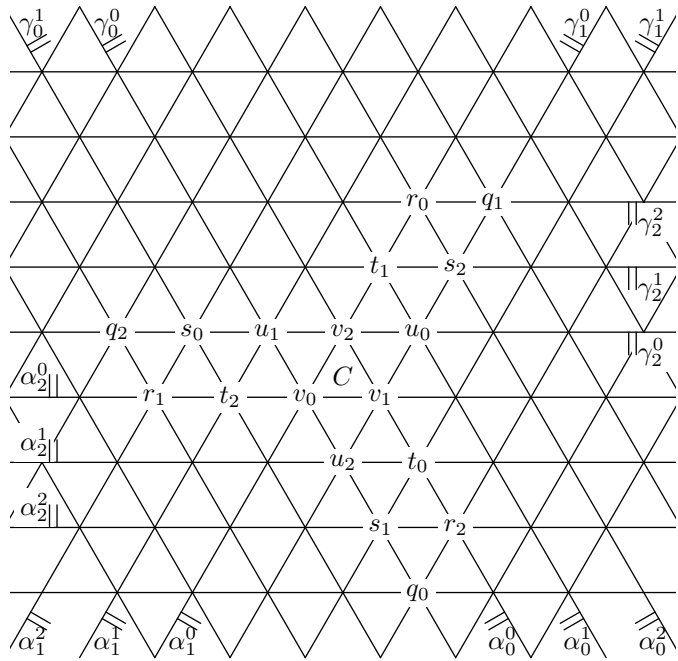}
\caption{Some named points and roots inside the apartment $\Sigma$.}
\label{fig:roots}
\end{figure}

To do so we will use root groups. Let $\Sigma$ be the apartment stabilized by the torus of diagonal matrices and let $T \cong (\calO^\times)^3$ be the torus of diagonal matrices over $\calO$. Note that this is the pointwise stabilizer of $\Sigma$. The root groups corresponding to the roots $\alpha_i^n, \gamma_i^n$, $0 \le i \le 2$, $n \in \N$ of $\Sigma$ indicated in Figure~\ref{fig:roots} are
\begin{align*}
U_{\alpha_0^n} &=
\Bigg\{
\begin{pmatrix}
1 &  & \\
& 1 &\\
x & & 1
\end{pmatrix}
\Bigg\vert
\nu(x) \ge n + 1
\Bigg\}
&
U_{\gamma_0^n} &=
\Bigg\{
\begin{pmatrix}
1 &  & x\\
& 1 & \\
& & 1
\end{pmatrix}
\Bigg\vert
\nu(x) \ge n
\Bigg\}\\
U_{\alpha_1^n} &=
\Bigg\{
\begin{pmatrix}
1 & x & \\
& 1 & \\
& & 1
\end{pmatrix}
\Bigg\vert
\nu(x) \ge n
\Bigg\}
&
U_{\gamma_1^n} &=
\Bigg\{
\begin{pmatrix}
1 &  & \\
x & 1 &\\
& & 1
\end{pmatrix}
\Bigg\vert
\nu(x) \ge n + 1
\Bigg\}\\
U_{\alpha_2^n} &=
\Bigg\{
\begin{pmatrix}
1 &  & \\
& 1 &x \\
& & 1
\end{pmatrix}
\Bigg\vert
\nu(x) \ge n
\Bigg\}
&
U_{\gamma_2^n} &=
\Bigg\{
\begin{pmatrix}
1 &  & \\
& 1 & \\
& x & 1
\end{pmatrix}
\Bigg\vert
\nu(x) \ge n + 1
\Bigg\}\text{.}
\end{align*}
By the ball of radius $r$ around $C$ in $\Sigma$, denoted $B_r(C)$, we mean the intersection of the $\beta_i^r, \beta \in \{\alpha,\gamma\}, 0 \le i \le 2$. Thus $B_0(C) = C$ and $B_1(C)$ consists of all chambers that meet $C$ in at least a vertex.

We will denote by $\bar{U}_{\beta_i^n}$ the quotient $U_{\beta_i^n}/U_{\beta_i^{n+1}}$ for $\beta \in \{\alpha, \gamma\}$, $0 \le i \le 2$, $n \ge 0$. It is isomorphic to (the additive group of) $\kappa$.

\begin{proposition}
\label{prop:little_projective}
An element $g \in B$ can be written as
\[
g = \big(\lim u_{\alpha_0^0}u_{\alpha_1^0}u_{\alpha_2^0}u_{\gamma_0^0}u_{\gamma_1^0}u_{\gamma_2^0}u_{\alpha_0^1}u_{\alpha_1^1}u_{\alpha_2^1}\cdots \big) \cdot t
\]
with $u_{\beta_i^n} \in U_{\beta_i^n}$ and $t \in T$ and this writing is unique in the following sense. If the first $k$ elements of the right hand side coincide then the $(k+1)$st defines a unique element in $\bar{U}_{\beta_i^n}$.
\end{proposition}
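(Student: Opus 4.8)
The plan is to prove this by an iterated Gaussian elimination over the local ring $\calO$ — a ``Hensel lifting'' carried out to successively higher $\pi$-adic precision and then passed to the limit — the convergence of the resulting infinite products being guaranteed by the completeness (indeed profiniteness) of $B$. Recall first that $B = P_0\cap P_1\cap P_2$ is the \emph{pointwise} stabilizer of the chamber $C$; after lifting matrices from $\PGL_3$ to $\GL_3$ and using the coordinates fixed above, it is the image of the Iwahori subgroup $\tilde B = \{g\in\GL_3(\calO)\mid g\bmod\pi\text{ is upper triangular}\}$. For $\ell\ge 0$ let $N_\ell\le B$ be the pointwise stabilizer of the ball $B_\ell(C)$. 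Then $N_0 = B$, each $N_\ell$ is normal of finite index in $B$ (it is the kernel of the $B$-action on the finite complex $B_\ell(C)$), $\bigcap_\ell N_\ell = 1$ since no nontrivial element of $\PGL_3(K)$ fixes the whole building, and $B = \varprojlim_\ell B/N_\ell$ is profinite. Since $B_n(C) = \bigcap_{\beta,i}\beta_i^n$ and $U_{\beta_i^n}$ fixes the half-apartment $\beta_i^n$, hence $B_n(C)$, pointwise, we have $U_{\beta_i^n}\le N_n$; as the $N_\ell$ are normal it follows that for \emph{any} choice of elements $u_{\beta_i^n}\in U_{\beta_i^n}$ the partial products of the displayed expression form a Cauchy sequence, so the indicated limit exists in $B$. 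The content of the proposition is therefore the existence of suitable factors $u_{\beta_i^n}$ and $t$, and the asserted uniqueness.

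\emph{Existence.} Fix $g\in\tilde B$ and build the factors recursively in the prescribed order. The induction hypothesis is that at the start of round $r$ the ``residue'' $h$ (the inverse of the partial product accumulated so far, times $g$) lies in $\tilde B$ with all off-diagonal entries of high valuation: those in positions $(1,2),(2,3),(1,3)$ of valuation $\ge r$ and those in positions $(2,1),(3,1),(3,2)$ of valuation $\ge r+1$ (for $r=0$ this is just $h\in\tilde B$). Going through the six positions in the order $\alpha_0^r,\alpha_1^r,\alpha_2^r,\gamma_0^r,\gamma_1^r,\gamma_2^r$, one takes at each step the elementary matrix in the corresponding position whose unique parameter clears the corresponding entry of $h$ by a row operation; by the valuation bounds this parameter has exactly the valuation required for membership in the relevant $U_{\beta_i^r}$. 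A direct $3\times 3$ computation shows that after the six row operations the new residue again has the displayed shape with $r$ replaced by $r+1$, and that residue and partial product have changed only in entries of valuation $\ge r$. Passing to the limit, the partial products converge in $B$ to the displayed infinite product $p$, and the residues converge to a matrix all of whose off-diagonal entries vanish, hence to one representing an element $t\in T$; and $g = p\,t$.

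\emph{Uniqueness.} Suppose $g = p\,t = p'\,t'$ with $p,p'$ infinite products as above whose first $k$ factors coincide. Writing $p = w\,q$, $p' = w\,q'$ with $w$ the common length-$k$ prefix and $q,q'$ the (convergent) tails, cancellation gives $q\,t = q'\,t'$, so $q' = q\,s$ with $s := t\,t'^{-1}\in T$. Let $\iota = (\beta,i,n)$ be the $(k+1)$-st index and write $q = u\,q_1$, $q' = u'\,q_1'$ with $u,u'\in U_{\beta_i^n}$ and $q_1,q_1'$ products of strictly later root-group factors. Then $u^{-1}u' = q_1\,s\,q_1'^{-1}$ lies in the closed subgroup $B_{\succ\iota}$ generated by $T$ together with all $U_{\beta_j^m}$ following $\iota$ in the ordering. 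From the explicit matrices one checks $U_{\beta_i^n}\cap B_{\succ\iota} = U_{\beta_i^{n+1}}$: every element of $B_{\succ\iota}$ has, in the $(\beta,i)$-position, an entry of valuation strictly larger than the bound defining $U_{\beta_i^n}$, because the later root groups \emph{in that position} carry a higher level, products of later root groups in other positions contribute there only through valuations that add up to at least one more, and $T$ contributes nothing off the diagonal. Hence $u^{-1}u'\in U_{\beta_i^{n+1}}$, so $u$ and $u'$ have the same image in $\bar U_{\beta_i^n}$, which is the assertion.

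\emph{Where the work is.} The only non-formal step is the $3\times 3$ bookkeeping in the existence paragraph: that the six level-$r$ row operations, performed in exactly the prescribed order, send a residue of ``level $r$'' to one of ``level $r+1$'' (rather than regenerating lower-order off-diagonal terms) and perturb the diagonal only at order $\pi^r$. This is routine, but it is the one place where the particular ordering of the six root groups within a level — equivalently, the Chevalley commutator relations for the affine root system $\tilde A_2$ displayed in Figure~\ref{fig:roots} — actually enters; everything else is soft.
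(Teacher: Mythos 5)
Your proof is correct, but it takes a genuinely different route from the paper's. The paper argues geometrically inside the apartment $\Sigma$: starting from $g\in B$ it multiplies by root-group elements so that the product fixes the balls $B_n(C)\subseteq\Sigma$ for larger and larger $n$ (each correction being unique in $\bar{U}_{\beta_i^n}$ because the root group moves the relevant vertex adjacent to the wall simply transitively modulo the next level), and the limit, fixing all of $\Sigma$, lies in $T$; uniqueness is read off from that same transitivity. You instead work entirely with matrices and run a level-by-level refinement of the Iwahori factorization, clearing the six off-diagonal entries to one more order of $\pi$-adic precision per round. Both work; the geometric argument is softer, since ``fixes $B_n(C)$ pointwise'' is a condition that later root-group multiplications cannot destroy, whereas your version concentrates all the content in the $3\times 3$ bookkeeping you defer. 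That bookkeeping does close up with the prescribed ordering --- the only entry regenerated during round $r$ is $(1,2)$, at the $\gamma_0^r$ step, with valuation at least $2r+1\ge r+1$ --- and your uniqueness step, via the identity $U_{\beta_i^n}\cap B_{\succ\iota}=U_{\beta_i^{n+1}}$ proved by a path/valuation count in products of elementary matrices, is a clean algebraic substitute for the paper's transitivity argument (and arguably more complete, since it handles two arbitrary decompositions rather than only the algorithm's output). One small slip in your framing: since $\bigcup_\ell B_\ell(C)$ exhausts only $\Sigma$ and not the building, $\bigcap_\ell N_\ell$ is the pointwise stabilizer $T$ of $\Sigma$, not the trivial group, so $B\ne\varprojlim_\ell B/N_\ell$. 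This affects nothing, because convergence of the partial products already follows from the fact that every element of $U_{\beta_i^n}$ is congruent to the identity modulo $\pi^n$, so the factors tend to $1$ in the congruence filtration of the (profinite) group $B$.
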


Thus we can get unique expressions by fixing a set-theoretic lift $\kappa \to \calO$.

\begin{proof}
Let $g \in B^+$ be arbitrary. We successively multiply $g$ by root group elements to make it coincide with the identity on larger and larger neighborhoods of $C$ in $\Sigma$. In the process we refer to the vertices named in Figure~\ref{fig:roots}. Since $g$ is in $B^+$ it fixes $C$. Multiplying by an element of $U_{\alpha_0^0}$ we get an element $g'$ that in addition takes $u_0$ to itself. Similarly we multiply by elements of $U_{\alpha_1^0}$ and of $U_{\alpha_2^0}$ to get an element $g''$ that takes $u_1$ and $u_2$ to themselves. Note that the elements by which we multiplied defined unique elements in the respective quotients $\bar{U}_{\alpha_i^0}$.

In a similar fashion we can use $U_{\gamma_0^0}$, $U_{\gamma_1^0}$, and $U_{\gamma_2^0}$ to get an element $g_1$ that fixes $t_2$, $t_1$, and $t_0$. Note that $g_1$ fixes $B_1(C)$. Again we multiplied by elements that were unique in $\bar{U}_{\gamma_i^0}$.

Continuing in this way with $U_{\alpha_i^n}, U_{\gamma_i^n}$, $0 \le i \le 2$ for increasing $n$, we get elements $g_n$ that fix $B_n(C)$.

The limit exists because $B$ is compact and because $U_{\beta_i^r}$ is contained in a small identity neighborhood for large $r$. Since the limit fixes all of $\Sigma$ we conclude that $t^{-1} \defeq \lim g_n I \in T$.\qed
\end{proof}

With these facts established we return to our problem of embedding $\Gamma$. We need to impose a condition on the difference matrix $E$. Namely we call $E$ \emph{normalized} if it is based and each column of $E$ contains the entry $1$. Not every difference matrix is equivalent to a normalized one: there are difference sets that are not equivalent to a difference set containing $0$ and $1$, however the first time this happens is for $q = 101$ (which is far beyond the computational scope of this section).

\begin{proposition}
Let $\Gamma$ be a Singer cyclic lattice and let $(\sigma_i)_i$ be a presenting triple such that the associated difference matrix is normalized. If $\Gamma \le \PGL_3(K)$ with $\sigma_i$ fixing $v_i$ then
\begin{equation}
s_i^{e_i^j} \cong
\begin{pmatrix}
* & * & *\\
* & * & *\\
0 & * & *
\end{pmatrix}
\mathrel{\mathrm{mod}} \pi\text{ for all }j\text{.}
\label{eq:local_constraint}
\end{equation}
Up to conjugating by an element of $B$ we can assume that the $s_i$ are of the form
\begin{equation}
\label{eq:global_constraint}
s_i = 
\begin{pmatrix}
0 & 0 & *\\
* & * & *\\
* & 1 & *
\end{pmatrix}
\end{equation}
where the stars indicate elements of $\calO$.
\end{proposition}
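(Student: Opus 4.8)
\emph{Set-up and coordinates.} I would first fix $w_0$ to be the vertex stabilised by $P=\PGL_3(\calO)$, so that $w_i=\rho^i.w_0$, and identify $\lk w_0$ with the flag complex of $\kappa^3=\calO^3/\pi\calO^3$ via the reduction $P\twoheadrightarrow\PGL_3(\kappa)$, writing $\bar g$ for the image of $g\in P$. A short computation of the lattices $\rho\calO^3$ and $\rho^2\calO^3$ shows that $w_1=\rho.w_0$ is the point $\langle e_1\rangle$ and $w_2=\rho^2.w_0$ the line $\langle e_1,e_2\rangle$ (so $w_1\in w_2$), and that $\bar B$ is the Borel of $\PGL_3(\kappa)$ stabilising this flag. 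Each $s_i=\sigma_i^{\rho^i}$ fixes $w_0$ and acts on $\lk w_0$ as a conjugate of the Singer cycle by which $\sigma_i$ acts on $\lk w_i$, so $\bar s_i$ is a Singer cycle of $\P^2\kappa$, in particular fixing no point and stabilising no line. With this in hand, the plan is to read off \eqref{eq:local_constraint} from Lemma~\ref{lem:canonical_difference_matrix}, and then to obtain \eqref{eq:global_constraint} by normalising the three $s_i$ simultaneously by one element of $B$, using Proposition~\ref{prop:little_projective} to keep the normalisation over $\calO$ under control.

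\emph{Local constraint.} Fix a row $j$ of $E$. Running the argument in the proof of Lemma~\ref{lem:canonical_difference_matrix} on the relation \eqref{eq:difference_cycle} and on its two cyclic rotations gives that $\sigma_0^{E_{j,0}}.w_1$, $\sigma_1^{E_{j,1}}.w_2$ and $\sigma_2^{E_{j,2}}.w_0$ are each adjacent to the two remaining vertices of $C$. Transporting the $i$-th statement by $\rho^{-i}$ and using $\rho.w_0=w_1$, $\rho.w_1=w_2$, $\rho.w_2=w_0$, it reads: $s_i^{E_{j,i}}.w_1$ is adjacent to $w_2$. In the identification above this is exactly the statement that $\overline{s_i^{E_{j,i}}}$ carries the point $\langle e_1\rangle$ into the line $\langle e_1,e_2\rangle$, i.e.\ the first column of $\overline{s_i^{E_{j,i}}}$ has vanishing last entry, which is \eqref{eq:local_constraint}.

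\emph{Global constraint.} Because $E$ is normalised, each of its columns contains $1$, so \eqref{eq:local_constraint} applies with exponent $1$: the point $\mathbf q_i\defeq\bar s_i.w_1$ lies on the line $w_2$ and is $\neq w_1$ (fixed-point-freeness), whence $\mathbf M_i\defeq\bar s_i.w_2$ is a line through $\mathbf q_i$ with $\mathbf M_i\neq w_2$. The crucial point I would establish is that the flag $(\mathbf q_i,\mathbf M_i)$ is independent of $i$: using the exponent-$1$ row of column $i$ to eliminate $\sigma_i$, one can rewrite $\bar s_i.w_1$ and $\bar s_i.w_2$ in terms of $\sigma_{i\pm1}$ and $\rho$, and comparing the three resulting expressions --- this is the step that really uses the product relations \eqref{eq:product_new} --- yields $\mathbf q_0=\mathbf q_1=\mathbf q_2\eqdef\mathbf q$ and $\mathbf M_0=\mathbf M_1=\mathbf M_2\eqdef\mathbf M$. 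The flag $(\mathbf q,\mathbf M)$ is in general position relative to $(w_1,w_2)=(\langle e_1\rangle,\langle e_1,e_2\rangle)$, so some $\bar b\in\bar B$ carries $(\mathbf q,\mathbf M)$ to $(\langle e_2\rangle,\langle e_2,e_3\rangle)$; lifting $\bar b$ to $b\in B$ and replacing each $s_i$ by $s_i^b$ I get, for every $i$, that $\bar s_i$ sends $\langle e_1\rangle$ to $\langle e_2\rangle$ and $\langle e_1,e_2\rangle$ to $\langle e_2,e_3\rangle$. Hence the $(1,1)$- and $(1,2)$-entries of $s_i$ lie in $\pi\calO$, and the $(3,2)$-entry is a unit (otherwise $\langle e_2\rangle$ would be $\bar s_i$-invariant). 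The description of $B$ in Proposition~\ref{prop:little_projective} as a product of root groups with the diagonal torus makes it transparent that $B$ is large enough to contain a further element correcting the $(1,1)$- and $(1,2)$-entries to $0$ identically --- simultaneously for all three $s_i$ --- while a final homothety of each $s_i$, harmless in $\PGL_3$ up to absorbing the scalars into the ``$\pi$'' of \eqref{eq:product_new}, normalises the $(3,2)$-entry to $1$. As $s_i\in P$, the remaining entries lie in $\calO$, giving \eqref{eq:global_constraint}.

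\emph{Main obstacle.} I expect the real difficulty to be the simultaneity --- that a single $b\in B$ normalises all three $s_i$ --- equivalently the coincidence $\bar s_0.(w_1,w_2)=\bar s_1.(w_1,w_2)=\bar s_2.(w_1,w_2)$. This must be argued from the product relations \eqref{eq:product_new} for the exponent-$1$ rows, not merely from the orders and \eqref{eq:local_constraint}. Once it is available, choosing $\bar b$ and lifting the whole normalisation from $\kappa$ to $\calO$ are routine with the help of Proposition~\ref{prop:little_projective}.
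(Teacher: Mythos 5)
Your treatment of the local constraint \eqref{eq:local_constraint} is correct and coincides with the paper's argument. The gap is in the global constraint, and it sits exactly where you place the ``main obstacle'': the coincidence $\bar s_0.(w_1,w_2)=\bar s_1.(w_1,w_2)=\bar s_2.(w_1,w_2)$ is not a consequence of the relations, and in general it is false. To see why it cannot be proved, note that the hypothesis of the proposition is stable under conjugating the embedding $\Gamma\le\PGL_3(K)$ by any $b\in B$, but under such a conjugation $s_i=\sigma_i^{\rho^i}$ becomes $s_i^{\,b^{\rho^i}}$, i.e.\ the three $s_i$ are conjugated by the three \emph{different} elements $b^{\rho^0},b^{\rho^1},b^{\rho^2}$ of $B$; hence the three flags move to $(b^{\rho^i})^{-1}.\bar s_i.(w_1,w_2)$, and their coincidence is destroyed for generic $b$. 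A statement that is not invariant under the allowed conjugations cannot follow from the (invariant) hypotheses. Concretely, $\sigma_0.v_1$, $\sigma_1.v_2$ and $\sigma_2.v_0$ are governed by three different rows of $E$ (the row with a $1$ in column $0$, the row with a $1$ in column $1$, and so on) and are simply unrelated before normalization. The same computation shows that your normalization step is not a legitimate move: replacing every $s_i$ by $s_i^{b}$ with one common $b$ amounts to replacing $\sigma_i$ by $\sigma_i^{b^{\rho^{-i}}}$, which is not conjugation of the embedding by a single element and does not preserve the relations \eqref{eq:product_new} unless $b$ commutes with $\rho$.

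The correct mechanism --- and the one the paper uses --- turns this apparent defect into the proof: precisely because one conjugation of $\Gamma$ by $b$ acts on the three $s_i$ through the three different elements $b^{\rho^i}$, the three conditions $\sigma_0.v_1=u_1$, $\sigma_1.v_2=u_2$, $\sigma_2.v_0=u_0$ can be normalized \emph{independently}, with no need for the flags to agree beforehand. The paper conjugates $\Gamma$ successively by elements of $U_{\alpha_1^0}$, $U_{\alpha_2^0}$, $U_{\alpha_0^0}$, each adjusting one of the three conditions while leaving the others untouched (for instance an element of $U_{\alpha_1^0}$ moves the points of $\gen{e_1,e_2}$ but fixes every line through $[e_1]$, so it adjusts $\sigma_0.v_1$ without moving $\sigma_1.v_2$), and then continues with the higher root groups $U_{\beta_i^n}$ as in Proposition~\ref{prop:little_projective} to promote the two zeros of \eqref{eq:global_constraint} from congruences modulo $\pi$ to exact equalities in $\calO$; your appeal to a single further element of $B$ acting ``simultaneously for all three $s_i$'' runs into the same common-conjugator problem at every level $n$. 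Finally, the entry $1$ is obtained in the paper by conjugating with a torus element rather than by rescaling each $s_i$: a homothety is only harmless when $q\equiv 1\mmod 3$, since otherwise the relations must hold on the nose in $\GL_3(\calO)$ and not merely up to $\calO^\times$.
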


\begin{proof}
We start with some preliminary observations. Consider a relation
\[
\sigma_0^{e_0}\sigma_1^{e_1}\sigma_2^{e_2} = 1
\]
(we suppress the row index for readability). A consequence is that $\sigma_0^{-e_0}.v_2 = \sigma_1^{e_1}\sigma_2^{e_2}.v_2 = \sigma_1^{e_1}.v_2$. By cyclically permuting the relation and acting on the vertices $v_0$ and $v_1$ we find more generally (indices modulo $3$) that
\[
\sigma_i^{e_i}.v_{i+1} = \sigma_{i-1}^{-e_{i-1}}.v_{i+1}\text{.}
\]
Since $v_{i+1}$ is adjacent to $v_{i-1}$ it follows that the same is true of $\sigma_i^{e_i}.v_{i+1}$:
\begin{equation}
\sigma_i^{e_i}.v_{i+1} \sim v_{i-1} \quad \text{which means} \quad s_i^{e_i}.v_1 \sim v_2\text{.}\label{eq:incidence}
\end{equation}
In fact, $v_{i+1}$ and the vertices $\sigma_i^{e_i^j}.v_{i+1}$ are all the neighbors of the edge $v_{i-1}$ in the link of $v_i$, recovering the defining difference set.

The natural identification of the link of $v_0$ with $\P^2\kappa$ is such that $v_1$ gets identified with $[e_1]$ and $v_2$ gets identified with $[e_1] + [e_2]$. Thus the relation \eqref{eq:incidence} means that $s_i^{e_i}.e_1 \in \gen{e_1, e_2}$ which is precisely \eqref{eq:local_constraint}.

The rest of the proof is similar to that of Proposition~\ref{prop:little_projective}.
Since the difference matrix is normalized, for every $i$ there is some $j$ such that $E_{i,j} = 1$. Thus relation \eqref{eq:incidence} states that $\sigma_i.v_{i+1} \sim v_{i-1}$ for all $i$. We may now conjugate (all of $\Gamma$) by an element of $U_{\alpha_1^0}$ to achieve that $\sigma_0.v_1 = u_1$. Similarly conjugating by elements of $U_{\alpha_2^0}$ and $U_{\alpha_0^0}$ we get $\sigma_1.v_2 = u_2$ and $\sigma_2.v_0 = u_0$. Note that all of these can be expressed as $s_i.v_1 = u_1$ for $0 \le i \le 2$.

Conjugating by further root groups, we can achieve that $s_i.v_2 = t_2$, that $s_i.u_0 = s_0$, that $s_i.t_1 = r_1$, etc. Note that each of the root groups preserves the progress made so far by the root group property. Taking the limit, we find that the geodesic ray from $v_0$ to $[e_1] + [e_2] \in \P^2 K \cong \partial X$ is taken to the geodesic ray from $v_0$ to $[e_2] + [e_3] \in \P^2 K$. This explains the two $0$s in \eqref{eq:global_constraint}.

Now consider the ray $\omega$ from $v_0$ to $[e_2] \in \P^2 K$. Note that $s_i.u_1$ is adjacent to $t_2$ but certainly does not lie in $\Sigma$ because $s_i$ is a Singer cycle. Thus $s_i.\omega$ immediately leaves $\Sigma$. As above we can use elements of $U_{\alpha_2^0}$ and $U_{\gamma_0^0}$ to take $s_i.\omega$ to the ray from $v_0$ to $[e_3]$. Conjugating by these elements would destroy the progress made so far but it shows that $s_i.[e_2]$ lies in $U_{\alpha_2^0}U_{\gamma_0^0}.[e_3]$. In fact, we already know that $s_i.[e_2] \in [e_2] + [e_3]$ so it lies in $U_{\alpha_2^0}.[e_3]$. This means that $s_i[e_2]$ is of the form $[y_i e_2 + z_i e_3]$ with $y_i \in \calO$ and $z_i \in \calO^*$. Conjugating everything by the diagonal matrix with entries $(z_1^{-1}, z_2^{-1}, z_0^{-1})$ (which lies in $T$ and thus stabilizes $\Sigma$ and preserves everything we have done so far) we get that $s_i$ takes $[e_2]$ to a point of the form $[y_i' e_2 + e_3]$. This justifies the $1$ in \eqref{eq:global_constraint}.\qed
\end{proof}

We summarize the content of this section:

\begin{theorem}
Let $E$ be a normalized difference matrix and let $\Gamma$ be the Singer cyclic lattice defined by $E$.
There is an embedding of $\Gamma$ into $G$ if and only if there is a solution to the system or relations $X^i_{r,s}$
\begin{align*}
s_i^\delta \approx I, 0 \le i \le 2\\
\pi s_0^{E_{0,j}} \rho s_1^{E_{1,j}}\rho s_2^{E_{2,j}} \approx I, 1 \le j \le q
\end{align*}
with
\[
s_i =
\begin{pmatrix}
0 & 0 & X^i_{0,2}\\
X^i_{1,0} & X^i_{1,1} & X^i_{1,2}\\
\pi X^i_{2,0} & 1 & X^i_{2,2}\text{.}\\
\end{pmatrix}
\]
If $p \equiv 1 \mmod p$ then ``$\approx$'' means ``up to $\calO^*$'', otherwise it means ``$=$''.
\end{theorem}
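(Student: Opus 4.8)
The plan is to assemble the machinery built above: Essert's presentation (Theorem~\ref{thm:essert}), the injectivity criterion (Lemma~\ref{lem:injectivity}), and the root-group normalizations carried out in the last two Propositions. The one computation to record first is the dictionary between the two ways of writing the defining relations. Writing $\sigma_i = \rho^i s_i \rho^{-i}$, so that $s_i = \sigma_i^{\rho^i}$ as before, and using $\rho^3 = \pi^{-1}I$, one checks
\[
\sigma_0^{e_0}\sigma_1^{e_1}\sigma_2^{e_2} = s_0^{e_0}\rho\, s_1^{e_1}\rho\, s_2^{e_2}\rho^{-2},
\]
and conjugating this relator by $\rho$ and rescaling by the scalar $\pi$ shows it is trivial exactly when $\pi s_0^{e_0}\rho\, s_1^{e_1}\rho\, s_2^{e_2}\rho = I$. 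In particular the row $(0,0,0)$ of a based $E$ yields $\pi\rho^3 = I$ with no content, which is why the displayed system runs only over $1 \le j \le q$; and $\sigma_i^\delta = 1 \Leftrightarrow s_i^\delta = 1$. By Lemmas~\ref{lem:q_congruence_factor} and~\ref{lem:mod_3} every homomorphism $\Gamma \to \PGL_3(K)$ factors through $\SL_3(K)$, so these are literal matrix identities, unless $q \equiv 1 \mmod 3$, when they can only be imposed up to $\calO^\times$; this is the force of the symbol ``$\approx$'' in the statement.

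For the implication ``solution $\Rightarrow$ embedding'', suppose we are given matrices $s_i$ of the form \eqref{eq:global_constraint} whose reductions modulo $\pi$ are Singer cycles of $\PGL_3(\kappa)$ and that satisfy the displayed relations (the Singer-cycle condition on admissible solutions is the one imposed in the discussion preceding \eqref{eq:order_new}--\eqref{eq:product_new}). Put $\sigma_i \defeq \rho^i s_i \rho^{-i}$. The relation $s_i^\delta \approx I$ forces $\det s_i \in \calO^\times$, so $s_i \in P = \PGL_3(\calO)$ and hence $\sigma_i \in P_i$; thus $\sigma_i$ fixes the vertex $w_i$ of the base chamber $C$, and, since its reduction is a Singer cycle of $\PGL_3(\kappa)$, it acts as a Singer cycle on $\lk w_i$. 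By the dictionary of the previous paragraph the $\sigma_i$ satisfy the relations \eqref{eq:difference_cycle} and \eqref{eq:order_relation}, so Theorem~\ref{thm:essert} yields a homomorphism $\iota \colon \Gamma \to \PGL_3(K) = G$; Lemma~\ref{lem:injectivity} then promotes $\iota$ to an embedding (in fact to a $\Gamma$-equivariant isomorphism between the building of $\Gamma$ and that of $\PGL_3(K)$).

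For the reverse implication I would run the normalization Proposition backwards. Given $\Gamma \le \PGL_3(K)$, let $(\sigma_i)_i$ be the presenting triple attached to the normalized matrix $E$. Each $\sigma_i$ has order $\delta$, hence fixes a vertex $v_i$ of the Bruhat--Tits building: when $q \not\equiv 1 \mmod 3$ this is immediate from Lemma~\ref{lem:vertex_stab}, and when $3 \mid \delta$ one argues separately that $\sigma_i$ cannot merely rotate a triangle (for instance by analyzing the image of $\gen{\sigma_i}$ in the finite quotient of a cell stabilizer, or by transporting the $\sigma_i$-fixed vertex of the building of $\Gamma$ through the equivariant isomorphism that Lemma~\ref{lem:injectivity} ultimately furnishes). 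The vertices $v_0, v_1, v_2$ span a chamber, so after applying an automorphism of the Bruhat--Tits building (which normalizes $\PGL_3(K)$) we may assume that chamber is $C$, i.e.\ $\sigma_i \in P_i$ and $s_i \defeq \sigma_i^{\rho^i} \in P$. The last Proposition then provides $b \in B$ with $s_i^b$ in the normal form \eqref{eq:global_constraint}; since conjugation by $b$ leaves all the relations intact, the matrix entries of the $s_i^b$ furnish the desired solution $X^i_{r,s}$.

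The step I expect to be the main obstacle is the parenthetical one above: in the case $q \equiv 1 \mmod 3$, verifying that the order-$\delta$ elements $\sigma_i$ genuinely fix vertices rather than rotating triangles, and relatedly that ``$\sigma_i$ fixes $v_i$'' --- the standing hypothesis of the normalization Proposition --- is a legitimate assumption for an arbitrary embedding. Once Lemma~\ref{lem:injectivity} is in play this is bookkeeping, but it is the only point that is not purely formal; the relation dictionary is elementary, and the passage between a solution and the normal form is exactly the content of the two preceding Propositions.
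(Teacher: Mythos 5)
Your proposal is correct and is essentially the paper's own argument: the paper offers no separate proof of this theorem, presenting it explicitly as a summary of Section~\ref{sec:linearity}, and your assembly --- the $\sigma_i\leftrightarrow s_i$ dictionary via $\rho^3=\pi^{-1}I$ (which also explains the index range $1\le j\le q$), Essert's theorem plus Lemma~\ref{lem:injectivity} for ``solution $\Rightarrow$ embedding'', and the two normalization propositions for the converse --- is exactly the intended one. The loose end you flag (justifying, for $q\equiv 1\mmod 3$, that the $\sigma_i$ fix the vertices of a chamber rather than rotating a triangle) is likewise left implicit in the paper, which simply builds ``$\sigma_i$ fixing $v_i$'' into the hypothesis of the normalization proposition; your suggested fix via the $\Gamma$-equivariant isomorphism coming from QI-rigidity is the right way to close it.
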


We end the section by giving examples of embeddings found using the method described above.


\subsection{Embedding $\Gamma_{2,1}$}
\label{sec:embedding_21}

Let $A$ be the set of all natural numbers whose binary expansion does not contain the string $00$ (including $0$). The sequence of elements of $A$ is A003754 in \cite{oeis}. Let $B$ be the set of odd elements in $A$. The corresponding sequence in \cite{oeis} is A247648. We will need the following:

\begin{lemma}
\label{lem:sequence_bijections}
\begin{enumerate}
\item $B = \{2i+1 \mid i \in A\}$ \label{item:bijection_alpha_beta}
\item $A = B \cup 2 B \cup \{0\}$ (disjoint union). \label{item:bijection_alpha_beta_complement}
\end{enumerate}
\end{lemma}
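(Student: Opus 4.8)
The whole statement comes down to one elementary observation about binary expansions, which I would isolate and prove first. Writing the binary expansion of a positive integer $m$ as a word $w$ over $\{0,1\}$ with no leading zeros, the expansions of $2m$ and $2m+1$ are $w0$ and $w1$ respectively. A factor $00$ of $w0$ is either already a factor of $w$ or is formed by the appended $0$ together with the last letter of $w$; and no factor $00$ of $w1$ can use the final letter. Hence, for $m \ge 1$,
\[
2m \in A \iff m \in A \text{ and } m \text{ is odd}\text{,} \qquad\text{and}\qquad 2m+1 \in A \iff m \in A \text{.}
\]
The only case not covered, $m = 0$, I would just check by hand: $0 \in A$ and $1 \in A$, consistent with reading the empty word as the expansion of $0$.

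Granting this, part~(1) is immediate. If $i \in A$ then $2i+1 \in A$ by the second equivalence and $2i+1$ is odd, so $2i+1 \in B$. Conversely any $n \in B$ is odd, hence of the form $n = 2i+1$ with $i = (n-1)/2 \in \N$, and $n \in A$ forces $i \in A$ by the same equivalence; so $B = \{2i+1 \mid i \in A\}$.

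For part~(2) I would first observe that disjointness is automatic: $B$ consists of odd integers, $2B$ of even integers that are at least $2$, and $0$ lies in neither. For the equality $A = \{0\} \cup B \cup 2B$, the inclusions $\{0\} \subseteq A$ and $B \subseteq A$ hold by definition, and $2B \subseteq A$ because each $b \in B$ is an odd element of $A$, whence $2b \in A$ by the first equivalence. Conversely, let $n \in A$: if $n = 0$ we are done; if $n$ is odd then $n$ is an odd element of $A$, i.e.\ $n \in B$; and if $n = 2m$ with $m \ge 1$, reading the first equivalence backwards gives $m \in A$ and $m$ odd, so $m \in B$ and $n \in 2B$.

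The only point needing any care — the ``hard part'', such as it is — is the bookkeeping at the bottom of the recursion: making the displayed equivalences literally correct by dealing with the degenerate case $m = 0$, and being explicit that ``binary expansion'' is always taken without leading zeros so that appending a bit really does produce $2m$ and $2m+1$. Everything else is pure parity-and-substring checking.
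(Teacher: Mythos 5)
Your proof is correct and follows essentially the same route as the paper: both arguments rest on the observation that passing from $i$ to $2i+1$ (resp.\ $2i$) appends a $1$ (resp.\ a $0$) to the binary expansion, and then track when a new factor $00$ can appear. Your version is if anything slightly more careful, since you isolate the two equivalences explicitly and handle the degenerate case $m=0$ separately.
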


\begin{proof}
In terms of the binary expansion taking $i$ to $2i+1$ means just extending by $1$ on the right. So if $i$ is in $A$ then $2i+1$ is in $\Gamma$ and every element of $\Gamma$ (uniquely) arises in this way.

Similarly, taking $i$ to $2i$ means extending by $0$ on the right. So if $i$ is in $B$ then $2i$ is in $B$ and is even. Every element of $A$ but $0$ arises in this way.\qed
\end{proof}

Now we consider the elements $\alpha = \sum_{e \in A} t^e$ and $\beta = \sum_{e \in B} t^e$ of $\F_2\lls{}t\rrs$. Thus
\begin{align*}
\alpha &= 1 + t + t^2 + t^3 + t^5 + t^6 + t^7 + t^{10} + t^{11} + t^{13} + t^{14} + t^{15} + t^{21} + \ldots\\
\beta &= t + t^3 + t^5 + t^7 + t^{11} + t^{13} + t^{15} + t^{21} + \ldots\text{.}
\end{align*}
We find

\begin{lemma}
\label{lem:relations}
The elements $\alpha$ and $\beta$ satisfy the relations $t\alpha^2 = \beta$ and $\alpha + \beta + \beta^2 = 1$.
\end{lemma}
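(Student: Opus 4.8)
The plan is to read both identities straight off the combinatorial description of $A$ and $B$ in Lemma~\ref{lem:sequence_bijections}, using only the Frobenius endomorphism of $\F_2\pseries{t}$: since $2 = 0$ and $a_e^2 = a_e$ for $a_e \in \F_2$, one has $\bigl(\sum_e a_e t^e\bigr)^2 = \sum_e a_e t^{2e}$, so squaring a series with $\{0,1\}$-coefficients simply doubles the exponents occurring in its support.

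For the first relation I would square $\alpha = \sum_{e \in A} t^e$ to get $\alpha^2 = \sum_{e \in A} t^{2e}$, hence $t\alpha^2 = \sum_{e \in A} t^{2e+1}$. By part~\eqref{item:bijection_alpha_beta} of Lemma~\ref{lem:sequence_bijections} the map $e \mapsto 2e+1$ is a bijection of $A$ onto $B$, so $t\alpha^2 = \sum_{b \in B} t^b = \beta$. For the second relation I would likewise compute $\beta^2 = \sum_{b \in B} t^{2b}$, which is the generating series of the set $2B$. Since every element of $B$ is odd, $B$ and $2B$ are disjoint, so there is no cancellation in characteristic $2$ and $\beta + \beta^2 = \sum_{c \in B \cup 2B} t^c$. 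Part~\eqref{item:bijection_alpha_beta_complement} of Lemma~\ref{lem:sequence_bijections} identifies $B \cup 2B$ with $A \setminus \{0\}$, and since $0 \in A$ this gives $\alpha = t^0 + \sum_{c \in B \cup 2B} t^c = 1 + \beta + \beta^2$, that is, $\alpha + \beta + \beta^2 = 1$.

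I do not anticipate any genuine obstacle: the computation is just the Frobenius identity together with a reindexing of the exponent sets. The only point needing a little care is keeping track of which unions of index sets are disjoint, so that addition over $\F_2$ does not silently cancel terms — and that is exactly the bookkeeping that Lemma~\ref{lem:sequence_bijections} packages.
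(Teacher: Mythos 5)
Your proof is correct and follows essentially the same route as the paper's: the squaring step (which you phrase via the Frobenius endomorphism, where the paper expands the square and cancels off-diagonal terms in characteristic $2$) followed by the reindexing of exponent sets via Lemma~\ref{lem:sequence_bijections}. The care you take about disjointness of $B$ and $2B$ is exactly the bookkeeping the paper's argument also relies on.
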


\begin{proof}
First note that
\[
\alpha^2 = \sum_{(e,f) \in A \times A} t^{e+f} = \sum_{e \in A} t^{2e}
\]
since we are working in characteristic $2$ so that the non-diagonal terms cancel. Thus
\[
t\alpha^2 = \sum_{e \in A} t^{2e + 1} = \sum_{e' \in B} t^{e'} = \beta
\]
by Lemma~\ref{lem:sequence_bijections}\eqref{item:bijection_alpha_beta}.

For the second relation we compute
\[
\alpha = \sum_{e \in A} t^e = \sum_{e \in \Gamma} t^e + \sum_{e \in 2\Gamma} t^e + t^0 = \beta + \beta^2 + 1
\]
using Lemma~\ref{lem:sequence_bijections}\eqref{item:bijection_alpha_beta_complement}. This is the desired relations thanks to characteristic $2$.\qed
\end{proof}

We consider the matrices ($\rho$ is just the special case $K = \F_2\lseries{t}$ of the $\rho$ defined before)
\[
s =
\begin{pmatrix}
0 & 0 & \alpha \\
1 + t \alpha & \beta & \alpha\\
t\alpha & 1 & \beta
\end{pmatrix}
\in \SL_3(\F_2\lseries{t})
\quad
\text{and}
\quad
\rho =
\begin{pmatrix}
0 & 0 & t^{-1}\\
1 & 0 & 0\\
0 & 1 & 0
\end{pmatrix}\text{.}
\]

\begin{proposition}
The matrices $s$ and $\rho$ satisfy the relations
\[
t(s\rho)^3 = 1 \quad \text{and} \quad t(s^3\rho)^3 =1  \quad \text{and} \quad s^7 = 1\text{.}
\]
As a consequence $\Gamma_{2,1}$ embeds into $\PGL_3(\F_2\lseries{t})$ via $\sigma_i \mapsto s^{\rho^{-i}}$.
\end{proposition}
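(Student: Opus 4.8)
I would split the statement into its two halves. The three matrix identities $t(s\rho)^3 = 1$, $t(s^3\rho)^3 = 1$, $s^7 = 1$ are, in the end, just computations in $\SL_3(\F_2\pseries{t})$: the plan is to expand in a convenient order, first computing $s\rho$ and $s^3$ (these simplify nicely, since $\rho$ permutes the columns cyclically up to the scalar $t^{-1}$), then cubing them, multiplying by $t$, and repeatedly substituting the two relations $t\alpha^2 = \beta$ and $\alpha + \beta + \beta^2 = 1$ of Lemma~\ref{lem:relations}, all the while remembering $\chr = 2$ so that cross terms cancel; the power $s^7$ I would reach via $s^2$, $s^4 = (s^2)^2$, $s^7 = s^4 s^2 s$. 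This bookkeeping is the only real work and is where I expect essentially all the effort to go; the rest of the argument is formal.

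For the embedding I would first record the elementary identity $\rho^3 = t^{-1}I$, so that $\rho$ has order $3$ in $\PGL_3(\F_2\lseries{t})$ and $\rho^{-3} = tI$ in $\GL_3$. Setting $\sigma_i \defeq s^{\rho^{-i}} = \rho^i s\rho^{-i}$, the three identities just established give
\begin{align*}
\sigma_i^7 &= \rho^i s^7 \rho^{-i} = 1,\\
\sigma_0\sigma_1\sigma_2 &= s\rho s\rho s\rho^{-2} = (s\rho)^3\rho^{-3} = t(s\rho)^3 = 1,\\
\sigma_0^3\sigma_1^3\sigma_2^3 &= s^3\rho s^3\rho s^3\rho^{-2} = (s^3\rho)^3\rho^{-3} = t(s^3\rho)^3 = 1.
\end{align*}
These are precisely the relations \eqref{eq:difference_cycle} and \eqref{eq:order_relation} for the based difference matrix $(d\ d\ d)_{d \in \{0,1,3\}}$ belonging to $\Gamma_{2,1}$ (the row $d = 0$ contributing the trivial relation), so by Essert's theorem (Theorem~\ref{thm:essert}) the assignment $\sigma_i \mapsto s^{\rho^{-i}}$ extends uniquely to a homomorphism $\iota \colon \Gamma_{2,1} \to \PGL_3(\F_2\lseries{t})$.

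Finally I would show $\iota$ is injective using Lemma~\ref{lem:injectivity}. Since $\alpha, \beta \in \F_2\pseries{t}$ and $\det s = 1$, the matrix $s$ lies in $P = \PGL_3(\F_2\pseries{t})$ and hence fixes the base vertex $v_0$; therefore $\sigma_i = s^{\rho^{-i}}$ fixes $w_i \defeq \rho^i.v_0$, and $\{w_0, w_1, w_2\}$ is a chamber as set up in Section~\ref{sec:linearity}. Conjugation by $\rho^i$ identifies the action of $\sigma_i$ on $\lk w_i$ with the action of $s$ on $\lk v_0 \cong \P^2\F_2$, which is induced by the reduction $\bar s \defeq s \bmod t = \left(\begin{smallmatrix}0&0&1\\1&0&1\\0&1&0\end{smallmatrix}\right)$. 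Its characteristic polynomial is $X^3 + X + 1$, which is irreducible over $\F_2$; hence $\bar s$ has order $7$ (its eigenvalues generate $\F_8^\times$) and no fixed points on $\P^2\F_2$, so it acts regularly there (the orbit sizes divide $7$) — a Singer cycle. Lemma~\ref{lem:injectivity} then yields that $\iota$ is injective, and as a bonus that the building of $\Gamma_{2,1}$ is the Bruhat--Tits building of $\PGL_3(\F_2\lseries{t})$, in agreement with Corollary~\ref{cor:cartwright_to_panel}.
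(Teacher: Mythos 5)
Your proposal is correct and follows essentially the same route as the paper, whose entire proof is ``this is just a computation using Lemma~\ref{lem:relations}'' with the embedding left implicit in the framework of Section~\ref{sec:linearity}. You usefully flesh out that implicit part — the identity $\rho^{-3}=tI$ converting the three matrix relations into the presentation relations \eqref{eq:difference_cycle}, \eqref{eq:order_relation} for the difference matrix of $\Gamma_{2,1}$, and the verification via the reduction $\bar s$ (irreducible characteristic polynomial $X^3+X+1$, hence a Singer cycle on $\P^2\F_2$) that Lemma~\ref{lem:injectivity} applies — exactly as the section's setup intends.
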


\begin{proof}
This is just a computation using Lemma~\ref{lem:relations}.\qed
\end{proof}

\subsection{Embedding $\Gamma_{3,1}$}
\label{sec:embedding_31}

To embed $\Gamma_{3,1}$ we use the alternative difference matrix
\[
\begin{pmatrix}
0 & 0 & 0\\
1 & 3 & 9\\
3 & 9 & 1\\
9 & 1 & 3
\end{pmatrix}
\]
which has the advantage of having smaller row sums.

The polynomial $A(X) = tX^2 + X + 1$ has a unique root in $\F_3\pseries{t}$:
\[
\alpha = -1 - t + t^2 + t^3 + t^4 + t^8 + t^9 + t^{10} - t^{11} - t^{12} - t^{13} + O(t^{20})
\]
From the defining equation it is clear that its inverse is $\alpha^{-1} = -(\alpha + 1) \cdot t$. Consider the matrices
\[
s =
\begin{pmatrix}
0 & 0 & \alpha\\
\alpha^{-1} & 1 & 0\\
0 & 1 & -1
\end{pmatrix}
\quad
\text{and}
\quad
\rho =
\begin{pmatrix}
0 & 0 & t^{-1}\\
1 & 0 & 0\\
0 & 1 & 0
\end{pmatrix}\text{.}
\]
To get an embedding of $\Gamma_{3,1}$ as presented by our standard difference matrix we put $s_0 = s$, $s_1 = s^9$, $s_2 = s^3$.

\begin{proposition}
The lattice $\Gamma_{3,1}$ embeds into $\SL_3(\F_3\lseries{t})$ via $\sigma_i \mapsto s_i^{\rho^{-i}}$.
\end{proposition}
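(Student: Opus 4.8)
The plan is to follow the template used for $\Gamma_{2,1}$ in Section~\ref{sec:embedding_21}: reduce the defining relations of $\Gamma_{3,1}$ to polynomial identities in $\F_3\lseries{t}$ and then invoke Lemma~\ref{lem:injectivity} to promote the resulting homomorphism to an embedding. First I would record the algebraic facts about $\alpha$ on which everything rests. By its very definition $\alpha$ satisfies $t\alpha^2+\alpha+1=0$; existence and uniqueness of such an element of $\F_3\pseries{t}$ are a direct application of Hensel's lemma to $A(X)=tX^2+X+1$ at its simple residue root $X\equiv-1\bmod t$ (note $A'(-1)\equiv1\not\equiv0\bmod t$). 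Dividing the defining relation by $\alpha$ shows that $\alpha$ is a unit of $\F_3\pseries{t}$ whose inverse is itself a polynomial in $\alpha$ and $t$; together with repeated use of $t\alpha^2=-\alpha-1$ this lets one rewrite every entry of every power $s^k$ (and of $s^{-1}$) as an $\F_3[t]$-linear combination of $1$ and $\alpha$, which is what makes the relation-checking finite and mechanical.

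The second step is to verify the relations \eqref{eq:order_new} and \eqref{eq:product_new}. Expanding $\det s$ along the first row gives $\det s=\alpha\cdot\alpha^{-1}=1$, so $s\in\SL_3(\F_3\lseries{t})$; since conjugation by $\rho\in\GL_3(\F_3\lseries{t})$ preserves $\SL_3$, each $s_i$ and each $\sigma_i=s_i^{\rho^{-i}}$ also lies in $\SL_3$. Because $q=3\not\equiv1\bmod3$, Lemmas~\ref{lem:mod_3} and~\ref{lem:q_congruence_factor} let us treat \eqref{eq:order_new}, \eqref{eq:product_new} as honest matrix equalities rather than equalities up to homothety. Of the four instances of \eqref{eq:product_new}, the one coming from the zero row of the difference matrix reduces to $\pi\rho^3=I$ (which holds since $\rho^3=\pi^{-1}I$), while the three coming from the rows $(1,1,1),(3,3,3),(9,9,9)$ of the standard difference matrix --- the powers $s_1=s^9$, $s_2=s^3$ being chosen precisely so that these coincide with the relations of the more convenient alternative matrix quoted above, Lemma~\ref{lem:equivalence} justifying that this presents the same lattice --- become, after using $s^{13}=I$ to reduce the exponents, cyclic permutations of the single relation $\pi\,s\rho\,s^{9}\rho\,s^{3}\rho=I$. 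Thus only this relation and $s^{13}=I$ need genuine computation, and each collapses to a tautology after substituting the two identities for $\alpha$ above; this is the bulk of the work but is entirely routine.

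Finally I would apply Lemma~\ref{lem:injectivity}. The conjugation normalization of Section~\ref{sec:linearity} already places $\sigma_i$ in the stabilizer of the vertex $w_i$ of the base chamber, so it only remains to check that $\sigma_i$ acts as a Singer cycle on $\lk w_i$, i.e.\ that the reduction $\bar s\in\PGL_3(\F_3)$ of $s$ modulo $t$ is non-scalar. This is visible directly: $\bar s$ has characteristic polynomial $x^3-x-1$, which is irreducible over $\F_3$, so $\bar s$ is non-scalar, and combined with $\bar s^{13}=\overline{s^{13}}=I$ its image in $\PGL_3(\F_3)$ has order exactly $13=3^2+3+1$, forcing it to be a Singer cycle. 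Lemma~\ref{lem:injectivity} then gives injectivity and, moreover, a $\Gamma_{3,1}$-equivariant isomorphism between the building of $\Gamma_{3,1}$ and the Bruhat--Tits building of $\PGL_3(\F_3\lseries{t})$. The only real obstacle is bookkeeping: carrying out the matrix-product expansions of the (cyclically equivalent) product relations and of $s^{13}$ while keeping the $\alpha$-degrees under control; there is no conceptual difficulty once the quadratic relation for $\alpha$ is in hand.
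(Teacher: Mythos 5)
Your proposal is correct and follows the same route as the paper, whose proof consists of the single remark that the relations can be checked using the defining relation for $\alpha$; you have simply made that check explicit (the product relation $\pi s\rho s^9\rho s^3\rho=I$ indeed reduces, entry by entry, to the identities $\alpha^{-1}=-(t\alpha+1)$ and $t\alpha^2+\alpha+1=0$, and $s^{13}=I$ follows from the characteristic polynomial $x^3-x-1$) and added the appeal to Lemma~\ref{lem:injectivity}, which the paper leaves implicit. The observation that the three nontrivial product relations are cyclic permutations of one another and that $\bar{s}$ is a Singer cycle because $x^3-x-1$ is irreducible over $\F_3$ are both accurate.
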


\begin{proof}
This can be checked using the defining relation for $\alpha$.\qed
\end{proof}

\subsection{An alternative embedding of $\Gamma_{3,1}$.}

We will now embed the Singer cyclic lattice with difference matrix
\[
E = 
\begin{pmatrix}
0 & 0 & 0\\
1 & 1 & 1\\
4 & 4 & 4\\
6 & 6 & 6
\end{pmatrix}\text{.}
\]
By Corollary~\ref{cor:count_classes_isom_equiv} this is isomorphic to $\Gamma_{3,1}$, so we already have an embedding from the previous example. Originally we performed this computation when we were not yet aware of Corollary~\ref{cor:count_classes_isom_equiv} in full generality. Now it illustrates that how nice the embedding is depends strongly on the chosen difference matrix.

The polynomials
\begin{align*}
A(X) & = t^{17} X^{11} + (t^{15} + t^{16}) X^{10} + t^{15} X^{9} + t^{12} X^{8} + (t^{10} + t^{11}) X^{7} + t^{10} X^{6} + (t^{7} + t^{9}) X^{5}\\
&+ (2t^{5} + t^{6} + t^{7} + t^{8}) X^{4} + (t^{4} + t^{5} + t^{7}) X^{3} + (t^{2} + 2t^{4} + t^{6} + 2t^{8}) X^{2}\\
& + (2 + t + t^{2} + 2t^{3} + t^{4} + t^{5} + 2t^{6} + 2t^{7}) X + 1 + 2t + 2t^{2} + t^{4} + 2t^{6}\\
B(X) & = t^{16} X^{11} + t^{15} X^{10} + t^{14} X^{9} + t^{12} X^{8} + t^{11} X^{7} + (t^{9} + t^{10}) X^{6} + t^{9} X^{5} + (t^{6} + t^{8}) X^{4}\\
&+ (2t^{5} + t^{6} + t^{7}) X^{3} + (t + t^{2} + 2t^{4} + t^{6} + 2t^{7}) X^{2} + (1 + t + t^{3} + t^{5} + 2t^{6}) X\\
& + 1 + t^{3} + t^{4} + 2t^{5}\\
&  + t^{5} X^{3} + t^{5} X + t^{4} X^{3} - t^{4} X^{2} + t^{4} X + t^{4} - t^{3} X + t^{2} X^{2} + t^{2} X - t^{2} + t X - t - X + 1\\
C(X) & = t^{13} X^{11} + t^{12} X^{10} + t^{11} X^{9} + 2t^{10} X^{8} + 2t^{9} X^{7} + (t^{7} + 2t^{8}) X^{6} + (t^{6} + 2t^{7}) X^{5} + 2t^{6} X^{4}\\
&+ (t^{3} + 2t^{4} + 2t^{5}) X^{3} + (2t + 2t^{2} + 2t^{3} + 2t^{4}) X^{2} + (2 + t + 2t^{3}) X + 2 + t + 2t^{2}
\end{align*}
each have a unique root in $\F_3\pseries{t}$. They are
\begin{align*}
\alpha &= 1 + t^{2} + 2t^{4} + 2t^{5} + 2t^{6} + 2t^{7} + t^{10} + t^{11} + 2t^{12} + 2t^{14} + 2t^{15} + 2t^{16} + 2t^{17} + O(t^{20})\\
\beta &= 2 + 2t + t^{2} + 2t^{3} + 2t^{4} + 2t^{5} + 2t^{7} + t^{10} + t^{11} + 2t^{12} + 2t^{13} + 2t^{14} + 2t^{16} + 2t^{17} + t^{18} + O(t^{20})\\
\gamma &= 2 + 2t^{2} + 2t^{3} + 2t^{5} + t^{8} + 2t^{9} + 2t^{11} + 2t^{12} + 2t^{14} + 2t^{17} + 2t^{18} + O(t^{20})\\
\end{align*}
With these series we set
\[
s =
\left(\begin{array}{rrr}
0 & 0 & t \alpha -1 \\
-t^{2} \alpha -1 + t & -t \beta -1 & t \gamma -1 \\
-t^{2} \gamma + t & 1 & t \beta
\end{array}\right).
\]

\begin{proposition}
The map $\sigma_i \mapsto s^{\rho^{-i}}$ defines an embedding of the Singer cyclic lattice with difference matrix $E$ into $\SL_3(\F_3\lseries{t})$.
\end{proposition}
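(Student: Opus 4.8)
The plan is to invoke Lemma~\ref{lem:injectivity}, which reduces the claim to two checks: (i) that $s$ and $\rho$ satisfy the relations that present $\Gamma$ via $E$, and (ii) that the resulting elements $\sigma_i = s^{\rho^{-i}}$ fix the vertices $v_i$ of the standard chamber and act on their links as Singer cycles. Because every row of $E$ is constant and the zero row gives a vacuous relation, part (i) amounts exactly to verifying
\begin{equation*}
s^{13} = I, \qquad t\,(s^{d}\rho)^{3} = I \quad\text{for } d \in \{1,4,6\},
\end{equation*}
in analogy with the relations $t(s\rho)^3 = t(s^3\rho)^3 = s^7 = 1$ checked for $\Gamma_{2,1}$; one uses $\rho^{3} = t^{-1}I$ to rewrite $\sigma_0^{d}\sigma_1^{d}\sigma_2^{d} = 1$ in this form. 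Since $q = 3 \not\equiv 1 \pmod 3$, Lemma~\ref{lem:q_congruence_factor} tells us that no scalar ambiguity arises, so these identities are to be checked literally in $\SL_3$; in particular one first records that $\det s = 1$, so that $s$, and hence each $\sigma_i = \rho^i s \rho^{-i}$, indeed lies in $\SL_3(\F_3\lseries{t})$ (note $\det\rho = t^{-1}$, so $\det(t(s^d\rho)^3) = 1$, consistent with the factor $t$).

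For part (ii), the vertices $v_0,v_1,v_2$ span a chamber by construction of $\rho$, and $\sigma_i$ fixes $v_i$ because $\sigma_i \in P^{\rho^{-i}}$; by $\rho$-equivariance it remains only to see that the reduction $\bar s$ of $s$ modulo $t$ acts as a Singer cycle on $\lk v_0 \cong \P^2\F_3$. Using $\alpha \equiv 1$ and $\beta \equiv \gamma \equiv -1 \pmod t$ one computes that $\bar s$ has characteristic polynomial $x^3 + x^2 + x - 1 \in \F_3[x]$, which is irreducible (it has no root in $\F_3$); since $\bar s$ is non-scalar with irreducible characteristic polynomial, its image in $\PGL_3(\F_3)$ generates a subgroup of order $13$ acting simply transitively on the $13$ points of $\P^2\F_3$, i.e.\ a Singer cycle. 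With (i) and (ii) in hand, Lemma~\ref{lem:injectivity} delivers injectivity of $\sigma_i \mapsto s^{\rho^{-i}}$ together with the associated $\Gamma$-equivariant isomorphism of buildings, which proves the proposition.

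The only real difficulty is computational. Unlike the difference matrices with constant rows and small entries (where the power series involved satisfy quadratic equations), here $\alpha,\beta,\gamma$ are the Hensel lifts of roots of the degree-$11$ polynomials $A,B,C$, and the four matrix relations above expand into sizeable expressions in $\alpha,\beta,\gamma$. One verifies them by reducing modulo the ideal generated by $A(\alpha)$, $B(\beta)$, $C(\gamma)$ in $\F_3[t][\alpha,\beta,\gamma]$ and checking that the result vanishes, a task carried out by computer (cf.\ \cite{github_sl}). The point of including this example, as the text indicates, is precisely to show that the simplicity of an explicit embedding depends heavily on which representative of the equivalence class of difference matrices (cf.\ Corollary~\ref{cor:count_classes_isom_equiv}) one works with.
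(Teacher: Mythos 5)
Your overall framework is the one the paper uses for these embedding propositions: reduce to checking that $s$ and $\rho$ satisfy $s^{13}=I$ and the three triangle relations for $d\in\{1,4,6\}$ together with the Singer-cycle condition on $\bar s$ modulo $t$, then invoke Lemma~\ref{lem:injectivity}; your computation of $\bar s$ and its irreducible characteristic polynomial is fine (and your normalization $t(s^d\rho)^3=I$ differs from the paper's $t^2I$ only by the central scalar $t^3I$). The gap is in the one step that is the actual substance of this proposition: how to verify the matrix relations \emph{exactly}, given that $\alpha,\beta,\gamma$ are specified only as the Hensel roots of the degree-$11$ polynomials $A,B,C$. You propose to reduce the entries of the relations modulo the ideal $(A(X),B(Y),C(Z))$ and check that they vanish. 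That check will fail, because the relations are \emph{not} elements of this ideal --- the containment goes the other way. Vanishing modulo $(A(X),B(Y),C(Z))$ would mean the relations hold at \emph{every} triple of roots of $A$, $B$, $C$ in an algebraic closure of $\F_3(t)$, i.e.\ on all of $V(A)\times V(B)\times V(C)$ (up to $11^3$ points), whereas the ideal generated by the relations cuts out only a handful of points: its Gr\"obner basis consists of three low-degree polynomials in $X,Y,Z$, one of which is linear in $Y$ with coefficient $1-tZ$ and hence cannot vanish at all eleven roots of $B$ for a fixed admissible $(X,Z)$. The relations are true at the one specific triple $(\alpha,\beta,\gamma)$, not identically on the product of root sets.

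The paper's argument runs in the opposite direction, and something like it is genuinely needed. One computes (by Gr\"obner bases) that the ideal of $\F_3(t)[X,Y,Z]$ generated by the entries of $s^{13}-I$ and of the three triangle relations is generated by three explicit polynomials and \emph{contains} $A(X)$, $B(Y)$, $C(Z)$. Hensel's lemma shows that this system has a unique solution over $\F_3\pseries{t}$; since $A,B,C$ lie in the ideal, the coordinates of that solution must be roots of $A$, $B$, $C$ respectively, hence must equal $\alpha,\beta,\gamma$, which are the unique such roots in $\F_3\pseries{t}$. Therefore $(\alpha,\beta,\gamma)$ satisfies the relations. Note also that simply truncating the power series and checking the relations to finite $t$-adic precision would only give an approximate verification, which the paper explicitly flags as insufficient; so your reduction step needs to be replaced by this (or an equivalent) exact ideal-theoretic argument.
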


\begin{proof}
If we replace $\alpha, \beta, \gamma$ by $X, Y, Z$ in the definition of $s$ above, the ideal of $\F_3(t)[X, Y, Z]$ generated by the equations $s^{13} = I$, $(s\rho)^3 = t^2 I$, $(s^4\rho)^3 = t^2I$ and $(s^6\rho)^3 = t^2I$ is generated by the three polynomials
\[
t^{6} X^{4} + (2 t^{5} + t^{4}) X^{3} + t^{3} Y Z^{2} + t^{3} X^{2} + t^{2} X Z + (2 t^{2}) Y Z + (2 t^{3} + t^{2} + 2 t) X + Y + (t + 1) Z + t^{2} + 2
\]
\[
t^{4} X^{2} Z + t^{3} Y Z^{2} + (2 t^{3}) X^{2} + (t^{3} + t^{2}) X Z + t^{2} Y Z + (2 t^{2}) Z^{2} + (2 t^{2} + 2 t) X + (t + 2) Y + (t^{2} + t + 1) Z + 2 t
\]
\[
t^{3} Z^{3} + (2 t^{2}) X^{2} + t^{2} X Z + (2 t) Y Z + (t + 2) X + Y + (2 t) Z + 2
\]
(this can be checked on a computer using Gröbner bases) and it contains the three polynomials $A(X)$, $B(Y)$ and $C(Z)$.

An application of Hensel's lemma \cite[Corollaire~III.4.2]{bourbaki_alg_comm_3-4fr} shows that the three generators of the ideal have a unique root in $\F_3\lseries{t}[X,Y,Z]$ and so do $A(X)$, $B(Y)$, $C(Z)$. We conclude that the tuple $(\alpha, \beta, \gamma)$ must be the unique root of the generators and therefore a solution to the equations.\qed
\end{proof}


\section{Hjelmslev planes}
\label{sec:hjelmslev}

From now on we will be interested in distinguishing the buildings that Singer cyclic lattices act on. Note that in view of Theorem~\ref{thm:qi_rigidity} this building is unique for a given lattice, and classifying the buildings amounts to classifying the lattices up to quasi-isometry. To do so we will compare combinatorial balls of a certain radius around a vertex. Such balls can be encoded by incidence geometries called \emph{Hjelmslev planes} \cite[Section~7.2]{dembowski68} (see also~\cite{klingenberg55,artmann69,bacon78}).

\subsection{Hjelmslev planes}
An incidence geometry $\hjelm = (\hpts, \hlns, \inc)$ is a \emph{Hjelmslev plane} if any two points lie on at least one common line and dually, and there is an epimorphism $\varphi \colon \hjelm \to \ProjPla$ to a projective plane $\ProjPla = (P,L,I)$ such that points $x$ and $x'$ lie on more than one common line if and only if $\varphi(x) = \varphi(x')$ and dually. Two elements with same image under $\varphi$ are called \emph{neighbors}. A \emph{Klingenberg plane} is defined just like a Hjelmslev plane with the difference that the last condition is that $x$ and $x'$ lie on more than one common line \emph{only if} $\varphi(x) = \varphi(y)$ and dually. The projection $\varphi$ gives rise to an equivalence relation $\sim$ on points and lines defined by $x \sim x'$ if and only if $\varphi(x) = \varphi(x')$. For Hjelmslev planes this can be recovered from the incidence relation, for Klingenberg planes it is part of the structure.

\begin{example}
Let $R$ be a local ring with maximal ideal $\frakm$. Call a vector $(x_1,x_2,x_3) \in R$ \emph{unimodular} if some $x_i$ is a unit (i.e.\ not in $\frakm$). We define a sets of points as $\hpts \defeq \{R^\times (x_1,x_2,x_3) \mid (x_1,x_2,x_3) \text{ unimodular}\}$ and a set of lines as $\hlns \defeq \{(y_1,y_2,y_3) R^\times \mid (y_1,y_2,y_3) \text{ unimodular}\}$. Incidence is defined by $R^\times (x_1,x_2,x_3) \inc (y_1,y_2,y_3) R^\times$ if and only if $x_1y_1 + x_2y_2 + x_3y_3 = 0$ (which is clearly well-defined). The incidence structure $\hjelm = (\hpts, \hlns, \inc)$ is a Klingenberg plane and any plane that arises in this way is called \emph{Desarguesian}.

If in addition every element of $\frakm$ is both a left- and a right zero divisor and for any two elements $x, y \in \frakm$ the left (and right) ideals generated by $x$ and $y$ are contained in each other then $R$ is an $H$-ring. The Klingenberg plane associated to an $H$-ring is a Hjelmslev plane.

In particular, if $\calO$ is a discrete valuation ring then for any $k \in \N \setminus \{0\}$ the quotient $\calO/\frakm^k$ is a commutative local principal ideal ring and thus an $H$-ring. We will be interested in Hjelmslev planes associated to rings of this form.
\label{exmpl:hjelmslev_moufang}
\end{example}

\subsection{Hjelmslev planes for Singer cyclic lattices}

Hjelmslev planes encode neighborhoods in $\tilde{A}_2$ buildings and the inverse system of Hjelmslev planes encodes the whole building, see \cite{vanmaldeghem87,vanmaldeghem88,hanvma89}. We consider the building on which a Singer cyclic lattice acts. For the purpose of exposition we fix the vertex $v_0$ as our center of interest. But since a Singer cyclic lattice has three vertex orbits, everything we do around $v_0$ can be done around $v_1$ and $v_2$ and give different answers. The analogous results are obtained by cyclically permuting the indices.

The elements of the Hjelmslev plane of level $k$ are vertices $v$ that are a geodesic edge path of length $k$ away from $v_0$ (which is unique, being a geodesic). We denote the Hjelmslev plane of level $k$ (including additional structure to be introduced) by $\hjelm^k$ or by $\hjelm^k(v_0)$ if we want to specify the center. Note that $\hjelm^1$ consists just of the vertices in $\lk v_0$. For $k \le \ell$ there is an obvious projection $\hjelm^\ell \to \hjelm^k$ that takes the end-vertex of an $\ell$-edge geodesic edge path to the end-vertex of its $k$-edge initial subpath. We say that an element of $\hjelm^k$ is a \emph{point} or a \emph{line} respectively if its projection image in $\hjelm^1$ is a point or a line, namely if it is of type $1$ respectively $2$. Thus if $v_0 = v^0, \ldots, v^k=v$ are the vertices along a geodesic edge path so that $v \in \hjelm^k$ then $v$ is a point if $\typ v^1 = 1$ and is a line if $\typ v^1 = 2$. We write $\hpts^k$ for the points and $\hlns^k$ for the lines in $\hjelm^k$. A point $x \in \hpts^k$ and a line $y \in \hlns^k$ are incident if there is a regular triangle with side lengths $k$ and vertices $v_0$, $x$, $y$. Note that (non-)incidence is preserved under the projection $\hjelm^\ell \to \hjelm^k$.

\begin{fact}
\label{fact:hjelmslev_building_ring}
Let $K$ be a local field and let $\calO$ be its ring of integers. The Hjelmslev planes of level $k$ inside the building of $\PGL_3(K)$ are isomorphic to the Hjelmslev planes of $\calO/\frakm^k$ as in Example~\ref{exmpl:hjelmslev_moufang}.
\end{fact}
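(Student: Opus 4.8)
The plan is to realise $\hjelm^k(v_0)$ inside the lattice model of the building and then identify it, point for point and line for line, with the Desarguesian Hjelmslev plane of the chain ring $\calO/\frakm^k$ of Example~\ref{exmpl:hjelmslev_moufang}. Recall that the building $X$ of $\PGL_3(K)$ has as vertices the homothety classes $[L]$ of $\calO$-lattices $L\subseteq K^3$, two classes being adjacent when they have representatives $L''\subseteq L'$ with $\frakm L'\subsetneq L''\subsetneq L'$. I take $v_0\defeq[\calO^3]$, so that $\lk v_0$ is canonically the projective plane $\P^2(\kappa)$ over the residue field $\kappa\defeq\calO/\frakm$; this is the Hjelmslev plane of level $1$, and the reduction map of Example~\ref{exmpl:hjelmslev_moufang} for $\calO/\frakm^k$ onto its residue plane will correspond to the projection $\hjelm^k\to\hjelm^1$.

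First I would pin down the points and lines of $\hjelm^k(v_0)$ in lattice-theoretic terms. After choosing a basis $e_1,e_2,e_3$ of $\calO^3$, a vertex lying in a common apartment with $v_0$ has a representative $\frakm^{a_1}e_1\oplus\frakm^{a_2}e_2\oplus\frakm^{a_3}e_3$, and the geodesic from $v_0$ to it is an edge path, necessarily unique, exactly when the segment runs along a wall through $v_0$; at combinatorial distance $k$ this singles out, up to the $\PGL_3(\calO)$-action, the classes of $\frakm^k e_1\oplus\calO e_2\oplus\calO e_3$ (first edge of type $2$) and of $\calO e_1\oplus\frakm^k e_2\oplus\frakm^k e_3$ (first edge of type $1$). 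Reading off the intermediate vertices shows that the lines of $\hjelm^k(v_0)$ are exactly the classes $[M]$ with $\frakm^k\calO^3\subseteq M\subseteq\calO^3$ and $\calO^3/M$ cyclic of length $k$, equivalently $M$ the full preimage of a corank-one free direct summand of $(\calO/\frakm^k)^3$, while the points are exactly the classes $[\calO x+\frakm^k\calO^3]$ with $x\in\calO^3$ unimodular. The wall condition is precisely what forces the geodesic edge path to be unique, so no vertex is counted twice, and $\hjelm^k\to\hjelm^1$ is induced by $L\mapsto L+\frakm\calO^3$.

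Next I would write down the two bijections onto the geometry of Example~\ref{exmpl:hjelmslev_moufang} with $R\defeq\calO/\frakm^k$: the point $[\calO x+\frakm^k\calO^3]$ goes to the class $R^\times\bar x\in\hpts$, where $\bar x$ is the reduction of $x$ modulo $\frakm^k$ --- this is well defined because replacing $x$ by $ux+\frakm^k z$ with $u\in\calO^\times$ does not change the lattice, and it is a bijection onto the unimodular rows up to scalar --- and the line $[M]$ goes to the line vector $\bar\ell\in\hlns$ cutting out $M/\frakm^k\calO^3$. Incidence then translates correctly: $v_0$, a point $[\calO x+\frakm^k\calO^3]$ and a line $[M]$ form a regular triangle with side lengths $k$ if and only if the three lattices form a chain $\frakm^k\calO^3\subseteq\calO x+\frakm^k\calO^3\subseteq M\subseteq\calO^3$, i.e.\ $x\in M$, i.e.\ $\bar x_1\bar\ell_1+\bar x_2\bar\ell_2+\bar x_3\bar\ell_3=0$ in $R$, which is exactly the incidence of Example~\ref{exmpl:hjelmslev_moufang}. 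These bijections visibly commute with the projections to $\hjelm^1=\P^2(\kappa)$, and since $\calO/\frakm^k$ is a chain ring the geometry of Example~\ref{exmpl:hjelmslev_moufang} genuinely is a Hjelmslev plane (cf.\ \cite{artmann69,bacon78}); as the neighbour relation of a Hjelmslev plane is recovered from its incidence, the full structure carries over.

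The main obstacle is not this linear algebra but the dictionary between the combinatorial notions attached to an $\tilde{A}_2$ building --- combinatorial distance, uniqueness of geodesic edge paths, regular triangles, incidence at level $k$ --- and lattice chains in $X$: one needs that the combinatorial distance from $v_0$ to $[L]$ is the largest elementary divisor of $\calO^3/L$, that the level-$k$ vertices are those listed above, and that ``incident at level $k$'' is the inclusion $\calO x+\frakm^k\calO^3\subseteq M$. All of this is contained in the identification of vertex neighbourhoods in $\tilde{A}_2$ buildings with Hjelmslev planes due to Van Maldeghem and Hanssens--Van Maldeghem \cite{vanmaldeghem87,vanmaldeghem88,hanvma89}. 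Alternatively one may observe that the principal congruence subgroup $\ker\big(\PGL_3(\calO)\to\PGL_3(\calO/\frakm^k)\big)$ fixes every vertex of $X$ having a representative between $\frakm^k\calO^3$ and $\calO^3$, so that the level-$\le k$ structure around $v_0$ depends only on $\calO/\frakm^k$, which reduces the statement to a computation over that finite ring.
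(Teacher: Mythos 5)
The paper itself does not prove this statement: it is recorded as a Fact and justified only by the preceding citation of Van Maldeghem and Hanssens--Van Maldeghem \cite{vanmaldeghem87,vanmaldeghem88,hanvma89}, so your write-up supplies an argument where the paper defers to the literature; and the argument you give is correct. The two substantive points are exactly the ones you isolate. First, the vertices of $\hjelm^k(v_0)$, i.e.\ those joined to $v_0=[\calO^3]$ by a unique geodesic edge path of length $k$, are precisely the classes $[L]$ with $\frakm^k\calO^3\subseteq L\subseteq\calO^3$ and elementary divisors $(0,0,k)$ or $(0,k,k)$: uniqueness forces the CAT(0) geodesic $[v_0,[L]]$ to run along a wall, which excludes the intermediate invariants $(0,a,k)$ with $0<a<k$, whose convex hull is a parallelogram carrying several geodesic edge paths. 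Second, a regular triangle of side $k$ on $v_0$, $[\calO x+\frakm^k\calO^3]$, $[M]$ exists if and only if the three lattices fit into a chain, i.e.\ if and only if $x\in M$: a flat equilateral triangle lies in an apartment, and conversely the chain yields an adapted basis $e_1=x,e_2,e_3$ exhibiting the three corners with exponent vectors $(0,0,0)$, $(0,k,k)$, $(0,0,k)$. Both facts are standard and you verify (or correctly source) them; the translation to unimodular vectors and corank-one free direct summands over $R=\calO/\frakm^k$, and the resulting match with the incidence $x_1y_1+x_2y_2+x_3y_3=0$ of Example~\ref{exmpl:hjelmslev_moufang}, is then routine linear algebra over a chain ring, and the neighbour relation comes along for free since it is determined by incidence in a Hjelmslev plane. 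The only cosmetic caveat is that whether the cyclic-quotient vertices are called lines and the $(0,k,k)$ vertices points depends on the type convention; since the Desarguesian plane over $\calO/\frakm^k$ is self-dual this does not affect the isomorphism claim. Your closing observation that the principal congruence subgroup $\ker\big(\PGL_3(\calO)\to\PGL_3(\calO/\frakm^k)\big)$ fixes every vertex between $\frakm^k\calO^3$ and $\calO^3$ is a clean a priori reason why the level-$k$ ball depends only on $\calO/\frakm^k$, which is the conceptual content of the Fact.
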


A useful feature of Singer cyclic lattices is that Hjelmslev planes can be described in an extremely efficient way. Let $\Gamma$ be a Singer cyclic lattice with presenting triple $(\sigma_i)_{0 \le i \le 2}$ and based difference matrix $E$. We let $D_i$ denote the difference set consisting of entries of the $i$th column of $E$. Let $X$ be the associated building and let $v_i$ be the vertex fixed by $\sigma_i$. Any vertex of the building is of the form $\sigma_{0}^{e^0} \sigma_{i_1}^{e^1} \cdots \sigma_{i_{\ell-1}}^{e^{\ell-1}} v_{i_\ell}$ for some $\ell$ and some $i_j \in \{0,1,2\}, 1 \le j \le \ell$ and $e^j \in \Z/\delta\Z, 0 \le j \le \ell-1$. This notation is a little cumbersome and since we will have to deal with many expressions of this form, we drop the $\sigma_{i_j}$ and make its index an index of the exponent, for instance we would write the previous expression as $e^0_0 e^1_{i_1} \ldots e^{\ell-1}_{i_{\ell-1}} v_{i_{\ell}}$. See the left hand side of Figure~\ref{fig:hjelmslev-paths} for an example. The subscripts are always understood modulo $3$ and we always assume that $i_j \ne i_{j+1}$. We call such an expression an \emph{edge path}. This is justified by the fact that it uniquely describes the edge path that has vertices $v_0, \sigma_0^{e^0}v_{i_1}, \sigma_0^{e^0}\sigma^{e^1} v_{i_2}, \sigma_0^{e^0}\sigma^{e^1}\sigma^{e^2} v_{i_3}, \ldots, \sigma_0^{e^0} \cdots \sigma_{i_{\ell-1}}^{e^{\ell-1}} v_{i_{\ell}}$. Pictorially we describe this edge path by labeling the edge from $\ldots \sigma_{i_{k-1}}^{e^{k-1}} v_k$ to $\ldots \sigma_{i_{k}}^{e^{k}} v_{k+1}$ by $e^k$. In the pictorial description the type $i_k$ is again clear from the context (it is just the type of the initial vertex of the edge labeled $e^k$) and may be omitted.

\begin{observation}
\label{obs:adjacent}
The two vertices $\sigma_i^b.v_{i+1}$ and $\sigma_i^a.v_{i+2}$ are adjacent if and only if $b - a \in D_i$.
\end{observation}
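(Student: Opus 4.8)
The plan is to reduce the two-parameter statement to a one-parameter one by translating with a power of $\sigma_i$, and then to recognise the difference set $D_i$ geometrically as the set recording which $\sigma_i$-translates of $v_{i+1}$ are incident with $v_{i+2}$ in the projective plane $\lk v_i$. Concretely, since $\sigma_i$ acts on $X$ as a simplicial automorphism fixing $v_i$, applying $\sigma_i^{-a}$ carries the pair $(\sigma_i^b.v_{i+1},\,\sigma_i^a.v_{i+2})$ to the pair $(\sigma_i^{b-a}.v_{i+1},\,v_{i+2})$ and preserves adjacency; hence $\sigma_i^b.v_{i+1}$ is adjacent to $\sigma_i^a.v_{i+2}$ if and only if $\sigma_i^{b-a}.v_{i+1}$ is adjacent to $v_{i+2}$. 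So everything comes down to proving
\[
D_i = \{\, s \in \Z/\delta\Z : \sigma_i^s.v_{i+1} \text{ is adjacent to } v_{i+2}\,\}\text{,}
\]
all indices read modulo $3$.

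To establish this identity I would argue by a cardinality count together with a single inclusion. Both sides have exactly $q+1$ elements: the $i$-th column of $E$ is, as a set, a difference set, hence has $q+1$ distinct entries, while the vertices of the form $\sigma_i^s.v_{i+1}$ adjacent to $v_{i+2}$ are precisely the $q+1$ points of $\lk v_i$ lying on the line $v_{i+2}$, each of which equals $\sigma_i^s.v_{i+1}$ for a unique $s$ because $\sigma_i$ acts as a Singer cycle on $\lk v_i$. For the inclusion $D_i \subseteq \{\, s : \sigma_i^s.v_{i+1} \text{ adjacent to } v_{i+2}\,\}$, take an entry $e_i = E_{j,i}$ of the $i$-th column; the relation~\eqref{eq:difference_cycle} attached to row $j$ may be cyclically permuted to $\sigma_i^{e_i}\sigma_{i+1}^{e_{i+1}}\sigma_{i+2}^{e_{i+2}} = 1$. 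Applying both sides to $v_{i+1}$ and using that $\sigma_{i+1}$ fixes $v_{i+1}$ gives $\sigma_i^{e_i}.v_{i+1} = \sigma_{i+2}^{-e_{i+2}}.v_{i+1}$; since $v_{i+1}$ is adjacent to $v_{i+2}$ and $\sigma_{i+2}$ fixes $v_{i+2}$, the vertex on the right is adjacent to $v_{i+2}$, so $e_i$ lies in the right-hand set. (For $i=0$ this inclusion is already contained in the construction of $E$ in Lemma~\ref{lem:canonical_difference_matrix}; for general $i$ one may instead apply that lemma to the cyclically shifted presenting triple $(\sigma_i,\sigma_{i+1},\sigma_{i+2})$, whose associated based difference matrix is $E$ with its columns cyclically permuted, by Lemma~\ref{lem:equivalence}.)

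Combining the displayed identity with the translation from the first paragraph yields the Observation. I do not expect any genuine obstacle; the points that need care are the modulo-$3$ index bookkeeping, the cyclic permutation of the defining relators so that the $i$-th column really plays the role that the $0$-th column plays in Lemma~\ref{lem:canonical_difference_matrix}, and checking that the two sides of the displayed identity have the same cardinality, so that one inclusion suffices.
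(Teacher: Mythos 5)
Your proposal is correct. The first step — translating by $\sigma_i^{-a}$ to reduce to the adjacency of $\sigma_i^{b-a}.v_{i+1}$ and $v_{i+2}$ — is exactly how the paper begins. Where you diverge is in how the resulting one-parameter statement is closed: the paper argues both directions at once, observing that $\sigma_i^{b-a}.v_{i+1}$ and $v_{i+2}$ are adjacent if and only if $\sigma_i^{b-a}\sigma_{i+1}^{f}=\sigma_{i+2}^{-g}$ for some $f,g$, which by the construction of $E$ in Lemma~\ref{lem:canonical_difference_matrix} (i.e.\ by the regularity of $\Gamma$ on panels of each type) is equivalent to $(b-a,f,g)$ being a row of $E$. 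You instead prove only the inclusion $D_i\subseteq\{s:\sigma_i^{s}.v_{i+1}\sim v_{i+2}\}$ from the cyclically permuted relators and then conclude by comparing cardinalities: both sets have $q+1$ elements, the left because a difference set in $\Z/\delta\Z$ does, the right because a line of the projective plane $\lk v_i$ carries $q+1$ points and the Singer cycle $\sigma_i$ parametrizes the points of $\lk v_i$ bijectively. Both arguments are sound; the paper's is marginally shorter because it reuses the panel-regularity already invoked in Lemma~\ref{lem:canonical_difference_matrix}, while yours trades that for an elementary count and only needs the easy direction of the relation-to-adjacency correspondence. Your care about the cyclic shift of indices (so that column $i$ plays the role column $0$ plays in Lemma~\ref{lem:canonical_difference_matrix}) is exactly the bookkeeping the paper sidesteps by simply assuming $i=0$.
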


\begin{proof}
We assume $i = 0$. The vertices $\sigma_0^b.v_{1}$ and $\sigma_0^a.v_{2}$ are adjacent if and only if $\sigma_0^{b-a}.v_{1}$ and $v_{2}$ are adjacent. This happens if and only if $\sigma_0^{b-a}\sigma_{1}^f = \sigma_{2}^{-g}$ for some $f$ and $g$, which is equivalent to $(b-a, f, g)$ being a row of $E$.\qed
\end{proof}

\begin{observation}
\label{obs:geodesic_edge_paths}
The edge path $e^0_0 e^1_{i_1} \ldots e^\ell_{i_{\ell-1}} v_{i_\ell}$ is geodesic if for $2 \le j \le \ell-2$ either
\begin{enumerate}
\item $(i_{j-1}, i_j, i_{j+1}) = (i_j -1, i_j, i_j + 1)$ and $-e^j \not\in D_{i_j}$ or \label{item:geodesic_forward}
\item $(i_{j-1}, i_j, i_{j+1}) = (i_j + 1, i_j, i_j -1)$ and $e^j \not\in D_{i_j}$.\label{item:geodesic_backward}
\end{enumerate}
\end{observation}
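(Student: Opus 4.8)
The plan is to pass to a single apartment by retraction and there reduce the claim to the non-adjacency condition that Observation~\ref{obs:adjacent} provides. First I would unwind what the hypothesis says about the \emph{types}. If the type triple at some $j$ has the form in alternative~\ref{item:geodesic_forward} and at $j+1$ the form in~\ref{item:geodesic_backward}, then the two descriptions force both $i_{j+1}=i_j+1$ and $i_j=i_{j+1}+1$, i.e.\ $0\equiv 2\pmod 3$; so the two alternatives cannot be mixed along the path and the sequence $i_1,\dots,i_{\ell-1}$ is \emph{monotone} — throughout either $i_{j+1}=i_j+1$ or $i_{j+1}=i_j-1$. Reversing the edge path, which interchanges the two alternatives and conjugates the exponents suitably and hence preserves the hypothesis, I may assume we are in the ``increasing'' case.

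Next I would fix an apartment $\Sigma$ containing $v_0$ and a chamber $C$ at $v_0$, chosen so that the retraction $\rho=\rho_{\Sigma,C}\colon X\to\Sigma$ does not fold the first edges of the path, and use the standard facts that $\rho$ takes edge paths to edge paths of the same length, that $\rho|_\Sigma=\id$ so that an apartment is isometrically embedded for the edge metric, and therefore $d_X(v_0,x)=d_\Sigma(v_0,\rho(x))$ for every vertex $x$. Thus it suffices to prove that $\rho(\gamma)$ is geodesic inside $\Sigma$, for then $\ell=d_\Sigma(v_0,\rho(w_\ell))=d_X(v_0,w_\ell)$. Here $\gamma=(v_0=w_0,w_1,\dots,w_\ell)$ denotes the edge path in question.

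Finally I would analyse $\rho(\gamma)$ in $\Sigma$, the $\tilde{A}_2$ Coxeter complex (the plane tiled by equilateral triangles). Monotonicity of the types means every edge of $\rho(\gamma)$ points in one of the three edge directions that raise the type, and those three directions sit pairwise at $120^\circ$; hence at each interior vertex $\rho(\gamma)$ either goes straight or turns by exactly $120^\circ$. An elementary planar computation shows that such a (monotone) path is geodesic iff it never turns: a $120^\circ$ turn makes the two neighbouring vertices span an edge, producing a shortcut, whereas a straight step never does. So it suffices to exclude turns at the interior vertices $w_2,\dots,w_{\ell-2}$, and this is exactly what the hypothesis says: a turn at $w_j$ — the backtrack case $w_{j-1}=w_{j+1}$ being excluded since the path is reduced — means $w_{j-1}\sim w_{j+1}$, and translating both vertices by the prefix $\sigma_{i_0}^{e^0}\cdots\sigma_{i_{j-1}}^{e^{j-1}}$, in which the last exponent $e^{j-1}$ drops out because $\sigma_{i_{j-1}}$ fixes $v_{i_{j-1}}$, turns ``$w_{j-1}\sim w_{j+1}$'' into a membership statement of $e^j$ (up to sign) in $D_{i_j}$ by Observation~\ref{obs:adjacent} — precisely the one that alternative~\ref{item:geodesic_forward} forbids; alternative~\ref{item:geodesic_backward} is symmetric and matches the reversed path. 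Hence $\rho(\gamma)$ is straight, and the claim follows.

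The step I expect to be the obstacle is this last one, for two reasons. First, $\rho$ is not injective on stars of vertices, so one must argue carefully — for instance by taking $\Sigma$ to contain the relevant chambers, or by pulling a putative shortcut back from $\Sigma$ to $X$ — that $\rho$ does not itself manufacture a shortcut. Second, the behaviour at the two interior vertices $w_1$ and $w_{\ell-1}$ at the very ends of the path, where no condition is imposed, needs a separate short analysis; making this match the stated range $2\le j\le\ell-2$ is the delicate point, and an alternative here is to run an induction on $\ell$ with a slightly enlarged base case so that these end effects are absorbed.
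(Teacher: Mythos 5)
Your local analysis is exactly the one the paper performs: the only content of the hypothesis at an interior vertex $w_j$ is that the previous vertex $w_{j-1}$ and the next vertex $w_{j+1}$ are \emph{not adjacent}, hence opposite, in $\lk w_j$, and after translating by the prefix $\sigma_0^{e^0}\cdots\sigma_{i_{j-1}}^{e^{j-1}}$ (whose last factor fixes $v_{i_{j-1}}$) Observation~\ref{obs:adjacent} turns this into the condition $\pm e^j\notin D_{i_j}$. The genuine gap is in how you pass from this local statement to global geodesicity. You route everything through the retraction $\rho=\rho_{\Sigma,C}$ and want to show that $\rho(\gamma)$ has no $120^\circ$ turns in $\Sigma$; but a turn of $\rho(\gamma)$ at $\rho(w_j)$ is an adjacency of $\rho(w_{j-1})$ and $\rho(w_{j+1})$ in $\lk_\Sigma\rho(w_j)$, not of $w_{j-1}$ and $w_{j+1}$ in $\lk_X w_j$. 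Restricted to the link of $w_j$, $\rho$ is itself a retraction of the spherical building (the incidence graph of a projective plane) onto a hexagon; such retractions are only distance non-increasing and routinely send an opposite pair to an adjacent pair. So the hypothesis, which controls adjacency upstairs, does not prevent $\rho$ from folding the path, and neither of your proposed repairs closes this: you cannot in general choose one apartment containing all the chambers along $\gamma$ without already knowing $\gamma$ is straight, and a shortcut in $\Sigma$ does not pull back along $\rho$ (retractions have no sections). The identity $d_X(v_0,w_\ell)=d_\Sigma(v_0,\rho(w_\ell))$ is fine, but the premise ``$\rho(\gamma)$ is geodesic in $\Sigma$'' is exactly what is left unproved.

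The paper avoids this entirely: opposition in $\lk w_j$ means the two edges at $w_j$ meet at angle $\pi$, so the concatenation of the Euclidean edges is a \emph{local} geodesic in the $\CAT(0)$ space $X$ and hence a global geodesic, which immediately gives $d(v_0,w_\ell)=\ell$; no apartment or retraction is needed. (Alternatively one can induct on $\ell$, but then the inductive step again needs this local-to-global input, not the retraction.) Your worry about the vertices $w_1$ and $w_{\ell-1}$ is not a defect of your argument but of the stated index range: the same non-adjacency condition is needed (and, judging from the proof and from Corollary~\ref{cor:hjelmslev_3}, intended) at every interior vertex $1\le j\le \ell-1$, so there is no separate end effect to analyze once the range is read correctly.
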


\begin{proof}
The first condition (of both items) just says that $i_{j-1}$, $i_j$, and $i_{j+1}$ need to be pairwise distinct, which is clearly necessary. We only prove the \eqref{item:geodesic_forward} case, the other one is analogous. We fix some $j$, put $g = \sigma_0^{e^0} \ldots \sigma_{i_{j-1}}^{e^{j-1}}$, and check when the edge path is locally geodesic in $g.v_j$. The previous vertex is $g. v_{i_j-1}$ and the following one is $g\sigma_{i_j}^{e^j}. v_{i_j+1}$. Thus we see that the path is locally geodesic in $g.v_j$ if and only if $v_{i_j-1}$ and $\sigma_{i_j}^{e^j}. v_{i_j+1}$ are opposite in $\lk v_j$ if and only if they are not adjacent.
Using the previous observation we see that 
the condition for being locally geodesic is that $-e^j \not\in D_{i_j}$.\qed
\end{proof}

We have just seen that the types along a geodesic edge path can only either cycle forward ($0,1,2,0,1,2,\ldots$) or cycle backward ($0,2,1,0,2,1,\ldots$) so we when specifying an edge path we may even drop the type-indices (including the terminal vertex) as long as we specify whether it is forward- or backward-cycling. In particular, elements of $\hpts^k$ are forward cycling edge paths and elements of $\hlns^k$ are backward cycling edge paths which can both be described by a sequence of elements of $\Z/\delta\Z$ of length $k$. It remains to analyze incidence.

\begin{lemma}
\label{lem:el_hty}
The two expressions $\sigma_i^a \sigma_{i+2}^y \sigma_i^r$ and $\sigma_i^b \sigma_{i+1}^x \sigma_i^s$ are equal if and only if there are rows $(e_0,e_1,e_2)$ and $(f_0, f_1, f_2)$ of $E$ such that the following equations hold:
\begin{align*}
b- a &= e_i\\
x &= e_{i+1} - f_{i+1}\\
y &= f_{i+2} - e_{i+2}\\
r - s & = f_i\text{.}
\end{align*}
\end{lemma}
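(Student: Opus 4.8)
The plan is to put the identity into a normal form and then to handle the two directions very differently: ``$\Leftarrow$'' is a one‑line relator computation, ``$\Rightarrow$'' needs the building. First I would observe that $a,r,b,s$ enter the two expressions only through $p\defeq b-a$ and $q\defeq r-s$: multiplying the asserted identity on the left by $\sigma_i^{-a}$ and on the right by $\sigma_i^{-s}$ turns it into $\sigma_{i+2}^{y}\sigma_i^{q}=\sigma_i^{p}\sigma_{i+1}^{x}$, i.e.\ $\sigma_{i+2}^{y}=\sigma_i^{p}\sigma_{i+1}^{x}\sigma_i^{-q}$, and this step is reversible. So the claim becomes: $\sigma_{i+2}^{y}=\sigma_i^{p}\sigma_{i+1}^{x}\sigma_i^{-q}$ holds in $\Gamma$ if and only if there are rows $(e_0,e_1,e_2)$ and $(f_0,f_1,f_2)$ of $E$ with $e_i=p$, $f_i=q$, $x=e_{i+1}-f_{i+1}$ and $y=f_{i+2}-e_{i+2}$.

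For ``$\Leftarrow$'' I would just compute. For each row $(e_0,e_1,e_2)$ of $E$, the cyclic permutation of \eqref{eq:difference_cycle} starting at the index $i$ gives $\sigma_i^{e_i}\sigma_{i+1}^{e_{i+1}}\sigma_{i+2}^{e_{i+2}}=1$, hence $\sigma_i^{e_i}\sigma_{i+1}^{e_{i+1}}=\sigma_{i+2}^{-e_{i+2}}$, and likewise $\sigma_{i+1}^{-f_{i+1}}\sigma_i^{-f_i}=\sigma_{i+2}^{f_{i+2}}$. Therefore $\sigma_i^{p}\sigma_{i+1}^{x}\sigma_i^{-q}=\sigma_i^{e_i}\sigma_{i+1}^{e_{i+1}}\sigma_{i+1}^{-f_{i+1}}\sigma_i^{-f_i}=\sigma_{i+2}^{-e_{i+2}}\sigma_{i+2}^{f_{i+2}}=\sigma_{i+2}^{y}$, as wanted.

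For ``$\Rightarrow$'' I would use Lemma~\ref{lem:triples} to let $\Gamma$ act on its building $X$, with $v_j$ the vertex fixed by $\sigma_j$. Evaluating $\sigma_{i+2}^{y}=\sigma_i^{p}\sigma_{i+1}^{x}\sigma_i^{-q}$ at $v_{i+2}$ (fixed by $\sigma_{i+2}$ and by $\sigma_i$) yields $\sigma_{i+1}^{x}.(\sigma_i^{-q}.v_{i+2})=\sigma_i^{-p}.v_{i+2}$, so that $\sigma_{i+1}^{x}$, which fixes $v_{i+1}$, carries the type‑$(i+2)$ neighbour $\sigma_i^{-q}.v_{i+2}$ of $v_i$ to the type‑$(i+2)$ neighbour $\sigma_i^{-p}.v_{i+2}$ of $v_i$. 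The key step is to extract from this (together with the analogous evaluations at $v_i$ and $v_{i+1}$ and at the immediate neighbours of the base chamber) that $p\in D_i$ and $q\in D_i$: the point is that the relevant vertices are forced to be adjacent to $v_{i+1}$ resp.\ $v_{i+2}$, and by Observation~\ref{obs:adjacent} adjacency of $\sigma_i^{b}.v_{i+1}$ and $\sigma_i^{a}.v_{i+2}$ is exactly $b-a\in D_i$. It is enough to establish $p\in D_i$ once and for all, because applying the argument to the inverse relation $\sigma_{i+2}^{-y}=\sigma_i^{q}\sigma_{i+1}^{-x}\sigma_i^{-p}$ then delivers $q\in D_i$. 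Granting this, let $e$ be the unique row of $E$ with $e_i=p$ and $f$ the unique row with $f_i=q$ (uniqueness is part of Lemma~\ref{lem:canonical_difference_matrix}); by the ``$\Leftarrow$'' direction already proved, $\sigma_{i+2}^{y'}=\sigma_i^{p}\sigma_{i+1}^{x'}\sigma_i^{-q}$ with $x'\defeq e_{i+1}-f_{i+1}$ and $y'\defeq f_{i+2}-e_{i+2}$. Comparing with the hypothesis shows $\sigma_{i+1}^{x-x'}$ is conjugate in $\Gamma$ to $\sigma_{i+2}^{y'-y}$; since a non‑trivial element of $\Gamma$ fixes a unique vertex (as in the proof of Proposition~\ref{prop:recover_building}) and these would fix vertices of different types, both must be trivial, giving $x=x'$ and $y=y'$ — the remaining two equations.

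The main obstacle is exactly that key step, $p\in D_i$. The abelianisation is useless here (it only records $p+q$, $x$ and $y$), so one genuinely has to argue inside $X$: track how $\sigma_{i+1}^{x}$, fixing $v_{i+1}$, permutes the neighbours of $v_i$, recognise the resulting configuration inside $\lk v_i$, and read off the forced incidence. A cleaner route may be a small disk‑diagram argument: after cyclic permutation the relation reads $\sigma_{i+1}^{-x}\sigma_i^{-p}\sigma_{i+2}^{y}\sigma_i^{q}=1$, so this word bounds a reduced van Kampen diagram over the presentation of Theorem~\ref{thm:essert}, and one would show that for a word of this shape the diagram can only consist of at most two difference‑cycle cells glued along a $\sigma_i$‑edge, which reads off the two rows directly. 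I would also flag that when $x=0$ or $y=0$ both sides collapse to a power of $\sigma_i$, and there the statement is to be read with the convention — implicit in the edge‑path formalism preceding the lemma — that the expressions describe non‑backtracking edge paths; those degenerate cases are then immediate.
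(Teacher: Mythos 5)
Your overall architecture is the paper's: the ``if'' direction is the same relator computation, and the endgame of the ``only if'' direction (once the rows $e$ and $f$ are in hand, compare with the identity produced by the ``if'' direction and kill $\sigma_{i+1}^{x-x'}$ and $\sigma_{i+2}^{y-y'}$ by a fixed-vertex argument) is a correct variant of what the paper does. The problem is that the one step you yourself single out as ``the main obstacle'' --- showing that $b-a$ and $r-s$ actually lie in $D_i$, i.e.\ that the rows $e$ and $f$ exist at all --- is never proved. You describe two strategies (tracking how $\sigma_{i+1}^x$ permutes the neighbours of $v_i$, or a van Kampen diagram argument) but carry out neither; the second would in any case need extra care, since the presentation of Theorem~\ref{thm:essert} also contains the order relations $\sigma_j^\delta$, so controlling reduced diagrams over it is not automatic. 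As it stands the proof is incomplete precisely at its core.

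The missing step has a short geometric proof, which is how the paper closes it. From the hypothesised equality, acting on $v_i$ gives a single vertex $w \defeq \sigma_i^a\sigma_{i+2}^y.v_i = \sigma_i^b\sigma_{i+1}^x.v_i$ of type $i$, and $w$ is adjacent to both $\sigma_i^a.v_{i+2}$ and $\sigma_i^b.v_{i+1}$ (apply the group elements to the edges $\{v_i,v_{i+2}\}$ and $\{v_i,v_{i+1}\}$); both of these vertices are also adjacent to $v_i$. When $w\neq v_i$, two distinct type-$i$ vertices admitting a common neighbour of type $i+1$ and one of type $i+2$ force those two neighbours to be adjacent to each other (pass to an apartment containing the edges $\{v_i,\sigma_i^b.v_{i+1}\}$ and $\{w,\sigma_i^a.v_{i+2}\}$ and inspect the rhombus spanned by $v_i$ and $w$ in the Coxeter complex). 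Observation~\ref{obs:adjacent} then yields a row $e$ with $e_i=b-a$, and the same argument applied to the inverted identity yields $f$ with $f_i=r-s$. Your own configuration (the image of $\sigma_i^{-q}.v_{i+2}$ under $\sigma_{i+1}^x$) can be pushed through in the same way, but some such argument must actually be written down. Your remark about the degenerate cases $x=0$ or $y=0$ is well taken --- there the ``only if'' direction genuinely requires the non-backtracking edge-path convention --- but that is a caveat about the statement, not a substitute for the missing step.
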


\begin{proof}
We assume $i=0$. Suppose the equations hold. Then we verify
\begin{align*}
\sigma_0^b \sigma_1^x \sigma_0^s &=  \sigma_0^a \sigma^{e_0} \sigma_1^x \sigma_0^{-f_0} \sigma_0^r\\
&= \sigma_0^a \sigma_0^{e_0} \sigma_1^{e_1} \sigma_1^{-f_1} \sigma_0^{-f_0} \sigma_0^r\\
&= \sigma_0^a \sigma_2^{-e_2} \sigma_2^{f_2} \sigma_0^r\\
&= \sigma_0^a \sigma_2^{y} \sigma_0^r\text{.}\\
\end{align*}
Conversely, if the two expressions are equal then $\sigma_0^a \sigma_2^y v_0 = \sigma_0^b \sigma_2^x v_0$ thus $\sigma_0^a v_2$ and $\sigma_0^b v_1$ have to be adjacent showing the existence of a row $(e_0, e_1, e_2)$ with $b-a = e_0$. Similarly we obtain a row $(f_0, f_1, f_2)$ with $r - s = f_0$. Now we compute
\[
\sigma_2^{-e_2} \sigma_1^{-e_1} \sigma_1^x = \sigma_0^{e_0} \sigma_1^x = \sigma_0^{b-a} \sigma_1^x = \sigma_2^y \sigma_0^{r-s} = \sigma_2^y \sigma_0^{f_0} = \sigma_2^y \sigma_2^{-f_2} \sigma_1^{-f_1}
\]
showing that
\[
\sigma_1^{x-e_1+f_1} = \sigma_2^{y+e_2-f_2}
\]
fixes both $v_1$ and $v_2$ and therefore has to be trivial.\qed
\end{proof}

\begin{figure}
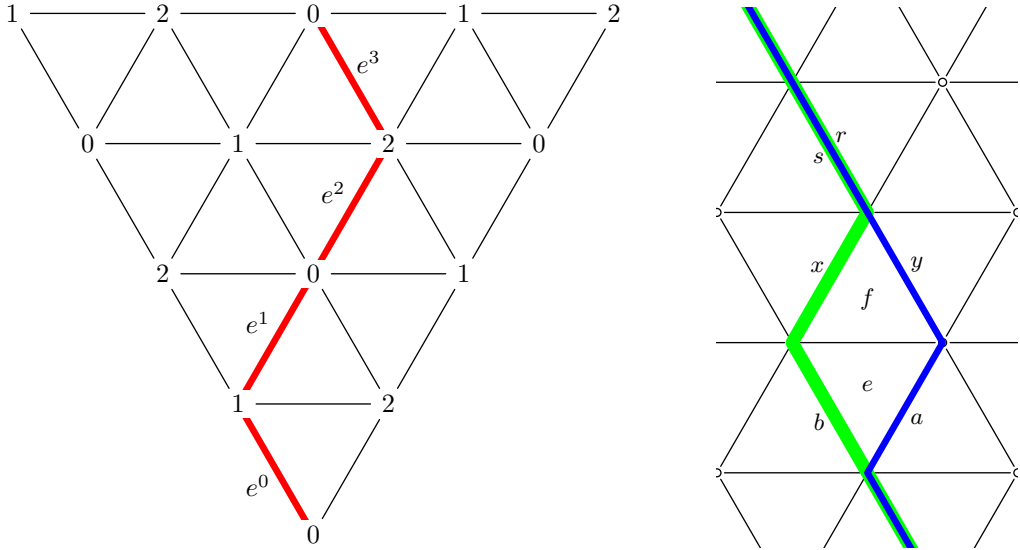

\centering
\includegraphics{hjelmslev-5}
\hspace{1cm}
\includegraphics{hjelmslev-3}
\caption{The left picture illustrates the graphical description of the path $e^0_0e^1_1e^2_0e^3_2v_0$. The right picture illustrates the effect of the elementary homotopy $b_ix_{i+1}s_i \leadsto a_i y_{i+2}r_i$.}
\label{fig:hjelmslev-paths}
\end{figure}

We call a substitution $\sigma_i^a \sigma_{i+2}^y \sigma_i^r \leadsto \sigma_i^b \sigma_{i+1}^x \sigma_i^s$ as in Lemma~\ref{lem:el_hty} (and its inverse) an \emph{elementary homotopy}, see the right hand side of Figure~\ref{fig:hjelmslev-paths}. This includes the possibility of substituting $\sigma_i^a \sigma_{i+2}^y v_i \leadsto \sigma_i^b \sigma_{i+1}^x v_i$. We call a substitution $\sigma_i^b v_{i+1} \leadsto \sigma_i^a v_{i+2}$ (and its inverse) a \emph{step move}.

We are now ready to give a criterion for incidence in Hjelmslev planes of arbitrary level. Let $b^0 \ldots b^{\ell-1}$ be a forward-cycling edge path (ending in an element of $\hpts^\ell$) and let $a^0 \ldots a^{\ell-1}$ be a backward-cycling edge path (ending in an element of $\hlns^\ell$). We consider a regular triangle of side length $\ell$ that is a conceivable filling triangle with base vertex $v_0$. We orient the edges along the two sides containing $v_0$ away from $v_0$ and label them by $b^0$ to $b^{\ell-1}$ and by $a^0$ to $a^{\ell-1}$ respectively. All other edges receive label $0$. The label of an edge $e$ will be denoted $\lambda(e)$. There is a caveat: the elements $b^i$ and $a^i$ do not actually correspond to the edges but rather to the turns. For practical reasons we still attach them to the edges but note that an elementary homotopy changes the label of the last edge without changing the edge.

Whether the point and the line are incident will depend on whether it is possible to label the chambers of the triangle by rows of $E$ subject to certain conditions. The label of a chamber $c$ will be denoted $\lambda(c)$. As a final preparation we need to introduce signs (see Figure~\ref{fig:hjelmslev-signs}). Each chamber $c$ carries a sign $\varepsilon_c$ such that the chamber containing $v_0$ has positive sign and adjacent chambers have opposite sign. Each oriented edge $e$ receives a sign $\varepsilon_e$ that is positive if $e$ is forward pointing (i.e.\ from type $i$ to $i+1$) and negative if it is backward pointing. For a chamber $c$, an oriented boundary edge $e$ and a vertex $v$ we write $e > v$ if $e$ contains $v$ and points away from it and $c > v$ if $c$ contains $v$.

\begin{figure}
\centering
\includegraphics{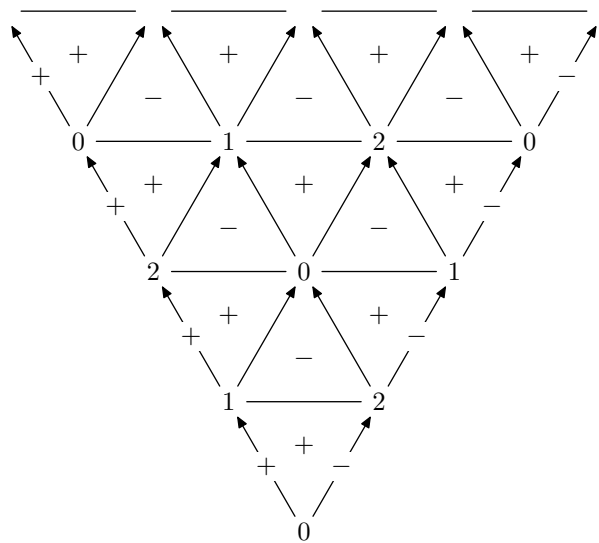}
\caption{Signs associated to edges and chambers. Each vertex is labeled by its type (this is relevant for the edge signs).}
\label{fig:hjelmslev-signs}
\end{figure}

\begin{figure}
\centering
\includegraphics{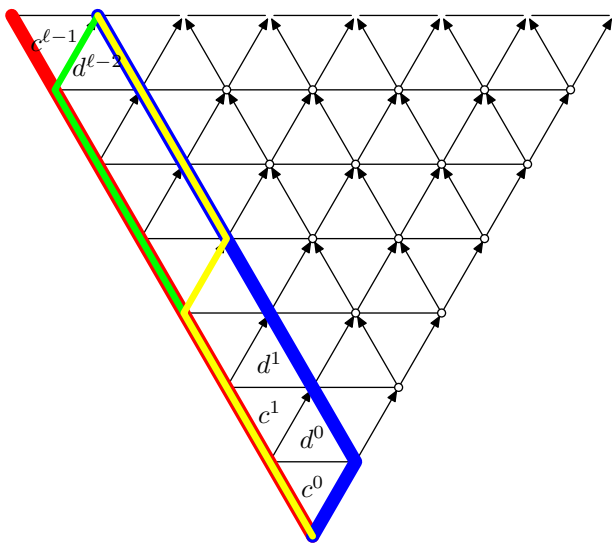}
\caption{Paths used in the proof of Theorem~\ref{thm:hjelmslev}.}
\label{fig:hjelmslev-signs}
\end{figure}

\begin{theorem}
\label{thm:hjelmslev}
The Hjelmslev plane of level $\ell$ is given by the following data. Its points are the end points of forward-cycling edge paths $b^0 \ldots b^{\ell-1}$ of length $\ell$. Its lines are the end points of backward-cycling edge paths $a^0 \ldots a^{\ell-1}$ of length $\ell$. The point given by $b^0\ldots b^{\ell-1}$ and the line given by $a^0 \ldots a^{\ell-1}$ are incident if there is an labeling of the chambers by rows of $E$ such that for every vertex $v$ at combinatorial distance $< \ell$ from $v_0$
\begin{equation}
\label{eq:edge_chamber_formula}
\sum_{c > v} \varepsilon_c \lambda(c)_{\typ(v)} = \sum_{e > v} \varepsilon_e \lambda(e)\text{.}
\end{equation}
\end{theorem}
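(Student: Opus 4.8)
The plan is to translate incidence in $\hjelm^\ell$ into the existence of a developed flat triangle in the building and then to identify the bookkeeping of that triangle with the balance equation~\eqref{eq:edge_chamber_formula}. By definition a point $p\in\hpts^\ell$ given by a forward-cycling geodesic edge path $b^0\ldots b^{\ell-1}$ and a line $L\in\hlns^\ell$ given by a backward-cycling geodesic edge path $a^0\ldots a^{\ell-1}$ are incident exactly when $v_0$, $p$, $L$ are the corners of a regular triangle of side $\ell$ in $X$; equivalently, when the abstract flat equilateral triangle $\Delta_\ell$ of side $\ell$ admits a type-preserving simplicial embedding into $X$ sending the distinguished corner to $v_0$ and the two edge paths leaving that corner to $b^0\ldots b^{\ell-1}$ and to $a^0\ldots a^{\ell-1}$. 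I would first observe that it suffices to produce a \emph{non-degenerate} (i.e.\ injective on every vertex star) such simplicial map with the prescribed boundary: since every vertex link of $X$ is the incidence graph of a projective plane, hence has girth $6$, while $\Delta_\ell$ has at most six chambers around any vertex, non-degeneracy is automatic once adjacent chambers of $\Delta_\ell$ map to genuinely distinct chambers of $X$; and a non-degenerate simplicial map between the $\CAT(0)$ complexes $\Delta_\ell$ and $X$ is automatically a local isometry and therefore an isometric embedding.

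\textbf{Encoding a developing map by a labeling.} A developing map $\Delta_\ell\to X$ is determined by the images of the vertices of $\Delta_\ell$, which one reads off by following edge paths from $v_0$. Compatibility of the two edge paths reaching a common vertex of $\Delta_\ell$ along the two sides of a diamond of chambers is precisely the content of Lemma~\ref{lem:el_hty}: the two $L$-shaped detours $\sigma_i^a\sigma_{i+2}^y\sigma_i^r$ and $\sigma_i^b\sigma_{i+1}^x\sigma_i^s$ coincide if and only if there are rows $(e_\bullet)$, $(f_\bullet)$ of $E$ obeying the four displayed linear relations. Consequently a developing map with the given boundary exists if and only if one can label every chamber $c$ of $\Delta_\ell$ by a row $\lambda(c)$ of $E$ so that across each interior edge the two adjacent rows satisfy Lemma~\ref{lem:el_hty} and so that the row $\lambda(c_0)$ of the corner chamber has first coordinate $b^0-a^0$ (Observation~\ref{obs:adjacent}). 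I would then show this system of local constraints is equivalent to requiring~\eqref{eq:edge_chamber_formula} at all vertices at distance $<\ell$ from $v_0$, by induction on $\ell$. For $\ell=1$ the triangle is a single chamber, the only vertex at distance $<1$ is $v_0$, and~\eqref{eq:edge_chamber_formula} at $v_0$ says $\lambda(c_0)_0=b^0-a^0$ (the backward side carrying the negative sign); solvability is $b^0-a^0\in D_0$, which by Observation~\ref{obs:adjacent} is incidence in $\lk v_0$, so the base case also pins down the sign conventions.

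\textbf{The inductive step.} I would peel off the outer collar of $2\ell-1$ chambers lying between the line of vertices at distance $\ell-1$ from $v_0$ and the far side. Applying Lemma~\ref{lem:el_hty} once to each of the collar's diamonds, together with Observation~\ref{obs:adjacent} at the two corners $B'$, $C'$ of the inner triangle $\Delta_{\ell-1}$, one checks that extending a developed $\Delta_{\ell-1}$ across the collar compatibly with the new turns $b^{\ell-1}$, $a^{\ell-1}$ is exactly the solvability of the $\ell$ new instances of~\eqref{eq:edge_chamber_formula}, namely those at the $\ell$ vertices at distance $\ell-1$ from $v_0$. Here the clause ``all other edges carry label $0$'' materializes: around a vertex interior to $\Delta_\ell$ the six chamber contributions $\varepsilon_c\lambda(c)_{\typ v}$ telescope (via Observation~\ref{obs:adjacent} read inside $\lk v$) to $0$, while at a vertex on one of the two sides exactly one side edge survives the telescoping as the right-hand boundary term. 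Finally I would note the criterion is independent of the chosen representatives of $p$ and $L$: an elementary homotopy or step move on $b^0\ldots b^{\ell-1}$ or $a^0\ldots a^{\ell-1}$ alters finitely many of the labels $b^i$, $a^i$ and correspondingly finitely many chamber labels, but preserves solvability, again by Lemma~\ref{lem:el_hty}.

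\textbf{Main obstacle.} The conceptual skeleton is light --- local non-degeneracy, one application of Lemma~\ref{lem:el_hty} per diamond, and telescoping in each link --- so the real work is the bookkeeping: fixing the signs $\varepsilon_c$, $\varepsilon_e$, verifying that each turn label $b^i$ (respectively $a^i$), though attached to an edge rather than a vertex, enters~\eqref{eq:edge_chamber_formula} at the correct vertex and with the correct sign, and checking that the collar's interior edges genuinely drop out. The forward implication --- reading a labeling off an actual triangle and verifying the equations --- is the easy half; the reverse, where one assembles the triangle from a labeling and must argue non-degeneracy (hence isometric embedding), is where the girth-$6$ and $\CAT(0)$ inputs are used.
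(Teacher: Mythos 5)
Your proposal is correct in substance and rests on the same two ingredients as the paper's proof (Observation~\ref{obs:adjacent} and Lemma~\ref{lem:el_hty}, fed into an induction on $\ell$), but the induction is organized along a genuinely different decomposition. The paper fixes the triangle and sweeps the \emph{boundary path}: starting from $b^0\ldots b^{\ell-1}$ it performs one step move and then a chain of elementary homotopies that push the path across the strip of $2\ell-1$ chambers lying along the side through $v_0$, arriving at a path beginning with $a^0$; the inductive hypothesis is then invoked for the Hjelmslev plane centred at the \emph{new} vertex $\sigma_0^{a^0}.v_2$, and every constraint that arises is an instance of the two-chamber relation of Lemma~\ref{lem:el_hty}, with the equations \eqref{eq:edge_chamber_formula} along the two affected sides read off directly as the relations \eqref{eq:e}--\eqref{eq:f}. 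You instead keep $v_0$ fixed and peel the collar along the \emph{far} side, so your induction runs along the projections $\hjelm^\ell(v_0)\to\hjelm^{\ell-1}(v_0)$; this is arguably more natural, but it forces you to verify, at each of the $\ell-2$ interior vertices of the penultimate row, that the vanishing of the alternating sum of the six surrounding chamber labels is equivalent to the hexagon in that vertex link closing up --- a six-term composite of Observation~\ref{obs:adjacent} rather than the single diamond of Lemma~\ref{lem:el_hty}, and precisely the bookkeeping you flag as the main obstacle. What your route buys in exchange is a cleaner treatment of the converse: your girth-$6$ plus $\CAT(0)$ argument that a locally injective development is automatically an isometric embedding (hence a \emph{regular} triangle) makes explicit a point the paper's converse leaves implicit behind the phrase ``an incidence preserving map of the vertices''; to complete it you still need a word on why adjacent chambers receive distinct images, which comes from the fact that distinct rows of $E$ have distinct entries in each column (the difference set property, cf.\ Lemma~\ref{lem:diff_set_difference}). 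Finally, the independence-of-representatives check at the end of your outline is superfluous: a vertex at combinatorial distance $\ell$ from $v_0$ is joined to it by a unique geodesic edge path, so the paths $b^0\ldots b^{\ell-1}$ and $a^0\ldots a^{\ell-1}$ are already canonical.
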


\begin{proof}
The proof is by induction on $\ell$. Let $b^0 \ldots b^{\ell-1}$ and $a^0 \ldots a^{\ell-1}$ be forward- respectively backward-cycling paths as in the statement.

We assume that their endpoints are incident in the Hjelmslev plane and consider a triangle that establishes this incidence. The strategy is as follows, see the right-hand-side of Figure~\ref{fig:hjelmslev-signs}. Starting with the edge path $b^0 \ldots b^{\ell-1} v_\ell$ (red) we perform a step move to obtain a path $b^0_0 \ldots b^{\ell-2}_{\ell-2} x^{\ell-1}_{\ell-1} v_{\ell+1}$ (green) whose endpoint is closer to the endpoint of $a^0 \ldots a^{\ell-1}$ (recall that the subscripts are modulo $3$). We then apply elementary homotopies through paths $b^0_0 \ldots b^{k-1}_{k-1} x^{k-1}_{k-1} \dbar{a}^k_{k+1} \bar{a}^{k+1}_{k+2} \ldots \bar{a}^{\ell-1}_\ell v_{\ell+1}$ (yellow) until we end up with a path that starts with $a^0_0$. We then apply the induction hypothesis to the smaller triangle.

We use the chamber labeling indicated in Figure~\ref{fig:hjelmslev-signs}. By Observation~\ref{obs:adjacent} there is a row $e^{\ell-1}$ of $E$ such that $b^{l-1} - x^{l-1} = e^{\ell-1}_{l-1}$ and we put $\lambda(c^{\ell-1}) = e^{\ell-1}$. Now comes the first elementary homotopy from $\ldots b^{\ell-2}_{\ell-2}x^{\ell-1}_{\ell-1}v_{\ell+1}$ to $\ldots x^{\ell-2}_{\ell-2} y^{\ell-1}_{\ell} v_{\ell+1}$. By Lemma~\ref{lem:el_hty} there are rows $e^{\ell-2}$ and $f^{\ell-2}$ of $E$ such that $b^{\ell-2} - x^{\ell-2} = e^{\ell-2}_{\ell-2}$ and $x^{\ell-1} = f^{\ell-2}_{\ell-1} - e^{\ell-2}_{\ell-1}$ and $y^{\ell-2} = e^{\ell-2}_\ell - f^{\ell-2}_\ell$ (the last relation does not yet show up because the homotopy is so close to the vertex). We assign $\lambda(c^{\ell-2}) = e^{\ell-2}$ and $\lambda(d^{\ell-2}) = f^{\ell-2}$. From now on we perform a sequence of elementary homotopies that are all of the form
\[
\ldots b^{k-1}_{k-1} x^k_k y^{k+1}_{k+2} \ldots \quad \leadsto \quad \ldots x^{k-1}_{k-1} y^k_{k+1} \bar{b}^{k+1}_{k+2} \ldots\text{.}
\]
Using again Lemma~\ref{lem:el_hty} we find rows $e^{k-1}$ and $f^{k-1}$ such that
\begin{align}
\label{eq:e} b^{k-1} - x^{k-1} &= e^{k-1}_{k-1}\\
\label{eq:x} x^k &= e^{k-1}_k - f^{k-1}_k\\
\label{eq:y} y^k &= f^{k-1}_{k+1} - e^{k-1}_{k+1}\\
\label{eq:f} \bar{b}^{k+1} - y^{k+1} &= f^{k-1}_{k+2}
\end{align}
As before we put $\lambda(c^{k-1}) = e^{k-1}$ and $\lambda(d^{k-1}) = e^{k-1}$.

Eventually we get to the edge path
\[
x^0_0 y^1_2 \bar{b}^2_3 \ldots \bar{b}^{\ell-1}_{\ell} v_{\ell+2}
\]
which has $x^0 = a^0$. So we may regard the edge paths $y^1_2 \bar{b}^2 \ldots \bar{b}^{\ell-1}$ and $a^1 \ldots a^{\ell-1}$ as defining a point and a line in the Hjelmslev plane of the vertex $\sigma_0^{a^0}.v_2$ and a labeling of the remaining triangles from the induction hypothesis. The equation \eqref{eq:edge_chamber_formula} holds for all interior vertices of the smaller triangle by induction hypothesis, so we need to check it on the vertices on the edge path $b^0 \ldots b^{\ell-1}$ and we need to produce the correcting terms for vertices on the edge path $x^0_0 y^1_2 \bar{b}^2_3 \ldots \bar{b}^{\ell-1}_\ell$.

Let us first check the additional relations along the face. The relation \eqref{eq:edge_chamber_formula} for $v_0$ reads $b^0 - a^0 = \lambda(c^0)_0 = e^0_0$ which is just \eqref{eq:e} for $k = 1$. The other instances of \eqref{eq:edge_chamber_formula} along the face read $b^i = \lambda(c^i)_k - \lambda(d^{i-1})_k + \lambda(c^{i-1})_k = e^i_k - f^{i-1}_k + e^{i-1}_k$ which is obtained by summing together \eqref{eq:e} for $k = i+1$ and \eqref{eq:x} for $k = i$.

As for the correction terms, we first need to show that $- y^1 = -\lambda(d^0)_2 + \lambda(c^0)_2 = -f^0_2 + e^0_2$ which is \eqref{eq:y} for $k = 1$. For the remaining correction terms we need to show that $-\bar{b}^i = -\lambda(d^{i-1})_{i+1} + \lambda(c^{i-1})_{i+1} - \lambda(d^{i-2})_{i+1} = -f^{i-1}_{i+1} + e^{i-1}_{i+1} - d^{i-2}_{i+1}$, which is \eqref{eq:f} for $k = i-1$ plus \eqref{eq:y} for $k = i$.

Conversely suppose that the chambers of the triangle between $b^0 \ldots b^{\ell-1}$ and $a^0 \ldots a^{\ell-1}$ can be labeled so that they satisfy \eqref{eq:edge_chamber_formula}. Then there is a sequence of step moves and elementary homotopies transforming $b^0\ldots b^{\ell-1}$ into $a^0 \ldots a^{\ell-1}$. For each vertex of the triangle there is a subpath of an intermediate path that ends in this vertex. Thus we obtain an incidence preserving map of the vertices of the triangle to the vertices of $X$, showing that an isomorphic triangle exists in $X$.\qed
\end{proof}

For Hjelmslev planes up to level $3$ we give an explicit description that avoids the special notation of this section, see Figure~\ref{fig:hjelmslev-3} for the chamber labels used. This is the analogue of \cite[Lemmas~5.11, 5.12]{essert13}.

\begin{corollary}
\label{cor:hjelmslev_3}
The set $\calP^\ell$ of points and $\calL^\ell$ of lines of the Hjelmslev plane of radius $\ell$ around $v_0$ are
\begin{align*}
\hpts^\ell &= \{ \sigma_0^{b^0} \sigma_1^{b^1} \sigma_2^{b^2} \sigma_0^{b^3} \cdots \sigma_{\ell-1 \mmod 3}^{a^{\ell-1}}v_{\ell \mmod 3} \mid b_0 \in \Z/\delta\Z, b^k \in\Z/\delta\Z \setminus -D_{k \mmod 3}\} \text{ and}\\
\hlns^\ell &= \{ \sigma_0^{a^0} \sigma_2^{a^1} \sigma_1^{a^2} \sigma_0^{a^3} \cdots \sigma_{-\ell+1 \mmod 3}^{a^{\ell-1}}v_{-\ell \mmod 3} \mid a_0 \in \Z/\delta\Z, a^k \in\Z/\delta\Z \setminus D_{- k  \mmod 3}\}\text{.}
\end{align*}
The point $\sigma_0^{b^0}v_1$ and the line $\sigma_0^{a^0}v_2$are incident if and only if
\begin{enumerate}[label={(C\arabic*)}, ref={C\arabic*}]
\item there is a row $(e_0, e_1, e_2)$ of $E$ such that $b^0 - a^0 = e_0$.
\end{enumerate}
The point $\sigma_0^{b^0}\sigma_1^{b^1}v_2$ and the line $\sigma_0^{a^0}\sigma_2^{a^1}v_1$ are incident if in addition
\begin{enumerate}[label={(C\arabic*)}, ref={C\arabic*}]
\setcounter{enumi}{1}
\item there are rows $(f_0,f_1,f_2)$, $(g_0,g_1,g_2)$, and $(h_0,h_1,h_2)$ of $E$ such that $b^1 - e_1+f_1-g_1 = 0$ and $a^1 + e_2 - f_2 + h_2 = 0$.\label{item:hjelm_2}
\end{enumerate}
The point $\sigma_0^{b^0}\sigma_1^{b^1}\sigma_2^{b^2}v_0$ and the line $\sigma_0^{a^0}\sigma_2^{a^1}\sigma_1^{a^2}\sigma_1 v_0$  are incident if in addition
\begin{enumerate}[label={(C\arabic*)}, ref={C\arabic*}]
\setcounter{enumi}{2}
\item there are rows $(k_0,k_1,k_2)$, $(\ell_0,\ell_1,\ell_2)$, $(m_0, m_1,m_2)$, $(n_0,n_1,n_2)$, $(o_0,o_1,o_2)$ of $E$ such that $b^2_2-g_2+k_2-m_2 = 0$, $a^2_1+h_1-\ell_1+n_1 = 0$, $-f_0+g_0+h_0-k_0-\ell_0+o_0 = 0$.
\end{enumerate}
\end{corollary}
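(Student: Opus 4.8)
The plan is to deduce everything from Theorem~\ref{thm:hjelmslev} by specialising it to $\ell\le 3$, together with Observation~\ref{obs:geodesic_edge_paths} for the description of the underlying point and line sets. There are thus two tasks: describe $\hpts^\ell$ and $\hlns^\ell$, and translate the incidence condition \eqref{eq:edge_chamber_formula} into the explicit equations (C1)--(C3).

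For the first task I would invoke Observation~\ref{obs:geodesic_edge_paths} and the remark following it that the types along a geodesic edge path cycle either forward ($0,1,2,0,\dots$) or backward ($0,2,1,0,\dots$). A forward-cycling path $b^0b^1\cdots b^{\ell-1}$ starting at $v_0$ runs through vertices of types $0,1,2,0,\dots$, so by item~\ref{item:geodesic_forward} it is geodesic precisely when $-b^k\notin D_{k\bmod 3}$ at each interior step $1\le k\le\ell-1$, with no constraint on $b^0$ because $v_0$ is an endpoint; its terminal vertex then has type $\ell\bmod 3$. Backward-cycling paths are handled symmetrically by item~\ref{item:geodesic_backward}, giving $a^k\notin D_{-k\bmod 3}$ and terminal type $-\ell\bmod 3$. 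This yields the two displayed sets; matching the precise terminal representatives (in particular the extra $\sigma_1$-factor appearing in the level-$3$ line) is a small bookkeeping point coming from the closing step move.

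For the second task I would fix, for each $\ell\in\{1,2,3\}$, the regular triangle of side length $\ell$ with base vertex $v_0$, orient its two bounding edge paths away from $v_0$ and label them $b^0,\dots,b^{\ell-1}$ and $a^0,\dots,a^{\ell-1}$ (all remaining edges getting label $0$), label the $\ell^2$ chambers by rows of $E$ as in Figure~\ref{fig:hjelmslev-3}, and then write out \eqref{eq:edge_chamber_formula} at each of the $1$, $3$, $6$ vertices $v$ with $d(v_0,v)<\ell$. For $\ell=1$ the only vertex is $v_0$, which lies in the single chamber $(e_0,e_1,e_2)$ between the edges labelled $b^0$ and $a^0$, and \eqref{eq:edge_chamber_formula} reads $b^0-a^0=e_0$, which is (C1) (and is also immediate from Observation~\ref{obs:adjacent}). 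For $\ell=2$ the additional vertices are the two distance-one vertices on the bounding paths; inserting the chamber signs $\varepsilon_c$ (positive on the chamber through $v_0$, alternating) and the edge signs $\varepsilon_e$ (according to forward/backward type) and eliminating with (C1) leaves exactly the two equations of (C2). For $\ell=3$ there are six vertices and nine chambers, labelled $e,f,g,h,k,\ell,m,n,o$; evaluating \eqref{eq:edge_chamber_formula} at the three distance-two vertices and simplifying with (C1) and (C2) produces (C3).

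I expect the sign bookkeeping in this last step to be the main obstacle: one must pin down the orientations and the signs $\varepsilon_c,\varepsilon_e$ consistently and then verify, vertex by vertex, that $\sum_{c>v}\varepsilon_c\lambda(c)_{\typ(v)}-\sum_{e>v}\varepsilon_e\lambda(e)$ collapses to exactly the asserted linear relation among rows of $E$ and to nothing else. This is nevertheless a finite and purely local computation at the $1+3+6=10$ vertices involved, entirely parallel to \cite[Lemmas~5.11,~5.12]{essert13}; with Theorem~\ref{thm:hjelmslev} in hand, everything else is formal.
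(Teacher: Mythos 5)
Your proposal matches the paper's intended argument: the corollary is stated there without a written-out proof precisely because it is the specialization of Theorem~\ref{thm:hjelmslev} to $\ell\le 3$ that you describe, with the point and line sets read off from Observation~\ref{obs:geodesic_edge_paths} and the conditions (C1)--(C3) obtained by evaluating \eqref{eq:edge_chamber_formula} at the interior vertices of the filling triangle using the chamber labels of Figure~\ref{fig:hjelmslev-3}. The remaining sign bookkeeping you defer is exactly the finite local computation the paper also leaves to the reader.
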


\begin{figure}
\centering
\includegraphics{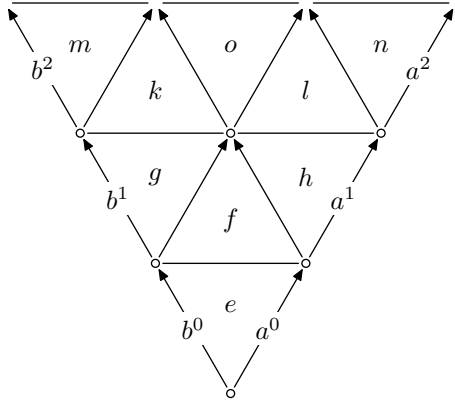}
\caption{Chamber labels used in Corollary~\ref{cor:hjelmslev_3}}
\label{fig:hjelmslev-3}
\end{figure}

Using Theorem~\ref{thm:hjelmslev} or Corollary~\ref{cor:hjelmslev_3} it is very easy to get explicit descriptions of Hjelmslev spheres of the three types of vertices in Singer cyclic lattices. This leaves us with the problem computing meaningful invariants for them. We will mostly concentrate on two invariants: the Moufang property, and the number of splittings of $\hjelm^2 \to \hjelm^1$.

\subsection{Moufang planes}

A Klingenberg plane $\hjelm^k$ is \emph{Moufang} if for every incident point-line pair $x \inc y$, $x \in \hpts, y \in \hlns$, for every point $x' \inc y$, $x' \not\sim x$ and for any two lines $z, z' \inc x'$, $y\not\sim z,z'$ there is a colineation that fixes all lines through $x$, all points on $y$, and takes $z$ to $z'$. The relevance of this property lies in \cite[Theorem~3.10]{baklanlor91} which implies:

\begin{theorem}
The Klingenberg plane associated to a commutative local ring as in Example~\ref{exmpl:hjelmslev_moufang} is Moufang.
\end{theorem}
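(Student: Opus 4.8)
The plan is to build the required collineations by hand as projective transvections, using that $\PGL_3(R)$ acts by collineations on the Desarguesian Klingenberg plane $\hjelm = (\hpts,\hlns,\inc)$ of Example~\ref{exmpl:hjelmslev_moufang}. A matrix $g \in \GL_3(R)$ acts on points via $R^\times v \mapsto R^\times gv$ and on lines via $wR^\times \mapsto (wg^{-1})R^\times$; this preserves incidence, since $(wg^{-1})(gv) = wv$, and it preserves the neighbour relation, since reduction modulo $\frakm$ sends $g$ to an element of $\GL_3(R/\frakm)$ compatibly with the projection $\varphi$. Homotheties act trivially, so $\PGL_3(R)$ acts by collineations, and I will produce the Moufang collineation inside this group.

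Next I would put a Moufang configuration $(x,y,x',z,z')$ into normal form. Since $R$ is local, every unimodular vector extends to an invertible matrix, so $\PGL_3(R)$ is transitive on flags (incident point--line pairs); conjugating, I may assume $x = R^\times(1,0,0)$ and that $y$ is the line $(0,0,1)R^\times$, i.e.\ $\{x_3 = 0\}$. The stabiliser of this flag is the group of upper triangular matrices with unit diagonal, and it acts on the points of $y$ not neighbouring $x$ --- those of the form $R^\times(a,1,0)$ --- with the translations $a \mapsto a+\beta$ available; hence I may further assume $x' = R^\times(0,1,0)$. A line through $x'$ is then $(y_1,0,y_3)R^\times$, and it is a neighbour of $y$ exactly when $y_1 \in \frakm$, so the lines $z,z' \inc x'$ with $y \not\sim z,z'$ are precisely $z=(1,0,c)R^\times$ and $z'=(1,0,c')R^\times$ with $c,c'\in R$.

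With the configuration normalised, the elation with axis $y$ and centre $x$ given by
\[
g_s = \begin{pmatrix} 1 & 0 & s \\ 0 & 1 & 0 \\ 0 & 0 & 1 \end{pmatrix}
\]
does the job: it fixes every point $R^\times(v_1,v_2,0)$ of $y$, fixes every line $(0,u_2,u_3)R^\times$ through $x$, and sends $z=(1,0,c)R^\times$ to $(1,0,c-s)R^\times$; taking $s=c-c'$ yields $g_s(z)=z'$. This verifies the Moufang property for $\hjelm$, hence in particular for each $\hjelm^k = \hjelm(\calO/\frakm^k)$. Alternatively, since the statement is in effect a special case of the classification of Moufang Klingenberg planes, one can simply invoke \cite[Theorem~3.10]{baklanlor91} after checking that its hypotheses (Desarguesian, coordinatised by a commutative local ring) are met.

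The step I expect to need the most care is the passage to normal form: checking flag-transitivity of $\PGL_3(R)$ via completability of unimodular vectors over a local ring, identifying the flag stabiliser, and --- throughout --- verifying that the neighbour relation is respected so that the ``$\not\sim$'' clauses in the definition are genuinely matched. The transvection computation itself is immediate, and if one instead takes the citation route the only obstacle is confirming that \cite[Theorem~3.10]{baklanlor91} applies verbatim to the rings of Example~\ref{exmpl:hjelmslev_moufang}.
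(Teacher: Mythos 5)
Your argument is correct, but it takes a different route from the paper: the paper offers no proof at all beyond citing \cite[Theorem~3.10]{baklanlor91} (the classification of Moufang Klingenberg planes via coordinatizing rings), whereas you construct the required collineations explicitly. Your construction is sound: $\PGL_3(R)$ does act by collineations preserving the neighbour relation (reduction mod $\frakm$ intertwines the action with $\varphi$); flag-transitivity follows since over a local ring a unimodular vector has a unit entry and hence extends to an invertible matrix, and the flag stabilizer (upper triangular with unit diagonal entries) lets you normalize $x'$ to $R^\times(0,1,0)$ --- note the normalizations use exactly the hypotheses $x'\not\sim x$ and $z,z'\not\sim y$ of the Moufang condition, so nothing is lost; and the transvection $g_s$ visibly fixes every point of $y$ and every line through $x$ while translating the third coordinate of $z=(1,0,c)R^\times$. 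What your approach buys is a short, self-contained verification that avoids invoking a classification theorem for a statement that, in the commutative Desarguesian case, is really just the existence of elations in $\PGL_3(R)$; what the citation buys is brevity and coverage of the general ($H$-ring, possibly non-commutative) setting of Example~\ref{exmpl:hjelmslev_moufang}, where one would have to be more careful about left versus right scalars. Either route is acceptable here, since the theorem is only applied to $\calO/\frakm^k$ with $\calO$ a (commutative) ring of integers of a local field.
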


Together with Fact~\ref{fact:hjelmslev_building_ring} this immediately gives us a way to show that the building associated to a Singer cyclic lattice is not Bruhat--Tits: we just need to find some Hjelmslev plane that is not Moufang. Tables~\ref{tab:buildings_2}, \ref{tab:buildings_3}, \ref{tab:buildings_4}, \ref{tab:buildings_5} show for which Singer cyclic lattices the Hjelmslev planes of level $2$ (and for $q \le 3$ also of level $3$) are Moufang.

\begin{theorem}
\label{thm:exotic_buildings}
For every prime power $q \le 5$ there is only one isomorphism class of Singer cyclic lattices that acts on a Bruhat--Tits building, all other act on exotic buildings.
\end{theorem}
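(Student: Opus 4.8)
The plan is to use the classification of Singer cyclic lattices (Corollary~\ref{cor:count_classes_isom_equiv}) together with the Moufang property of Hjelmslev planes as the obstruction to being Bruhat--Tits. For the \emph{existence and uniqueness} of the Bruhat--Tits example, I would argue as follows. The lattice constructed in Section~\ref{sec:brutit_examples} lies in $\PGL_3(\F_q\lseries{Y})$, hence acts on a Bruhat--Tits building, and by Corollary~\ref{cor:cartwright_to_panel} its difference matrix is equivalent to one of the shape $(d\ d\ d)_{d\in D}$ with $D$ a Desarguesian difference set. Since for $q\le 8$ every difference set is Desarguesian (Theorem~\ref{thm:small_desarguesian}) and all difference sets are then mutually equivalent (Theorem~\ref{thm:difference_sets_affine}), any two difference matrices of this shape are equivalent, so by Corollary~\ref{cor:count_classes_isom_equiv} they account for exactly one isomorphism class of Singer cyclic lattice — call it $\Gamma_q$ (for $q=2$ and $q=3$ it is $\Gamma_{2,1}$ resp.\ $\Gamma_{3,1}$ of Tables~\ref{tab:lattices_2} and~\ref{tab:lattices_3}).

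It then remains to show that every \emph{other} isomorphism class acts on an exotic, i.e.\ non-Bruhat--Tits, building. Suppose $\Gamma$ is a Singer cyclic lattice with $q\le 5$ acting on $X$, and suppose $X$ were Bruhat--Tits, say $X\cong X(\PGL_3(K))$ for a local field $K$ with ring of integers $\calO$. By Fact~\ref{fact:hjelmslev_building_ring} the Hjelmslev plane of level $\ell$ around any vertex of $X$ is isomorphic to the Desarguesian Klingenberg plane of the commutative local ring $\calO/\frakm^\ell$ of Example~\ref{exmpl:hjelmslev_moufang}, and is therefore Moufang. Hence, to show $X$ is exotic it suffices to exhibit a level $\ell$ and a vertex $v\in\{v_0,v_1,v_2\}$ for which the Hjelmslev plane $\hjelm^\ell(v)$ of the building of $\Gamma$ fails to be Moufang; this is a well-posed condition on $\Gamma$ because $X$ is determined by $\Gamma$ up to isomorphism (Theorem~\ref{thm:qi_rigidity}).

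This reduces the theorem to a finite computation. Using Corollary~\ref{cor:count_classes_isom_equiv} one enumerates representatives of all isomorphism classes of Singer cyclic lattices for $q\in\{2,3,4,5\}$ (there are $2$, $7$, $17$, $3269$ of them). For each representative and each vertex $v_0,v_1,v_2$ one reads off, from the explicit incidence rules of Corollary~\ref{cor:hjelmslev_3} (or Theorem~\ref{thm:hjelmslev}), the Hjelmslev plane of level $2$ — and, for $q\le 3$, also of level $3$ — and tests the Moufang property directly from its definition. The outcome is that $\Gamma_q$ is the \emph{unique} isomorphism class all of whose Hjelmslev planes at the tested levels and all three vertices are Moufang; for every other class at least one of these planes is not Moufang, so by the previous paragraph its building is exotic. (For $q=2$ the single remaining class $\Gamma_{2,2}$ was already shown to be exotic in Section~\ref{sec:ronan}, where its three level-$2$ Hjelmslev planes are seen to be pairwise non-isomorphic.)

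The step requiring the most care — though, as the paper notes, not computationally heavy — is the combinatorial one. Passing from the abstract Hjelmslev plane $\hjelm^\ell(v)$ to a concrete incidence structure requires the edge-path and chamber-labelling calculus of Theorem~\ref{thm:hjelmslev}, and one must be careful to test all three vertex orbits, since a Singer cyclic lattice has three of them and they can behave differently (some lattices have Moufang planes around one vertex but not around another). A further point settled only by the computation itself is that levels $2$ and $3$ already suffice to single out $\Gamma_q$ among all other lattices; a priori one could imagine having to go to higher level. Once the Hjelmslev planes are written down, the Moufang test is an elementary finite check on small incidence structures, so the real content of the theorem is the reduction to it — via Fact~\ref{fact:hjelmslev_building_ring}, the Moufang property of Desarguesian Klingenberg planes, and the classification of Section~\ref{sec:singer_lattices}.
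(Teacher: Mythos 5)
Your proposal is correct and follows essentially the same route as the paper: reduce to the Moufang test on Hjelmslev planes via Fact~\ref{fact:hjelmslev_building_ring}, run the finite check over all isomorphism classes (using levels $2$ and $3$, the latter being genuinely needed for $q=2$), and identify the unique all-Moufang class with the arithmetic lattice of Corollary~\ref{cor:cartwright_to_panel}. The only slip is the parenthetical claim that the three level-$2$ planes of $\Gamma_{2,2}$ are \emph{pairwise} non-isomorphic — two of them coincide (both are the plane of $\Z_2/4$) and only the third differs — but since non-isomorphy of \emph{some} pair already contradicts the Bruhat--Tits case, and the level-$3$ Moufang failure covers $q=2$ anyway, this does not affect the argument.
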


\begin{proof}
In each table only one building has all Hjelmslev planes Moufang, all the others are exotic. We know that the first lattice acts on a Bruhat--Tits building from Corollary~\ref{cor:cartwright_to_panel}.\qed
\end{proof}

\begin{remark}
Bader, Caprace, and Lécureux \cite[Appendix~D]{badcaplec16} show that an exotic Singer cyclic lattice with parameter $q_0 \not\equiv 1 \mmod 3$ can be used to deduce that certain Singer cyclic lattices with parameter $q = q_0^3$, $e \not\equiv 0 \mmod 3$ are exotic as well. Namely, if $E_0$ is a difference matrix for $q_0$ and $E$ is a difference matrix for $q$ such that $E$ contains the rows of $\delta/\delta_0 \cdot E_0$ then there is an obvious morphism $\Gamma_0 \to \Gamma$ taking each generator $\sigma_i$ to $\sigma_i^{\delta/\delta_0}$. The main result of \cite{badcaplec16} implies that if $\Gamma$ is Bruhat--Tits then $\Gamma_0$ needs to have infinite image in the automorphism group of the field, and in particular have an infinite residually finite quotient. Thus if $\Gamma_0$ is (virtually) perfect in addition to being exotic, $\Gamma$ needs to be exotic as well.

Via this construction Theorem~\ref{thm:exotic_buildings} gives rise to further exotic lattices, providing some evidence toward the conjecture formulated in the introduction. However, for a given parameter $q$ the proportion of lattices that can be seen to be exotic among all Singer cyclic lattices for this parameter, tends to $0$ (at least when the strategy is applied and estimated naively).

Applying the method for $q_0 = 2$ and $q = 4$ and using that $\Gamma_{2,2}$ is exotic, only one lattice in Table~\ref{tab:lattices_4} can be shown to be exotic, namely $\Gamma_{4,2}$. One of its based difference matrices is
\[
\begin{psmallmatrix}
0 & 0 & 0\\3&3&9\\9&9&3\\4&4&4\\11&11&11
\end{psmallmatrix}
\]
and all possibilities to swap entries between the last two rows lead to equivalent difference matrices.
\end{remark}

Besides identifying exotic buildings, the Moufang property also allows us to distinguish between non-isomorphic buildings. For example we can see that there are at least three non-isomorphic buildings for $q = 3$ by just taking into account how many Hjelmslev planes are Moufang.

\begin{table}
\caption{Buildings of Singer cyclic lattices for $q = 2$. The numbering is the same as that of Table~\ref{tab:lattices_2}. The table shows a based difference matrix, whether the Hjelmslev planes of level $2$ respectively $3$ are Moufang, in how many ways the projection $\hjelm^2 \to \hjelm^1$ splits, some subquotients $Q_1^{k}$ of the vertex stabilizers, and the normalizer of the Singer cycle on level $2$. Each of these latter columns has three entries, one for Hjelmslev planes around $v_0$, $v_1$, and $v_2$.}
\label{tab:buildings_2}
\begin{gather*}
\begin{array}{cccccccc}
\text{Name} & \text{Based DM} & \substack{\hjelm^2\\\text{Moufang}} & \substack{\hjelm^3\\\text{Moufang}} & \substack{\#\,\text{splits of}\\\hjelm^2 \to \hjelm^1} & \abs{\stab{1}[][2]} & \abs{\stab{1}[2][3]} & \abs{N_{\stab{}[1][2]}(\gen{\sigma})/\gen{\sigma}}\\[.5em]
X_{2, 1}& \left(\begin{smallarray}{ccc}0&0&0\\1&1&1\\3&3&3\end{smallarray}\right)& \makebox[1.0em][c]{$+$} \makebox[1.0em][c]{$+$} \makebox[1.0em][c]{$+$}& \makebox[1.0em][c]{$+$} \makebox[1.0em][c]{$+$} \makebox[1.0em][c]{$+$}& \raisebox{0.5em}{\makebox[1.5em][c]{$256$}} \raisebox{-0.5em}{\makebox[1.5em][c]{$256$}} \raisebox{0.5em}{\makebox[1.5em][c]{$256$}}& \raisebox{-0.5em}{\makebox[1.9em][c]{$256$}} \raisebox{0.5em}{\makebox[1.9em][c]{$256$}} \raisebox{-0.5em}{\makebox[1.9em][c]{$256$}}& \raisebox{0.5em}{\makebox[2.4em][c]{$256$}} \raisebox{-0.5em}{\makebox[2.4em][c]{$256$}} \raisebox{0.5em}{\makebox[2.4em][c]{$256$}}& \makebox[1.0em][c]{$3$} \makebox[1.0em][c]{$3$} \makebox[1.0em][c]{$3$}\\
X_{2, 2}& \left(\begin{smallarray}{ccc}0&0&0\\1&1&3\\3&3&1\end{smallarray}\right)& \makebox[1.0em][c]{$+$} \makebox[1.0em][c]{$+$} \makebox[1.0em][c]{$+$}& \makebox[1.0em][c]{$-$} \makebox[1.0em][c]{$-$} \makebox[1.0em][c]{$-$}& \raisebox{0.5em}{\makebox[1.5em][c]{$0$}} \raisebox{-0.5em}{\makebox[1.5em][c]{$0$}} \raisebox{0.5em}{\makebox[1.5em][c]{$256$}}& \raisebox{-0.5em}{\makebox[1.9em][c]{$256$}} \raisebox{0.5em}{\makebox[1.9em][c]{$256$}} \raisebox{-0.5em}{\makebox[1.9em][c]{$256$}}& \raisebox{0.5em}{\makebox[2.4em][c]{$1$}} \raisebox{-0.5em}{\makebox[2.4em][c]{$1$}} \raisebox{0.5em}{\makebox[2.4em][c]{$256$}}& \makebox[1.0em][c]{$3$} \makebox[1.0em][c]{$3$} \makebox[1.0em][c]{$3$}

\end{array}
\end{gather*}
\end{table}

\begin{table}
\caption{Buildings of Singer cyclic lattices for $q = 3$. The columns are the same as in Table~\ref{tab:buildings_2}. One can check that two Hjelmslev planes of level $2$ are isomorphic if and only if their data in this table coincide, including $\abs{N_{\stab{}[1][2]}(\gen{\sigma})/\gen{\sigma}}$ which is not a priori an invariant of $\hjelm^2$. All the Hjelmslev planes of level $2$ are also self-dual. In particular, only four isomorphism types of Hjelmslev planes of level $2$ appear. The only building that has three different planes of level $2$ is $X_{3,6}$. The only exotic building that has all planes of level $2$ isomorphic is $X_{3,7}$.}
\label{tab:buildings_3}
\begin{gather*}
\begin{array}{cccccccc}
\text{Name} & \text{Based DM} & \substack{\hjelm^2\\\text{Moufang}} & \substack{\hjelm^3\\\text{Moufang}} & \substack{\#\,\text{splits of}\\\hjelm^2 \to \hjelm^1} & \abs{\stab{1}[][2]} & \abs{\stab{1}[][3]} & \abs{N_{\stab{}[1][2]}(\gen{\sigma})/\gen{\sigma}}\\[.5em]
X_{3, 1}& \left(\begin{smallarray}{ccc}0&0&0\\1&1&1\\3&3&3\\9&9&9\end{smallarray}\right)& \makebox[1.0em][c]{$+$} \makebox[1.0em][c]{$+$} \makebox[1.0em][c]{$+$}& \makebox[1.0em][c]{$+$} \makebox[1.0em][c]{$+$} \makebox[1.0em][c]{$+$}& \raisebox{0.5em}{\makebox[1.5em][c]{$6561$}} \raisebox{-0.5em}{\makebox[1.5em][c]{$6561$}} \raisebox{0.5em}{\makebox[1.5em][c]{$6561$}}& \raisebox{-0.5em}{\makebox[1.9em][c]{$13122$}} \raisebox{0.5em}{\makebox[1.9em][c]{$13122$}} \raisebox{-0.5em}{\makebox[1.9em][c]{$13122$}}& \raisebox{0.5em}{\makebox[2.4em][c]{$258280326$}} \raisebox{-0.5em}{\makebox[2.4em][c]{$258280326$}} \raisebox{0.5em}{\makebox[2.4em][c]{$258280326$}}& \makebox[1.0em][c]{$3$} \makebox[1.0em][c]{$3$} \makebox[1.0em][c]{$3$}\\
X_{3, 2}& \left(\begin{smallarray}{ccc}0&0&0\\1&1&1\\3&3&9\\9&9&3\end{smallarray}\right)& \makebox[1.0em][c]{$-$} \makebox[1.0em][c]{$-$} \makebox[1.0em][c]{$+$}& \makebox[1.0em][c]{$-$} \makebox[1.0em][c]{$-$} \makebox[1.0em][c]{$-$}& \raisebox{0.5em}{\makebox[1.5em][c]{$0$}} \raisebox{-0.5em}{\makebox[1.5em][c]{$0$}} \raisebox{0.5em}{\makebox[1.5em][c]{$6561$}}& \raisebox{-0.5em}{\makebox[1.9em][c]{$3$}} \raisebox{0.5em}{\makebox[1.9em][c]{$3$}} \raisebox{-0.5em}{\makebox[1.9em][c]{$13122$}}& \raisebox{0.5em}{\makebox[2.4em][c]{$1$}} \raisebox{-0.5em}{\makebox[2.4em][c]{$1$}} \raisebox{0.5em}{\makebox[2.4em][c]{$1$}}& \makebox[1.0em][c]{$3$} \makebox[1.0em][c]{$3$} \makebox[1.0em][c]{$3$}\\
X_{3, 3}& \left(\begin{smallarray}{ccc}0&0&0\\1&1&1\\3&3&4\\9&9&6\end{smallarray}\right)& \makebox[1.0em][c]{$-$} \makebox[1.0em][c]{$-$} \makebox[1.0em][c]{$+$}& \makebox[1.0em][c]{$-$} \makebox[1.0em][c]{$-$} \makebox[1.0em][c]{$-$}& \raisebox{0.5em}{\makebox[1.5em][c]{$0$}} \raisebox{-0.5em}{\makebox[1.5em][c]{$0$}} \raisebox{0.5em}{\makebox[1.5em][c]{$6561$}}& \raisebox{-0.5em}{\makebox[1.9em][c]{$3$}} \raisebox{0.5em}{\makebox[1.9em][c]{$3$}} \raisebox{-0.5em}{\makebox[1.9em][c]{$13122$}}& \raisebox{0.5em}{\makebox[2.4em][c]{$1$}} \raisebox{-0.5em}{\makebox[2.4em][c]{$1$}} \raisebox{0.5em}{\makebox[2.4em][c]{$1$}}& \makebox[1.0em][c]{$1$} \makebox[1.0em][c]{$1$} \makebox[1.0em][c]{$3$}\\
X_{3, 4}& \left(\begin{smallarray}{ccc}0&0&0\\1&1&1\\3&3&6\\9&9&4\end{smallarray}\right)& \makebox[1.0em][c]{$-$} \makebox[1.0em][c]{$-$} \makebox[1.0em][c]{$+$}& \makebox[1.0em][c]{$-$} \makebox[1.0em][c]{$-$} \makebox[1.0em][c]{$-$}& \raisebox{0.5em}{\makebox[1.5em][c]{$1$}} \raisebox{-0.5em}{\makebox[1.5em][c]{$1$}} \raisebox{0.5em}{\makebox[1.5em][c]{$6561$}}& \raisebox{-0.5em}{\makebox[1.9em][c]{$2$}} \raisebox{0.5em}{\makebox[1.9em][c]{$2$}} \raisebox{-0.5em}{\makebox[1.9em][c]{$13122$}}& \raisebox{0.5em}{\makebox[2.4em][c]{$1$}} \raisebox{-0.5em}{\makebox[2.4em][c]{$1$}} \raisebox{0.5em}{\makebox[2.4em][c]{$1$}}& \makebox[1.0em][c]{$1$} \makebox[1.0em][c]{$1$} \makebox[1.0em][c]{$3$}\\
X_{3, 5}& \left(\begin{smallarray}{ccc}0&0&0\\1&1&1\\3&9&4\\9&3&6\end{smallarray}\right)& \makebox[1.0em][c]{$-$} \makebox[1.0em][c]{$-$} \makebox[1.0em][c]{$-$}& \makebox[1.0em][c]{$-$} \makebox[1.0em][c]{$-$} \makebox[1.0em][c]{$-$}& \raisebox{0.5em}{\makebox[1.5em][c]{$1$}} \raisebox{-0.5em}{\makebox[1.5em][c]{$0$}} \raisebox{0.5em}{\makebox[1.5em][c]{$0$}}& \raisebox{-0.5em}{\makebox[1.9em][c]{$2$}} \raisebox{0.5em}{\makebox[1.9em][c]{$3$}} \raisebox{-0.5em}{\makebox[1.9em][c]{$3$}}& \raisebox{0.5em}{\makebox[2.4em][c]{$1$}} \raisebox{-0.5em}{\makebox[2.4em][c]{$1$}} \raisebox{0.5em}{\makebox[2.4em][c]{$1$}}& \makebox[1.0em][c]{$1$} \makebox[1.0em][c]{$1$} \makebox[1.0em][c]{$1$}\\
X_{3, 6}& \left(\begin{smallarray}{ccc}0&0&0\\1&1&6\\3&4&4\\9&6&1\end{smallarray}\right)& \makebox[1.0em][c]{$-$} \makebox[1.0em][c]{$-$} \makebox[1.0em][c]{$-$}& \makebox[1.0em][c]{$-$} \makebox[1.0em][c]{$-$} \makebox[1.0em][c]{$-$}& \raisebox{0.5em}{\makebox[1.5em][c]{$0$}} \raisebox{-0.5em}{\makebox[1.5em][c]{$1$}} \raisebox{0.5em}{\makebox[1.5em][c]{$0$}}& \raisebox{-0.5em}{\makebox[1.9em][c]{$3$}} \raisebox{0.5em}{\makebox[1.9em][c]{$2$}} \raisebox{-0.5em}{\makebox[1.9em][c]{$3$}}& \raisebox{0.5em}{\makebox[2.4em][c]{$1$}} \raisebox{-0.5em}{\makebox[2.4em][c]{$1$}} \raisebox{0.5em}{\makebox[2.4em][c]{$1$}}& \makebox[1.0em][c]{$3$} \makebox[1.0em][c]{$1$} \makebox[1.0em][c]{$1$}\\
X_{3, 7}& \left(\begin{smallarray}{ccc}0&0&0\\1&1&4\\3&6&1\\9&4&6\end{smallarray}\right)& \makebox[1.0em][c]{$-$} \makebox[1.0em][c]{$-$} \makebox[1.0em][c]{$-$}& \makebox[1.0em][c]{$-$} \makebox[1.0em][c]{$-$} \makebox[1.0em][c]{$-$}& \raisebox{0.5em}{\makebox[1.5em][c]{$1$}} \raisebox{-0.5em}{\makebox[1.5em][c]{$1$}} \raisebox{0.5em}{\makebox[1.5em][c]{$1$}}& \raisebox{-0.5em}{\makebox[1.9em][c]{$2$}} \raisebox{0.5em}{\makebox[1.9em][c]{$2$}} \raisebox{-0.5em}{\makebox[1.9em][c]{$2$}}& \raisebox{0.5em}{\makebox[2.4em][c]{$1$}} \raisebox{-0.5em}{\makebox[2.4em][c]{$1$}} \raisebox{0.5em}{\makebox[2.4em][c]{$1$}}& \makebox[1.0em][c]{$1$} \makebox[1.0em][c]{$1$} \makebox[1.0em][c]{$1$}

\end{array}
\end{gather*}
\end{table}

\subsection{Splittings}

Another invariant to distinguish between different Hjelmslev planes is very simple. We know that for $k > \ell$ there is a projection $\hjelm^k \to \hjelm^\ell$. Thus we may wonder whether and in how many ways this projection splits. This is of course computationally most feasible if $k = 2$, $\ell = 1$.

Essert formulates a sufficient condition for the projection $\hjelm^2 \to \hjelm^1$ to split. The statement is not quite correct as stated. In the last sentence of \cite[Lemma~5.12]{essert13} the difference sets have to be equal as ordered difference sets, not as unordered difference sets. If it applied to unordered difference sets then the argument in \cite[Proposition~5.14]{essert13}, saying that the Hjelmslev planes split would apply to all Hjelmslev planes of $\Gamma_{2,2}$. But we know from Lemma~\ref{lem:22_ronan_tits} and Tits's invariant that only one of the Hjelmslev spheres of level $2$ of $\Gamma_{2,2}$ is that of $\F_2\pseries{t}/t^2$ while the other two are those of $\Z_2/4$ which do not split (which is also confirmed by Table~\ref{tab:buildings_2}).

On the other hand, looking at Hjelmslev planes around a single vertex separately, the assumption in Essert's arguments can be weakened to only depend on two difference sets. We formulate the correct version of \cite[Proposition~5.14]{essert13} only in our specific context of Singer cyclic lattices.

\begin{corollary}
\label{cor:equal_columns}
If the $1$st and the $2$nd column of $E$ coincide (as \emph{ordered} difference sets) then $\sigma_0^{a^0} \sigma_2^{a^1}. v_1$ is incident with $\sigma_0^{b^0} \sigma_1^{b^1} v_2$ if and only if there is a row $(e_0,e_1,e_1)$ of $E$ such that $b^0 - a^0 = e_0$ and there is an $n \in (b^1 - D_1) \cap (-a^1 - D_1) \cap (e_1 - D_1)$.

In particular, for every $m \in \Z/\delta\Z \setminus D_1$ the map
\begin{align*}
\sigma_0^{a^0}.v_2 &\mapsto \sigma_0^{a^0}\sigma_2^{-m}.v_1\\
\sigma_0^{b^0}.v_1 &\mapsto \sigma_0^{b^0}\sigma_1^m.v_2
\end{align*}
is a splitting of $\hjelm^2(v_0) \to \hjelm^1(v_0)$ in that case.
\end{corollary}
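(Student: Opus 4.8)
The plan is to derive both assertions from the level-$2$ part of Corollary~\ref{cor:hjelmslev_3}; the hypothesis enters only through the fact that ``columns $1$ and $2$ of $E$ coincide'' means exactly that every row of $E$ has the shape $(e_0,e_1,e_1)$.

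First I would prove the incidence criterion. By Corollary~\ref{cor:hjelmslev_3} the point $\sigma_0^{b^0}\sigma_1^{b^1}.v_2$ and the line $\sigma_0^{a^0}\sigma_2^{a^1}.v_1$ are incident iff (C1) there is a row $(e_0,e_1,e_2)$ of $E$ with $b^0-a^0=e_0$ and, in addition, (C2) there are rows $(f_0,f_1,f_2),(g_0,g_1,g_2),(h_0,h_1,h_2)$ of $E$ with $b^1-e_1+f_1-g_1=0$ and $a^1+e_2-f_2+h_2=0$. Putting $e_2=e_1$, $f_2=f_1$, $h_2=h_1$, and recalling that the entries of the $1$st column are exactly the elements of $D_1$, each occurring in one row, (C2) becomes: there is an $f_1\in D_1$ with $b^1+f_1-e_1\in D_1$ and $-a^1+f_1-e_1\in D_1$. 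Setting $n\defeq e_1-f_1$ one has $f_1=e_1-n$, $b^1+f_1-e_1=b^1-n$ and $-a^1+f_1-e_1=-a^1-n$, so (C2) says precisely $n\in(e_1-D_1)\cap(b^1-D_1)\cap(-a^1-D_1)$. As $D_0$ is a difference set its elements are distinct, so the row in (C1) is determined by $e_0=b^0-a^0$; this is the stated criterion.

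Next I would check that the displayed map $s$ is a splitting of the projection $\pi\colon\hjelm^2(v_0)\to\hjelm^1(v_0)$. The map $s$ is well defined because $\gen{\sigma_0}$ acts regularly on the vertices of each type in $\lk v_0$, and $m\notin D_1$ is exactly what is needed for $\sigma_0^{b^0}\sigma_1^m.v_2$ and $\sigma_0^{a^0}\sigma_2^{-m}.v_1$ to be honest elements of $\hpts^2$ and $\hlns^2$; that $\pi\circ s=\id$ is immediate from the definition of $\pi$. For incidence I apply the criterion just proved with $b^1=m$ and $a^1=-m$: the images $\sigma_0^{b^0}\sigma_1^m.v_2$ and $\sigma_0^{a^0}\sigma_2^{-m}.v_1$ are incident iff $b^0-a^0\in D_0$ and $(m-D_1)\cap(m-D_1)\cap(e_1-D_1)\neq\emptyset$. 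The third intersectand coincides with the first two, and since $D_1$ is a difference set every element of $\Z/\delta\Z$---in particular $m-e_1$---is a difference of two elements of $D_1$, so $(m-D_1)\cap(e_1-D_1)\neq\emptyset$ unconditionally. Hence incidence of the images is equivalent to $b^0-a^0\in D_0$, which by (C1) at level $1$ is exactly incidence of $\sigma_0^{a^0}.v_2$ with $\sigma_0^{b^0}.v_1$ in $\hjelm^1$. Thus $s$ preserves incidence and non-incidence, and being a morphism of incidence geometries that is a section of $\pi$, it is the desired splitting.

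I do not expect a serious obstacle: the one idea is the change of variables $n=e_1-f_1$, which collapses the three row-conditions of (C2) into the single condition $n\in(e_1-D_1)\cap(b^1-D_1)\cap(-a^1-D_1)$ and then lets the difference-set property of $D_1$---which forces the relevant intersection of cosets to be automatically nonempty---finish the splitting. The rest (keeping track of which column of $E$ is used where, well-definedness of $s$, and the geodesicity condition $m\notin D_1$) is routine bookkeeping.
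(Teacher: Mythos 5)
Your proof is correct and follows essentially the same route as the paper's: both reduce to condition (C2) of Corollary~\ref{cor:hjelmslev_3}, use the hypothesis to set $e_2=e_1$, $f_2=f_1$, $h_2=h_1$, make the substitution $n=e_1-f_1$ (equivalently, exhibit the witnesses $f_1=e_1-n$, $g_1=b^1-n$, $h_2=-a^1-n$), and then obtain the splitting from the difference-set property, which forces $(m-D_1)\cap(e_1-D_1)\neq\emptyset$. You are somewhat more explicit than the paper in checking the converse direction of the equivalence and the well-definedness of the section, but no new idea is involved.
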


\begin{proof}
Given $n$ as in the statement, the relations for \eqref{item:hjelm_2} are satisfied with $f_1 = e_1 - n$, $g_1 = b^1 - n$, $h_2 = -a^1 - n$.

Given $m$ as in the statement, the target points of the map are actually points and lines of $\hjelm^2$. Incidence is preserved because $(m - D_1) \cap (e_1 - D_1) \ne \emptyset$ by the difference set property: to find $g_1, f_1 \in D_1$ with $m - g_1 = e_1 - f_1$ amounts to writing $m - e_1 = g_1 - f_1$, which is always possible.\qed
\end{proof}

\begin{remark}
\begin{enumerate}
\item Looking at Table~\ref{tab:buildings_3} it is tempting to think that the isomorphism type of the Hjelmslev sphere of level two around $v_i$ might depend only on the $(i-1)$st and $(i+1)$st column of $E$. This is not the case and a counterexample (the only one for $q = 3$) is given by the Hjelmslev plane around $v_0$ in $X_{3,2}$ and the Hjelmslev plane around $v_2$ in $X_{3,5}$. The obstruction is the following: a Singer cycle gives rise to a partition of the edges of the building into orbits. For two different cycles there is generally not an isomorphism taking one partition to the other.
\item A more modest guess would be that in the situation of Corollary~\ref{cor:equal_columns}, that is, if two columns of $E$ coincide, the Hjelmslev plane around the third vertex is Moufang. There is no counterexample for $q \le 4$ but there are counterexamples for $q = 5$, see the third row of Table~\ref{tab:buildings_5}.
\end{enumerate}
\end{remark}

The number of splittings of $\hjelm^2 \to \hjelm^1$ has been computed for all Singer cyclic lattices with $q \le 4$ and all vertices. The results are shown in Tables~\ref{tab:buildings_2},~\ref{tab:buildings_3},~\ref{tab:buildings_4}. It turns out that in the situation of Corollary~\ref{cor:equal_columns} there are in fact always $q^8$ splittings. The only splittings apart from these exist for $q = 3$ and are unique. An interesting example is $\Gamma_{3,7}$ where the projection has a unique splitting for each vertex. By taking the number of splittings into account, we see that for $q = 3$ there are at least five quasi-isometry classes of Singer cyclic lattices: only $\Gamma_{3,2}$ and $\Gamma_{3,3}$ as well as $\Gamma_{3,5}$ and $\Gamma_{3,6}$ could still be quasi-isometric.

\section{Building automorphisms}
\label{sec:building_auts}

Let $X$ be the building associated to a Singer cyclic lattice $\Gamma$. The goal of this section is to determine the full automorphism group $G \defeq \Aut(X)$ of $X$. We start by investigating the subgroup $G_0 \defeq \Aut_0(X)$ of type preserving automorphisms.

\begin{observation}
If $v \in X$ is any vertex then $G_0 = (G_0)_v \Gamma$.
\end{observation}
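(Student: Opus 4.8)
The plan is essentially to unwind the statement to the transitivity of $\Gamma$ on vertices of each type, which was recorded immediately after the definition of a Singer lattice. First I would observe that since $\Gamma$ preserves types we have $\Gamma \le G_0$, so both sides of the claimed identity are subgroups of $G_0$ and the inclusion $(G_0)_v\,\Gamma \subseteq G_0$ is automatic. It therefore only remains to prove $G_0 \subseteq (G_0)_v\,\Gamma$.

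For the reverse inclusion I would take an arbitrary $g \in G_0$ and look at the vertex $g^{-1}.v$. Because $g$ is type-preserving, $g^{-1}.v$ has the same type as $v$, so by transitivity of the $\Gamma$-action on vertices of type $\typ(v)$ there exists $\gamma \in \Gamma$ with $\gamma.v = g^{-1}.v$. Then $g\gamma$ is a type-preserving automorphism that fixes $v$, i.e.\ $g\gamma \in (G_0)_v$, whence $g = (g\gamma)\gamma^{-1} \in (G_0)_v\,\Gamma$. Since $g$ was arbitrary this yields $G_0 \subseteq (G_0)_v\,\Gamma$, and combined with the trivial inclusion above we get equality.

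There is no real obstacle in this argument; the entire content is carried by the vertex-transitivity of $\Gamma$ on each type. The only point that requires a moment's care is the order of the factors: one must produce $\gamma$ with $\gamma.v = g^{-1}.v$ (not $g.v$) so that it is $g\gamma$, rather than $\gamma g$, that fixes $v$. This is of course harmless because $\Gamma$ is a group, but it is the reason the decomposition comes out as $(G_0)_v\,\Gamma$ rather than $\Gamma\,(G_0)_v$.
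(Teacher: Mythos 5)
Your argument is correct and is exactly the standard unwinding of the paper's one-line proof, which simply cites the transitivity of $\Gamma$ on vertices of each type. Nothing further is needed.
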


\begin{proof}
This is because $\Gamma$ is transitive on each type of vertices.\qed
\end{proof}

The observation allows us to reduce our problem to studying vertex stabilizers. So we fix a vertex $v$ and let $P = (G_0)_v$ denote the stabilizer of $v$ in $G_0$. We introduce the following subquotients. We let $\stab{r}$ be the pointwise stabilizer in $P$ of the combinatorial ball of radius $r$ around $v$ and we let $\stab{}[k]$ be the image of $P$ in the automorphism group of the ball of radius $k$ around $v$. Combining both constructions we define the group $\stab{r}[k]$ to be the image of $\stab{r}$ in $\stab{}[k]$. That is, if $r \le r'$ then $\stab{r'}[k]$ is a subgroup of $\stab{r}[k]$ and if $k \le k'$ then $\stab{r}[k]$ is a quotient of $\stab{r}[k']$. Then for $0 \le r \le \ell \le k \le \infty$ there is an exact sequence
\begin{equation}
\label{eq:stabilizer_ses}
1 \to \stab{\ell}[k] \to \stab{r}[k]  \to \stab{r}[\ell] \to 1
\end{equation}
(where the indices $r = 0$ or $k = \infty$ are understood to be omitted).

The phenomenon that we are after is the following.

\begin{lemma}
\label{lem:stabilizer_faithful}
Assume either that $\stab{1}[3]$ is trivial for some vertex or that $\stab{1}[2]$ is trivial for vertices of two different types .

Then $\stab{1}$ is trivial and so the projection $\stab{} \to \stab{}[1]$ is an isomorphism. That is, $\stab{}$ acts faithfully on $\lk v$.
\end{lemma}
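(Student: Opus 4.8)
The plan is to exploit the fact that in an $\tilde A_2$ building local rigidity propagates: a type-preserving automorphism that fixes a ball of radius $r$ and also fixes one chamber a little further out must fix a ball of radius $r+1$. First I would reduce the two hypotheses to a single statement. If $\stab{1}[3]$ is trivial for a vertex $v$, then an element of $\stab{1}(v)$ fixes the ball of radius $1$ and, since it is trivial in $\stab{}[3]$, also fixes the ball of radius $3$; hence it lies in $\stab{3}(v)$. If instead $\stab{1}[2]$ is trivial for vertices of two different types, I would use a chamber $\{v_0,v_1,v_2\}$ with $v=v_0$ of one of those types, together with a neighbour of $v$ of the second type, to get that any $g \in \stab{1}(v_0)$ fixes the ball of radius $1$ around $v_0$ and the ball of radius $1$ around that neighbour, which together contain the ball of radius $2$ around $v_0$; so again $g \in \stab{2}(v_0)$ (in fact one would want $\stab{3}(v_0)$, obtained by chaining the hypothesis at the second vertex type once more along the link).

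The heart of the argument is then the inductive step: if $g$ fixes pointwise the ball $B_r(v)$ with $r \ge 2$, then $g$ fixes $B_{r+1}(v)$. The point is that for any vertex $w$ on the sphere $S_r(v)$, the geodesic from $v$ to $w$ has length $r \ge 2$, so $g$ already fixes $w$ together with the vertex $w'$ preceding it on that geodesic \emph{and} the vertex $w''$ two steps back; the residue (link) of $w$ is a projective plane, $g$ fixes the line/point of $\lk w$ through which the geodesic enters $w$, and more importantly $g$ fixes two distinct points (or a point and a line) in $\lk w$ lying on that line, which already pins down the action of $g$ on $\lk w$ by the rigidity of projective planes under collineations fixing a line and enough points on it. Concretely, the neighbours of $w$ outside $B_r(v)$ are exactly the vertices of $\lk w$ not incident to the entering flag, and a collineation of a projective plane fixing a full line pointwise together with one further point off it is the identity; this forces $g$ to fix all neighbours of $w$, i.e.\ $g$ fixes $B_{r+1}(v)$. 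Iterating gives that $g$ fixes $X$, so $g$ is trivial, i.e.\ $\stab{1}$ is trivial and $\stab{} \to \stab{}[1]$ is injective (surjectivity is automatic).

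The main obstacle is making precise the base case, i.e.\ verifying that the hypotheses really force an element of $\stab{1}$ into $\stab{2}$ or $\stab{3}$, keeping careful track of which vertex types are involved and of the fact that a ball of radius $2$ around $v$ is covered by the radius-$1$ balls around $v$ and its neighbours only if one has enough vertex types available — this is exactly why the statement needs either radius $3$ at one vertex or radius $2$ at two types. One must also be careful that in the inductive step the geodesic from $v$ to $w \in S_r(v)$ is unique (true in a building, as it is a $\mathrm{CAT}(0)$ space, and geodesics in the $1$-skeleton between vertices at combinatorial distance $r$ correspond to minimal galleries), so that $g$ genuinely fixes the distinguished flag at $w$; and that $r \ge 2$ is needed so that $g$ fixes not just the entering flag but an extra vertex of $\lk w$ on it. Once these points are in place the propagation is routine and the conclusion follows.
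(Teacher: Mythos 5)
Your inductive step is where the argument fails, and it fails irreparably. You use the triviality hypothesis only once, at the base vertex, and then try to propagate ``$g$ fixes $B_r(v)$ pointwise $\Rightarrow$ $g$ fixes $B_{r+1}(v)$ pointwise'' for all $r \ge 2$ by pure projective‑plane rigidity in the links. If that propagation were valid it would apply verbatim to \emph{every} thick locally finite $\tilde{A}_2$ building and would show that the pointwise stabilizer of $B_2(v)$ in $\Aut(X)$ is always trivial, i.e.\ that $\Aut(X)$ is always discrete. This is false for the Bruhat--Tits buildings of $\PGL_3(K)$, where the pointwise stabilizers of the balls $B_r(v)$ form a strictly decreasing chain of nontrivial compact open (congruence) subgroups. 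Concretely, two things go wrong at a sphere vertex $w$: fixing $B_r(v)$ pointwise only forces $g$ to fix the vertices of $\lk w$ that lie in $B_r(v)$, and this configuration is in general far from a full line pointwise plus two independent points — its pointwise stabilizer in the collineation group of $\lk w$ can be nontrivial, as the Bruhat--Tits example shows. Moreover the projective‑plane fact you invoke is itself wrong: a collineation fixing a line pointwise and one further point off it need not be the identity (any homology with that axis and that center is a counterexample); an axial collineation is forced to be trivial only once it fixes \emph{two} points off its axis.

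The correct propagation — and the one the paper uses — re-applies the hypothesis at every step rather than only at the start. Since $\Gamma$ is transitive on vertices of each type, the triviality of $\stab{1}[3]$ at $v$ holds at every vertex $v'$ of the same type. So $\alpha \in \stab{1}$ fixes $B_1(v)$, hence by the hypothesis fixes $B_3(v)$, hence fixes $B_1(v')$ for every same-type vertex $v'$ at distance at most $2$, hence by the hypothesis \emph{at $v'$} fixes $B_3(v')$, and so on; all that remains is to check that these balls exhaust $X$ (for the two-type version of the hypothesis the covering argument uses that every panel of a gallery contains a vertex of one of the two chosen types). Your base-case reduction has the same defect in miniature: an element of $\stab{1}$ at $v_0$ does not fix $B_1(v_1)$ pointwise for a neighbour $v_1$, so you cannot invoke the hypothesis at $v_1$ before you have forced $\alpha$ to fix that whole link by some other means. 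The essential missing idea is that the hypothesis must be exploited at infinitely many vertices via homogeneity, not replaced after one use by a local rigidity that $\tilde{A}_2$ links simply do not have.
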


\begin{proof}
Assume that $\stab{1}[3]$ is trivial and let $\alpha \in \stab{1}$. By assumption $\alpha$ fixes the combinatorial $3$-ball around $v$, in particular it fixes the full star of the vertices $v'$ that are at combinatorial distance $2$ from $v$ and of the same type. Since the automorphism group of $X$ is transitive on vertices of each type, the Lemma also holds with $v$ replaced by $v'$ and so $\alpha$ fixes the $3$-balls around all of these vertices. Proceeding inductively one gets that $\alpha = 1$.

For the second statement assume that $\stab{1}[2]$ is trivial for vertices of types $0$ and $1$. The argument proceeds in the same way as before and we need to show that the sequence of subcomplexes defined by $K_0 \defeq B_2(v)$,
\[
K_{i+1} \defeq \bigcup \{B_2(v') \mid v' \text{ interior vertex of type $0$ or $1$ of }K_i\}
\]
exhausts the whole building. This is saying that for every vertex $w$ of $X$ there is an edge path through vertices $v = v_0, v_1, \ldots v_{\ell-1}, v_\ell = w$ such that $v_0, \ldots v_{\ell-1}$ are of type $0$ or $1$. To see that this statement is true, take a gallery from a chamber containing $v$ to a chamber containing $w$ and note that every panel contains a vertex of type $0$ or a vertex of type $1$.

The remaining statement is just the sequence \eqref{eq:stabilizer_ses} in case $r = 0, \ell = 1, k = \infty$.\qed
\end{proof}

\begin{corollary}
If the assumptions of Lemma~\ref{lem:stabilizer_faithful} hold then $G_0$ is discrete.
\end{corollary}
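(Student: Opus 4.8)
The plan is to leverage Lemma~\ref{lem:stabilizer_faithful} to reduce the question of discreteness of $G_0$ to a finiteness statement about the vertex stabilizer $P$. First I would recall that if the assumptions of Lemma~\ref{lem:stabilizer_faithful} hold, then $\stab{1}$ is trivial, so $P = \stab{}$ embeds into $\stab{}[1]$, the automorphism group of the link $\lk v$. But $\lk v$ is a finite building of type $A_2$ (the incidence graph of a finite projective plane), so $\stab{}[1]$ is a finite group; hence $P$ is finite. Combined with the Observation that $G_0 = P\Gamma$, this shows $\Gamma$ has finite index in $G_0$.

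From here the argument is standard. The group $G_0 = \Aut_0(X)$ acts properly on the locally finite building $X$ (properness is automatic for the isometry group of a locally finite CAT(0) space with the compact-open topology), and $\Gamma \le G_0$ is a uniform lattice with $[G_0 : \Gamma] < \infty$ by the previous paragraph. Since $\Gamma$ is discrete and of finite index, $G_0$ is discrete as well: concretely, $P = (G_0)_v$ is a compact-open subgroup of $G_0$ that we have just shown to be finite, and any locally compact group with a finite open subgroup is discrete.

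The only mildly delicate point is making sure $\stab{}[1]$ really is finite, i.e. that the automorphism group of the link of a vertex is finite. This holds because $\lk v$ is a finite projective plane incidence graph (each vertex link of a building of type $\tilde A_2$ is a building of type $A_2$, and here of order $q$, hence finite), so its automorphism group is a subgroup of a finite symmetric group. I do not expect any genuine obstacle here; the whole corollary is essentially a formal consequence of Lemma~\ref{lem:stabilizer_faithful} together with local finiteness of $X$, so the proof will be two or three lines.

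\begin{proof}
By Lemma~\ref{lem:stabilizer_faithful} the stabilizer $P = (G_0)_v$ of a vertex $v$ acts faithfully on $\lk v$. Since $\lk v$ is a finite building of type $A_2$, its automorphism group is finite, so $P$ is finite. By the Observation $G_0 = P\Gamma$, hence $\Gamma$ has finite index in $G_0$. As $P$ is an open subgroup of $G_0$ and is finite, $G_0$ is discrete.\qed
\end{proof}
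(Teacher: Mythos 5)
Your proof is correct, and it rests on the same lemma as the paper's, but the mechanism is slightly different. The paper's proof is a one-liner that uses the conclusion $\stab{1} = 1$ directly: discreteness of $G_0$ (in the permutation topology on the locally finite complex $X$) is by definition the triviality of the pointwise stabilizer of a sufficiently large combinatorial ball, and Lemma~\ref{lem:stabilizer_faithful} hands you exactly that. You instead pass through the other half of the lemma's conclusion --- faithfulness of $P = \stab{}$ on $\lk v$ --- combine it with finiteness of the link to get $\abs{P} < \infty$, and then invoke the fact that a finite open subgroup forces discreteness. Both arguments are valid; yours is marginally longer but buys a little more, namely the explicit finiteness of the vertex stabilizers and hence $[G_0 : \Gamma] \le \abs{P} < \infty$, which is the form of the statement one actually wants later (that $\Aut_0(X)$ is a finite extension of $\Gamma$). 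Note that the step $G_0 = P\Gamma$ is not needed for discreteness itself --- the finite open subgroup $P$ already suffices --- so your proof could be trimmed by one clause without loss.
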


\begin{proof}
Discreteness means that the stabilizer of a large enough ball is trivial. Lemma~\ref{lem:stabilizer_faithful} asserts that combinatorial balls of radius $3$ around vertices are trivial.\qed
\end{proof}

The group $\stab{}$ is not accessible to computer experiments but it can be approximated by the group $\stab{}[][r]$ of all automorphisms of the ball of radius $k$ around $v$:
\[
\stab{} = \bigcap_{r \in \N} \stab{}[][r]\text{.}
\]
In particular $\stab{}[k] < \stab{}[][k]$. Similarly, letting $\stab{r}[][k]$ denote the stabilizer of the ball of radius $r$ in $\stab{}[][k]$ we get that $\stab{r}[k][] < \stab{r}[][k]$. Thus the hypothesis of Lemma~\ref{lem:stabilizer_faithful} can be checked by verifying that $\stab{1}[][k]$ is trivial for $k = 2$ or $k = 3$. It happens, however, that $\stab{1}[2]$ is trivial while $\stab{1}[][2]$ as well as $\stab{2}[][3]$ are non-trivial. In these cases we look at the group $\stab{1}[2][3]$ of isomorphisms on level $2$ that fix level $1$ and extend to level $3$. This is still a supergroup of $\stab{1}[2]$, so it is enough to prove it trivial around two types of vertices. The orders of these groups are shown for $q \le 5$ in Tables~\ref{tab:buildings_2},~\ref{tab:buildings_3},~\ref{tab:buildings_4}, and~\ref{tab:buildings_5}.

In cases where Lemma~\ref{lem:stabilizer_faithful} applies, it remains to study $\stab{}[1][]$. Thanks to the existence of a Singer cyclic lattice, we already know that $\stab{}[1][]$ contains a Singer cycle. The remaining possible groups are limited by the following lemma.

\begin{lemma}
\label{lem:normalizer}
The subgroup $M \defeq \Gal(\F_{q^3}/\F_p) \ltimes \F_{q^3}^\times/\F_q^\times < \PGL_3(\F_q)$ is maximal.
\end{lemma}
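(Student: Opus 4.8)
The plan is to show that any subgroup $H$ with $M < H \le \PGL_3(\F_q)$ must be all of $\PGL_3(\F_q)$, by using the fact that $M$ contains the normalizer of a Singer cycle and that a Singer cycle acts irreducibly on $\F_q^3$. First I would recall the structure of $M$: the subgroup $S \defeq \F_{q^3}^\times/\F_q^\times$ is a (cyclic) Singer group of order $\delta = q^2+q+1$, and by the computation in the proof of Lemma~\ref{lem:frobenius_order} (citing \cite[II.7.3a]{huppert67}), $M = N_{\PGaL_3(\F_q)}(S) \cap \PGL_3(\F_q)$ is in fact the full normalizer of $S$ in $\PGL_3(\F_q)$, so $M = S \rtimes \Gal(\F_{q^3}/\F_p)$ has order $3\eta\delta$. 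Any overgroup $H$ therefore has index in $\PGL_3(\F_q)$ dividing $\abs{\PGL_3(\F_q)}/(3\eta\delta)$.

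The cleanest route is to appeal to the classification of subgroups of $\PSL_3$ (equivalently the classification of maximal subgroups of $\PGL_3(q)$; see \cite{bloom67} for $\PSL_3$, or the general subgroup structure theorems for $\PGL_3$), noting that $\PGL_3(\F_q)$ acts $2$-transitively on the $\delta$ points of $\P^2\F_q$. A subgroup $H$ containing $M$ contains the cyclic group $S$ which is transitive (indeed regular) on those $\delta$ points, so $H$ is transitive on $\P^2\F_q$. The transitive subgroups of $\PGL_3(\F_q)$ on the projective plane are limited: either $H$ is contained (up to conjugacy) in a subgroup normalizing a nonsplit torus — but those are precisely conjugates of $M$ (or of its intersection with $\PGL$), and $M$ is already maximal among them since adding anything forces irreducibility to fail or forces $H$ to contain $\SL_3$ — or $H$ contains $\SL_3(\F_q)$, in which case $H = \PGL_3(\F_q)$ because $M$ already surjects onto $\PGL_3(\F_q)/\SL_3(\F_q) \cong \F_q^\times$ (as $M$ contains scalars of every determinant via its torus). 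Alternatively, and more self-containedly, one argues: $S$ acts irreducibly on $\F_q^3$, so by Clifford theory the only proper subspaces invariant under $H \supseteq S$ are trivial, hence $H$ is irreducible; an irreducible subgroup of $\PGL_3$ properly containing the normalizer of a Singer cycle and transitive on $\P^2\F_q$ must, by Aschbacher-type analysis of the possibilities (reducible, imprimitive, field-extension/torus-normalizer, classical subgroup, or almost simple), fall into none of the proper classes — it is not imprimitive since $3$ is prime and $S$ has no block system, it is not of extraspecial/symplectic type since $3 \nmid$ the relevant orders generically and one checks the small exceptions, and it cannot be a proper almost-simple group because such a group cannot contain an element of order $\delta = q^2+q+1$ together with the full Galois action unless it is $\PSL_3$ itself — leaving $H = \PGL_3(\F_q)$.

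The main obstacle will be handling the low-dimensional and small-$q$ exceptional cases where $3 \mid q-1$ (so scalars, the Singer group, and the subfield/imprimitive subgroups interact nontrivially), and where sporadic maximal subgroups of $\PSL_3(\F_q)$ of almost-simple type exist (e.g. $\PSL_2(7) \le \PSL_3(2)$, or subgroups related to $A_6$). For these I would check directly that none of the finitely many relevant groups simultaneously contain a cyclic subgroup of order $q^2+q+1$ acting irreducibly \emph{and} a field automorphism of order $3\eta$ normalizing it, so that none can contain a conjugate of $M$ properly; this is a finite computation that can be done case by case (or cited from the explicit maximal subgroup lists). Once the exceptions are cleared, the generic argument via transitivity on $\P^2\F_q$ plus irreducibility closes the proof.
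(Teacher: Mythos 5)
Your plan is essentially sound, but it lands on the same territory as the paper's \emph{alternative} proof, which simply invokes the explicit lists of maximal subgroups of $\SL_3(\F_q)$ (Main Theorem~2.1.1 and Table~8.3 of \cite{braholron13}). The paper's primary argument is much shorter and avoids all of your case analysis: by Kantor's theorem \cite{kantor80} on linear groups containing a Singer cycle, \emph{any} proper subgroup of $\PGL_3(\F_q)$ containing the Singer cycle already normalizes it, and by \cite[II.7.3a]{huppert67} the group $M$ is the full normalizer; maximality is then immediate, with no need to discuss transitivity on $\P^2\F_q$, irreducibility, the imprimitive/extraspecial/subfield/almost-simple classes, or small-$q$ exceptions. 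If you pursue your route, note that as written it is a plan rather than a proof: the decisive step (``falls into none of the proper classes'') is exactly the content that must either be carried out class by class or replaced by a citation, and the exceptional almost-simple and subfield subgroups ($\PSL_2(7)$, $3.A_6$, $\PGL_3$ of a subfield) genuinely require the element-order/primitive-prime-divisor argument you only sketch. Two smaller corrections: $\PGL_3(\F_q)/\PSL_3(\F_q) \cong \F_q^\times/(\F_q^\times)^3$ has order $\gcd(3,q-1)$; it is not $\F_q^\times$ (the fact you actually need --- that the Singer torus surjects onto this quotient, via surjectivity of the norm map --- is still true). And for $\eta > 1$ the Frobenius generating $\Gal(\F_{q^3}/\F_p)$ is not $\F_q$-linear, so $M$ as written sits in $\PGaL_3(\F_q)$ rather than $\PGL_3(\F_q)$, and the normalizer of the Singer group inside $\PGL_3(\F_q)$ has order $3\delta$, not $3\eta\delta$; the maximality statement should then be read (and proved) in the full collineation group, which is how the paper actually uses it.
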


\begin{proof}
By \cite{kantor80} any proper subgroup of $\PGL_3(\F_q)$ that contains the Singer cycle has to normalize it and by \cite[II.7.3a]{huppert67} $M$ is the full normalizer.

Alternatively, one can use Main~Theorem~2.1.1 together with Table~8.3 in \cite{braholron13}. One then has to read Sections~1.3.1 and~1.7.1 of that reference to see that the maximal subgroup of $\SL_3(\F_q)$ gives rise to a maximal subgroup of $\PGL_3(\F_q)$ by moding out the center and then extending by $\delta$.\qed
\end{proof}

Lemma~\ref{lem:normalizer} tells us that $\stab{}[1][]$ is either all of $\PGL_3(\F_q)$ or it lies between the Singer group $\Gamma_v$ and its normalizer. The former possibility is ruled out in most cases by Theorem~\ref{thm:chamber_trans}: if $G_0$ is panel transitive and $\stab{}[1][] = \PGL_3(\F_q)$ then $G$ is chamber-transitive, which can only happen in case $q = 2$ (where it does happen, as we have seen in Section~\ref{sec:ronan}) or $q = 8$, which is beyond the scope of this article.

Let $\sigma \in \Gamma$ denote a Singer cycle around our base vertex $v$. The discussion so far has shown that if $\stab{}[][] \cong \stab{}[1][]$ and if $q \ne 2, 8$ then $\stab{}[1][]$ normalizes $\gen{\sigma}$. A lower bound for $\stab{}[1][]$ is given by $\Aut(\Gamma)_v$. Let $\varphi$ be a generator of $N_{\stab{}[1][]}(\gen{\sigma})/\gen{\sigma}$ (the Frobenius automorphism in the correspondence of the Lemma). The procedure to produce an upper bound is similar to the stabilizer computations before. We check whether some subgroup $\gen{\varphi^k}$ of $\gen{\varphi}$ lifts to an element of $\stab{}[][2]$. The normalizer of $\gen{\sigma}$ in $\stab{}[2][1]$ is then $\gen{\sigma} \rtimes \gen{\varphi^k}$ for the maximal $\gen{\varphi^k}$. If $q$ is prime (in particular for $p \in \{2,3,5\}$) then $\varphi$ has order $3$ and it suffices to check whether $\varphi$ lifts. If $q = p^\eta$ with $\eta > 1$ then the order of $\varphi$ is $3 \eta$ and we need to check elements of the various prime powers separately. For $q = 4$ the order is $6$ and we need to check whether $\varphi^2$ and $\varphi^3$ lift. The results are again listed in Tables~\ref{tab:buildings_2},~\ref{tab:buildings_3},~\ref{tab:buildings_4}, and~\ref{tab:buildings_5}. 

Comparing them with Tables~\ref{tab:lattices_2},~\ref{tab:lattices_3},~\ref{tab:lattices_4}, and~\ref{tab:lattices_5} we find:
\begin{theorem}
\label{thm:aut}
Let $\Gamma$ be a Singer cyclic lattice and let $X$ be its building. If $q \le 5$ and $\Gamma$ is not one of the Bruhat--Tits lattices from Section~\ref{sec:brutit_examples} then $\Aut(\Gamma) = \Aut(X)$, which is never type-transitive.
\end{theorem}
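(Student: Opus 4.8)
We prove the two assertions in turn; both combine the structural results of this section with the computed invariants in Tables~\ref{tab:buildings_2}--\ref{tab:buildings_5} and Tables~\ref{tab:lattices_2}--\ref{tab:lattices_5}. Write $G_0 = \Aut_0(X)$ and fix a vertex $v$, with $P = (G_0)_v$. By the Observation above, $G_0 = P\Gamma$; since $\Aut_0(\Gamma)$ is type-preserving, contains $\Inn(\Gamma) \cong \Gamma$ (by Proposition~\ref{prop:recover_building}, using that $\Gamma$ is centerless), and $\Gamma$ is transitive on vertices of each type, the same argument gives $\Aut_0(\Gamma) = \Aut_0(\Gamma)_v\,\Gamma$ with $\Aut_0(\Gamma)_v \le P$. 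Hence $G_0 = \Aut_0(\Gamma)$ will follow once we show $P = \Aut_0(\Gamma)_v$, so the plan is to pin down $P$ via the stabilizer subquotients introduced above.

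First I would check that $P = \stab{}$ acts faithfully on $\lk v$. Inspecting Tables~\ref{tab:buildings_2}--\ref{tab:buildings_5} one sees that for every exotic Singer cyclic lattice with $q \le 5$ either $\stab{1}[][2]$ is trivial for at least two vertex types, or (in the remaining cases) $\stab{1}[2][3]$ is; either way the hypothesis of Lemma~\ref{lem:stabilizer_faithful} holds, so $\stab{1}$ is trivial and $P$ embeds into $\Aut(\lk v) = \PGL_3(\F_q)$. By Lemma~\ref{lem:normalizer}, $P$ is then either all of $\PGL_3(\F_q)$ or normalizes the Singer cycle $\gen{\sigma} = \Gamma_v$. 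In the first case $G_0 = P\Gamma$ is a type-preserving lattice that is transitive on panels of each type (it contains $\Gamma$) and flag-transitive on links, hence chamber-transitive; Theorem~\ref{thm:chamber_trans} then forces $q \in \{2,8\}$ and $G_0$ chamber-\emph{regular}, so for $q = 2$ (we exclude $q = 8$) the group $G_0$ is the unique chamber-regular lattice on $X$, which by Lemma~\ref{lem:22_ronan_tits} is $\hat\Gamma_{2,2} \cong \Aut_0(\Gamma_{2,2})$ (the conjugation map $\hat\Gamma_{2,2} \to \Aut(\Gamma_{2,2})$ being injective as $\Gamma_{2,2}$ is centerless, with image $\Inn(\Gamma_{2,2})\cdot C_3 = \Aut_0(\Gamma_{2,2})$ by Table~\ref{tab:lattices_2}). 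So in that case too $G_0 = \Aut_0(\Gamma)$, and in all remaining cases $P$ normalizes $\gen{\sigma}$.

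Thus $\gen{\sigma} \le P \le \gen{\sigma} \rtimes \gen{\varphi}$ with $\varphi$ of order $3\eta$. On one side $\Aut_0(\Gamma)_v \le P$; on the other, every element of $P$ extends to $B_2(v)$, so $P/\gen{\sigma}$ injects into the group of order $\abs{N_{\stab{}[1][2]}(\gen{\sigma})/\gen{\sigma}}$ tabulated in Tables~\ref{tab:buildings_2}--\ref{tab:buildings_5} (for $q = 4$ one tests the primary parts $\varphi^2$, $\varphi^3$ separately). These two bounds are seen to coincide with the subgroup of $C$ prescribed by $\Out(\Gamma) = \Aut(E)$ in Tables~\ref{tab:lattices_2}--\ref{tab:lattices_5}, forcing $P = \Aut_0(\Gamma)_v$ and hence $G_0 = \Aut_0(\Gamma)$. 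For the full groups, both $G/G_0$ and $\Aut(\Gamma)/\Aut_0(\Gamma)$ embed into $S_3$ via the action on vertex types, and $\Aut(\Gamma) \le G$ gives one inclusion; for the reverse, given $g \in G$ inducing a type-permutation $\pi$, after composing with an element of $\Gamma$ we may assume $g$ stabilizes the base chamber, whence $g$ carries each $\gen{\sigma_j}$ onto a subgroup of order $\delta$ of $P_{v_{\pi(j)}}$, which for $q \ne 4$ is uniquely $\gen{\sigma_{\pi(j)}}$ (and for $q = 4$ one reads off from the tables that the relevant $P_v$ either equals $\gen{\sigma}$ or still has $\gen{\sigma}$ as its only order-$\delta$ subgroup). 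So $g$ normalizes $\Gamma$ and restricts to an element of $\Aut(\Gamma)$ inducing $\pi$, giving $G = \Aut(\Gamma)$. Finally, by Tables~\ref{tab:lattices_2}--\ref{tab:lattices_5} the image of $\Aut(E) = \Out(\Gamma)$ in $S_3$ is trivial or a single transposition for every exotic lattice with $q \le 5$ — never containing a $3$-cycle — so $\Aut(X) = \Aut(\Gamma)$ is not transitive on types.

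The real difficulty is that a statement about the infinite group $\Aut(X)$ is being extracted from finite computations on balls of radius $2$ and $3$: Lemma~\ref{lem:stabilizer_faithful} is the local-to-global bridge, but only after its hypotheses have been verified case by case, and in the borderline case $q = 2$ (and, to a lesser extent, $q = 4$, where $3 \mid \delta$) the local data alone do not exclude $P = \PGL_3(\F_q)$, so one must feed in Theorem~\ref{thm:chamber_trans} together with the explicit identifications of Section~\ref{sec:ronan}. The other delicate point is confirming that the upper bound from extendability to $B_2(v)$ is actually attained — that no automorphism of $B_2(v)$ normalizing $\gen{\sigma}$ fails to extend to all of $X$ — which is precisely why one matches it against the independently known lower bound $\Aut_0(\Gamma)_v$ coming from $\Out(\Gamma) = \Aut(E)$.
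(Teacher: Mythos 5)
Your argument for the type-preserving statement $\Aut_0(\Gamma)=\Aut_0(X)$ is essentially the paper's: faithfulness of vertex stabilizers on links via Lemma~\ref{lem:stabilizer_faithful} and the tabulated triviality of $\stab{1}[][2]$, $\stab{1}[2][3]$ or $\stab{1}[][3]$; the dichotomy from Lemma~\ref{lem:normalizer}, with the flag-transitive branch handled by Theorem~\ref{thm:chamber_trans} (and, for $\Gamma_{2,2}$, the identification of $\Aut_0(X)$ with $\hat{\Gamma}_{2,2}$ from Section~\ref{sec:ronan}); and the squeeze of $P$ between the lower bound $\Aut_0(\Gamma)_v$ and the tabulated upper bound $N_{\stab{}[1][2]}(\gen{\sigma})/\gen{\sigma}$. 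Where you genuinely diverge is the passage to the full automorphism group. The paper argues case by case according to the image of $\Out(\Gamma)$ in $S_3$: when a transposition is present it compares Hjelmslev planes or split counts to rule out type-transitivity; when $\Aut_0(X)=\Gamma=\Aut(\Gamma)$ it uses a normal-closure argument; and for the $q=5$ lattices with type-preserving $\Out(\Gamma)=C_3$ it uses an index count in $\gen{\Gamma,\Gamma^\theta}$. You replace all of this by a single uniform argument: any $g\in\Aut(X)$ conjugates each $\gen{\sigma_j}$ into the (type-preserving) stabilizer of some vertex, whose unique cyclic subgroup of order $\delta$ is the Singer cycle of $\Gamma$ there, so $g$ normalizes $\Gamma$ and hence lies in $\Aut(\Gamma)$. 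This is cleaner and subsumes the paper's three cases at once; what it costs is the uniqueness statement, which for $q\in\{2,3,5\}$ is automatic ($\gen{\sigma}$ is a normal Hall subgroup of $\gen{\sigma}\rtimes\gen{\varphi^k}$) but for $q=4$, where $3\mid\delta=21$, requires the small verification that every element of order $21$ in $C_{21}\rtimes C_6$ already lies in $C_{21}$ — your ``one reads off from the tables'' is not that, so spell it out.

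Three repairs are needed, none fatal. First, you cannot ``assume $g$ stabilizes the base chamber after composing with an element of $\Gamma$'': $\Gamma$ has $q+1$ orbits of chambers, so at best you can arrange $g(v_j)=v_{\pi(j)}$ for two of the three $j$. But you do not need this: the unique-subgroup argument applies verbatim at the vertices $g(v_j)$, wherever they land, since $\Gamma$ is transitive on vertices of each type and hence $\Gamma_{g(v_j)}$ is again a Singer cycle of $\Gamma$. Second, to pass from ``$g$ normalizes $\Gamma$'' to ``$g\in\Aut(\Gamma)$ inside $\Aut(X)$'' under the identification of Proposition~\ref{prop:recover_building}, note that the centralizer of $\Gamma$ in $\Aut(X)$ is trivial (it normalizes every vertex stabilizer $\Gamma_w$ and therefore fixes its unique fixed vertex $w$). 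Third, in your ``first case'' note that Theorem~\ref{thm:chamber_trans} actually forces the vertex stabilizer to have order $\delta(q+1)$, so $P=\PGL_3(\F_q)$ is impossible for $q\le 5$ and the $\Gamma_{2,2}$ conclusion is better reached directly, as you in effect do, from $G_0\supseteq\hat{\Gamma}_{2,2}$ together with the upper bound $P\le\gen{\sigma}\rtimes C_3$. With these adjustments the proof is complete, and the final reduction of non-type-transitivity to the observation that the image of $\Aut(E)$ in $S_3$ never contains a $3$-cycle for the exotic lattices is correct.
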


\begin{proof}
Let $\Aut_0(\Gamma)$ and $\Aut_0(X)$ denote the groups of type preserving automorphisms of $\Gamma$ and $X$ respectively.

For $q = 2$ we only need to consider $\Gamma_{2,2}$. Looking at Table~\ref{tab:lattices_2} we see that $P_1^{3}$ is trivial for vertices of type $0$ so we can apply Lemma~\ref{lem:stabilizer_faithful} to conclude that the stabilizer of $v_0$ acts faithfully on $\lk v_0$. We have already seen in Section~\ref{sec:ronan} that $\Aut_0(\Gamma_{2,2})$ is chamber regular, so $P_1 = \PGL_3(\F_2)$ and $\Aut_0(\Gamma_{2,1}) = \Aut_0(X)$. Looking at Table~\ref{tab:lattices_2} we see that $\Aut(\Gamma_{2,2})$ acts by exchanging two types. From Table~\ref{tab:buildings_2} (or the discussion in Section~\ref{sec:ronan}) we see that $\Aut(X_{2,2})$ cannot permute all three types since the Hjelmslev planes of level $2$ are non-isomorphic.

For $q = 3$, we need to consider the lattices $\Gamma_{3,2}$ to $\Gamma_{3,7}$. We see in Table~\ref{tab:lattices_3} that for these lattices $\stab{1}[][3]$ is trivial for all vertices so that the stabilizer of any vertex $v$ acts faithfully on $\lk v$. We know from Theorem~\ref{thm:chamber_trans} that the vertex stabilizer cannot be all of $\PGL_3(\F_3)$ and so by Lemma~\ref{lem:normalizer} it has to normalize the Singer cycle in $\Gamma$. Since $q$ is prime the order of $N_{P}(\gen{\sigma})/\gen{\sigma}$ can only be $3$ or $1$. We see in Table~\ref{tab:lattices_3} that the order is $3$ already within $\Aut_0(\Gamma_{3,2})$ so that $\Aut_0(\Gamma_{3,2}) = \Aut_0(X_{3,2})$ is an extension of $\Gamma_{3,2}$ by $C_3$. In all the other cases Table~\ref{tab:buildings_3} shows at least one vertex where the Singer cycle has trivial normalizer at level $2$ and thus $\Aut_0(\Gamma_{3,i}) = \Gamma = \Aut_0(X_{3,i})$ for $i > 2$.

Toward the full automorphism group, for $2 \le i \le 4$ we see in Table~\ref{tab:lattices_3} that $\Aut(\Gamma)$ exchanges two types and from Table~\ref{tab:buildings_3} we see that it cannot exchange more (for example because $\hjelm^2(v_2)$ is Moufang while $\hjelm^2(v_0)$ and $\hjelm^2(v_1)$ are not). The remaining groups satisfy
\begin{equation}
\label{eq:aut0_gamma_aut}
\Aut_0(X) = \Gamma = \Aut(\Gamma)\text{.}
\end{equation}
We claim that these equations already imply that $\Aut(X) = \Aut_0(X)$. Indeed suppose $\theta \in \Aut(X) \setminus \Aut_0(X)$ and consider the group of type-preserving automorphisms $\gen{\Gamma^{\gen{\theta}}}$. If this group equals $\Gamma$ then $\theta$ normalizes $\Gamma$ contradicting the right equation of \eqref{eq:aut0_gamma_aut}. If it is strictly larger than $\Gamma$ the left equation of \eqref{eq:aut0_gamma_aut} is violated.

For $q =4$, we look at the lattices $\Gamma_{4,2}$ to $\Gamma_{4,17}$. Again we see in Table~\ref{tab:lattices_4} that in each building either $\stab{1}[2][3]$ is trivial for two vertex types, or that $\stab{1}[][3]$ is trivial for some vertex so the vertex stabilizers act faithfully on the corresponding vertex links. As before Theorem~\ref{thm:chamber_trans} and Lemma~\ref{lem:normalizer} imply that every vertex stabilizer has to normalize the Singer cycle in $\Gamma$. This time the possible values for the order of $N_{P}(\gen{\sigma})/\gen{\sigma}$ are $1$, $3$, and $6$, since $4 = 2^2$. Table~\ref{tab:lattices_4} shows that the order is $6$ in $\Aut_0(\Gamma_{4,2})$ and Table~\ref{tab:buildings_4} shows that the order is $1$ in all other cases.

For the full automorphism group the argument is just as for $q = 3$: the lattices $\Gamma_{4,i}$, $2 \le i \le 5$ exchange two types by Table~\ref{tab:lattices_4} and cannot exchange all types by the number of splits in Table~\ref{tab:buildings_4}. The remaining lattices satisfy \eqref{eq:aut0_gamma_aut}.

For $q = 5$ (excluding the Bruhat--Tits example in the first row) the same reasoning as before applies to see that $\Aut_0(\Gamma) = \Aut_0(X)$ from Tables~\ref{tab:lattices_5} and~\ref{tab:buildings_5}. Whenever $\Aut(\Gamma)$ strictly contains $\Aut_0(\Gamma)$ only two types are exchanged and it can be seen from the column $\abs{\stab{1}[][2]}$ of Table~\ref{tab:buildings_5} that the third type could not be exchanged with the other two, so that $\Aut(\Gamma) = \Aut(X)$. When \eqref{eq:aut0_gamma_aut} holds, we conclude as before. This leaves us with lattices where $\Out(\Gamma) = C_3$. We tweak the previous argument a bit to see that $\Aut(X) = \Aut_0(X)$ even in these cases: suppose $\theta$ was a building automorphism that did not preserve types. We know that $\theta$ cannot normalize $\Gamma$ so $\Gamma$ and $\Gamma^\theta$ are distinct. Since $\Gamma$ is generated by Singer cycles, there has to be one that is not contained in $\Gamma^\theta$. But then the index of $\Gamma$ in $\gen{\Gamma, \Gamma^\theta}$ is at least $5$ contradicting that $\Gamma$ has index $3$ in $\Aut_0(X)$.\qed
\end{proof}

\begin{corollary}
Let $\Gamma \curvearrowright X$ and $\Gamma' \curvearrowright X'$ be two Singer cyclic lattices with $q \le 5$ acting on their associated buildings. If $\Gamma$ is not isomorphic to $\Gamma'$ then $X$ is not isomorphic to $X'$. In particular, $\Gamma$ and $\Gamma'$ are then not quasi-isometric.
\end{corollary}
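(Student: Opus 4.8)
The plan is to prove the contrapositive of the first assertion: \emph{if $X \cong X'$ then $\Gamma \cong \Gamma'$}. Since $X \cong X'$, the two buildings are simultaneously Bruhat--Tits or simultaneously exotic, so I would split into two cases. If $X$ (hence $X'$) is Bruhat--Tits, then $\Gamma$ and $\Gamma'$ are both Singer cyclic lattices acting on Bruhat--Tits buildings, so by Theorem~\ref{thm:exotic_buildings} each lies in the unique isomorphism class of Singer cyclic lattices with parameter $q$ acting on a Bruhat--Tits building; hence $\Gamma \cong \Gamma'$ and nothing more is needed.

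The substantial case is when $X$ and $X'$ are exotic, where I would exploit Theorem~\ref{thm:aut}. By Proposition~\ref{prop:recover_building} the action of $\Gamma$ on $X$ embeds $\Aut(\Gamma)$ into $\Aut(X)$ carrying $\Inn(\Gamma)$ -- which is isomorphic to $\Gamma$, since a Singer cyclic lattice is centreless -- onto the image of $\Gamma$; and Theorem~\ref{thm:aut} says this embedding is onto, so $\Aut(X) = \Aut(\Gamma)$ and $\Gamma \cong \Inn(\Gamma) \trianglelefteq \Aut(X)$, and likewise for $\Gamma'$. An isomorphism $\phi \colon X \to X'$ conjugates $\Aut(X)$ isomorphically onto $\Aut(X') = \Aut(\Gamma')$ and therefore carries $\Gamma$ to a normal subgroup $\bar\Gamma \cong \Gamma$ of $\Aut(\Gamma')$ that again acts as a Singer cyclic lattice on $X'$ (the three vertex types being at worst permuted by $\phi$). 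Everything then comes down to recovering the normal subgroup $\Inn(\Gamma') \cong \Gamma'$ from $\Aut(\Gamma')$ up to isomorphism, i.e.\ to showing $\bar\Gamma \cong \Gamma'$.

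For that I would use that $\Out(\Gamma') \cong \Aut(E')$ is a subgroup of $C = ((\Z/\delta\Z)^\times)^3 \rtimes S_3$ by Corollary~\ref{thm:group_exact_sequence}, hence solvable of order dividing $(\delta-1)^3 \cdot 6$. The quotient $\bar\Gamma/(\bar\Gamma \cap \Inn(\Gamma'))$ embeds into $\Out(\Gamma')$, so it is a solvable quotient of $\bar\Gamma \cong \Gamma$; if it were non-trivial, the abelianization $H_1(\Gamma)$ -- a quotient of $(\Z/\delta\Z)^3$ by the presentation -- would admit a non-trivial quotient of order dividing $(\delta-1)^3 \cdot 6$. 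For $q \in \{2,3,5\}$ the integer $\delta = q^2+q+1$ is prime, so $H_1(\Gamma)$ is a $\delta$-group while $(\delta-1)^3 \cdot 6$ is prime to $\delta$; this forces $\bar\Gamma \le \Inn(\Gamma')$, and comparing indices in $\Aut(\Gamma')$ (the symmetric argument gives $\abs{\Out(\Gamma)} = \abs{\Out(\Gamma')}$) yields $\bar\Gamma = \Inn(\Gamma')$ and hence $\Gamma \cong \Gamma'$. The one value $\le 5$ where this coprimality can fail is $q = 4$ ($\delta = 21$); for it I would fall back on the explicit classification -- by Theorem~\ref{thm:intro_count} there are only $17$ such lattices -- and check directly from Table~\ref{tab:buildings_4} that the building invariants (the Moufang type of the Hjelmslev planes of level $2$, the number of splittings of $\hjelm^2 \to \hjelm^1$, and the stabilizer and Singer-normalizer orders, read as an unordered triple over the three vertices) are pairwise distinct for non-isomorphic lattices; the same finite check settles $q \le 3$ and, with more effort, $q = 5$, independently of the conceptual argument. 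I expect this recovery step -- pinning the lattice down inside its automorphism group -- to be the main obstacle: the clean version only works when $\delta$ is prime, and the borderline case $q=4$ needs either a sharper look at which primes can simultaneously divide $\abs{H_1(\Gamma)}$ and $\abs{\Out(\Gamma)}$, or the bookkeeping against the census.

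Finally, for the quasi-isometry statement: $\Gamma$ and $\Gamma'$ act properly and cocompactly on $X$ and $X'$, so the Milnor--\v{S}varc lemma gives quasi-isometries $X \simeq \Gamma$ and $X' \simeq \Gamma'$; a quasi-isometry $\Gamma \to \Gamma'$ thus produces one $X \to X'$, which by Theorem~\ref{thm:qi_rigidity}(1) is at bounded distance from an isomorphism $X \xrightarrow{\ \sim\ } X'$, whence $\Gamma \cong \Gamma'$ by the above. Since commensurable groups are quasi-isometric, the conclusion also covers the commensurable case.
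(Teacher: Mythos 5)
Your route is genuinely different from the paper's. The paper works inside $\Aut_0(X)$: whenever $\Gamma = \Aut_0(X)$ the building determines the lattice outright; for $q \le 4$ the tables show at most one exotic building with $\Aut_0(X) \gneq \Gamma$; and for $q = 5$ two distinct Singer cyclic lattices on one building would give $\Gamma$ index at least $5$ in $\gen{\Gamma,\Gamma'}$, contradicting $[\Aut_0(X):\Gamma] \le 3$. Your alternative --- pass to $\Aut(X) = \Aut(\Gamma)$ via Theorem~\ref{thm:aut}, transport $\Inn(\Gamma) \trianglelefteq \Aut(X)$ to a normal copy $\bar\Gamma \le \Aut(\Gamma')$, and force $\bar\Gamma \le \Inn(\Gamma')$ because the finite solvable group $\bar\Gamma\Inn(\Gamma')/\Inn(\Gamma') \le \Out(\Gamma') \le C$ would otherwise give $\Gamma$ a nontrivial abelian quotient of order dividing $\abs{C} = \varphi(\delta)^3 \cdot 6$, which is impossible since $H_1(\Gamma)$ is $\delta$-torsion --- is correct and rather clean for $q \in \{2,3,5\}$, where $\delta \in \{7,13,31\}$ is prime and coprime to $6$; the concluding index comparison giving $\bar\Gamma = \Inn(\Gamma')$ also checks out, as does the quasi-isometry paragraph via Theorem~\ref{thm:qi_rigidity}.

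The gap is at $q = 4$. There $\delta = 21$, the prime $3$ divides both $\delta$ and $\abs{C}$, and several $q=4$ lattices really do have $3$-torsion in $H_1(\Gamma)$, so the coprimality argument breaks, as you anticipate. But your fallback --- checking from Table~\ref{tab:buildings_4} that the tabulated building invariants are pairwise distinct for non-isomorphic lattices --- fails as a matter of fact: the rows for $X_{4,9}$ through $X_{4,17}$ carry identical data (all Moufang signs $-$, zero splittings, trivial stabilizer subquotients and trivial normalizer quotients), and likewise $X_{4,3}$, $X_{4,4}$, $X_{4,5}$ coincide; the analogous check also fails for $q=3$ (e.g.\ $X_{3,5}$ and $X_{3,6}$ agree as unordered triples) and wildly for $q = 5$, where thousands of lattices share a single row. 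So the ``finite check'' cannot be carried out as described, and $q=4$ is left unproved. What closes this case (and is essentially the paper's argument) is that for every exotic $q=4$ lattice other than $\Gamma_{4,2}$ one has $\Gamma = \Aut_0(X)$, so $X \cong X'$ immediately gives $\Gamma \cong \Aut_0(X) \cong \Aut_0(X') \cong \Gamma'$ when both lattices are of this kind, while only one isomorphism class of exotic $q=4$ buildings has $\Aut_0(X) \gneq \Gamma$; your framework already contains everything needed for this, since you know $\Aut_0(X) = \Aut_0(\Gamma)$ and can read off $\Out(\Gamma)$ from the census.
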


\begin{proof}
We assume $X = X'$ and need to show $\Gamma \cong \Gamma'$. The statement is clear if $\Gamma = \Aut_0(X)$ or (using Tables~\ref{tab:buildings_2},~\ref{tab:buildings_3},~\ref{tab:buildings_4} and~\ref{tab:buildings_5}) if $X$ is Bruhat--Tits. For $q \le 4$ there is only at most one exotic building left in the tables with $\Aut_0(X) \gneq \Gamma$. For $q = 5$ we conclude as at the end of the last proof: if $\Gamma$ and $\Gamma'$ are distinct Singer cyclic lattices on the same building, then the index of $\Gamma$ in $\gen{\Gamma,\Gamma'}$ is at least $5$ but the index in $\Aut_0(X)$ is at most $3$.

The last sentence follows from Theorem~\ref{thm:qi_rigidity}.\qed
\end{proof}

\begin{longtable}{>{$}c<{$}>{$}c<{$}>{$}c<{$}>{$}c<{$}>{$}c<{$}>{$}c<{$}}
\caption{Singer cyclic lattices for $q = 4$. The columns are the same as in Table~\ref{tab:lattices_2}.}\endfirsthead
\caption{continued}\endhead
\label{tab:lattices_4}
\text{Name} & \text{Diff mat} & \text{Based DM} & \textrm{Out}(\Gamma) &H_1(\Gamma) & H_1([\Gamma,\Gamma])\\
\Gamma_{4, 1}& \left(\begin{smallarray}{ccc}0&0&0\\1&1&1\\4&4&4\\14&14&14\\16&16&16\end{smallarray}\right)& \left(\begin{smallarray}{ccc}0&0&0\\1&1&1\\4&4&4\\14&14&14\\16&16&16\end{smallarray}\right)& C_6 \rtimes S_3& (\Z/3\Z)^{2}\oplus(\Z/7\Z)^{2}& (\Z/2\Z)^{12}\\
\Gamma_{4, 2}& \left(\begin{smallarray}{ccc}0&0&0\\1&1&1\\4&4&16\\14&14&14\\16&16&4\end{smallarray}\right)& \left(\begin{smallarray}{ccc}0&0&0\\1&1&1\\4&4&16\\14&14&14\\16&16&4\end{smallarray}\right)& C_6 \rtimes (0, 1)& (\Z/3\Z)^{2}\oplus\Z/7\Z& 0\\
\Gamma_{4, 3}& \left(\begin{smallarray}{ccc}0&0&0\\1&1&1\\4&4&4\\14&14&16\\16&16&14\end{smallarray}\right)& \left(\begin{smallarray}{ccc}0&0&0\\1&1&1\\4&4&4\\14&14&16\\16&16&14\end{smallarray}\right)& (0, 1)& \Z/3\Z\oplus\Z/7\Z& 0\\
\Gamma_{4, 4}& \left(\begin{smallarray}{ccc}0&0&0\\1&1&1\\4&4&14\\14&14&16\\16&16&4\end{smallarray}\right)& \left(\begin{smallarray}{ccc}0&0&0\\1&1&1\\4&4&14\\14&14&16\\16&16&4\end{smallarray}\right)& (0, 1)& \Z/3\Z\oplus\Z/7\Z& 0\\
\Gamma_{4, 5}& \left(\begin{smallarray}{ccc}0&0&1\\1&1&0\\4&4&4\\14&14&16\\16&16&14\end{smallarray}\right)& \left(\begin{smallarray}{ccc}0&0&0\\1&1&1\\4&4&8\\14&14&6\\16&16&18\end{smallarray}\right)& (0, 1)& \Z/3\Z\oplus\Z/7\Z& 0\\
\Gamma_{4, 6}& \left(\begin{smallarray}{ccc}0&0&1\\1&1&0\\4&14&4\\14&16&16\\16&4&14\end{smallarray}\right)& \left(\begin{smallarray}{ccc}0&0&0\\1&1&1\\4&14&18\\14&16&6\\16&4&8\end{smallarray}\right)& 1& 0& 0\\
\Gamma_{4, 7}& \left(\begin{smallarray}{ccc}0&0&1\\1&1&0\\4&4&14\\14&16&4\\16&14&16\end{smallarray}\right)& \left(\begin{smallarray}{ccc}0&0&0\\1&1&1\\4&4&8\\14&16&18\\16&14&6\end{smallarray}\right)& 1& 0& 0\\
\Gamma_{4, 8}& \left(\begin{smallarray}{ccc}0&0&1\\1&1&4\\4&14&0\\14&4&14\\16&16&16\end{smallarray}\right)& \left(\begin{smallarray}{ccc}0&0&0\\1&1&6\\4&4&18\\14&16&1\\16&14&8\end{smallarray}\right)& 1& \Z/3\Z& 0\\
\Gamma_{4, 9}& \left(\begin{smallarray}{ccc}0&0&0\\1&1&1\\4&4&16\\14&16&4\\16&14&14\end{smallarray}\right)& \left(\begin{smallarray}{ccc}0&0&0\\1&1&1\\4&4&16\\14&16&4\\16&14&14\end{smallarray}\right)& 1& \Z/3\Z& 0\\
\Gamma_{4, 10}& \left(\begin{smallarray}{ccc}0&0&1\\1&1&0\\4&16&4\\14&14&16\\16&4&14\end{smallarray}\right)& \left(\begin{smallarray}{ccc}0&0&0\\1&1&1\\4&16&8\\14&14&6\\16&4&18\end{smallarray}\right)& 1& \Z/3\Z& 0\\
\Gamma_{4, 11}& \left(\begin{smallarray}{ccc}0&0&1\\1&1&4\\4&4&16\\14&16&0\\16&14&14\end{smallarray}\right)& \left(\begin{smallarray}{ccc}0&0&0\\1&1&6\\4&14&1\\14&4&8\\16&16&18\end{smallarray}\right)& 1& \Z/3\Z& 0\\
\Gamma_{4, 12}& \left(\begin{smallarray}{ccc}0&0&0\\1&1&1\\4&4&14\\14&16&4\\16&14&16\end{smallarray}\right)& \left(\begin{smallarray}{ccc}0&0&0\\1&1&1\\4&4&14\\14&16&4\\16&14&16\end{smallarray}\right)& 1& 0& 0\\
\Gamma_{4, 13}& \left(\begin{smallarray}{ccc}0&0&0\\1&1&1\\4&14&16\\14&16&4\\16&4&14\end{smallarray}\right)& \left(\begin{smallarray}{ccc}0&0&0\\1&1&1\\4&14&16\\14&16&4\\16&4&14\end{smallarray}\right)& 1& 0& 0\\
\Gamma_{4, 14}& \left(\begin{smallarray}{ccc}0&0&1\\1&1&0\\4&4&4\\14&16&14\\16&14&16\end{smallarray}\right)& \left(\begin{smallarray}{ccc}0&0&0\\1&1&1\\4&4&18\\14&16&8\\16&14&6\end{smallarray}\right)& 1& 0& 0\\
\Gamma_{4, 15}& \left(\begin{smallarray}{ccc}0&0&1\\1&1&0\\4&4&16\\14&16&4\\16&14&14\end{smallarray}\right)& \left(\begin{smallarray}{ccc}0&0&0\\1&1&1\\4&4&6\\14&16&18\\16&14&8\end{smallarray}\right)& 1& 0& 0\\
\Gamma_{4, 16}& \left(\begin{smallarray}{ccc}0&0&1\\1&1&0\\4&14&14\\14&4&16\\16&16&4\end{smallarray}\right)& \left(\begin{smallarray}{ccc}0&0&0\\1&1&1\\4&14&8\\14&4&6\\16&16&18\end{smallarray}\right)& 1& 0& 0\\
\Gamma_{4, 17}& \left(\begin{smallarray}{ccc}0&0&1\\1&1&0\\4&14&14\\14&16&4\\16&4&16\end{smallarray}\right)& \left(\begin{smallarray}{ccc}0&0&0\\1&1&1\\4&14&8\\14&16&18\\16&4&6\end{smallarray}\right)& 1& 0& 0

\end{longtable}

\clearpage

\begin{longtable}{>{$}c<{$}>{$}c<{$}>{$}c<{$}>{$}c<{$}>{$}c<{$}>{$}c<{$}>{$}c<{$}>{$}c<{$}}
\caption{Buildings of Singer cyclic lattices for $q = 4$. The columns are the same as in Table~\ref{tab:buildings_2}.}\endfirsthead
\caption{continued}\endhead
\label{tab:buildings_4}
\text{Name} & \text{Based DM} & \substack{\hjelm^2\\\text{Moufang}} & \substack{\#\,\text{splits of}\\\hjelm^2 \to \hjelm^1} & \abs{\stab{1}[][2]} & \abs{\stab{1}[3][2]} & \abs{N_{\stab{}[][2]}(\gen{\sigma})/\gen{\sigma}}\\
X_{4, 1}& \left(\begin{smallarray}{ccc}0&0&0\\1&1&1\\4&4&4\\14&14&14\\16&16&16\end{smallarray}\right)& \makebox[1.0em][c]{$+$} \makebox[1.0em][c]{$+$} \makebox[1.0em][c]{$+$}& \raisebox{0.5em}{\makebox[1.5em][c]{$65536$}} \raisebox{-0.5em}{\makebox[1.5em][c]{$65536$}} \raisebox{0.5em}{\makebox[1.5em][c]{$65536$}}& \raisebox{-0.5em}{\makebox[1.9em][c]{$196608$}} \raisebox{0.5em}{\makebox[1.9em][c]{$196608$}} \raisebox{-0.5em}{\makebox[1.9em][c]{$196608$}}& \raisebox{0.5em}{\makebox[2.4em][c]{$$}} \raisebox{-0.5em}{\makebox[2.4em][c]{$$}} \raisebox{0.5em}{\makebox[2.4em][c]{$$}}& \raisebox{0.5em}{\makebox[2.4em][c]{$$}} \raisebox{-0.5em}{\makebox[2.4em][c]{$$}} \raisebox{0.5em}{\makebox[2.4em][c]{$$}}& \makebox[1.0em][c]{$6$} \makebox[1.0em][c]{$6$} \makebox[1.0em][c]{$6$}\\
X_{4, 2}& \left(\begin{smallarray}{ccc}0&0&0\\1&1&1\\4&4&16\\14&14&14\\16&16&4\end{smallarray}\right)& \makebox[1.0em][c]{$-$} \makebox[1.0em][c]{$-$} \makebox[1.0em][c]{$+$}& \raisebox{0.5em}{\makebox[1.5em][c]{$0$}} \raisebox{-0.5em}{\makebox[1.5em][c]{$0$}} \raisebox{0.5em}{\makebox[1.5em][c]{$65536$}}& \raisebox{-0.5em}{\makebox[1.9em][c]{$65536$}} \raisebox{0.5em}{\makebox[1.9em][c]{$65536$}} \raisebox{-0.5em}{\makebox[1.9em][c]{$196608$}}& \raisebox{0.5em}{\makebox[2.4em][c]{$1$}} \raisebox{-0.5em}{\makebox[2.4em][c]{$1$}} \raisebox{0.5em}{\makebox[2.4em][c]{$1$}}& \raisebox{0.5em}{\makebox[2.4em][c]{$65536$}} \raisebox{-0.5em}{\makebox[2.4em][c]{$65536$}} \raisebox{0.5em}{\makebox[2.4em][c]{$262144$}}& \makebox[1.0em][c]{$6$} \makebox[1.0em][c]{$6$} \makebox[1.0em][c]{$6$}\\
X_{4, 3}& \left(\begin{smallarray}{ccc}0&0&0\\1&1&1\\4&4&4\\14&14&16\\16&16&14\end{smallarray}\right)& \makebox[1.0em][c]{$-$} \makebox[1.0em][c]{$-$} \makebox[1.0em][c]{$+$}& \raisebox{0.5em}{\makebox[1.5em][c]{$0$}} \raisebox{-0.5em}{\makebox[1.5em][c]{$0$}} \raisebox{0.5em}{\makebox[1.5em][c]{$65536$}}& \raisebox{-0.5em}{\makebox[1.9em][c]{$1$}} \raisebox{0.5em}{\makebox[1.9em][c]{$1$}} \raisebox{-0.5em}{\makebox[1.9em][c]{$196608$}}& \raisebox{0.5em}{\makebox[2.4em][c]{$$}} \raisebox{-0.5em}{\makebox[2.4em][c]{$$}} \raisebox{0.5em}{\makebox[2.4em][c]{$$}}& \raisebox{0.5em}{\makebox[2.4em][c]{$$}} \raisebox{-0.5em}{\makebox[2.4em][c]{$$}} \raisebox{0.5em}{\makebox[2.4em][c]{$$}}& \makebox[1.0em][c]{$1$} \makebox[1.0em][c]{$1$} \makebox[1.0em][c]{$6$}\\
X_{4, 4}& \left(\begin{smallarray}{ccc}0&0&0\\1&1&1\\4&4&14\\14&14&16\\16&16&4\end{smallarray}\right)& \makebox[1.0em][c]{$-$} \makebox[1.0em][c]{$-$} \makebox[1.0em][c]{$+$}& \raisebox{0.5em}{\makebox[1.5em][c]{$0$}} \raisebox{-0.5em}{\makebox[1.5em][c]{$0$}} \raisebox{0.5em}{\makebox[1.5em][c]{$65536$}}& \raisebox{-0.5em}{\makebox[1.9em][c]{$1$}} \raisebox{0.5em}{\makebox[1.9em][c]{$1$}} \raisebox{-0.5em}{\makebox[1.9em][c]{$196608$}}& \raisebox{0.5em}{\makebox[2.4em][c]{$$}} \raisebox{-0.5em}{\makebox[2.4em][c]{$$}} \raisebox{0.5em}{\makebox[2.4em][c]{$$}}& \raisebox{0.5em}{\makebox[2.4em][c]{$$}} \raisebox{-0.5em}{\makebox[2.4em][c]{$$}} \raisebox{0.5em}{\makebox[2.4em][c]{$$}}& \makebox[1.0em][c]{$1$} \makebox[1.0em][c]{$1$} \makebox[1.0em][c]{$6$}\\
X_{4, 5}& \left(\begin{smallarray}{ccc}0&0&0\\1&1&1\\4&4&8\\14&14&6\\16&16&18\end{smallarray}\right)& \makebox[1.0em][c]{$-$} \makebox[1.0em][c]{$-$} \makebox[1.0em][c]{$+$}& \raisebox{0.5em}{\makebox[1.5em][c]{$0$}} \raisebox{-0.5em}{\makebox[1.5em][c]{$0$}} \raisebox{0.5em}{\makebox[1.5em][c]{$65536$}}& \raisebox{-0.5em}{\makebox[1.9em][c]{$1$}} \raisebox{0.5em}{\makebox[1.9em][c]{$1$}} \raisebox{-0.5em}{\makebox[1.9em][c]{$196608$}}& \raisebox{0.5em}{\makebox[2.4em][c]{$$}} \raisebox{-0.5em}{\makebox[2.4em][c]{$$}} \raisebox{0.5em}{\makebox[2.4em][c]{$$}}& \raisebox{0.5em}{\makebox[2.4em][c]{$$}} \raisebox{-0.5em}{\makebox[2.4em][c]{$$}} \raisebox{0.5em}{\makebox[2.4em][c]{$$}}& \makebox[1.0em][c]{$1$} \makebox[1.0em][c]{$1$} \makebox[1.0em][c]{$6$}\\
X_{4, 6}& \left(\begin{smallarray}{ccc}0&0&0\\1&1&1\\4&14&18\\14&16&6\\16&4&8\end{smallarray}\right)& \makebox[1.0em][c]{$-$} \makebox[1.0em][c]{$-$} \makebox[1.0em][c]{$-$}& \raisebox{0.5em}{\makebox[1.5em][c]{$0$}} \raisebox{-0.5em}{\makebox[1.5em][c]{$0$}} \raisebox{0.5em}{\makebox[1.5em][c]{$0$}}& \raisebox{-0.5em}{\makebox[1.9em][c]{$4$}} \raisebox{0.5em}{\makebox[1.9em][c]{$4$}} \raisebox{-0.5em}{\makebox[1.9em][c]{$1$}}& \raisebox{0.5em}{\makebox[2.4em][c]{$1$}} \raisebox{-0.5em}{\makebox[2.4em][c]{$1$}} \raisebox{0.5em}{\makebox[2.4em][c]{$1$}}& \raisebox{0.5em}{\makebox[2.4em][c]{$$}} \raisebox{-0.5em}{\makebox[2.4em][c]{$$}} \raisebox{0.5em}{\makebox[2.4em][c]{$$}}& \makebox[1.0em][c]{$1$} \makebox[1.0em][c]{$1$} \makebox[1.0em][c]{$1$}\\
X_{4, 7}& \left(\begin{smallarray}{ccc}0&0&0\\1&1&1\\4&4&8\\14&16&18\\16&14&6\end{smallarray}\right)& \makebox[1.0em][c]{$-$} \makebox[1.0em][c]{$-$} \makebox[1.0em][c]{$-$}& \raisebox{0.5em}{\makebox[1.5em][c]{$0$}} \raisebox{-0.5em}{\makebox[1.5em][c]{$0$}} \raisebox{0.5em}{\makebox[1.5em][c]{$0$}}& \raisebox{-0.5em}{\makebox[1.9em][c]{$4$}} \raisebox{0.5em}{\makebox[1.9em][c]{$4$}} \raisebox{-0.5em}{\makebox[1.9em][c]{$16$}}& \raisebox{0.5em}{\makebox[2.4em][c]{$1$}} \raisebox{-0.5em}{\makebox[2.4em][c]{$1$}} \raisebox{0.5em}{\makebox[2.4em][c]{$1$}}& \raisebox{0.5em}{\makebox[2.4em][c]{$$}} \raisebox{-0.5em}{\makebox[2.4em][c]{$$}} \raisebox{0.5em}{\makebox[2.4em][c]{$$}}& \makebox[1.0em][c]{$1$} \makebox[1.0em][c]{$1$} \makebox[1.0em][c]{$3$}\\
X_{4, 8}& \left(\begin{smallarray}{ccc}0&0&0\\1&1&6\\4&4&18\\14&16&1\\16&14&8\end{smallarray}\right)& \makebox[1.0em][c]{$-$} \makebox[1.0em][c]{$-$} \makebox[1.0em][c]{$-$}& \raisebox{0.5em}{\makebox[1.5em][c]{$0$}} \raisebox{-0.5em}{\makebox[1.5em][c]{$0$}} \raisebox{0.5em}{\makebox[1.5em][c]{$0$}}& \raisebox{-0.5em}{\makebox[1.9em][c]{$65536$}} \raisebox{0.5em}{\makebox[1.9em][c]{$65536$}} \raisebox{-0.5em}{\makebox[1.9em][c]{$1$}}& \raisebox{0.5em}{\makebox[2.4em][c]{$$}} \raisebox{-0.5em}{\makebox[2.4em][c]{$$}} \raisebox{0.5em}{\makebox[2.4em][c]{$1$}}& \raisebox{0.5em}{\makebox[2.4em][c]{$$}} \raisebox{-0.5em}{\makebox[2.4em][c]{$$}} \raisebox{0.5em}{\makebox[2.4em][c]{$1$}}& \makebox[1.0em][c]{$$} \makebox[1.0em][c]{$$} \makebox[1.0em][c]{$1$}\\
X_{4, 9}& \left(\begin{smallarray}{ccc}0&0&0\\1&1&1\\4&4&16\\14&16&4\\16&14&14\end{smallarray}\right)& \makebox[1.0em][c]{$-$} \makebox[1.0em][c]{$-$} \makebox[1.0em][c]{$-$}& \raisebox{0.5em}{\makebox[1.5em][c]{$0$}} \raisebox{-0.5em}{\makebox[1.5em][c]{$0$}} \raisebox{0.5em}{\makebox[1.5em][c]{$0$}}& \raisebox{-0.5em}{\makebox[1.9em][c]{$1$}} \raisebox{0.5em}{\makebox[1.9em][c]{$1$}} \raisebox{-0.5em}{\makebox[1.9em][c]{$1$}}& \raisebox{0.5em}{\makebox[2.4em][c]{$$}} \raisebox{-0.5em}{\makebox[2.4em][c]{$$}} \raisebox{0.5em}{\makebox[2.4em][c]{$$}}& \raisebox{0.5em}{\makebox[2.4em][c]{$$}} \raisebox{-0.5em}{\makebox[2.4em][c]{$$}} \raisebox{0.5em}{\makebox[2.4em][c]{$$}}& \makebox[1.0em][c]{$1$} \makebox[1.0em][c]{$1$} \makebox[1.0em][c]{$1$}\\
X_{4, 10}& \left(\begin{smallarray}{ccc}0&0&0\\1&1&1\\4&16&8\\14&14&6\\16&4&18\end{smallarray}\right)& \makebox[1.0em][c]{$-$} \makebox[1.0em][c]{$-$} \makebox[1.0em][c]{$-$}& \raisebox{0.5em}{\makebox[1.5em][c]{$0$}} \raisebox{-0.5em}{\makebox[1.5em][c]{$0$}} \raisebox{0.5em}{\makebox[1.5em][c]{$0$}}& \raisebox{-0.5em}{\makebox[1.9em][c]{$1$}} \raisebox{0.5em}{\makebox[1.9em][c]{$1$}} \raisebox{-0.5em}{\makebox[1.9em][c]{$1$}}& \raisebox{0.5em}{\makebox[2.4em][c]{$$}} \raisebox{-0.5em}{\makebox[2.4em][c]{$$}} \raisebox{0.5em}{\makebox[2.4em][c]{$$}}& \raisebox{0.5em}{\makebox[2.4em][c]{$$}} \raisebox{-0.5em}{\makebox[2.4em][c]{$$}} \raisebox{0.5em}{\makebox[2.4em][c]{$$}}& \makebox[1.0em][c]{$1$} \makebox[1.0em][c]{$1$} \makebox[1.0em][c]{$1$}\\
X_{4, 11}& \left(\begin{smallarray}{ccc}0&0&0\\1&1&6\\4&14&1\\14&4&8\\16&16&18\end{smallarray}\right)& \makebox[1.0em][c]{$-$} \makebox[1.0em][c]{$-$} \makebox[1.0em][c]{$-$}& \raisebox{0.5em}{\makebox[1.5em][c]{$0$}} \raisebox{-0.5em}{\makebox[1.5em][c]{$0$}} \raisebox{0.5em}{\makebox[1.5em][c]{$0$}}& \raisebox{-0.5em}{\makebox[1.9em][c]{$1$}} \raisebox{0.5em}{\makebox[1.9em][c]{$1$}} \raisebox{-0.5em}{\makebox[1.9em][c]{$1$}}& \raisebox{0.5em}{\makebox[2.4em][c]{$$}} \raisebox{-0.5em}{\makebox[2.4em][c]{$$}} \raisebox{0.5em}{\makebox[2.4em][c]{$$}}& \raisebox{0.5em}{\makebox[2.4em][c]{$$}} \raisebox{-0.5em}{\makebox[2.4em][c]{$$}} \raisebox{0.5em}{\makebox[2.4em][c]{$$}}& \makebox[1.0em][c]{$1$} \makebox[1.0em][c]{$1$} \makebox[1.0em][c]{$1$}\\
X_{4, 12}& \left(\begin{smallarray}{ccc}0&0&0\\1&1&1\\4&4&14\\14&16&4\\16&14&16\end{smallarray}\right)& \makebox[1.0em][c]{$-$} \makebox[1.0em][c]{$-$} \makebox[1.0em][c]{$-$}& \raisebox{0.5em}{\makebox[1.5em][c]{$0$}} \raisebox{-0.5em}{\makebox[1.5em][c]{$0$}} \raisebox{0.5em}{\makebox[1.5em][c]{$0$}}& \raisebox{-0.5em}{\makebox[1.9em][c]{$1$}} \raisebox{0.5em}{\makebox[1.9em][c]{$1$}} \raisebox{-0.5em}{\makebox[1.9em][c]{$1$}}& \raisebox{0.5em}{\makebox[2.4em][c]{$$}} \raisebox{-0.5em}{\makebox[2.4em][c]{$$}} \raisebox{0.5em}{\makebox[2.4em][c]{$$}}& \raisebox{0.5em}{\makebox[2.4em][c]{$$}} \raisebox{-0.5em}{\makebox[2.4em][c]{$$}} \raisebox{0.5em}{\makebox[2.4em][c]{$$}}& \makebox[1.0em][c]{$1$} \makebox[1.0em][c]{$1$} \makebox[1.0em][c]{$1$}\\
X_{4, 13}& \left(\begin{smallarray}{ccc}0&0&0\\1&1&1\\4&14&16\\14&16&4\\16&4&14\end{smallarray}\right)& \makebox[1.0em][c]{$-$} \makebox[1.0em][c]{$-$} \makebox[1.0em][c]{$-$}& \raisebox{0.5em}{\makebox[1.5em][c]{$0$}} \raisebox{-0.5em}{\makebox[1.5em][c]{$0$}} \raisebox{0.5em}{\makebox[1.5em][c]{$0$}}& \raisebox{-0.5em}{\makebox[1.9em][c]{$1$}} \raisebox{0.5em}{\makebox[1.9em][c]{$1$}} \raisebox{-0.5em}{\makebox[1.9em][c]{$1$}}& \raisebox{0.5em}{\makebox[2.4em][c]{$$}} \raisebox{-0.5em}{\makebox[2.4em][c]{$$}} \raisebox{0.5em}{\makebox[2.4em][c]{$$}}& \raisebox{0.5em}{\makebox[2.4em][c]{$$}} \raisebox{-0.5em}{\makebox[2.4em][c]{$$}} \raisebox{0.5em}{\makebox[2.4em][c]{$$}}& \makebox[1.0em][c]{$1$} \makebox[1.0em][c]{$1$} \makebox[1.0em][c]{$1$}\\
X_{4, 14}& \left(\begin{smallarray}{ccc}0&0&0\\1&1&1\\4&4&18\\14&16&8\\16&14&6\end{smallarray}\right)& \makebox[1.0em][c]{$-$} \makebox[1.0em][c]{$-$} \makebox[1.0em][c]{$-$}& \raisebox{0.5em}{\makebox[1.5em][c]{$0$}} \raisebox{-0.5em}{\makebox[1.5em][c]{$0$}} \raisebox{0.5em}{\makebox[1.5em][c]{$0$}}& \raisebox{-0.5em}{\makebox[1.9em][c]{$1$}} \raisebox{0.5em}{\makebox[1.9em][c]{$1$}} \raisebox{-0.5em}{\makebox[1.9em][c]{$1$}}& \raisebox{0.5em}{\makebox[2.4em][c]{$$}} \raisebox{-0.5em}{\makebox[2.4em][c]{$$}} \raisebox{0.5em}{\makebox[2.4em][c]{$$}}& \raisebox{0.5em}{\makebox[2.4em][c]{$$}} \raisebox{-0.5em}{\makebox[2.4em][c]{$$}} \raisebox{0.5em}{\makebox[2.4em][c]{$$}}& \makebox[1.0em][c]{$1$} \makebox[1.0em][c]{$1$} \makebox[1.0em][c]{$1$}\\
X_{4, 15}& \left(\begin{smallarray}{ccc}0&0&0\\1&1&1\\4&4&6\\14&16&18\\16&14&8\end{smallarray}\right)& \makebox[1.0em][c]{$-$} \makebox[1.0em][c]{$-$} \makebox[1.0em][c]{$-$}& \raisebox{0.5em}{\makebox[1.5em][c]{$0$}} \raisebox{-0.5em}{\makebox[1.5em][c]{$0$}} \raisebox{0.5em}{\makebox[1.5em][c]{$0$}}& \raisebox{-0.5em}{\makebox[1.9em][c]{$1$}} \raisebox{0.5em}{\makebox[1.9em][c]{$1$}} \raisebox{-0.5em}{\makebox[1.9em][c]{$1$}}& \raisebox{0.5em}{\makebox[2.4em][c]{$$}} \raisebox{-0.5em}{\makebox[2.4em][c]{$$}} \raisebox{0.5em}{\makebox[2.4em][c]{$$}}& \raisebox{0.5em}{\makebox[2.4em][c]{$$}} \raisebox{-0.5em}{\makebox[2.4em][c]{$$}} \raisebox{0.5em}{\makebox[2.4em][c]{$$}}& \makebox[1.0em][c]{$1$} \makebox[1.0em][c]{$1$} \makebox[1.0em][c]{$1$}\\
X_{4, 16}& \left(\begin{smallarray}{ccc}0&0&0\\1&1&1\\4&14&8\\14&4&6\\16&16&18\end{smallarray}\right)& \makebox[1.0em][c]{$-$} \makebox[1.0em][c]{$-$} \makebox[1.0em][c]{$-$}& \raisebox{0.5em}{\makebox[1.5em][c]{$0$}} \raisebox{-0.5em}{\makebox[1.5em][c]{$0$}} \raisebox{0.5em}{\makebox[1.5em][c]{$0$}}& \raisebox{-0.5em}{\makebox[1.9em][c]{$1$}} \raisebox{0.5em}{\makebox[1.9em][c]{$1$}} \raisebox{-0.5em}{\makebox[1.9em][c]{$1$}}& \raisebox{0.5em}{\makebox[2.4em][c]{$$}} \raisebox{-0.5em}{\makebox[2.4em][c]{$$}} \raisebox{0.5em}{\makebox[2.4em][c]{$$}}& \raisebox{0.5em}{\makebox[2.4em][c]{$$}} \raisebox{-0.5em}{\makebox[2.4em][c]{$$}} \raisebox{0.5em}{\makebox[2.4em][c]{$$}}& \makebox[1.0em][c]{$1$} \makebox[1.0em][c]{$1$} \makebox[1.0em][c]{$1$}\\
X_{4, 17}& \left(\begin{smallarray}{ccc}0&0&0\\1&1&1\\4&14&8\\14&16&18\\16&4&6\end{smallarray}\right)& \makebox[1.0em][c]{$-$} \makebox[1.0em][c]{$-$} \makebox[1.0em][c]{$-$}& \raisebox{0.5em}{\makebox[1.5em][c]{$0$}} \raisebox{-0.5em}{\makebox[1.5em][c]{$0$}} \raisebox{0.5em}{\makebox[1.5em][c]{$0$}}& \raisebox{-0.5em}{\makebox[1.9em][c]{$1$}} \raisebox{0.5em}{\makebox[1.9em][c]{$1$}} \raisebox{-0.5em}{\makebox[1.9em][c]{$1$}}& \raisebox{0.5em}{\makebox[2.4em][c]{$$}} \raisebox{-0.5em}{\makebox[2.4em][c]{$$}} \raisebox{0.5em}{\makebox[2.4em][c]{$$}}& \raisebox{0.5em}{\makebox[2.4em][c]{$$}} \raisebox{-0.5em}{\makebox[2.4em][c]{$$}} \raisebox{0.5em}{\makebox[2.4em][c]{$$}}& \makebox[1.0em][c]{$1$} \makebox[1.0em][c]{$1$} \makebox[1.0em][c]{$1$}

\end{longtable}

\clearpage

\begin{longtable}{>{$}c<{$}>{$}c<{$}>{$}c<{$}>{$}c<{$}>{$}c<{$}>{$}c<{$}>{$}c<{$}}
\caption{Summary of Singer cyclic lattices for $q = 5$. The leftmost column shows how many lattices the row represents. The based difference matrix is one example that realizes the indicated properties.  The other columns are the same as in Table~\ref{tab:lattices_2}.}\endfirsthead
\caption{continued}\endhead
\label{tab:lattices_5}
\# & \text{Based DM} & \textrm{Out}(\Gamma) &H_1(\Gamma) & H_1([\Gamma,\Gamma])\\
1& \left(\begin{smallarray}{ccc}0&0&0\\1&1&1\\3&3&3\\8&8&8\\12&12&12\\18&18&18\end{smallarray}\right)& C_3 \rtimes S_3& (\Z/31\Z)^{2}& (\Z/5\Z)^{6}\\
1& \left(\begin{smallarray}{ccc}0&0&0\\1&1&18\\3&3&1\\8&8&8\\12&12&3\\18&18&12\end{smallarray}\right)& C_3 \rtimes (0, 1)& \Z/31\Z& 0\\
1& \left(\begin{smallarray}{ccc}0&0&0\\1&1&1\\3&3&12\\8&8&18\\12&12&3\\18&18&8\end{smallarray}\right)& C_3 \rtimes (0, 1)& \Z/31\Z& 0\\
1& \left(\begin{smallarray}{ccc}0&0&0\\1&1&12\\3&3&1\\8&8&8\\12&12&3\\18&18&18\end{smallarray}\right)& (0, 1)& \Z/31\Z& 0\\
4& \left(\begin{smallarray}{ccc}0&0&0\\1&1&1\\3&3&3\\8&8&12\\12&12&8\\18&18&18\end{smallarray}\right)& (0, 1)& \Z/31\Z& 0\\
10& \left(\begin{smallarray}{ccc}0&0&0\\1&1&1\\3&3&3\\8&8&8\\12&12&18\\18&18&12\end{smallarray}\right)& (0, 1)& \Z/31\Z& 0\\
5& \left(\begin{smallarray}{ccc}0&0&0\\1&1&1\\3&3&12\\8&8&18\\12&12&8\\18&18&3\end{smallarray}\right)& 1& \Z/31\Z& 0\\
3& \left(\begin{smallarray}{ccc}0&0&0\\1&1&3\\3&3&1\\8&8&18\\12&12&12\\18&18&8\end{smallarray}\right)& 1& \Z/31\Z& 0\\
11& \left(\begin{smallarray}{ccc}0&0&0\\1&1&1\\3&3&12\\8&8&8\\12&12&18\\18&18&3\end{smallarray}\right)& 1& \Z/31\Z& 0\\
55& \left(\begin{smallarray}{ccc}0&0&0\\1&1&1\\3&3&3\\8&8&12\\12&12&18\\18&18&8\end{smallarray}\right)& 1& \Z/31\Z& 0\\
2& \left(\begin{smallarray}{ccc}0&0&0\\1&1&18\\3&8&1\\8&12&12\\12&18&3\\18&3&8\end{smallarray}\right)& C_3 \rtimes (0, 1)& 0& 0\\
1& \left(\begin{smallarray}{ccc}0&0&0\\1&1&18\\3&12&12\\8&8&8\\12&18&3\\18&3&1\end{smallarray}\right)& C_3 \rtimes (0, 1)& 0& 0\\
6& \left(\begin{smallarray}{ccc}0&0&0\\1&1&3\\3&12&12\\8&18&1\\12&8&18\\18&3&8\end{smallarray}\right)& C_3 \rtimes (0, 1)& 0& 0\\
11& \left(\begin{smallarray}{ccc}0&0&0\\1&1&1\\3&3&3\\8&8&12\\12&18&8\\18&12&18\end{smallarray}\right)& (0, 1)& 0& 0\\
1& \left(\begin{smallarray}{ccc}0&0&0\\1&1&18\\3&8&1\\8&18&3\\12&3&12\\18&12&8\end{smallarray}\right)& (0, 1)& 0& 0\\
2& \left(\begin{smallarray}{ccc}0&0&0\\1&1&8\\3&18&1\\8&12&18\\12&8&12\\18&3&3\end{smallarray}\right)& (0, 1)& 0& 0\\
5& \left(\begin{smallarray}{ccc}0&0&0\\1&1&1\\3&8&18\\8&18&3\\12&3&12\\18&12&8\end{smallarray}\right)& (0, 1)& 0& 0\\
18& \left(\begin{smallarray}{ccc}0&0&0\\1&1&1\\3&3&8\\8&18&12\\12&12&3\\18&8&18\end{smallarray}\right)& (0, 1)& 0& 0\\
24& \left(\begin{smallarray}{ccc}0&0&0\\1&1&1\\3&8&8\\8&12&12\\12&3&18\\18&18&3\end{smallarray}\right)& (0, 1)& 0& 0\\
1& \left(\begin{smallarray}{ccc}0&0&0\\1&1&12\\3&18&3\\8&12&1\\12&8&8\\18&3&18\end{smallarray}\right)& C_3& 0& 0\\
8& \left(\begin{smallarray}{ccc}0&0&0\\1&1&3\\3&8&12\\8&12&8\\12&3&18\\18&18&1\end{smallarray}\right)& C_3& 0& 0\\
10& \left(\begin{smallarray}{ccc}0&0&0\\1&1&3\\3&3&8\\8&8&12\\12&18&1\\18&12&18\end{smallarray}\right)& C_3& 0& 0\\
2& \left(\begin{smallarray}{ccc}0&0&0\\1&1&3\\3&12&1\\8&18&8\\12&3&18\\18&8&12\end{smallarray}\right)& 1& 0& 0\\
2& \left(\begin{smallarray}{ccc}0&0&0\\1&1&8\\3&12&3\\8&18&18\\12&8&12\\18&3&1\end{smallarray}\right)& 1& 0& 0\\
8& \left(\begin{smallarray}{ccc}0&0&0\\1&1&8\\3&18&1\\8&8&18\\12&12&3\\18&3&12\end{smallarray}\right)& 1& 0& 0\\
59& \left(\begin{smallarray}{ccc}0&0&0\\1&1&3\\3&3&8\\8&18&1\\12&12&18\\18&8&12\end{smallarray}\right)& 1& 0& 0\\
1& \left(\begin{smallarray}{ccc}0&0&0\\1&1&3\\3&3&12\\8&12&8\\12&18&18\\18&8&1\end{smallarray}\right)& 1& 0& 0\\
19& \left(\begin{smallarray}{ccc}0&0&0\\1&1&3\\3&3&12\\8&8&18\\12&18&1\\18&12&8\end{smallarray}\right)& 1& 0& 0\\
93& \left(\begin{smallarray}{ccc}0&0&0\\1&1&1\\3&8&12\\8&12&3\\12&3&8\\18&18&18\end{smallarray}\right)& 1& 0& 0\\
4& \left(\begin{smallarray}{ccc}0&0&0\\1&1&18\\3&12&1\\8&18&12\\12&3&8\\18&8&3\end{smallarray}\right)& 1& 0& 0\\
60& \left(\begin{smallarray}{ccc}0&0&0\\1&1&1\\3&12&12\\8&8&8\\12&3&18\\18&18&3\end{smallarray}\right)& 1& 0& 0\\
661& \left(\begin{smallarray}{ccc}0&0&0\\1&1&1\\3&3&8\\8&12&3\\12&8&18\\18&18&12\end{smallarray}\right)& 1& 0& 0\\
2179& \left(\begin{smallarray}{ccc}0&0&0\\1&1&1\\3&3&3\\8&8&18\\12&18&8\\18&12&12\end{smallarray}\right)& 1& 0& 0

\end{longtable}

\clearpage

\begin{longtable}{>{$}c<{$}>{$}c<{$}>{$}c<{$}>{$}c<{$}>{$}c<{$}>{$}c<{$}>{$}c<{$}}
\caption{Summary of buildings of Singer cyclic lattices for $q = 5$. The leftmost column shows how many lattices the row represents. The based difference matrix is one example that realizes the indicated properties. The other columns are the same as in Table~\ref{tab:buildings_2}.}\endfirsthead
\caption{continued}\endhead
\label{tab:buildings_5}
\# & \text{Based DM} & \substack{\hjelm^2\\\text{Moufang}} & \substack{\#\,\text{splits of}\\\hjelm^2 \to \hjelm^1} & \abs{\stab{1}[][2]} & \abs{N_{\stab{}[][2]}(\gen{\sigma})/\gen{\sigma}}\\
1& \left(\begin{smallarray}{ccc}0&0&0\\1&1&1\\3&3&3\\8&8&8\\12&12&12\\18&18&18\end{smallarray}\right)& \makebox[1.0em][c]{$+$} \makebox[1.0em][c]{$+$} \makebox[1.0em][c]{$+$}& \raisebox{0.5em}{\makebox[1.5em][c]{$390625$}} \raisebox{-0.5em}{\makebox[1.5em][c]{$390625$}} \raisebox{0.5em}{\makebox[1.5em][c]{$390625$}}& \raisebox{-0.5em}{\makebox[1.9em][c]{$1562500$}} \raisebox{0.5em}{\makebox[1.9em][c]{$1562500$}} \raisebox{-0.5em}{\makebox[1.9em][c]{$1562500$}}& \makebox[1.0em][c]{$3$} \makebox[1.0em][c]{$3$} \makebox[1.0em][c]{$3$}\\
1& \left(\begin{smallarray}{ccc}0&0&0\\1&1&18\\3&3&1\\8&8&8\\12&12&3\\18&18&12\end{smallarray}\right)& \makebox[1.0em][c]{$-$} \makebox[1.0em][c]{$-$} \makebox[1.0em][c]{$+$}& \raisebox{0.5em}{\makebox[1.5em][c]{$0$}} \raisebox{-0.5em}{\makebox[1.5em][c]{$0$}} \raisebox{0.5em}{\makebox[1.5em][c]{$390625$}}& \raisebox{-0.5em}{\makebox[1.9em][c]{$1$}} \raisebox{0.5em}{\makebox[1.9em][c]{$1$}} \raisebox{-0.5em}{\makebox[1.9em][c]{$1562500$}}& \makebox[1.0em][c]{$3$} \makebox[1.0em][c]{$3$} \makebox[1.0em][c]{$3$}\\
1& \left(\begin{smallarray}{ccc}0&0&0\\1&1&1\\3&3&12\\8&8&18\\12&12&3\\18&18&8\end{smallarray}\right)& \makebox[1.0em][c]{$-$} \makebox[1.0em][c]{$-$} \makebox[1.0em][c]{$-$}& \raisebox{0.5em}{\makebox[1.5em][c]{$3$}} \raisebox{-0.5em}{\makebox[1.5em][c]{$3$}} \raisebox{0.5em}{\makebox[1.5em][c]{$25$}}& \raisebox{-0.5em}{\makebox[1.9em][c]{$1$}} \raisebox{0.5em}{\makebox[1.9em][c]{$1$}} \raisebox{-0.5em}{\makebox[1.9em][c]{$100$}}& \makebox[1.0em][c]{$3$} \makebox[1.0em][c]{$3$} \makebox[1.0em][c]{$3$}\\
1& \left(\begin{smallarray}{ccc}0&0&0\\1&1&12\\3&3&1\\8&8&8\\12&12&3\\18&18&18\end{smallarray}\right)& \makebox[1.0em][c]{$-$} \makebox[1.0em][c]{$-$} \makebox[1.0em][c]{$+$}& \raisebox{0.5em}{\makebox[1.5em][c]{$0$}} \raisebox{-0.5em}{\makebox[1.5em][c]{$0$}} \raisebox{0.5em}{\makebox[1.5em][c]{$390625$}}& \raisebox{-0.5em}{\makebox[1.9em][c]{$1$}} \raisebox{0.5em}{\makebox[1.9em][c]{$1$}} \raisebox{-0.5em}{\makebox[1.9em][c]{$1562500$}}& \makebox[1.0em][c]{$1$} \makebox[1.0em][c]{$1$} \makebox[1.0em][c]{$3$}\\
4& \left(\begin{smallarray}{ccc}0&0&0\\1&1&1\\3&3&3\\8&8&12\\12&12&8\\18&18&18\end{smallarray}\right)& \makebox[1.0em][c]{$-$} \makebox[1.0em][c]{$-$} \makebox[1.0em][c]{$-$}& \raisebox{0.5em}{\makebox[1.5em][c]{$0$}} \raisebox{-0.5em}{\makebox[1.5em][c]{$0$}} \raisebox{0.5em}{\makebox[1.5em][c]{$25$}}& \raisebox{-0.5em}{\makebox[1.9em][c]{$1$}} \raisebox{0.5em}{\makebox[1.9em][c]{$1$}} \raisebox{-0.5em}{\makebox[1.9em][c]{$100$}}& \makebox[1.0em][c]{$1$} \makebox[1.0em][c]{$1$} \makebox[1.0em][c]{$3$}\\
10& \left(\begin{smallarray}{ccc}0&0&0\\1&1&1\\3&3&3\\8&8&8\\12&12&18\\18&18&12\end{smallarray}\right)& \makebox[1.0em][c]{$-$} \makebox[1.0em][c]{$-$} \makebox[1.0em][c]{$-$}& \raisebox{0.5em}{\makebox[1.5em][c]{$0$}} \raisebox{-0.5em}{\makebox[1.5em][c]{$0$}} \raisebox{0.5em}{\makebox[1.5em][c]{$25$}}& \raisebox{-0.5em}{\makebox[1.9em][c]{$1$}} \raisebox{0.5em}{\makebox[1.9em][c]{$1$}} \raisebox{-0.5em}{\makebox[1.9em][c]{$100$}}& \makebox[1.0em][c]{$1$} \makebox[1.0em][c]{$1$} \makebox[1.0em][c]{$1$}\\
5& \left(\begin{smallarray}{ccc}0&0&0\\1&1&1\\3&3&12\\8&8&18\\12&12&8\\18&18&3\end{smallarray}\right)& \makebox[1.0em][c]{$-$} \makebox[1.0em][c]{$-$} \makebox[1.0em][c]{$-$}& \raisebox{0.5em}{\makebox[1.5em][c]{$0$}} \raisebox{-0.5em}{\makebox[1.5em][c]{$0$}} \raisebox{0.5em}{\makebox[1.5em][c]{$3$}}& \raisebox{-0.5em}{\makebox[1.9em][c]{$1$}} \raisebox{0.5em}{\makebox[1.9em][c]{$1$}} \raisebox{-0.5em}{\makebox[1.9em][c]{$1$}}& \makebox[1.0em][c]{$1$} \makebox[1.0em][c]{$1$} \makebox[1.0em][c]{$1$}\\
3& \left(\begin{smallarray}{ccc}0&0&0\\1&1&3\\3&3&1\\8&8&18\\12&12&12\\18&18&8\end{smallarray}\right)& \makebox[1.0em][c]{$-$} \makebox[1.0em][c]{$-$} \makebox[1.0em][c]{$-$}& \raisebox{0.5em}{\makebox[1.5em][c]{$0$}} \raisebox{-0.5em}{\makebox[1.5em][c]{$0$}} \raisebox{0.5em}{\makebox[1.5em][c]{$2$}}& \raisebox{-0.5em}{\makebox[1.9em][c]{$1$}} \raisebox{0.5em}{\makebox[1.9em][c]{$1$}} \raisebox{-0.5em}{\makebox[1.9em][c]{$1$}}& \makebox[1.0em][c]{$1$} \makebox[1.0em][c]{$1$} \makebox[1.0em][c]{$1$}\\
11& \left(\begin{smallarray}{ccc}0&0&0\\1&1&1\\3&3&12\\8&8&8\\12&12&18\\18&18&3\end{smallarray}\right)& \makebox[1.0em][c]{$-$} \makebox[1.0em][c]{$-$} \makebox[1.0em][c]{$-$}& \raisebox{0.5em}{\makebox[1.5em][c]{$0$}} \raisebox{-0.5em}{\makebox[1.5em][c]{$0$}} \raisebox{0.5em}{\makebox[1.5em][c]{$1$}}& \raisebox{-0.5em}{\makebox[1.9em][c]{$1$}} \raisebox{0.5em}{\makebox[1.9em][c]{$1$}} \raisebox{-0.5em}{\makebox[1.9em][c]{$1$}}& \makebox[1.0em][c]{$1$} \makebox[1.0em][c]{$1$} \makebox[1.0em][c]{$1$}\\
55& \left(\begin{smallarray}{ccc}0&0&0\\1&1&1\\3&3&3\\8&8&12\\12&12&18\\18&18&8\end{smallarray}\right)& \makebox[1.0em][c]{$-$} \makebox[1.0em][c]{$-$} \makebox[1.0em][c]{$-$}& \raisebox{0.5em}{\makebox[1.5em][c]{$0$}} \raisebox{-0.5em}{\makebox[1.5em][c]{$0$}} \raisebox{0.5em}{\makebox[1.5em][c]{$0$}}& \raisebox{-0.5em}{\makebox[1.9em][c]{$1$}} \raisebox{0.5em}{\makebox[1.9em][c]{$1$}} \raisebox{-0.5em}{\makebox[1.9em][c]{$1$}}& \makebox[1.0em][c]{$1$} \makebox[1.0em][c]{$1$} \makebox[1.0em][c]{$1$}\\
2& \left(\begin{smallarray}{ccc}0&0&0\\1&1&18\\3&8&1\\8&12&12\\12&18&3\\18&3&8\end{smallarray}\right)& \makebox[1.0em][c]{$-$} \makebox[1.0em][c]{$-$} \makebox[1.0em][c]{$+$}& \raisebox{0.5em}{\makebox[1.5em][c]{$0$}} \raisebox{-0.5em}{\makebox[1.5em][c]{$0$}} \raisebox{0.5em}{\makebox[1.5em][c]{$390625$}}& \raisebox{-0.5em}{\makebox[1.9em][c]{$1$}} \raisebox{0.5em}{\makebox[1.9em][c]{$1$}} \raisebox{-0.5em}{\makebox[1.9em][c]{$1562500$}}& \makebox[1.0em][c]{$3$} \makebox[1.0em][c]{$3$} \makebox[1.0em][c]{$3$}\\
1& \left(\begin{smallarray}{ccc}0&0&0\\1&1&18\\3&12&12\\8&8&8\\12&18&3\\18&3&1\end{smallarray}\right)& \makebox[1.0em][c]{$-$} \makebox[1.0em][c]{$-$} \makebox[1.0em][c]{$-$}& \raisebox{0.5em}{\makebox[1.5em][c]{$3$}} \raisebox{-0.5em}{\makebox[1.5em][c]{$3$}} \raisebox{0.5em}{\makebox[1.5em][c]{$25$}}& \raisebox{-0.5em}{\makebox[1.9em][c]{$1$}} \raisebox{0.5em}{\makebox[1.9em][c]{$1$}} \raisebox{-0.5em}{\makebox[1.9em][c]{$100$}}& \makebox[1.0em][c]{$3$} \makebox[1.0em][c]{$3$} \makebox[1.0em][c]{$3$}\\
6& \left(\begin{smallarray}{ccc}0&0&0\\1&1&3\\3&12&12\\8&18&1\\12&8&18\\18&3&8\end{smallarray}\right)& \makebox[1.0em][c]{$-$} \makebox[1.0em][c]{$-$} \makebox[1.0em][c]{$-$}& \raisebox{0.5em}{\makebox[1.5em][c]{$0$}} \raisebox{-0.5em}{\makebox[1.5em][c]{$0$}} \raisebox{0.5em}{\makebox[1.5em][c]{$25$}}& \raisebox{-0.5em}{\makebox[1.9em][c]{$1$}} \raisebox{0.5em}{\makebox[1.9em][c]{$1$}} \raisebox{-0.5em}{\makebox[1.9em][c]{$100$}}& \makebox[1.0em][c]{$3$} \makebox[1.0em][c]{$3$} \makebox[1.0em][c]{$3$}\\
11& \left(\begin{smallarray}{ccc}0&0&0\\1&1&1\\3&3&3\\8&8&12\\12&18&8\\18&12&18\end{smallarray}\right)& \makebox[1.0em][c]{$-$} \makebox[1.0em][c]{$-$} \makebox[1.0em][c]{$+$}& \raisebox{0.5em}{\makebox[1.5em][c]{$0$}} \raisebox{-0.5em}{\makebox[1.5em][c]{$0$}} \raisebox{0.5em}{\makebox[1.5em][c]{$390625$}}& \raisebox{-0.5em}{\makebox[1.9em][c]{$1$}} \raisebox{0.5em}{\makebox[1.9em][c]{$1$}} \raisebox{-0.5em}{\makebox[1.9em][c]{$1562500$}}& \makebox[1.0em][c]{$1$} \makebox[1.0em][c]{$1$} \makebox[1.0em][c]{$3$}\\
1& \left(\begin{smallarray}{ccc}0&0&0\\1&1&18\\3&8&1\\8&18&3\\12&3&12\\18&12&8\end{smallarray}\right)& \makebox[1.0em][c]{$-$} \makebox[1.0em][c]{$-$} \makebox[1.0em][c]{$-$}& \raisebox{0.5em}{\makebox[1.5em][c]{$2$}} \raisebox{-0.5em}{\makebox[1.5em][c]{$2$}} \raisebox{0.5em}{\makebox[1.5em][c]{$25$}}& \raisebox{-0.5em}{\makebox[1.9em][c]{$1$}} \raisebox{0.5em}{\makebox[1.9em][c]{$1$}} \raisebox{-0.5em}{\makebox[1.9em][c]{$100$}}& \makebox[1.0em][c]{$1$} \makebox[1.0em][c]{$1$} \makebox[1.0em][c]{$1$}\\
2& \left(\begin{smallarray}{ccc}0&0&0\\1&1&8\\3&18&1\\8&12&18\\12&8&12\\18&3&3\end{smallarray}\right)& \makebox[1.0em][c]{$-$} \makebox[1.0em][c]{$-$} \makebox[1.0em][c]{$-$}& \raisebox{0.5em}{\makebox[1.5em][c]{$1$}} \raisebox{-0.5em}{\makebox[1.5em][c]{$1$}} \raisebox{0.5em}{\makebox[1.5em][c]{$25$}}& \raisebox{-0.5em}{\makebox[1.9em][c]{$1$}} \raisebox{0.5em}{\makebox[1.9em][c]{$1$}} \raisebox{-0.5em}{\makebox[1.9em][c]{$100$}}& \makebox[1.0em][c]{$1$} \makebox[1.0em][c]{$1$} \makebox[1.0em][c]{$3$}\\
5& \left(\begin{smallarray}{ccc}0&0&0\\1&1&1\\3&8&18\\8&18&3\\12&3&12\\18&12&8\end{smallarray}\right)& \makebox[1.0em][c]{$-$} \makebox[1.0em][c]{$-$} \makebox[1.0em][c]{$-$}& \raisebox{0.5em}{\makebox[1.5em][c]{$1$}} \raisebox{-0.5em}{\makebox[1.5em][c]{$1$}} \raisebox{0.5em}{\makebox[1.5em][c]{$25$}}& \raisebox{-0.5em}{\makebox[1.9em][c]{$1$}} \raisebox{0.5em}{\makebox[1.9em][c]{$1$}} \raisebox{-0.5em}{\makebox[1.9em][c]{$100$}}& \makebox[1.0em][c]{$1$} \makebox[1.0em][c]{$1$} \makebox[1.0em][c]{$1$}\\
18& \left(\begin{smallarray}{ccc}0&0&0\\1&1&1\\3&3&8\\8&18&12\\12&12&3\\18&8&18\end{smallarray}\right)& \makebox[1.0em][c]{$-$} \makebox[1.0em][c]{$-$} \makebox[1.0em][c]{$-$}& \raisebox{0.5em}{\makebox[1.5em][c]{$0$}} \raisebox{-0.5em}{\makebox[1.5em][c]{$0$}} \raisebox{0.5em}{\makebox[1.5em][c]{$25$}}& \raisebox{-0.5em}{\makebox[1.9em][c]{$1$}} \raisebox{0.5em}{\makebox[1.9em][c]{$1$}} \raisebox{-0.5em}{\makebox[1.9em][c]{$100$}}& \makebox[1.0em][c]{$1$} \makebox[1.0em][c]{$1$} \makebox[1.0em][c]{$3$}\\
24& \left(\begin{smallarray}{ccc}0&0&0\\1&1&1\\3&8&8\\8&12&12\\12&3&18\\18&18&3\end{smallarray}\right)& \makebox[1.0em][c]{$-$} \makebox[1.0em][c]{$-$} \makebox[1.0em][c]{$-$}& \raisebox{0.5em}{\makebox[1.5em][c]{$0$}} \raisebox{-0.5em}{\makebox[1.5em][c]{$0$}} \raisebox{0.5em}{\makebox[1.5em][c]{$25$}}& \raisebox{-0.5em}{\makebox[1.9em][c]{$1$}} \raisebox{0.5em}{\makebox[1.9em][c]{$1$}} \raisebox{-0.5em}{\makebox[1.9em][c]{$100$}}& \makebox[1.0em][c]{$1$} \makebox[1.0em][c]{$1$} \makebox[1.0em][c]{$1$}\\
1& \left(\begin{smallarray}{ccc}0&0&0\\1&1&12\\3&18&3\\8&12&1\\12&8&8\\18&3&18\end{smallarray}\right)& \makebox[1.0em][c]{$-$} \makebox[1.0em][c]{$-$} \makebox[1.0em][c]{$-$}& \raisebox{0.5em}{\makebox[1.5em][c]{$0$}} \raisebox{-0.5em}{\makebox[1.5em][c]{$3$}} \raisebox{0.5em}{\makebox[1.5em][c]{$3$}}& \raisebox{-0.5em}{\makebox[1.9em][c]{$1$}} \raisebox{0.5em}{\makebox[1.9em][c]{$1$}} \raisebox{-0.5em}{\makebox[1.9em][c]{$1$}}& \makebox[1.0em][c]{$3$} \makebox[1.0em][c]{$3$} \makebox[1.0em][c]{$3$}\\
8& \left(\begin{smallarray}{ccc}0&0&0\\1&1&3\\3&8&12\\8&12&8\\12&3&18\\18&18&1\end{smallarray}\right)& \makebox[1.0em][c]{$-$} \makebox[1.0em][c]{$-$} \makebox[1.0em][c]{$-$}& \raisebox{0.5em}{\makebox[1.5em][c]{$0$}} \raisebox{-0.5em}{\makebox[1.5em][c]{$0$}} \raisebox{0.5em}{\makebox[1.5em][c]{$3$}}& \raisebox{-0.5em}{\makebox[1.9em][c]{$1$}} \raisebox{0.5em}{\makebox[1.9em][c]{$1$}} \raisebox{-0.5em}{\makebox[1.9em][c]{$1$}}& \makebox[1.0em][c]{$3$} \makebox[1.0em][c]{$3$} \makebox[1.0em][c]{$3$}\\
10& \left(\begin{smallarray}{ccc}0&0&0\\1&1&3\\3&3&8\\8&8&12\\12&18&1\\18&12&18\end{smallarray}\right)& \makebox[1.0em][c]{$-$} \makebox[1.0em][c]{$-$} \makebox[1.0em][c]{$-$}& \raisebox{0.5em}{\makebox[1.5em][c]{$0$}} \raisebox{-0.5em}{\makebox[1.5em][c]{$0$}} \raisebox{0.5em}{\makebox[1.5em][c]{$0$}}& \raisebox{-0.5em}{\makebox[1.9em][c]{$1$}} \raisebox{0.5em}{\makebox[1.9em][c]{$1$}} \raisebox{-0.5em}{\makebox[1.9em][c]{$1$}}& \makebox[1.0em][c]{$3$} \makebox[1.0em][c]{$3$} \makebox[1.0em][c]{$3$}\\
2& \left(\begin{smallarray}{ccc}0&0&0\\1&1&3\\3&12&1\\8&18&8\\12&3&18\\18&8&12\end{smallarray}\right)& \makebox[1.0em][c]{$-$} \makebox[1.0em][c]{$-$} \makebox[1.0em][c]{$-$}& \raisebox{0.5em}{\makebox[1.5em][c]{$1$}} \raisebox{-0.5em}{\makebox[1.5em][c]{$2$}} \raisebox{0.5em}{\makebox[1.5em][c]{$3$}}& \raisebox{-0.5em}{\makebox[1.9em][c]{$1$}} \raisebox{0.5em}{\makebox[1.9em][c]{$1$}} \raisebox{-0.5em}{\makebox[1.9em][c]{$1$}}& \makebox[1.0em][c]{$1$} \makebox[1.0em][c]{$1$} \makebox[1.0em][c]{$1$}\\
2& \left(\begin{smallarray}{ccc}0&0&0\\1&1&8\\3&12&3\\8&18&18\\12&8&12\\18&3&1\end{smallarray}\right)& \makebox[1.0em][c]{$-$} \makebox[1.0em][c]{$-$} \makebox[1.0em][c]{$-$}& \raisebox{0.5em}{\makebox[1.5em][c]{$1$}} \raisebox{-0.5em}{\makebox[1.5em][c]{$1$}} \raisebox{0.5em}{\makebox[1.5em][c]{$3$}}& \raisebox{-0.5em}{\makebox[1.9em][c]{$1$}} \raisebox{0.5em}{\makebox[1.9em][c]{$1$}} \raisebox{-0.5em}{\makebox[1.9em][c]{$1$}}& \makebox[1.0em][c]{$1$} \makebox[1.0em][c]{$1$} \makebox[1.0em][c]{$1$}\\
8& \left(\begin{smallarray}{ccc}0&0&0\\1&1&8\\3&18&1\\8&8&18\\12&12&3\\18&3&12\end{smallarray}\right)& \makebox[1.0em][c]{$-$} \makebox[1.0em][c]{$-$} \makebox[1.0em][c]{$-$}& \raisebox{0.5em}{\makebox[1.5em][c]{$0$}} \raisebox{-0.5em}{\makebox[1.5em][c]{$1$}} \raisebox{0.5em}{\makebox[1.5em][c]{$3$}}& \raisebox{-0.5em}{\makebox[1.9em][c]{$1$}} \raisebox{0.5em}{\makebox[1.9em][c]{$1$}} \raisebox{-0.5em}{\makebox[1.9em][c]{$1$}}& \makebox[1.0em][c]{$1$} \makebox[1.0em][c]{$1$} \makebox[1.0em][c]{$1$}\\
59& \left(\begin{smallarray}{ccc}0&0&0\\1&1&3\\3&3&8\\8&18&1\\12&12&18\\18&8&12\end{smallarray}\right)& \makebox[1.0em][c]{$-$} \makebox[1.0em][c]{$-$} \makebox[1.0em][c]{$-$}& \raisebox{0.5em}{\makebox[1.5em][c]{$0$}} \raisebox{-0.5em}{\makebox[1.5em][c]{$0$}} \raisebox{0.5em}{\makebox[1.5em][c]{$3$}}& \raisebox{-0.5em}{\makebox[1.9em][c]{$1$}} \raisebox{0.5em}{\makebox[1.9em][c]{$1$}} \raisebox{-0.5em}{\makebox[1.9em][c]{$1$}}& \makebox[1.0em][c]{$1$} \makebox[1.0em][c]{$1$} \makebox[1.0em][c]{$1$}\\
1& \left(\begin{smallarray}{ccc}0&0&0\\1&1&3\\3&3&12\\8&12&8\\12&18&18\\18&8&1\end{smallarray}\right)& \makebox[1.0em][c]{$-$} \makebox[1.0em][c]{$-$} \makebox[1.0em][c]{$-$}& \raisebox{0.5em}{\makebox[1.5em][c]{$0$}} \raisebox{-0.5em}{\makebox[1.5em][c]{$2$}} \raisebox{0.5em}{\makebox[1.5em][c]{$2$}}& \raisebox{-0.5em}{\makebox[1.9em][c]{$1$}} \raisebox{0.5em}{\makebox[1.9em][c]{$1$}} \raisebox{-0.5em}{\makebox[1.9em][c]{$1$}}& \makebox[1.0em][c]{$1$} \makebox[1.0em][c]{$1$} \makebox[1.0em][c]{$1$}\\
19& \left(\begin{smallarray}{ccc}0&0&0\\1&1&3\\3&3&12\\8&8&18\\12&18&1\\18&12&8\end{smallarray}\right)& \makebox[1.0em][c]{$-$} \makebox[1.0em][c]{$-$} \makebox[1.0em][c]{$-$}& \raisebox{0.5em}{\makebox[1.5em][c]{$0$}} \raisebox{-0.5em}{\makebox[1.5em][c]{$1$}} \raisebox{0.5em}{\makebox[1.5em][c]{$2$}}& \raisebox{-0.5em}{\makebox[1.9em][c]{$1$}} \raisebox{0.5em}{\makebox[1.9em][c]{$1$}} \raisebox{-0.5em}{\makebox[1.9em][c]{$1$}}& \makebox[1.0em][c]{$1$} \makebox[1.0em][c]{$1$} \makebox[1.0em][c]{$1$}\\
93& \left(\begin{smallarray}{ccc}0&0&0\\1&1&1\\3&8&12\\8&12&3\\12&3&8\\18&18&18\end{smallarray}\right)& \makebox[1.0em][c]{$-$} \makebox[1.0em][c]{$-$} \makebox[1.0em][c]{$-$}& \raisebox{0.5em}{\makebox[1.5em][c]{$0$}} \raisebox{-0.5em}{\makebox[1.5em][c]{$0$}} \raisebox{0.5em}{\makebox[1.5em][c]{$2$}}& \raisebox{-0.5em}{\makebox[1.9em][c]{$1$}} \raisebox{0.5em}{\makebox[1.9em][c]{$1$}} \raisebox{-0.5em}{\makebox[1.9em][c]{$1$}}& \makebox[1.0em][c]{$1$} \makebox[1.0em][c]{$1$} \makebox[1.0em][c]{$1$}\\
4& \left(\begin{smallarray}{ccc}0&0&0\\1&1&18\\3&12&1\\8&18&12\\12&3&8\\18&8&3\end{smallarray}\right)& \makebox[1.0em][c]{$-$} \makebox[1.0em][c]{$-$} \makebox[1.0em][c]{$-$}& \raisebox{0.5em}{\makebox[1.5em][c]{$1$}} \raisebox{-0.5em}{\makebox[1.5em][c]{$1$}} \raisebox{0.5em}{\makebox[1.5em][c]{$1$}}& \raisebox{-0.5em}{\makebox[1.9em][c]{$1$}} \raisebox{0.5em}{\makebox[1.9em][c]{$1$}} \raisebox{-0.5em}{\makebox[1.9em][c]{$1$}}& \makebox[1.0em][c]{$1$} \makebox[1.0em][c]{$1$} \makebox[1.0em][c]{$1$}\\
60& \left(\begin{smallarray}{ccc}0&0&0\\1&1&1\\3&12&12\\8&8&8\\12&3&18\\18&18&3\end{smallarray}\right)& \makebox[1.0em][c]{$-$} \makebox[1.0em][c]{$-$} \makebox[1.0em][c]{$-$}& \raisebox{0.5em}{\makebox[1.5em][c]{$0$}} \raisebox{-0.5em}{\makebox[1.5em][c]{$1$}} \raisebox{0.5em}{\makebox[1.5em][c]{$1$}}& \raisebox{-0.5em}{\makebox[1.9em][c]{$1$}} \raisebox{0.5em}{\makebox[1.9em][c]{$1$}} \raisebox{-0.5em}{\makebox[1.9em][c]{$1$}}& \makebox[1.0em][c]{$1$} \makebox[1.0em][c]{$1$} \makebox[1.0em][c]{$1$}\\
661& \left(\begin{smallarray}{ccc}0&0&0\\1&1&1\\3&3&8\\8&12&3\\12&8&18\\18&18&12\end{smallarray}\right)& \makebox[1.0em][c]{$-$} \makebox[1.0em][c]{$-$} \makebox[1.0em][c]{$-$}& \raisebox{0.5em}{\makebox[1.5em][c]{$0$}} \raisebox{-0.5em}{\makebox[1.5em][c]{$0$}} \raisebox{0.5em}{\makebox[1.5em][c]{$1$}}& \raisebox{-0.5em}{\makebox[1.9em][c]{$1$}} \raisebox{0.5em}{\makebox[1.9em][c]{$1$}} \raisebox{-0.5em}{\makebox[1.9em][c]{$1$}}& \makebox[1.0em][c]{$1$} \makebox[1.0em][c]{$1$} \makebox[1.0em][c]{$1$}\\
2179& \left(\begin{smallarray}{ccc}0&0&0\\1&1&1\\3&3&3\\8&8&18\\12&18&8\\18&12&12\end{smallarray}\right)& \makebox[1.0em][c]{$-$} \makebox[1.0em][c]{$-$} \makebox[1.0em][c]{$-$}& \raisebox{0.5em}{\makebox[1.5em][c]{$0$}} \raisebox{-0.5em}{\makebox[1.5em][c]{$0$}} \raisebox{0.5em}{\makebox[1.5em][c]{$0$}}& \raisebox{-0.5em}{\makebox[1.9em][c]{$1$}} \raisebox{0.5em}{\makebox[1.9em][c]{$1$}} \raisebox{-0.5em}{\makebox[1.9em][c]{$1$}}& \makebox[1.0em][c]{$1$} \makebox[1.0em][c]{$1$} \makebox[1.0em][c]{$1$}

\end{longtable}

\bibliographystyle{amsalpha}
\bibliography{singer_lattice.bib}

\end{document}